\documentclass[11pt]{article}
\usepackage[paper=a4paper]{geometry}
\usepackage{amsmath, amsthm, amssymb}
\usepackage{multicol}
\usepackage{amsrefs}
\usepackage[colorlinks=true, linktoc=page, linkcolor=webgreen, citecolor=webgreen, urlcolor=webbrown]{hyperref}
\usepackage{xcolor}
\usepackage[mathscr]{eucal}
\usepackage{enumitem}
\usepackage{thmtools}
\usepackage{geometry} 
\usepackage{graphicx} 
\usepackage{float} 
\usepackage{wrapfig}
\usepackage{tikz-cd}
\usepackage{ytableau}
\ytableausetup{centertableaux}
\usepackage[utf8]{inputenc} 
\usepackage[T1]{fontenc}

\newtheorem{thm}{Theorem}[section]
\newtheorem{thrm}{Theorem}

\newtheorem{prop}[thm]{Proposition}
\newtheorem{lem}[thm]{Lemma}
\newtheorem{cor}[thm]{Corollary}

\theoremstyle{remark}
\newtheorem{dfn}[thm]{Definition}
\newtheorem{rmk}[thm]{Remark}

\newtheorem{ex}{Example}
\theoremstyle{definition}

\numberwithin{equation}{section}

\DeclareMathOperator{\Sym}{Sym}

\newcommand{\Rat}{\mathbb Q}
\newcommand{\Z}{\mathbb Z}

\def\E{\mathscr{E}}

\def\P{\mathscr{P}}
\def\Pas{\mathscr{P}_{\rm as}}
\def\H{\mathscr{H}}

\def\Pt{\mathcal{P}}

\definecolor{webgreen}{rgb}{0,.4,0}
\definecolor{webbrown}{rgb}{.4,0,0}

\title{Stable Limit DAHA of type $(C^{\vee},C)$ 
and Stable Limit Koornwinder Polynomials}
\author{Dongyu Wu}
\date{}

\begin{document}

\maketitle 

\begin{abstract}
	We construct two stable limit representations of the double affine Hecke 
	algebra of type $(C^\vee,C)$ on the space of almost symmetric 
	Laurent polynomials, namely the positive and negative stable limit 
	representations. 
	Starting from the standard polynomial representation of 
	the finite rank DAHA of type $(C_n^\vee,C_n)$, 
	we study the asymptotic behavior of the Cherednik operators 
	under the two natural rescalings by positive and negative powers 
	of the parameter $t$. 
	We prove that these rescaled Cherednik operators 
	admit well-defined limits on the ring of almost symmetric 
	Laurent polynomials. 
	This yields stable positive and negative actions of 
	a common stable limit DAHA. The action of 
	the limit Cherednik operators is also proven to 
	be triangular on a natural basis of almost symmetric 
	Laurent polynomials labeled by tuple-partition symbols 
	with respect to the induced Bruhat order. 
	We further construct for 
	each of the two stable limit representations a set of 
	simultaneous eigenfunctions of the limit Cherednik operators using 
	the partial symmetrization operators acting on the 
	non-symmetric Koornwinder polynomials. We show that each of the two 
	sets of the eigenfunctions 
	form a basis of the space of almost symmetric Laurent polynomials, 
	and denote them by the positive and negative stable limit Koornwinder polynomials.
\end{abstract}

\section{Introduction}
The double affine Hecke algebras (DAHA) were first invented by I. Cherednik. 
They have been widely studied in representation theory, algebraic combinatorics, 
and mathematical physics. The standard polynomial representation of each DAHA 
gives rise to a family of orthogonal polynomials famously 
known as Macdonald-Koornwinder polynomials. For type $B,C,D,BC$ cases, 
Macdonald-Koornwinder polynomials of all other types 
can be obtained from the Koornwinder 
polynomials of type $(C_n^{\vee},C_n)$, 
a family of multivariable orthogonal polynomials 
depending on six parameters, by specialization of parameters. Therefore, 
studying DAHA of type $(C_n^{\vee},C_n)$ will help establish 
a better understanding on the standard polynomial representations of DAHA of 
all types.

A compelling direction in the Macdonald theory is to consider a stable 
limit of DAHAs as the rank tends to infinity. 
In type $A$, such limit constructions by Ion and Wu already yield 
interesting connections to double Dyck path algebra, 
and algebras of operators on them. The challenge is harder in the setting of the  
type $(C_n^{\vee},C_n)$. For 
instance, 
due to the $\mathbb{Z}_2$ reflection of the Weyl group of type $BC_n$, 
the Cherednik operators in type $(C_n^{\vee},C_n)$ are 
no longer homogeneous in the variables, and will 
involve both positive and negative powers of the variables in 
their action on the Laurent polynomials, 
which complicates the limit process. As another example, 
the partial symmetrization of the non-symmetric Koornwinder polynomials in finite rank 
will yield a family of almost symmetric Laurent polynomials in the limit which 
lies in the space of almost symmetric Laurent polynomials. 
This space has both asymptotic cyclic symmetry and asymptotic 
reflexive symmetry, which extremely complicates 
the analysis of the algebraic structure of the space 
as well as the combinatorial technicality involved in the 
stabilization process.

Ion and Wu introduced and studied the stable limit DAHA of type $A$ and developed 
its standard representation as a limit of the standard representations of 
finite rank DAHA. For the type $(C_n^{\vee},C_n)$ root system, the works 
of Noumi \cite{Nou} and Sahi \cite{Sa} established 
a standard representation of the double affine Hecke algebra of type
$(C_n^{\vee},C_n)$. Therefore it is natural to try 
to apply the framework of Ion and Wu to the type $(C_n^{\vee},C_n)$ 
setting, and to develop the associated limit theory of DAHA. By doing so, 
we are able to describe a framework for the stable lmit DAHA of type 
$(C^{\vee},C)$ in infinite rank in Section \ref{sec: DAHAsec} and 
\ref{sec: stable limit DAHA}. More specifically, 
the stable limit double affine Hecke algebra 
	$\mathscr{H}$ of type $(C^{\vee},C)$ 
	used in this paper is defined to be the  
	$\mathbb{Q}(q,t,t_0,t_{\infty},a,c)$-algebra generated by 
  $$T_0,T_1,T_2,\dots,X_1^{\pm1},X_2^{\pm1},\dots,\tilde{Y}_1,
  \tilde{Y}_2,\dots$$
  satisfying the following relations:
	  \begin{subequations}
		  \begin{equation*}
			  \begin{gathered}
			  (T_{0}-1)(T_{0}+t_0)=0, \\
			  (T_{i}-1)(T_{i}+t)=0, \quad i\geq 1,
			  \end{gathered}
			\end{equation*}
		  \begin{equation*}
			\begin{gathered}
			T_{i}T_{j}=T_{j}T_{i}, \quad |i-j|>1,\\
			T_{i}T_{i+1}T_{i}=T_{i+1}T_{i}T_{i+1}, \quad i\geq 1,\\
			T_0 T_1 T_0 T_1 = T_1 T_0 T_1 T_0,
			\end{gathered}
			\end{equation*}
		  \begin{equation*}
			  \begin{gathered}
				  t T_i^{-1} X_i T_i^{-1}=X_{i+1}, \quad i\geq 1\\
				  T_{i}X_{j}=X_{j}T_{i},\quad  j\neq i,i+1,\\
				  X_i X_j=X_j X_i \quad i,j\geq 1,\\
				  (X_1 T_0 - t_0 c^{-1})(X_1 T_0 + q^{-1}c)=0,
			  \end{gathered}
		  \end{equation*}
		  \begin{equation*}
			  \begin{gathered}
				  t T_i \tilde{Y}_i T_i=\tilde{Y}_{i+1}, \quad i\geq 1\\
				  T_{i}\tilde{Y}_{j}=\tilde{Y}_{j}T_{i},\quad  j\neq i,i+1,\\
				  \tilde{Y}_i \tilde{Y}_j=\tilde{Y}_j \tilde{Y}_i \quad i,j\geq 1,\\
				  \tilde{Y}_1 X_1 T_0 \tilde{Y}_1 =0.
			  \end{gathered}
		  \end{equation*}
	  \end{subequations}

The main difficulty of the stabilization process of the standard representation 
as usual is caused by the Cherednik operators. 
For type $(C_n^{\vee},C_n)$, the action of the Cherednik operators 
on the standard representation 
produces Laurent polynomials involving both $x_i$ and $x_i^{-1}$ with non-trivial 
coefficients. In Section \ref{sec: stable limit DAHA}, we use combinatorial techniques 
to show that by rescaling the Cherednik operators by 
positive and negative powers of $t$, these Laurent polynomials admit 
well-defined limits in the ring of almost symmetric Laurent polynomials. 
Thus, we obtain in Theorem \ref{def: positive Limit DAHA action} and 
\ref{def: negative Limit DAHA action} the positive and negative 
stable limit actions of the stable limit DAHA on the 
space of almost symmetric Laurent polynomials 
$$\Pas^{\pm} :=K[x_1^{\pm1},x_2^{\pm1},...]\otimes\Sym[X^{\pm}].$$
We state the results more precisely as follows.
\begin{thrm}
	\begin{enumerate}
		\item (Positive limit action) The stable limit double affine Hecke algebra 
		$\mathscr{H}$ of type $(C^{\vee},C)$ acts on the space $\Pas^{\pm}$ 
		of almost symmetric Laurent polynomials, with 
		the action of $Y_i$ defined as the $t$-adic limit
		$$\tilde{Y}_i = \lim_{n\rightarrow\infty}t^n Y_i^{(n)}\Pi_n$$
		and the action of $T_i,X_i^{\pm}$ defined as the classical action. 
		\item (Negative limit action) The stable limit double affine Hecke algebra 
		$\mathscr{H}$ of type $(C^{\vee},C)$ acts on the space $\Pas^{\pm}$ 
		of almost symmetric Laurent polynomials, with 
		the action of $\tilde{Y}_i$ defined as the $t^{-1}$-adic limit
		$$\tilde{Y}_i = \lim_{n\rightarrow\infty}t^{-n} Y_i^{(n)}\Pi_n$$
		and the action of $T_i,X_i^{\pm}$ defined as the classical action. 
	\end{enumerate}
\end{thrm}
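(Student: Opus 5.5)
The plan follows the Ion--Wu strategy for type $A$, adapted to $(C_n^\vee,C_n)$. There are three steps: show that the rescaled Cherednik operators converge on $\Pas^{\pm}$, identify the limits, and then check that the limits satisfy the defining relations of $\mathscr{H}$.

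\textbf{Step 1: reduction to the Noumi--Sahi representation.} For each $n$, the finite rank DAHA of type $(C_n^\vee,C_n)$ acts on $K[x_1^{\pm1},\dots,x_n^{\pm1}]$ via the Noumi--Sahi representation. Here $T_0,\dots,T_n$ act by Demazure--Lusztig operators, $X_i$ acts by multiplication, and
$$Y_i^{(n)}=T_i\cdots T_{n-1}\,T_n\cdots T_0\,T_1^{-1}\cdots T_{i-1}^{-1}$$
up to the normalizing constant. Let $\Pi_n\colon \Pas^{\pm}\to K[x_1^{\pm1},\dots,x_n^{\pm1}]$ be the restriction that sets $x_m=0$ (formally) for $m>n$. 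Concretely, it sends the symmetric part $\Sym[X^{\pm}]$ to $BC$-symmetric functions in $x_{k+1},\dots,x_n$.

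The first thing to verify is that $T_i$ for $i\ge1$, $T_0$, and $X_i^{\pm1}$ commute with $\Pi_n$ for all $n\gg i$. This is immediate, since these operators involve only $x_i$ and $x_{i+1}$ (respectively $x_1$). It shows that their classical action on $\Pas^{\pm}$ is well defined and compatible with restriction.

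All the relations of $\mathscr{H}$ that involve only $T$'s and $X$'s are then inherited from the finite rank relations. To see this, apply $\Pi_n$ for $n$ large and use that the family $(\Pi_n)$ is jointly injective.

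\textbf{Step 2: convergence of $t^{\pm n}Y_i^{(n)}\Pi_n$, the main obstacle.} Fix a basis element $f=x^\alpha\, g$ of $\Pas^{\pm}$, where $\alpha$ is supported on $x_1,\dots,x_k$ and $g$ is symmetric. I would first reduce to $i=1$ and $k$ fixed, using $tT_i\tilde Y_iT_i=\tilde Y_{i+1}$, which holds at every finite rank up to the same power of $t$.

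Next, write $Y_1^{(n)}=T_1\cdots T_{n-1}\,(T_n\cdots T_0)\,$ acting on $\Pi_n f$. The part $T_{k+1}\cdots T_{n-1}$ acts on polynomials that are symmetric in $x_{k+1},\dots,x_n$ except for one distinguished variable. Each $T_j$ contributes a factor $1$ plus a divided difference term. I would expand the product as a sum over subsets of positions at which the divided difference term is chosen. The terms then organize by which variable the "wandering" monomial lands on, together with the reflection $x\mapsto x^{-1}$ introduced by $T_n$ and $T_0$.

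The key estimate is that, after multiplying by $t^{n}$ (respectively $t^{-n}$), every coefficient is a rational function in $t$ whose $t$-adic (respectively $t^{-1}$-adic) expansion stabilizes as $n\to\infty$. The hard part is the reflection at the end of the chain. A monomial $x_j^{a}$ becomes a combination of $x_j^{a},\dots,x_j^{-a}$ with coefficients involving $t_0,t_\infty,a,c$. One has to show that these contributions resymmetrize into elements of $\Sym[X^{\pm}]$ modulo terms that vanish in the limit.

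I would attack this by computing first on generating objects: $f=x_1^{m}\,p_r$ with $p_r$ a $BC$ power sum $\sum_j(x_j^r+x_j^{-r})$. I would derive an explicit formula for $Y_1^{(n)}\Pi_n f$ as a finite sum of geometric series in $t$ of length $n-k$. Taking the limit term by term gives an explicit operator on $\Pas^{\pm}$.

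\textbf{Step 3: the relations involving $\tilde Y$.} Define $\tilde Y_i$ on $\Pas^{\pm}$ as the limit obtained in Step 2. Each remaining relation is checked by applying $\Pi_n$ and passing to the limit in the corresponding finite-rank identity:

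\begin{itemize}
\item The relations $tT_i\tilde Y_iT_i=\tilde Y_{i+1}$ and $T_j\tilde Y_i=\tilde Y_iT_j$ for $j\ne i,i+1$ follow this way.
\item Commutativity $\tilde Y_i\tilde Y_j=\tilde Y_j\tilde Y_i$ follows because both $t^{2n}Y_iY_j\Pi_n$ and its limit respect restriction. This in turn requires the compatibility $\Pi_{n}\circ\lim=\lim\circ\,\Pi_n$, which I would prove from the stabilization established in Step 2.
\item The degenerate relation $\tilde Y_1X_1T_0\tilde Y_1=0$ comes from the finite identity $Y_1^{(n)}X_1T_0Y_1^{(n)}=(\text{const})\cdot T_1\cdots T_n\cdots$. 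That right-hand side carries a power of $t$ of order strictly lower than $t^{2n}$ (respectively higher than $t^{-2n}$), so it tends to zero after rescaling.
\end{itemize}

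The negative case is identical, with the $t^{-1}$-adic topology and with $T_j$ replaced by the appropriate inverse-normalized form. The only genuine work is again the convergence in Step 2.
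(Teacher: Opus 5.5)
\textbf{Verdict: genuine gap.} Steps~1 and~3 match the paper's proof. Stability of $T_i$, $T_0$, $X_i^{\pm1}$ under $\Pi_n$, defining $\tilde Y_i$ as a limit, and passing finite-rank relations to the limit via Proposition~\ref{prop: continuity} are all how the paper argues. But Step~2 is the entire content of the theorem, and there you only restate the claim. Phrases like ``every coefficient \dots\ stabilizes'' and ``one has to show that these contributions resymmetrize'' are what must be proved, not a proof. The paper supplies four ingredients that your sketch lacks:
\begin{itemize}
\item[(i)] \emph{Reduction to monomials.} The chain $T_1^{-1}\cdots T_n^{-1}T_{n-1}^{-1}\cdots T_i^{-1}$ commutes with multiplication by the $W_n$-invariant $m_\lambda[\bar X_n^{\pm}]$. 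The operators $T_0^{-1}$ and $T_{i-1}\cdots T_1$ do not depend on $n$ and act after the chain. So only $x^\nu x_k^{m}$ has to be pushed through the chain. Computing on $x_1^m p_r$ does not help, since $\tilde Y_1$ is not multiplicative.
\item[(ii)] \emph{Closed forms for the outgoing half.} These are the plethystic formulas for $t_nt^{n-k}T_n^{-1}\cdots T_k^{-1}x_k^{m}$ and $T_n^{-1}\cdots T_k^{-1}x_k^{-m}$ (Corollary~\ref{cor: raising part}), each splitting into a $\beta$-free part and a part proportional to $\beta$.
\item[(iii)] \emph{Generating-series identities for the returning half} $t^{n-k}T_k^{-1}\cdots T_{n-1}^{-1}$ (Lemmas~\ref{lem: symmetric part 1}--\ref{lem: symmetric part 4}). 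They give \emph{exact} symmetry in $x_{k+1}^{\pm1},\dots,x_n^{\pm1}$ at each finite $n$, not symmetry modulo vanishing terms. They express the result either as a telescoped geometric sum or as a sum over $\epsilon\in\{0,1\}^{n-k}$. Your subset expansion has the shape of the latter; the substance is the degree count in Corollary~\ref{cor: symmetric part 1} showing that the coefficient of each fixed monomial converges.
\item[(iv)] \emph{The sign dichotomy.} In the $t$-adic case the contributions of $x_k^{-m}$ die. After the chain their $t$-degree is bounded below while an extra factor $t^{n}$ survives (Proposition~\ref{prop: stable limit of positive Y action 2}). In the $t^{-1}$-adic case the roles of $x_k^{m}$ and $x_k^{-m}$ are exchanged (Proposition~\ref{prop: symmetric part 5}). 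So the negative case is parallel to the positive one, not literally identical.
\end{itemize}

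There are also concrete errors to fix.

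Your $Y_i^{(n)}=T_i\cdots T_{n-1}T_n\cdots T_0T_1^{-1}\cdots T_{i-1}^{-1}$ is not the paper's operator up to a constant. It equals $(t_0t_n)t^{n-i}(Y_i^{(n)})^{-1}$ for the paper's $Y_i^{(n)}=(t_0t_n)t^{n-i}T_{i-1}\cdots T_1T_0^{-1}T_1^{-1}\cdots T_n^{-1}\cdots T_i^{-1}$. Its diagonal coefficient on $x^{\lambda0^{n-k}}$ is therefore $(t_0t_n)t^{n-i}$ times the inverse of the eigenvalue in Proposition~\ref{prop: triangularity of Y}. After multiplying by $t^n$, this tends to $0$ $t$-adically for every $\lambda$ and $i$. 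Hence your rescaled operators cannot converge to the $\tilde Y_i$ of the statement. The rescaling only works with the paper's form, where $t^{n-k}T_k^{-1}\cdots T_{n-1}^{-1}=(T_k+t-1)\cdots(T_{n-1}+t-1)$ has coefficients polynomial in $t$.

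Relatedly, the reflection that matters is $T_n$ at the far end of the chain, which brings in $t_n$ and $\beta$. $T_0$, which carries $t_0$ and $c$, is $n$-independent and harmless.

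In Step~3, the finite identity coming from the last relation of Definition~\ref{def: Limit DAHA-alt} is $Y_1X_1T_0Y_1=t_0\beta\,Y_1+t_0^2t_n(X_1T_0)^{-1}$, not a product of $T$'s. After multiplying by $t^{\pm 2n}$, the right side becomes $t^{\pm n}t_0\beta\,(t^{\pm n}Y_1^{(n)})+t^{\pm 2n}t_0^2t_n(X_1T_0)^{-1}$. It tends to $0$ only because $t^{\pm n}Y_1^{(n)}$ already converges. Finally, commutativity of the $\tilde Y_i$ needs Proposition~\ref{prop: continuity}, i.e.\ $A_nf_n\to Af$ for convergent $f_n$, because $t^nY_j^{(n)}\Pi_nf\neq\Pi_n\tilde Y_jf$. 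The compatibility $\Pi_n\circ\lim=\lim\circ\Pi_n$ that you invoke is not the statement actually needed.
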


Inherited from the finite rank case, 
the limit Cherednik operators still satisfy some nice 
properties. In particular, we show in Proposition \ref{prop: triangularity of positive limit Y} 
and \ref{prop: triangularity of negative limit Y} 
that the limit Cherednik operators are triangular on 
a natural basis of almost symmetric Laurent polynomials 
labeled by tuple-partition symbols with respect to the induced Bruhat order. 

\begin{thrm}
	For symbol $\lambda\vert\mu$ and $i\geq 1$, we have 
	$$\tilde{Y}_i^+ m_{\lambda\vert\mu}\in 
	\delta_i(\lambda)q^{\lambda_i}
	t^{u_{\lambda\mu}(i)}m_{\lambda\vert\mu}
	+\sum_{\nu\vert\eta<\lambda\vert\mu}\mathbb{K}'m_{\nu\vert\eta}$$
	and 
	$$\tilde{Y}_i^- m_{\lambda\vert\mu}\in 
	\delta_i'(\lambda)q^{\lambda_i}(t_0t_{\infty})
	t^{-u_{\lambda\mu}(i)}m_{\lambda\vert\mu}
	+\sum_{\nu\vert\eta<\lambda\vert\mu}\mathbb{K}'m_{\nu\vert\eta}.$$
\end{thrm}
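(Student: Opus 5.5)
Our plan is to obtain triangularity of $\tilde{Y}_i^{\pm}$ from the known triangularity of the finite rank Cherednik operators $Y_i^{(n)}$ on monomials (Sahi, Noumi), and then pass to the limit.

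\textbf{Step 1: write $m_{\lambda\vert\mu}$ at finite rank.} For a symbol $\lambda\vert\mu$, with $\lambda$ a tuple of integers of length $k$ and $\mu$ a partition, we write $\Pi_n m_{\lambda\vert\mu}$ as an orbit sum. Namely, it is the sum of $x^{\alpha}$ over $\alpha\in\mathbb{Z}^n$ with $(\alpha_1,\dots,\alpha_k)=\lambda$ and $(\alpha_{k+1},\dots,\alpha_n)$ running over the $W_{n-k}$-orbit (signed permutations) of $\mu$ padded by zeros. Each such $x^\alpha$ is a finite rank monomial.

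\textbf{Step 2: finite rank triangularity.} We use that $Y_i^{(n)}x^{\alpha}$ equals $\gamma_i(\alpha)x^\alpha$ plus terms $x^\beta$ with $\beta<\alpha$ in the Bruhat-type order of type $BC_n$. Here $\gamma_i(\alpha)$ is a product of $q^{\alpha_i}$, a power of $t$ determined by the relative position of $|\alpha_i|$ among the $|\alpha_j|$, and a factor $t_0t_\infty$ or $1$ according to the sign of $\alpha_i$. Applying this to the orbit sum, the only terms that are not strictly lower are the $x^\alpha$ lying in the same orbit block.

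\textbf{Step 3: pass to the limit.} For $i\le k$, all $\alpha$ in the support share $\alpha_i=\lambda_i$. The $t$-exponent of $\gamma_i(\alpha)$ has the form $\pm n+u_{\lambda\mu}(i)+(\text{bounded})$. The $\pm n$ part comes from the count of the $n-k$ zero or small tail entries, and after rescaling by $t^{\mp n}$ it gives exactly $t^{\pm u_{\lambda\mu}(i)}$. The factors $\delta_i(\lambda)$ and $\delta_i'(\lambda)$ arise as follows. In the positive limit, a sign-dependent factor vanishes $t$-adically unless $\lambda_i$ has the favourable sign. In the negative limit, the surviving factor carries $t_0t_\infty$.

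Next we regroup the lower terms $x^\beta$ into $\Pi_n m_{\nu\vert\eta}$. Since $\Pi_n$ commutes with the relevant $T_j$ for $j>k$ in the limit, and $Y_i^{(n)}$ for $i\le k$ commutes with $T_j$ for $j>i$ up to the standard relations, the image is symmetric in the tail. Hence it expands in $m_{\nu\vert\eta}$ with $\nu\vert\eta<\lambda\vert\mu$ in the induced order. The coefficients lie in $\mathbb{K}'$ and stabilize $t$-adically, using the convergence already established in Theorems \ref{def: positive Limit DAHA action} and \ref{def: negative Limit DAHA action}.

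\textbf{Step 4: the case $i>k$.} For $i>k$ we reduce to the previous case via the relation $tT_i\tilde{Y}_iT_i=\tilde{Y}_{i+1}$. Here $m_{\lambda\vert\mu}$ is viewed as a symbol with $\lambda$ padded by zeros, so that we may take $i\le k$.

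\textbf{Main obstacle.} We expect the main difficulty to be Step 3. The lower-order terms of $Y_i^{(n)}$ in rank $n$ are not uniformly bounded in number. We must show that their coefficients, after rescaling, converge to elements of $\mathbb{K}'$, and that no terms of order $t^{\pm n}$ feed back into the leading term. We would handle this by proving, at finite rank, that the coefficient of any fixed lower symbol is a rational function whose rescaled $t$-adic expansion stabilizes. The dominant contributions come from the tail of zeros, which are symmetric and so can be resummed as geometric series in $t$.
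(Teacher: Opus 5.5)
For $i\le l(\lambda)$, your Steps 1--3 follow the paper's argument: finite-rank triangularity (Proposition \ref{prop: triangularity of Y}), tail symmetry of $t^{\pm n}Y_i^{(n)}\Pi_n m_{\lambda\vert\mu}$, and extraction of the leading coefficient. Tail symmetry holds because for $j>k\ge i$ the operator $T_j$ fixes $\Pi_n m_{\lambda\vert\mu}$ and commutes exactly with $Y_i^{(n)}$.

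You should make three points explicit. First, for $i\le k$ the diagonal factor is constant on the tail orbit: $\beta_\alpha(i)$ only sees $|\alpha_j|$ for $j>k$, so there is no ``bounded'' correction. Second, lower terms of $Y_i^{(n)}x^\alpha$ can fall back into the same orbit, so the coefficient of $m_{\lambda\vert\mu}$ must be read at the maximal monomial $x^{\lambda\mu^{-}0^{n-k-l(\mu)}}$, as the paper does. Third, Proposition \ref{prop: Bruhat order} is what turns $\beta<\alpha$ in $\mathbb{Z}^n$ into $\nu\vert\eta<\lambda\vert\mu$. Convergence of the lower coefficients is not the real obstacle; it is supplied by Theorems \ref{def: positive Limit DAHA action} and \ref{def: negative Limit DAHA action}.

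The genuine gap is Step 4. The padded tuple $\lambda 0^{i-k}$ is not a symbol, and $x^{\lambda}m_{\mu}[X_k^{\pm}]\neq x^{\lambda 0^{i-k}}m_{\mu}[X_i^{\pm}]$. The difference is a sum of terms $x^{\lambda\sigma}m_{\eta}[X_i^{\pm}]$ with $0\neq\sigma\in\mathbb{Z}^{i-k}$. Their symbols have prefix length $>k$, so they are not below $\lambda\vert\mu$. Your $i\le k$ analysis assigns them a nonzero leading coefficient whenever $\sigma_{i-k}>0$, so you would need a cancellation you never establish. The relation $tT_i\tilde{Y}_iT_i=\tilde{Y}_{i+1}$ does not help: it only leads to $\tilde{Y}_k$ applied to $T_k m_{\lambda\vert\mu}$, which is not a basis element and whose position in the order you do not control.

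The missing idea, which is the paper's, is direct vanishing. For $i>k$, every $T_j^{\pm1}$ with $j\ge i$ occurring in $Y_i^{(n)}$ fixes $\Pi_n m_{\lambda\vert\mu}$, so
\[
t^{n}Y_i^{(n)}\Pi_n m_{\lambda\vert\mu}=t_0t_\infty\,t^{2n-i}\,T_{i-1}\cdots T_1T_0^{-1}T_1^{-1}\cdots T_{i-1}^{-1}\,\Pi_n m_{\lambda\vert\mu}.
\]
The operator on the right only involves $x_1,\dots,x_i$, so the right-hand side equals $t_0t_\infty t^{2n-i}\Pi_n F$ for a fixed $F$. Hence $\tilde{Y}_i^{+}m_{\lambda\vert\mu}=0$.

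The same identity shows that the negative half of the statement is false for $i>l(\lambda)$. Neither your reduction nor the paper's ``exactly the same argument'' can cover it, because with $t^{-n}$ the scalar becomes $t_0t_\infty t^{-i}$, which does not decay. Three counterexamples:
\begin{enumerate}
\item Since $T_1T_0^{-1}T_1^{-1}x_1=x_1$, we get $Y_2^{(n)}x_1=t_0t_\infty t^{n-2}x_1$. Hence $\tilde{Y}_2^{-}m_{(1)\vert\emptyset}=t_0t_\infty t^{-2}m_{(1)\vert\emptyset}$, although $\delta_2'((1))=0$.
\item Similarly, $\tilde{Y}_i^{-}1=t_0t_\infty t^{-i}$.
\item $\tilde{Y}_1^{-}m_{\emptyset\vert(1)}$ contains $x_1=m_{(1)\vert\emptyset}$ with coefficient $t_\infty t^{-1}(q-1)$, and $(1)\vert\emptyset$ is not below $\emptyset\vert(1)$. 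So even redefining $\delta_i'$ would not rescue triangularity.
\end{enumerate}
Your own sign heuristic predicts this: a padded zero has exactly the sign that survives the negative limit. The negative claim can only be proved for $i\le l(\lambda)$.
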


We then proceed in Section \ref{sec: stable limit Koornwinder polynomials} 
to construct the positive and negative almost symmetric Koornwinder polynomials 
as simultaneous eigenfunctions of the limit Cherednik operators using 
the partial symmetrization operators acting on the 
non-symmetric Koornwinder polynomials in finite rank. More precisely,
\begin{align*}
	\E_{\lambda\vert\mu}^+&=
	\frac{1}{(1-t)^{l(\mu)}\prod_{i\geq 0}[m_i(\mu)]_t!}
	\lim_{n\rightarrow\infty}
	e_k^{(n)}(t_{\infty})E_{\lambda\mu 0^{n-k-l(\mu)}}\\
	\E_{\lambda\vert\mu}^-&=
	\frac{\prod_{i=1}^{r}(1-q^{\mu_{a_i}}
	t^{-\beta_{\lambda\mu}(k+a_i)-1})}
	{(1-t^{-1})^{s}\prod_{i>0}[m_i(\mu)]_{t^{-1}}!}
	\\
	&\prod_{i<j\leq r}
	\left(\frac{1-q^{-\mu_{a_i}+\mu_{a_j}}
		t^{\beta_{\lambda\mu}(k+a_i)-\beta_{\lambda\mu}(k+a_j)}}
		{1-q^{-\mu_{a_i}+\mu_{a_j}}
		t^{\beta_{\lambda\mu}(k+a_i)-\beta_{\lambda\mu}(k+a_j)+m_{a_j}(\mu)}}\right)^{m_{a_i}(\mu)}
	\lim_{n\rightarrow\infty}
	e_k^{(n)}(t_{\infty})E_{\lambda,-\mu, 0^{n-k-s}}		
\end{align*}
where $e_k^{(n)}$ is the partial symmetrization operator. We obtain 
in Theorem \ref{thm: positive stable limit Koornwinder} and 
\ref{thm: negative stable limit Koornwinder} that the positive and negative 
stable limit Koornwinder polynomials both form simultaneous eigenbases 
for the space of almost symmetric Laurent polynomials 
with respect to the positive and negative limit Cherednik operators respectively.

\begin{thrm}
	The set of stable limit Koornwinder polynomials 
	$\{\E_{\lambda\vert\mu}^{+}\}$ 
	forms a normalized simultaneous eigenbasis for $\Pas^{\pm}$ of 
	the positive stable limit Cherednik operators $\tilde{Y}_i^+$ 
	with eigenvalues $\delta_i(\lambda)q^{\lambda_i}t^{u_{\lambda\mu}(i)}$. 
	The set of stable limit Koornwinder polynomials 
	$\{\E_{\lambda\vert\mu}^{-}\}$ 
	forms a normalized simultaneous eigenbasis for $\Pas^{\pm}$ of 
	the negative stable limit Cherednik operators $\tilde{Y}_i^-$ 
	with eigenvalues 
	$\delta_i'(\lambda)(t_0t_n)q^{\lambda_i}t^{-u_{\lambda\mu}(i)}$.
\end{thrm}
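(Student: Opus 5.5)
The plan follows the type $A$ strategy of Ion and Wu. The proof has three parts: show the limits defining $\E^{\pm}_{\lambda\vert\mu}$ exist in $\Pas^{\pm}$; show they are eigenfunctions of $\tilde{Y}_i^{\pm}$ with the stated eigenvalues; and deduce the basis property from the triangularity in the second theorem above.

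\emph{Step 1: existence of the limits.}
In finite rank $n$, the non-symmetric Koornwinder polynomial $E_{\lambda\mu0^{n-k-l(\mu)}}$ is triangular with respect to the Bruhat order on the monomial basis. Applying the partial symmetrization $e_k^{(n)}(t_\infty)$, which symmetrizes over the last $n-k$ variables, expresses $e_k^{(n)}(t_\infty)E_{\lambda\mu0^{n-k-l(\mu)}}$ as
\[
c_n\,m^{(n)}_{\lambda\vert\mu}+\sum_{\nu\vert\eta<\lambda\vert\mu} c_n^{\nu\vert\eta}\, m^{(n)}_{\nu\vert\eta},
\]
where $m^{(n)}$ are the finite-rank truncations of the basis $m_{\nu\vert\eta}$.

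I would then show each coefficient converges $t$-adically (for $+$) or $t^{-1}$-adically (for $-$). The tool is the explicit eigenvalue data: $E$ is built from $m$ by inverting $Y^{(n)}$ on a finite Bruhat interval. The coefficients are rational functions in the eigenvalues $\delta_i q^{\lambda_i}t^{\pm(\cdots)}$, and after rescaling by $t^{\pm n}$ these stabilize.

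The normalizing prefactors must be checked to exactly cancel the divergent leading coefficient $c_n$:
\begin{itemize}
\item for $+$, the factor $(1-t)^{l(\mu)}\prod_i[m_i(\mu)]_t!$ coming from the stabilizer of $\mu0^{\cdots}$;
\item for $-$, the more intricate product, where reflecting $\mu$ to $-\mu$ introduces factors $1-q^{\mu_{a_i}}t^{-\beta-1}$ and cross-ratios between distinct parts.
\end{itemize}
With this normalization the leading coefficient of $\E^{\pm}_{\lambda\vert\mu}$ is $1$.

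\emph{Step 2: the eigenvalue property.}
In rank $n$, the operator $Y_i^{(n)}$ for $i\le k$ commutes with $e_k^{(n)}$, since it commutes with $T_j$ for $j>k$. Hence
\[
Y_i^{(n)}e_k^{(n)}E=\text{(eigenvalue)}\cdot e_k^{(n)}E.
\]
For $i>k$ we instead use the relation $\tilde{Y}_{i+1}=tT_i\tilde{Y}_iT_i$ together with the almost symmetric structure, reducing to the case $i\le k$ after enlarging $k$ by trivially padding $\lambda$ with the entries of $\mu$.

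Passing to the limit uses the definitions $\tilde{Y}_i^{\pm}=\lim t^{\pm n}Y_i^{(n)}\Pi_n$ from Theorems \ref{def: positive Limit DAHA action} and \ref{def: negative Limit DAHA action}. One needs compatibility of $\Pi_n$ with the truncations, and the rescaled finite eigenvalues must converge to $\delta_i(\lambda)q^{\lambda_i}t^{u_{\lambda\mu}(i)}$, respectively $\delta_i'(\lambda)(t_0t_\infty)q^{\lambda_i}t^{-u_{\lambda\mu}(i)}$. This is a direct computation from the finite-rank eigenvalue formula, tracking the number of zero entries that grows with $n$.

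\emph{Step 3: the basis property.}
By Step 1, $\E^{\pm}_{\lambda\vert\mu}=m_{\lambda\vert\mu}+\text{lower}$. The transition matrix to the basis $m$ is therefore unitriangular with respect to the order on symbols. Since the lower intervals are finite, this gives a basis of $\Pas^{\pm}$.

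Independently of this, the triangularity in Propositions \ref{prop: triangularity of positive limit Y} and \ref{prop: triangularity of negative limit Y} gives uniqueness. Distinct symbols have distinct eigenvalue tuples; this follows from the generic parameters and the fact that the $u_{\lambda\mu}(i)$ sequence together with the $q^{\lambda_i}$ recovers the symbol. So each eigenvector is determined by its leading term, which also confirms the identification.

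\emph{Main obstacle.}
I expect the hardest part to be Step 1 for the negative case. There the coefficients involve ratios whose $t^{-1}$-adic behaviour depends on the relative positions $\beta_{\lambda\mu}$ of repeated parts, and verifying that exactly the stated product of factors removes the divergence is delicate. I would first try to compute $e_k^{(n)}$ on a single orbit explicitly, using the Hecke symmetrizer factorization into minimal coset representatives, and compare leading terms.
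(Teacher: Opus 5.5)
Your Step 2 for $i\le k$ and your basis argument in Step 3 are exactly the paper's. In Step 2 you commute $Y_i^{(n)}$ past $e_k^{(n)}(t_\infty)$ and pass to the limit via Proposition \ref{prop: continuity}; in Step 3 you use unitriangularity plus Proposition \ref{prop: standard basis 1}.

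The case $i>k$, however, fails as you set it up, for two reasons.
\begin{enumerate}
\item Padding $\lambda$ by the entries of $\mu$ replaces $\E_{\lambda\vert\mu}$ by a different function, the unsymmetrized limit of $E_{\lambda\mu0^{n}}$. For $k<i\le k+l(\mu)$ its $\tilde Y_i^{+}$-eigenvalue is $q^{\mu_{i-k}}t^{u_{\lambda\mu}(i)}\neq 0$, whereas the statement asserts $\delta_i(\lambda)=0$ there.
\item The relation $\tilde Y_{i+1}=tT_i\tilde Y_iT_i$, together with $T_i\E=\E$ for $i\ge k+1$, only propagates information upward from $i=k+1$. You never obtain that base case: at $i=k$ the factor $T_k$ does not fix $\E$.
\end{enumerate}
The missing ingredient is that $\E^{+}_{\lambda\vert\mu}\in\P(k)^{\pm}$ and that $\tilde Y_i^{+}$ annihilates $\P(k)^{\pm}$ for $i>k$. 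On an input symmetric in $x_{k+1}^{\pm1},\dots,x_n^{\pm1}$, every $T_j^{-1}$ with $j\ge i$ acts trivially. Hence $t^{n}Y_i^{(n)}\Pi_n f=t_0t_\infty t^{2n-i}\,T_{i-1}\cdots T_1T_0^{-1}T_1^{-1}\cdots T_{i-1}^{-1}\Pi_n f\to 0$ $t$-adically. This is the $i>l(\lambda)$ part of the proof of Proposition \ref{prop: triangularity of positive limit Y}.

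Be aware that the identical computation with $t^{-n}$ yields $t_0t_\infty t^{-i}$ times an invertible, $n$-independent operator. So $\tilde Y_i^{-}$ is injective on $\P(k)^{\pm}$ for $i>k$. For example, $\tilde Y_1^{-}1=t_0t_\infty t^{-1}$. On $\Sym[X^{\pm}]$ one has $\tilde Y_1^{-}=t_0t_\infty t^{-1}T_0^{-1}$, and no element of the form $m_1[X^{\pm}]+c$ is an eigenvector of it; triangularity forces $\E^{-}_{\emptyset\vert(1)}$ to have exactly that form. So for the negative family only the $i\le k$ part can be proved as stated. The paper's appeal to ``the same argument'' does not cover $i>k$ there either.

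Drop the claim in Step 3 that distinct symbols have distinct eigenvalue tuples: it is false and not needed. By the vanishing just described, every $\E^{+}_{\emptyset\vert\mu}$ has the zero tuple. Likewise $\E^{+}_{(1)\vert(1)}$ and $\E^{+}_{(1)\vert(2)}$ both have the tuple $(qt^{2},0,0,\dots)$.

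The same degeneracy undermines Step 1 as written.
\begin{enumerate}
\item Only the $Y_i^{(n)}$ with $i\le k$ commute with $e_k^{(n)}$, and there are none when $k=0$. So the coefficients of $e_k^{(n)}E$ cannot be recovered by inverting $Y$'s.
\item Even for the unsymmetrized $E_{\lambda\mu0^{n}}$, the rescaled eigenvalue differences can tend to $0$. Your claim ``after rescaling these stabilize'' breaks down exactly where the paper falls back on the unrescaled recursion, whose coefficients converge by Corollary \ref{cor: limit of Y coefficients}.
\item Your plan never controls $e_k^{(n)}(t_\infty)$ itself, a normalized sum over the growing group $S_{n-k}\ltimes\mathbb{Z}_2^{n-k}$. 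The paper proves its convergence separately, using the $BC_n$ Hall--Littlewood description of Proposition \ref{prop: tail symmetrization} and the straightening relations of Proposition \ref{cor: Hall-Littlewood expansion}. It then combines this with the convergence of $E_{\lambda\mu0^{n}}$ through Proposition \ref{prop: continuity}.
\end{enumerate}
Finally, in the positive case the leading coefficient is $v_\mu(t_\infty)$, which converges to a nonzero constant. The prefactor is therefore a normalization, not the cancellation of a divergence.
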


By analyzing the behavior of the stable limit DAHA and 
stable limit Koornwinder polynomials and comparing with the finite-rank 
theory we are able to portrait how classical 
DAHA theory and Koornwinder-Macdonald theory behave 
or stabilize in the infinite case. The results will also 
greatly help us develop further the orthogonality, duality and 
other properties of the stable limit Koornwinder polynomials.

\section{Preliminaries}
\subsection{Symmetric functions and plethysm}
We denote by $\Sym[A]$ 
the ring of symmetric functions in an alphabet $A$. In this paper, 
we will mainly work with the alphabets $X$, $X^{-1}$ and $X^{\pm}$, 
where $X$ is the infinite alphabet $x_1,x_2,\dots$, the alphabet 
$X^{-1}$ is the infinite 
alphabet $x_1^{-1},x_2^{-1},\dots$, and $X^{\pm}$ is the infinite alphabet 
$x_1,x_1^{-1},x_2^{-1},x_2,\dots$, together with their 
modifications. The field or ring of coefficients 
$K\supseteq \Rat$ will depend on the context. For any $k\geq 1$, 
we denote by $\overline{X}_k$ the finite alphabet $x_1,x_2,\dots, x_k$ and by 
$X_k$ the infinite alphabet $x_{k+1},x_{k+2},\dots$. 
Furthermore, for any $1\leq k\leq m$, we denote by $\overline{X}_{[k,m]}$ 
the finite alphabet $x_k,\dots, x_m$. We similarly define 
$\overline{X}_k^{-1}$, $X_k^{-1}$, 
$\overline{X}_{[k,m]}^{-1}$, $\overline{X}_k^{\pm}$, 
$X_k^{\pm}$, and $\overline{X}_{[k,m]}^{\pm}$ as the corresponding modifications 
of the alphabets $X^{-1}$ and $X^{\pm}$. 
We denote by $h_n[A]$ the $n$-{th} complete symmetric functions (or polynomials) 
in $A$, by $p_n[A]$ 
the $n$-th power sum symmetric functions (or polynomials), and by $e_n[A]$
the $n$-th elementary symmetric functions (or polynomials). 
For a partition $\lambda$, $m_\lambda[A]$ denotes the monomial symmetric function 
(or polynomial) in $A$. 

Any action of the  monoid $(\Z_{>0},\cdot)$ on the ring 
$K$ extends to a canonical action by $\Rat$-algebra morphisms on $\Sym[A]$. 
The morphism corresponding to the action of $n\in \Rat_{>0}$ is denoted by 
$\mathfrak{p}_n$ 
and is defined by
$$
\mathfrak{p}_n\cdot p_k[A]=p_{nk}[A], \quad k\geq 1.
$$
The action of $(\Z_{>0},\cdot)$ on $K$ is $\Rat$-linear, 
and $\mathfrak{p}_n$ acts on 
parameters by raising them to the $n$-th power. 
For example, if $K=\Rat(q,t)$ then, 
$\mathfrak{p}_n\cdot q=q^n, ~\mathfrak{p}_n\cdot t=t^n$. 
Such parameters will be called \emph{line elements} or \emph{scalars}. 

Let $R$ be a ring with an action of $(\Z_{>0},\cdot)$ by ring morphisms. 
Any ring morphism $\varphi: \Sym[A]\to R$ that is compatible with the action 
of $(\Z_{>0},\cdot)$ is uniquely determined by the image of $p_1[A]$. 
The image of $F[A]\in \Sym[A]$ through $\varphi$ is usually denoted by 
$F[\varphi(A)]$ and called the plethystic evaluation (or substitution) of 
$F$ at $\varphi(A)$. 
As an instance, $p_1[X]$ can be regarded as the plethysm 
through the identity morphism $\varphi: p_1\mapsto p_1=x_1+x_2+\dots$. 
This is compatible with the previous definition of the symmetric functions 
and hence we also write the alphabet $X=x_1+x_2+\dots$ in the 
summation form and similarly $X^{-1} =x_1^{-1}+x_2^{-1}+\dots$ and 
$X^{\pm}=X+X^{-1}$. We remark that an element 
$F\in\Sym[X^{\pm}]$ is symmetric in the variables $x_1,x_2,...$ and also 
symmetric under the inversion $x_i\mapsto x_i^{-1}$ for each $i$. 
Equivalently, $\Sym[X^{\pm}]$ can be regarded as the ring of symmetric functions 
in the variables 
$$x_1^{\pm}=(x_1+x_1^{-1}), x_2^{\pm}=(x_2+x_2^{-1}),...$$
We also note that $h_n[X^{\pm}]$ in general is not homogeneous. 

As an example, the symmetric monomials in the alphabet $X_{k-1}^{\pm}$ can be 
expanded as follows.
\begin{ex}\label{ex: monomial expansion}
For a partition $\lambda=(\lambda_1,...,\lambda_n)$ with $\lambda_n>0$, we have 
$$m_{\lambda}[X_{k-1}^{\pm}]=
\sum_{T\subseteq S\subseteq [n]}
x_k^{2|\lambda_T|-|\lambda_S|}m_{\hat{\lambda}_S}[X_{k}^{\pm}]
$$
where $[n]=\{1,2,...,n\}$, $\lambda_S$ is the subpartition of $\lambda$ consisting of the parts 
$\lambda_i$ with $i\in S$, $\hat{\lambda}_S$ is the partition obtained from 
$\lambda$ by removing the parts $\lambda_i$ with $i\in S$. Equivalently, 
the expression can be written as
$$m_{\lambda}[X_{k}^{\pm}]=m_{\lambda}[X_{k-1}^{\pm}]-
\sum_{\substack{{T\subseteq S\subseteq [n]}\\{S\neq \emptyset}}}
x_k^{2|\lambda_T|-|\lambda_S|}m_{\hat{\lambda}_S}[X_{k}^{\pm}]
$$
\end{ex}

The plethystic exponential $\textrm{Exp}[X]$ is defined as
$$\textrm{Exp}[X]=\sum_{n=0}^{\infty}h_n[X]=
\exp\left(\sum_{n=1}^\infty \frac{p_n[X]}{n}\right).$$
It satisfies some basic properties listed below.
\begin{itemize}
	\item $\textrm{Exp}[X+Y]=\textrm{Exp}[X]\textrm{Exp}[Y]$.
	\item $\textrm{Exp}[-X]=\sum_{n=0}^{\infty}(-1)^n e_n[X]$.
	\item $\textrm{Exp}[X]=\prod_{i}(1-x_i)^{-1}$ if $X=x_1+x_2+\dots$.
	\item $\textrm{Exp}[-X]=\prod_{i}(1-x_i)$ if $X=x_1+x_2+\dots$.
\end{itemize}
The generating function of the complete symmetric functions is given by
$$\sum_{n=0}^{\infty}h_n[X] z^n = \textrm{Exp}[zX] = \prod_{i}\frac{1}{1-x_i z}.$$
By applying the formula to the alphabet 
$$(1-t)X= x_1 - t x_1 + x_2 - t x_2 + ...$$
for some parameter $t$, we also have a more generalized formula
$$\sum_{n=0}^{\infty}h_n[(1-t)X] z^n = 
\textrm{Exp}[z(1-t)X] = 
\prod_{i}\frac{1-t x_i z}{1-x_i z}.$$
We will be using this formula frequently 
in the later sections. 

\subsection{Almost symmetric functions}
Let $K$ be an ambient field. For any $k\geq 1$, we denote by
$$\iota_k: K[x_1^{\pm1},...,x_{k-1}^{\pm1}]\to 
K[x_1^{\pm1},x_2^{\pm1},...x_k^{\pm1}]$$
be the algebraic morphism defined by $\iota_k(x_i)=x_i$ for $i=1,...,k-1$. 
Then $\P^{\pm}_k=K[x_1^{\pm1},...,x_k^{\pm1}]$ forms a direct system with 
respect to the morphisms $\iota_k$. The direct limit of the system is denoted by 
$K[x_1^{\pm1},x_2^{\pm1},...]$. It is the ring consisting of all Laurent polynomials 
in finitely many variables $x_1^{\pm1},...,x_n^{\pm1}$ for some $n$ over field $K$. 
The ring of almost symmetric functions $\Pas^{\pm}$ over field $K$ is then 
defined as 
$$\Pas^{\pm} :=K[x_1^{\pm1},x_2^{\pm1},...]\otimes\Sym[X^{\pm}]$$
respectively. By convention, in $\Pas^{\pm}$
we will write $f\otimes F$ in short as $f F$ for any 
$f\in K[x_1^{\pm1},x_2^{\pm1},...]$ and $F\in \Sym[X^{\pm}]$. By definition, 
$\Pas^{\pm}$ has a natural basis 
$$\{x^{\mu} m_\lambda[X^{\pm}]\}_{k\geq 0, \mu\in \Z^k, \lambda\in \Pi}.$$
We denote $\P(k)^{\pm}$ to be the subspace of $\Pas^{\pm}$ 
consisting of all elements which are symmetric 
in the variables $x_{k+1},x_{k+2},...$ and symmetric under the inversion 
$x_i\mapsto x_i^{-1}$ for each $i>k$.

The ring of almost symmetric functions 
is also naturally equipped with an action 
of the monoid $(\Z_{>0},\cdot)$ defined in a similar way. Each 
letter $x_i$ is a line element and we have 
the equivalence 
$$X_k^{\pm} = X - x_1 - x_1^{-1} - ... - x_k - x_k^{-1}.$$
The plethysm $F[\varphi(X_k^{\pm})]$ in $\Pas^{\pm}$ is then equivalent to the plethysm 
$$F[\varphi(X - x_1 - x_1^{-1} - ... - x_k - x_k^{-1})]$$
and well-defined. 

Unlike the polynomial ring $\P^+_k=K[x_1,x_2,...,x_n]$, 
the projection from $\P^{\pm}_k$ to $\P^{\pm}_{k-1}$ 
by sending $x_k$ to zero is not a well-defined morphism. 
However, we can still define a projective system with 
respect to the linear projection 
$$\pi_k: K[x_1^{\pm1},x_2^{\pm1},...,x_{k}^{\pm1}]\to 
K[x_1^{\pm1},...,x_{k-1}^{\pm1}]$$
\[
x^{\lambda} \mapsto \begin{cases}
x^{\lambda} & \text{if } \lambda_k=0,\\
0 & \text{if } \lambda_k\neq 0.
\end{cases}
\]
Then $\P^{\pm}_k=K[x_1^{\pm1},...,x_k^{\pm1}]$ as a linear space 
forms a projective system with respect to 
the morphisms $\pi_k$. The projective limit of the system is denoted by 
$\P_\infty^{\pm}$. We naturally have the inclusion 
$\Pas^{\pm}\subseteq \P_\infty^{\pm}$ defined by 
$f\otimes F\mapsto fF$. The induced projection will de denoted by 
$$\Pi_k: \Pas^{\pm}\to K[x_1^{\pm1},x_2^{\pm1},...,x_k^{\pm1}]$$
and can be explicitly described as 
\[
\Pi_k(f(x_1,x_1^{-1},...,x_n,x_n^{-1}) F[X^{\pm}])=
\begin{cases}
f(x_1,x_1^{-1}...,x_k,x_k^{-1},0,...,0) F[\bar{X}_k^{\pm}] & \text{if } n> k,\\
f(x_1,x_1^{-1},...,x_n,x_n^{-1}) F[\bar{X}_k^{\pm}] & \text{if } n\leq k.
\end{cases}
\]
We also denote 
$$I_k: K[x_1^{\pm1},...,x_k^{\pm1}]\to \Pas^{\pm}$$
to be the algebraic morphism defined by $I_k(x_i)=x_i$ for $i=1,...,k$. 
Then $\Pas^{\pm}$ admits the following type of convergence 
proposed in \cite{IW}. 
Let $(f_k)_{k\geq 1}$ be a sequence with 
$$f_k\in \P^{\pm}_k=K[x_1^{\pm1},...,x_k^{\pm1}].$$
We say that the sequence is convergent if there exists 
$N\geq 1$ and sequences $(h_k)_{k\geq 1}$, $(g_{i,k})_{k\geq 1}$, 
$i\leq N$, $h_k, ~g_{i, k}\in \P^{\pm}_k$, and $(a_{i, 
k})_{k\leq 1}$, $i\leq N$, $a_{i, k}\in K$ such that
\begin{enumerate}
\item For any $k\geq 1$, we have $f_k=h_k+\sum_{i=1}^N a_{i, k} g_{i, k}$;
\item For any $i\leq N$, $k\geq 2$,  $\pi_k(g_{i, k})=g_{i, k-1}$ and $\pi_k(h_{k})=h_{k-1}$. 
We denote by 
$$\displaystyle g_i=\lim_{k\to \infty} g_{i,k}\quad  \text{and}\quad  \displaystyle h=\lim_{k\to \infty} h_{k}$$ 
the sequence 
$(g_{i,k})_{k\geq 
1}$ and, respectively, $(h_{k})_{k\geq 1}$ as elements of $\P_\infty^{\pm}$.  
We require that $g_i\in \Pas^{\pm}$. 
\item For any  $i\leq N$ the sequence $(a_{i, k})_{k\geq 1}$ is 
convergent in $K$. We denote  $\displaystyle a_i=\lim_{k\to \infty}(a_{i, k})$.
\end{enumerate}
If the sequence $(f_k)_{k\geq 1}$ is convergent we define its 
limit as $$\lim_k(f_k):=h+ \sum_{i=1}^N a_i g_i\in\P_\infty^{\pm}.$$
In this paper, the ambient field $K$ will always contain $\mathbb{Q}(t)$ 
as a subfield and we will always use either 
the induced $t$-adic convergence or the induced $t^{-1}$-adic convergence on $K$, 
depending on whether we are working with the positive or negative part. 
\begin{prop}\label{prop: well-defined limit}
The limit defined above is well-defined.
\end{prop}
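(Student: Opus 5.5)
The idea is to show that the proposed limit is determined coefficient by coefficient, where each coefficient is an ordinary limit in $K$ of the corresponding coefficient of $f_k$. Since limits in $K$ are unique for the $t$-adic or $t^{-1}$-adic topology, the choice of decomposition in conditions (1)--(3) cannot affect the answer.

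First I will describe $\P_\infty^{\pm}$ through coefficients. An element of $\P_\infty^{\pm}$ is a family $(u_k)_{k\geq 1}$ with $u_k\in\P^{\pm}_k$ and $\pi_k(u_k)=u_{k-1}$. For $\alpha\in\Z^m$ and $k\geq m$, write $c_\alpha(u_k)$ for the coefficient in $u_k$ of $x^{\alpha}=x_1^{\alpha_1}\cdots x_m^{\alpha_m}$, viewed as a monomial in $k$ variables by padding $\alpha$ with zeros.

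By the definition of $\pi_k$, the coefficient of a monomial whose $k$-th exponent is zero is unchanged by $\pi_k$. Hence $c_\alpha(u_k)=c_\alpha(u_m)$ for all $k\geq m$; call this common value $c_\alpha(u)$. Conversely, $u_m$ is recovered from the numbers $c_\alpha(u)$ with $\alpha\in\Z^m$. Therefore two elements of $\P_\infty^{\pm}$ coincide if and only if all their coefficients $c_\alpha$ agree.

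I also note that $\P_\infty^{\pm}$ is a $K$-vector space under componentwise operations, because each $\pi_k$ is linear. So $h+\sum_i a_i g_i$ is a genuine element of $\P_\infty^{\pm}$, and $c_\alpha$ is $K$-linear on it.

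Now take a convergent sequence $(f_k)$ with a decomposition $f_k=h_k+\sum_{i=1}^N a_{i,k}g_{i,k}$ as in the definition. Fix $\alpha\in\Z^m$. For every $k\geq m$, compatibility of $(h_k)$ and $(g_{i,k})$ gives
$$c_\alpha(f_k)=c_\alpha(h)+\sum_{i=1}^N a_{i,k}\,c_\alpha(g_i),$$
where $c_\alpha(h)$ and $c_\alpha(g_i)$ do not depend on $k$. Since each $a_{i,k}\to a_i$ in $K$, and addition and multiplication are continuous in the $t$-adic (resp. $t^{-1}$-adic) topology, the sequence $c_\alpha(f_k)$ converges in $K$ to
$$c_\alpha(h)+\sum_i a_i c_\alpha(g_i)=c_\alpha\Big(h+\sum_i a_i g_i\Big).$$

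Suppose now there is a second decomposition $f_k=h'_k+\sum_{j=1}^{N'}a'_{j,k}g'_{j,k}$. The same computation shows that $c_\alpha(f_k)$ also converges to $c_\alpha\big(h'+\sum_j a'_j g'_j\big)$. The $t$-adic topology on $K$ is Hausdorff, being induced by a valuation, and so is the $t^{-1}$-adic one; hence a sequence in $K$ has at most one limit. The two coefficients therefore agree for every $\alpha$, and by the first step the two candidate limits are equal in $\P_\infty^{\pm}$.

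The argument is essentially formal. The one step needing care is the coefficient description of $\P_\infty^{\pm}$: the stabilization $c_\alpha(u_k)=c_\alpha(u_m)$ for $k\geq m$ must be checked directly from the definition of $\pi_k$. This matters because $\pi_k$ is only a linear map, not a ring morphism, so one cannot argue by substituting $x_k=0$.

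A further point is that $h$ and $h'$ need not lie in $\Pas^{\pm}$. The uniqueness is proved in $\P_\infty^{\pm}$, and this is exactly what the statement requires.
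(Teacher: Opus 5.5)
Your proof is correct, but it follows a different route from the paper's.

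\textbf{The paper's route.} It first reduces to showing that any decomposition of the zero sequence has zero limit, and then argues by finite-dimensional linear algebra. An auxiliary lemma says that if $f_1,\dots,f_m\in\Pas^{\pm}$ are linearly independent, then so are $\Pi_k(f_1),\dots,\Pi_k(f_m)$ for large $k$. The paper then chooses a basis $h_1,\dots,h_s$ of the span of the $g_i$ and writes $g_i=\sum_j b_{ij}h_j$. From $\sum_i c_{i,k}\Pi_k(g_i)=0$ it deduces $\sum_i b_{ij}c_{i,k}=0$ for all large $k$, and finally passes to the limit in $K$.

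\textbf{Your route.} You work with stable coefficients instead. Each $\pi_k$ preserves the coefficient of every monomial not involving $x_k$, so an element of $\P_\infty^{\pm}$ is determined by its stable coefficients $c_\alpha$. Moreover, $c_\alpha(f_k)=c_\alpha(h)+\sum_i a_{i,k}c_\alpha(g_i)$ has $k$-independent coefficients. Uniqueness of limits in the valued field $K$ then finishes the argument.

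\textbf{What your approach buys.} It is more elementary: it needs no asymptotic-independence lemma and never uses the hypothesis $g_i\in\Pas^{\pm}$. It also identifies the limit intrinsically, as the element of $\P_\infty^{\pm}$ whose $\alpha$-coefficient is $\lim_k c_\alpha(f_k)$.

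\textbf{A point the paper's written proof skips.} Your argument handles the term $h_k$ directly. The paper's reduction only treats relations $\sum_i c_{i,k}g_{i,k}=0$ whose limits all lie in $\Pas^{\pm}$. For two decompositions, the difference $h-h'$ need not a priori lie in $\Pas^{\pm}$. The paper's argument would therefore need an extra step there, for instance using compatibility under $\pi_k$ to show that the coefficients $\sum_i b_{ij}c_{i,k}$ eventually stabilize.

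\textbf{What the paper's approach buys.} It works with finite-dimensional subspaces of $\Pas^{\pm}$ and the projections $\Pi_k$, which matches how limits of operators are handled later. For the bare well-definedness statement, however, yours is the cleaner and more complete argument.
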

We need the following lemma for the proof. 
\begin{lem}\label{lem: asymptotic independence}
Let $f_1,...,f_m\in \Pas^{\pm}$ be $K$-linear independent. 
Then there exists a large enough $N>0$ such that for all 
$k\geq N$, the projections $\Pi_k(f_1),...,\Pi_k(f_m)$ are 
$K$-linear independent in $\P^{\pm}_k$.
\end{lem}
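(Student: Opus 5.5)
The plan is to reduce to finitely many basis elements and then show that the projections $\Pi_k$ are eventually injective on any fixed finite-dimensional subspace of $\Pas^{\pm}$.

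\textbf{Reduction to a finite-dimensional subspace.} Write each $f_j$ in the natural basis $\{x^{\alpha} m_\lambda[X^{\pm}]\}$. Only finitely many basis vectors occur, say $b_1,\dots,b_r$, and the $f_j$ span a subspace $V$ of $W=\mathrm{span}_K(b_1,\dots,b_r)$. It suffices to show that $\Pi_k$ is injective on $W$ for all $k\ge N$. Then linearly independent vectors of $V\subseteq W$ map to linearly independent vectors.

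\textbf{Choice of $N$.} Let each $b_s=x^{\alpha^{(s)}}m_{\lambda^{(s)}}[X^{\pm}]$ with $\alpha^{(s)}\in\Z^{n_s}$. Set $n=\max_s n_s$ and $\ell=\max_s l(\lambda^{(s)})$, and take $N= n+\ell$. For $k\ge N$ every $x^{\alpha^{(s)}}$ involves only variables with index at most $k$. Hence
$$\Pi_k(b_s)=x^{\alpha^{(s)}}\,m_{\lambda^{(s)}}[\overline{X}_k^{\pm}].$$
Here I read the formula for $\Pi_k$ with $n\le k$. (If one instead reads $\Pi_k$ strictly through the projections $\pi_k$, as Laurent monomials with nonzero exponents in variables beyond $k$ being killed, the result is the same symmetric polynomial in the finite alphabet.)

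\textbf{Injectivity on $W$.} Suppose $\sum_s c_s\, x^{\alpha^{(s)}} m_{\lambda^{(s)}}[\overline{X}_k^{\pm}]=0$. I would split $\overline{X}_k^{\pm}=\overline{X}_n^{\pm}+\overline{X}_{[n+1,k]}^{\pm}$ and expand by the coproduct:
$$m_\lambda[\overline{X}_k^{\pm}]=\sum_{\lambda=\rho\cup\sigma} m_\rho[\overline{X}_n^{\pm}]\,m_\sigma[\overline{X}_{[n+1,k]}^{\pm}].$$
This is the iterated form of Example \ref{ex: monomial expansion}. The polynomials $m_\sigma[\overline{X}_{[n+1,k]}^{\pm}]$, for partitions $\sigma$ with $l(\sigma)\le k-n$, are linearly independent over $K[x_1^{\pm1},\dots,x_n^{\pm1}]$. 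Indeed, they are monomial symmetric polynomials in the algebraically independent elements $x_i+x_i^{-1}$ for $n<i\le k$, and their leading monomials $x^{\sigma}$ in the variables $x_{n+1},\dots,x_k$ are distinct.

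Now take the coefficient of $m_\sigma[\overline{X}_{[n+1,k]}^{\pm}]$ for $\sigma$ maximal among the $\lambda^{(s)}$ occurring with $c_s\ne0$, ordered by size and then dominance. Only terms with $\lambda^{(s)}=\sigma$ and $\rho=\emptyset$ contribute. This step uses $k-n\ge \ell$, so that $\sigma$ survives in the finite alphabet. Hence $\sum_{s:\lambda^{(s)}=\sigma} c_s x^{\alpha^{(s)}}=0$. The $\alpha^{(s)}$ for fixed $\sigma$ are distinct, so these $c_s$ vanish. Inducting downward on $\sigma$ gives that all $c_s$ are zero.

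\textbf{Main obstacle.} The delicate step is the extraction of the top coefficient. The terms $\rho\ne\emptyset$ coming from larger $\lambda^{(s)}$ produce $m_\sigma$ with smaller $\sigma$, which is why the induction must run from the maximal $\sigma$ downward. I would also double-check that the $x_i+x_i^{-1}$ independence argument works over $K[x_1^{\pm1},\dots,x_n^{\pm1}]$. To do so I would compare coefficients of the monomials $x_{n+1}^{\sigma_1}\cdots x_{k}^{\sigma_{k-n}}$ with the first $n$ variables treated as scalars.
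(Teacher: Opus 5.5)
Your proof is correct and follows the same strategy as the paper: restrict to the finite span of the basis vectors occurring in the $f_j$, and for large $k$ separate the head variables carrying $x^{\alpha}$ from far-out tail variables that detect the partition. Your injectivity of $\Pi_k$ on that span, proved by downward induction on the tail partition, makes explicit the triangularity that the paper's comparison of coefficients at $x^{(\lambda,0,\dots,0,\mu)}$ passes over: that monomial can also occur in truncations of basis vectors with larger $\mu$, e.g.\ $x_1^2$ occurs in $\Pi_k(x_1m_1[X^{\pm}])$.

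One small correction. In the paper's notation $m_\sigma[\overline{X}^{\pm}_{[n+1,k]}]$ is a plethystic evaluation at the letters $x_i^{\pm1}$, not the monomial symmetric polynomial in the $x_i+x_i^{-1}$ (e.g.\ $m_2[x+x^{-1}]=x^2+x^{-2}$). So drop that justification and rely on your leading-monomial argument instead. Order Laurent monomials $x^{\gamma}$ first by $\sum_i|\gamma_i|$ and then lexicographically. For $l(\sigma)\le k-n$, the top component of $m_\sigma[\overline{X}^{\pm}_{[n+1,k]}]$ is then the signed-permutation orbit sum of $x^{\sigma}$ with all coefficients $1$, which gives the independence you need.
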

\begin{proof}
Let $\{x^{\lambda}m_{\mu}[X^{\pm}]\}_{(\lambda,\mu)}$ be 
the finite set of all basis elements appearing in the expression of 
$f_1,...,f_m$ as a linear combination of the basis elements. Then 
each $f_i$ is expressed as a 
finite linear combination of the basis elements
$$f_{i}=\sum_{\lambda,\mu} a_{i,(\lambda,\mu)}
x^{\lambda}m_{\mu}[X^{\pm}]$$
Since the linear system 
$$\sum_{i=1}^{m}c_i a_{i,(\lambda,\mu)}=0$$
for all pairs $(\lambda,\mu)$ has only the zero solution, 
we can choose $m$ basis elements 
$x^{\lambda^{(j)}}m_{\mu^{(j)}}[X^{\pm}]$ such that the coefficient matrix 
$$(a_{i,j})_{1\leq i,j\leq m}=(a_{i,(\lambda^{(j)},\mu^{(j)})})_{1\leq i,j\leq m}$$
is invertible. Note that each truncation 
$x^{\lambda}m_{\mu}[X^{\pm}_k]$ for sufficiently large $k$ 
contains a distinct monomial $x^{(\lambda,0,...,0,\mu)}$. Therefore 
when $k$ is large enough, the finite rank elements 
$\Pi_k(x^{\lambda}m_{\mu}[X^{\pm}_k])$ 
are still linearly independent. Consider now such $k$ and the linear system
$$\sum_{i=1}^{m}c_i \Pi_k(f_i)=0.$$
By comparing the coefficients of the monomials 
$x^{(\lambda^{(j)},0,...,0,\mu^{(j)})}$, we have 
$\sum_{i=1}^{m}c_i a_{i,j}=0$. Since the matrix 
$(a_{i,j})_{1\leq i,j\leq m}$ is invertible, we have $c_i=0$ for all $i$.
\end{proof}
\begin{proof}[Proof of Proposition \ref{prop: well-defined limit}]
It suffices to show that the limit of the constant sequence 0 is zero, 
regardless of the auxiliary sequences chosen. For $i=1,...,m$, 
consider sequences 
$c_{i,k}\in K$ and $g_{i,k}\in \P^{\pm}_k$ such that 
$$\lim_{k\to \infty} c_{i,k}=c_i, \quad 
\lim_{k\to \infty} g_{i,k}=g_i\in \Pas^{\pm}.$$
and $$\sum_{i=1}^m c_{i,k} g_{i,k}=0 \quad \text{for any } k\geq 1.$$
Now consider the finite dimensional subspace of $\Pas^{\pm}$ 
spanned by $g_1,...,g_m$ and choose a $K$-basis $h_1,...,h_s$ for the subspace. 
Then we express each $g_i$ as a linear combination of the basis elements 
$$g_i=\sum_{j=1}^{s}b_{ij}h_j$$
By Lemma \ref{lem: asymptotic independence}, 
for large enough $k$, the projections $\Pi_k(h_1),...,\Pi_k(h_s)$ 
are linearly independent. On the other hand, we have 
$$0=\sum_{i=1}^m c_{i,k}g_{i,k}=\sum_{j=1}^s(\sum_{i=1}^m b_{ij}c_{i,k})\Pi_k(h_j)$$
Therefore, we have 
$$\sum_{i=1}^m b_{ij}c_{i,k}=0$$
for all $j$ and large enough $k$. 
Since $\lim_{k\to \infty} c_{i,k}=c_i$, we have $\sum_{i=1}^m b_{ij}c_i=0$ 
for all $j$. This yields
$$\sum_{i=1}^m c_{i}g_{i}= \sum_{j=1}^s(\sum_{i=1}^m b_{ij}c_{i})h_j=0.$$
This proves the well-definedness of the limit.
\end{proof}

We need the following properties of the convergence. The 
proof in \cite{IW} also applies to the current setting.
\begin{prop}[See \cite{IW}*{Proposition 6.21, Corollary 6.22}]\label{prop: continuity}
\begin{enumerate}
	\item Let $A_k:\P_k^{\pm}\to \P_k^{\pm}$ be a sequence of operators 
	with the property that 
	$(A_k \Pi_k f)_{k\geq 1}$ converges in $\Pas^{\pm}$ for all 
	$f\in \Pas^{\pm}$. 
	Let $A: \Pas^{\pm}\to \Pas^{\pm}$ be the limit operator of the 
	sequence $(A_k)_{k\geq 1}$ and 
	$(f_k)_{k\geq 1}$, $f_k\in \P_k^{\pm}$ be a convergent sequence such that 
	$f=\lim_{k} f_k\in \Pas^{\pm}$. Then we have 
	$$A f=\lim_{k}A_{k} f_k.$$
	\item Let $B$ be another limit operator on $\Pas^{\pm}$ of $B_k$ 
	with the same property as $A_k$. Then the operator $AB$ is the limit of the sequence 
	$(A_k B_k)_{k\geq 1}$.
\end{enumerate}
\end{prop}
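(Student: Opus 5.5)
The plan is to reduce both parts to the well-definedness of limits (Proposition \ref{prop: well-defined limit}) together with the fact that each projection $\Pi_k$ is linear and compatible with the $\pi_k$. Throughout, we use that a convergent sequence can be written as $f_k=h_k+\sum_i a_{i,k}g_{i,k}$ with $\pi_k$-compatible auxiliary sequences.

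\textbf{Part (1).} Fix $f\in\Pas^{\pm}$. Applying the hypothesis with $f_k$ and with $\Pi_k f$, the difference $f_k-\Pi_k f$ converges and has limit $0$; here linearity of limits is a consequence of Proposition \ref{prop: well-defined limit}, since summing the decompositions of two convergent sequences gives a decomposition of their sum. So it suffices to show: if $d_k\to 0$ then $A_k d_k\to 0$.

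Write $d_k=h_k+\sum_i a_{i,k}g_{i,k}$ with limit $0$. Replace each $g_{i,k}$ by $\Pi_k g_i$, where $g_i=\lim g_{i,k}$. Because the $g_{i,k}$ are $\pi_k$-compatible, $g_{i,k}=\Pi_k g_i$ exactly. The projective family $h_k$ has $h=-\sum a_i g_i$ in the limit, so it lies in $\Pas^{\pm}$ and $h_k=\Pi_k h$. Hence
$$d_k=\Pi_k\Big(h+\sum_i a_{i,k} g_i\Big).$$
By linearity of $A_k$,
$$A_k d_k=A_k\Pi_k h+\sum_i a_{i,k}A_k\Pi_k g_i.$$
Each of the finitely many sequences $A_k\Pi_k h$ and $A_k\Pi_k g_i$ converges, to $Ah$ and $Ag_i$ respectively. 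Multiplying a convergent sequence by a convergent scalar sequence stays convergent, since we may absorb the scalars $a_{i,k}$ into the coefficient sequences. The limit is therefore $Ah+\sum a_iAg_i=A(h+\sum a_ig_i)=A(0)=0$.

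\textbf{Part (2).} For $f\in\Pas^{\pm}$, the sequence $f_k:=B_k\Pi_k f$ converges to $Bf$ by hypothesis. Part (1) then gives $A_kB_k\Pi_k f=A_kf_k\to A(Bf)$. This shows that $(A_kB_k)$ has the required property and that its limit operator is $AB$.

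\textbf{Main obstacle.} The delicate step is justifying $h_k=\Pi_k h$ with $h\in\Pas^{\pm}$. A priori $h$ is only an element of $\P_\infty^{\pm}$, and the identity $h=-\sum a_ig_i$ needs care: the limit of $d_k$ is $0$ in $\P_\infty^{\pm}$, and the $g_i$ lie in $\Pas^{\pm}\subseteq\P_\infty^{\pm}$. I would also verify that $\Pi_k$ agrees with the projective-limit projection on $\Pas^{\pm}$, so that $\pi_k$-compatible sequences are exactly $\Pi_k$ of their limits.
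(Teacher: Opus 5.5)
Your argument is correct and is essentially the Ion--Wu argument that the paper cites. That argument decomposes $f_k=h_k+\sum_i a_{i,k}g_{i,k}$ and notes that $h=f-\sum_i a_i g_i\in\Pas^{\pm}$, so every auxiliary sequence is $\Pi_k$ of a fixed almost symmetric element; it then concludes by linearity of $A_k$ and deduces (2) from (1) exactly as you do (your preliminary reduction to sequences with limit $0$ is harmless but not needed).

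The identification $h_k=\Pi_k h$, $g_{i,k}=\Pi_k g_i$ that you flag is not proved there but is built into the setup, since $\Pi_k$ is declared to be the projection induced by $\Pas^{\pm}\subseteq\P^{\pm}_\infty$. Be aware that with the paper's explicit formula for $\Pi_k$ the family $(\Pi_k g)_k$ need not actually be $\pi_k$-compatible in the Laurent setting (e.g.\ $\pi_k(p_1[\bar{X}_k^{\pm}]^2)=p_1[\bar{X}_{k-1}^{\pm}]^2+2$), so this is an inherited assumption rather than something you can verify.
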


\section{Double affine Hecke algebras}\label{sec: DAHAsec}
In this section, we recall 
the affine root system of type $(C_n^{\vee},C_n)$ 
and the induced Bruhat order on the weight lattice. 
It will be used to label a set of standard basis elements 
for the space of almost symmetric Laurent polynomials. 
The basis will be proved to be triangular under the action of the 
limit Cherednik operators later. Then we will recall 
the definition of the double affine Hecke algebra 
of type $(C_n^{\vee},C_n)$ and its action on the space of Laurent polynomials 
as well as prove some basic properties of the action.
\subsection{Affine root system of type $(C_n^{\vee},C_n)$}
The (non-reduced) root system of type $BC_n$ is defined 
to contain the following set of vectors 
$$\Phi_n= \{\pm \epsilon_i \pm \epsilon_j,\ 
\pm \epsilon_i,\ \pm 2\epsilon_i\}_{1\leq i< j\leq n},$$
where $\{\epsilon_i\}_{1\leq i\leq n}$ is the standard basis of 
$\mathbb{R}^n$ with the standard inner product 
$\langle \epsilon_i,\epsilon_j\rangle =\delta_{ij}$. 
$\Phi_n^+$ denotes the set of positive roots in $\Phi_n$ defined by 
$$\Phi_n^+=\{\epsilon_i \pm \epsilon_j,\ \epsilon_i,\ 2\epsilon_i\}_{1\leq i< j\leq n}.$$

The affine root system of type $(C_n^{\vee},C_n)$ is defined to be 
$$\Phi_n^{\rm{aff}} = \{\pm \epsilon_i + 
\frac{k}{2}\delta\}_{1\leq i\leq n,k \in \mathbb{Z}}
\bigcup \{\pm \epsilon_i \pm \epsilon_j + k\delta,\ 
\pm 2\epsilon_i+ k\delta\}_{1\leq i< j\leq n,k \in \mathbb{Z}},$$
where $\delta$ is the null root and 
\begin{align*}
\Phi_n^{\rm{aff},+}=&
\{\epsilon_i + \frac{k}{2}\delta\}_{1\leq i\leq n,k\geq 0}\bigcup 
\{-\epsilon_i + \frac{k}{2}\delta\}_{1\leq i\leq n,k>0}\\
\bigcup &\{\epsilon_i \pm \epsilon_j + k\delta,\ 
2\epsilon_i+ k\delta\}_{1\leq i< j\leq n,k \geq 0} 
\bigcup \{-\epsilon_i \pm \epsilon_j + k\delta,\ 
-2\epsilon_i+ k\delta\}_{1\leq i< j\leq n,k>0}
\end{align*}
the set of affine simple roots is given by 
$$\alpha_i=\epsilon_i - \epsilon_{i+1}\ (1\leq i\leq n-1),
\ \alpha_n=\epsilon_n,\ \alpha_0=\frac{1}{2}\delta-\epsilon_1.$$

The Weyl group of type $BC_n$ is generated by 
$$W_n\cong S_n\ltimes \mathbb{Z}_2^n = \langle s_{\alpha_1},...,s_{\alpha_n}\rangle$$
and the affine Weyl group $W_n^{\rm{aff}}=\mathbb{Z}^n\rtimes W_n$
of type $(C_n^{\vee},C_n)$ is generated by 
$$W_n^{\rm{aff}} = \langle s_{\alpha_0},s_{\alpha_1},...,s_{\alpha_n}\rangle$$
In short, we write $s_i$ for $s_{\alpha_i}$ for $0\leq i\leq n$. 
$W_n^{\rm{aff}}$ acts on $\mathbb{R}^n\oplus \mathbb{R}\delta$  
linearly extended from the action on the basis vectors 
$$s_i \epsilon_j = \epsilon_{s_i(j)}\ (1\leq i \leq n-1, 1\leq j \leq n),$$
$$s_n \epsilon_j=\epsilon_j\ (1\leq j \leq n-1),\quad s_n \epsilon_n=-\epsilon_n,$$ 
$$ s_0 \epsilon_1=-\epsilon_1+\delta, \quad 
s_0 \epsilon_j=\epsilon_j\ (2\leq j \leq n)$$
as well as the trivial action on $\mathbb{R}\delta$.

Let $\Lambda_n\cong \mathbb{Z}^n$ be the weight lattice. It can be 
presented as the ring 
$$P_n=\mathbb{K}[x_1^{\pm1},x_2^{\pm1},...,x_n^{\pm1}]$$
of Laurent polynomials in $n$ variables over 
$\mathbb{K}=\mathbb{Q}(q^{1/2},t^{1/2},t_0^{1/2},
t_n^{1/2},u_0^{1/2},u_n^{1/2})$. Hence $W_n^{\rm{aff}}$ naturally 
acts on $P_n$. The action is given by $w x^{\lambda}=x^{w\lambda}$ where 
$x^{\delta}=q$ and can be explicitly expressed as follows: 
$$s_i f(x_1,...,x_i,x_{i+1},...,x_n)=
f(x_1,...,x_{i+1},x_i,...,x_n), \quad (1\leq i\leq n-1)$$
$$s_n f(x_1,...,x_n)=f(x_1,...,x_n^{-1}),$$
$$s_0 f(x_1,...,x_n)=f(q^{-1} x_1^{-1},x_2,...,x_n).$$

\subsection{The Bruhat order}
The Bruhat order on $W_n^{\rm{aff}}$ is defined as the transitive closure 
of the relation 
$$w < s_\alpha w \quad \text{if} \quad w^{-1}(\alpha) \in \Phi_n^{\rm{aff},+}$$
For any $\lambda\in \Lambda_n$, we denote by $\lambda^+$ the 
unique dominant weight in the $W_n$-orbit of $\lambda$. More 
specifically, if 
$$\lambda=(\lambda_1,...,\lambda_n)$$
then 
$\lambda^+$ is the rearrangement of $|\lambda_1|,...,|\lambda_n|$ 
such that 
$$\lambda_1^+\geq \lambda_2^+\geq ...\geq \lambda_n^+.$$
Similarly, we denote by $\lambda^-$ the unique antidominant weight in the 
$W_n$-orbit of $\lambda$. It is the rearrangement of 
$-|\lambda_1|,...,-|\lambda_n|$ such that 
$$\lambda_1^-\leq \lambda_2^-\leq ...\leq \lambda_n^-.$$
Then the Bruhat order on $W_n^{\rm{aff}}$ induces a partial order on
$\Lambda_n$ defined as follows. For any $\lambda, \mu\in \Lambda_n$, 
we have
\begin{enumerate}
	\item If $\lambda^+ < \mu^+$ in the dominance order on 
	partitions, then $\lambda < \mu$ ;
	\item If $\lambda^+ = \mu^+$, then 
	$\lambda < \mu$ if $w_\lambda < w_\mu$ where 
	$w_\lambda$ and $w_\mu$ are the minimal length representatives of the 
	cosets $W_n \lambda$ and $W_n \mu$ respectively.
\end{enumerate}
The induced partial order on $\Lambda_n$ is also called the Bruhat order.

\begin{ex}
We list all vectors smaller than $(-2,0)$ below in the rank 2 space $\mathbb{Z}^2$ 
with respect to the Bruhat ordering induced from the Bruhat order in $W_2^{\rm{aff}}$ 
and arrange them increasingly.
\begin{align*}
&(0,0)<(1,0)<(0,1)<(0,-1)<(-1,0)<(1,1)<(1,-1)<(-1,1)<(-1,-1)\\
&<(2,0)<(0,2)<(0,-2)<(-2,0)
\end{align*}
\end{ex}

A strict tuple in this paper is defined to be 
an element $\lambda\in \mathbb{Z}^n$ for some $n$ 
such that $\lambda_n\neq 0$. In this case 
$\lambda$ is called a strict $n$-tuple. 
Following the notation in \cite{IW}, we use the symbol 
$\lambda\vert\mu$ for the ordered pair $(\lambda,\mu)$ 
such that $\lambda$ is a strict $k$-tuple for some $k$ and 
$\mu\in \Pi$ is a partition. Then 
the partial order on $\Lambda_n$ induces a partial order on the 
set $\Lambda^{\rm{as}}$ of all such elements $\lambda\vert\mu$ 
defined as follows
$$\lambda\vert\mu \leq \eta\vert\nu \quad \text{if} \quad 
l(\lambda)\leq l(\eta) \quad \text{and} \quad 
\lambda 0^{l(\eta)-l(\lambda)}\mu\leq \eta\nu$$
where $l$ is the length function in $W_n^{\rm{aff}}$ and 
$0^{l(\eta)-l(\lambda)}$ is the sequence of zeros of length 
$l(\eta)-l(\lambda)$. Then we have the following property of 
the Bruhat order on $\Lambda^{\rm{as}}$. 
\begin{prop}\label{prop: Bruhat order}
For any tuples $\lambda,\mu,\eta,\nu$ we have 
\begin{enumerate}
	\item If $l(\lambda)=l(\eta)$ and $\lambda\mu \leq \eta\nu$, then 
	$\lambda\mu^+ \leq \eta\nu^+$;
	\item If $\lambda,\eta$ are strict tuples, $l(\lambda)\leq l(\eta)$ 
	and $\lambda0^{l(\eta)-l(\lambda)}\mu \leq \eta\nu$, 
	then $\lambda\vert\mu^+ \leq \eta\vert\nu^+$.
\end{enumerate}
\end{prop}
We refer to \cite{IW2} for the proof for the proposition which 
holds for general root systems. 

Now for a symbol $\lambda\vert\mu$, we denote 
$$m_{\lambda\vert\mu}=x^{\lambda}m_{\mu}[X_{l(\lambda)}^{\pm}].$$
Then we have
\begin{prop}\label{prop: standard basis 1}
	$\{m_{\lambda\vert\mu}\}$ forms a basis of $\Pas^{\pm}$. As 
	a consequence, $\{m_{\lambda\vert\mu}\}_{l(\lambda)\leq k}$ 
	forms a basis of $\P(k)^{\pm}$.
\end{prop}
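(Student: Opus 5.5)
The plan is to compare $\{m_{\lambda\vert\mu}\}$ with the natural basis $\{x^{\alpha} m_\nu[X^{\pm}]\}$ of $\Pas^{\pm}$ by a unitriangular change of basis. Because of the $\mathbb{Z}_2$ symmetry, the finite alphabet is best encoded through the variables $x_i^{\pm}=x_i+x_i^{-1}$.

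First observe that $\Pas^{\pm}=\bigcup_k \P(k)^{\pm}$. Any $fF$ with $f\in K[x_1^{\pm1},\dots,x_k^{\pm1}]$ lies in $\P(k)^{\pm}$, since $F[X^{\pm}]=F[\overline{X}_k^{\pm}+X_k^{\pm}]$ is symmetric in $x_{k+1},x_{k+2},\dots$ and invariant under each inversion $x_i\mapsto x_i^{-1}$, $i>k$. Moreover $\P(k)^{\pm}\cong K[x_1^{\pm1},\dots,x_k^{\pm1}]\otimes \Sym[X_k^{\pm}]$. One inclusion follows from the coproduct identity $F[\overline{X}_k^{\pm}+X_k^{\pm}]=\sum F_{(1)}[\overline{X}_k^{\pm}]F_{(2)}[X_k^{\pm}]$, which writes every $fF[X^{\pm}]$ in the right-hand side. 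The converse holds because $m_\mu[X_k^{\pm}]=m_\mu[X^{\pm}-\sum_{i\le k}(x_i+x_i^{-1})]$ expands, again by plethystic coproduct, as a finite sum of Laurent polynomials in $x_1,\dots,x_k$ times elements of $\Sym[X^{\pm}]$. This last point can also be seen by iterating Example~\ref{ex: monomial expansion}.

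Second, I show that $\{x^{\alpha}m_\mu[X_k^{\pm}]\}_{\alpha\in\mathbb{Z}^k,\mu\in\Pi}$ is a basis of $\P(k)^{\pm}$. Spanning is the isomorphism from the first step. For independence, I project with $\Pi_N$ for large $N$, as in the proof of Lemma~\ref{lem: asymptotic independence}. The element $x^{\alpha}m_\mu[X_k^{\pm}]$ contains the monomial $x^{(\alpha,\mu,0,\dots)}$, and these leading monomials are distinct for distinct pairs $(\alpha,\mu)$. Choosing a suitable monomial order on the tail variables, for instance lex with positive exponents dominant, a finite linear relation therefore forces all coefficients to vanish.

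Third, I reindex by strict tuples. An $\alpha\in\mathbb{Z}^k$ with trailing zeros truncates to a strict tuple $\lambda$ of length $j\le k$, giving $x^\alpha m_\mu[X_k^{\pm}]=x^\lambda m_\mu[X_k^{\pm}]$. Applying Example~\ref{ex: monomial expansion} repeatedly to $m_\mu[X_j^{\pm}]$ yields
\[
m_{\lambda\vert\mu}=x^\lambda m_\mu[X_j^{\pm}]=x^\lambda m_\mu[X_k^{\pm}]+\sum x^{\lambda}x_{j+1}^{a_{j+1}}\cdots x_k^{a_k}\, m_{\hat\mu}[X_k^{\pm}],
\]
where each $\hat\mu$ in the sum is $\mu$ with at least one part removed, so $|\hat\mu|<|\mu|$.

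Finally, I order the pairs $(\alpha,\mu)$ by $|\mu|$. The transition matrix from $\{m_{\lambda\vert\mu}\}_{l(\lambda)\le k}$ to the basis of the second step is then unitriangular on each finite piece with bounded $|\mu|$ and bounded exponents, so it is invertible. Hence $\{m_{\lambda\vert\mu}\}_{l(\lambda)\le k}$ is a basis of $\P(k)^{\pm}$. The inclusions $\P(k)^{\pm}\subseteq\P(k+1)^{\pm}$ are compatible with these bases, which gives the statement for $\Pas^{\pm}$ by taking the union over $k$.

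The main obstacle is bookkeeping: the correspondence $(\alpha,\mu)\mapsto(\lambda,\mu)$ is a bijection only after allowing $j=l(\lambda)<k$, and I must check that the extra terms in the expansion never reproduce a leading pair. They do not, because they strictly lower $|\mu|$.
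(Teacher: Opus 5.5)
Your argument for the first assertion is correct but follows a different route from the paper. The paper proves spanning by a double induction on $|\mu|$ and $n-l(\lambda)$ via Example~\ref{ex: monomial expansion}. It proves independence by isolating the monomial $x^{\eta}\prod_i x_{i+l(\eta)+N}^{\tau_i}$ for a symbol $\eta\vert\tau$ of maximal degree $|\eta^+|+|\tau|$ and, among those, minimal length. You instead filter by $V_k:=K[x_1^{\pm1},\dots,x_k^{\pm1}]\cdot\Sym[X^{\pm}]$ and get the basis $\{x^{\alpha}m_\mu[X_k^{\pm}]\}_{\alpha\in\mathbb{Z}^k}$ of $V_k$ by a lex leading-term argument. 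That argument is genuinely needed, since $m_\mu[X_k^{\pm}]$ contains lower-degree monomials such as $x_ix_i^{-1}=1$, so supports overlap. You then pass to $\{m_{\lambda\vert\mu}\}_{l(\lambda)\le k}$ by a change of basis unitriangular in $|\mu|$, which gives spanning and independence at once. Since $\Pas^{\pm}=\bigcup_k V_k$ is immediate, this proves that $\{m_{\lambda\vert\mu}\}$ is a basis of $\Pas^{\pm}$.

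The gap is in the second assertion, at the claimed identification $\P(k)^{\pm}=V_k$. By definition $\P(k)^{\pm}$ consists of \emph{all} elements of $\Pas^{\pm}$ invariant under permutations and inversions of $x_{k+1},x_{k+2},\dots$. Such an element is a priori $\sum_j f_jF_j$ with the $f_j$ involving $x_{k+1},\dots,x_N$, and only the sum is invariant. Your coproduct identity rewrites $fF$ only when $f\in K[x_1^{\pm1},\dots,x_k^{\pm1}]$. So it proves $V_k\subseteq\P(k)^{\pm}$, not the reverse inclusion, and the reverse inclusion is exactly the content of the claim. (The paper also passes over this with ``as a consequence'', but your write-up asserts the inclusion with a justification that does not cover it.)

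You can close it using the basis of $\Pas^{\pm}$ you did establish. Below, the degree of a Laurent monomial is the sum of the absolute values of its exponents; every monomial of $m_{\nu\vert\eta}$ has degree at most $|\nu^+|+|\eta|$.
\begin{enumerate}
\item Given $g\in\P(k)^{\pm}$, subtract its component along $\{m_{\lambda\vert\mu}\}_{l(\lambda)\le k}$, which lies in $V_k\subseteq\P(k)^{\pm}$. This leaves an invariant $g'=\sum_{\lambda\vert\mu\in S'}c_{\lambda\vert\mu}m_{\lambda\vert\mu}$ with all $l(\lambda)>k$ and all $c_{\lambda\vert\mu}\neq0$. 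Suppose $S'\neq\emptyset$.
\item Let $D$ be the maximal degree $|\lambda^+|+|\mu|$ on $S'$. Choose $\lambda\vert\mu\in S'$ of degree $D$ with minimal length $L$ (so $L>k$), and take $M$ larger than every length occurring in $S'$.
\item Compute coefficients in $\Pi_N g'$ for $N\gg M$. Exactly as in the paper's independence argument, the monomial $x^{\lambda}\prod_i x_{M+i}^{\mu_i}$ has coefficient $c_{\lambda\vert\mu}$.
\item Its image under the transposition $x_L\leftrightarrow x_M$ has degree $D$ and zero exponent in $x_L,\dots,x_{M-1}$. So it can only come from a degree-$D$ symbol in $S'$ of length $<L$. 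There is none, so its coefficient is $0$.
\item Since $L,M>k$, invariance of $g'$ under this transposition forces the two coefficients to agree. Hence $c_{\lambda\vert\mu}=0$, a contradiction.
\end{enumerate}
Therefore $\P(k)^{\pm}=V_k$, and your basis of $V_k$ is a basis of $\P(k)^{\pm}$.
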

\begin{proof}
We first show the set spans $\Pas^{\pm}$. We will show the set spans 
all basis elements 
$\{x^{\lambda}m_{\mu}[X^{\pm}]\}$ for all $\lambda\in \mathbb{Z}^n$, 
$\mu\in \Pi$. Since 
$$x^{\lambda}m_{\mu}[X^{\pm}]=x^{\lambda}
\sum_{\nu\sqsubseteq \mu}m_{\nu}[\bar{X}_{l(\lambda)}^{\pm}]
m_{\mu\backslash\nu}[X_{l(\lambda)}^{\pm}]$$
where $\nu$ sums over all subpartitions of $\mu$ consisting 
a subset of parts of $\mu$, it suffices to show 
that for any strict tuple 
$\lambda\in \mathbb{Z}^n$, $\mu\in \Pi$ and $n\geq l(\lambda)$, 
the element $x^{\lambda}m_{\mu}[X_n^{\pm}]$ can be 
spanned as a linear combination of elements of the form $m_{\lambda\vert\mu}$. 
This can be proved 
by a double induction on the weight $|\mu|$ and $n-l(\lambda)\geq 0$. 
For $|\mu|=0$ the statement is trivial. 
For $|\mu|=1$ we have 
$$x^{\lambda}m_{1}[X_n^{\pm}]=x^{\lambda}m_{1}[X_{l(\lambda)}^{\pm}]
-x^{\lambda}(x_{l(\lambda)+1}+...+x_{n}).$$
Suppose the statement holds for all $\mu$ with $|\mu|<N$. Then for $|\mu|=N$, 
when $n=l(\lambda)$ 
the element $x^{\lambda}m_{\mu}[X_n^{\pm}]=m_{(\lambda\vert\mu)}$ 
is already in the desired form. When $n>l(\lambda)$, 
we can use the formula in Example 
\ref{ex: monomial expansion} to express $x^{\lambda}m_{\mu}[X_n^{\pm}]$ 
as a linear combination of elements satisfying the induction hypothesis 
with either a smaller weight $|\mu|$ or a smaller $n-l(\lambda)$. 

Next we show the set is linearly independent. Suppose we have a non-empty 
finite set of symbols $S=\{\lambda|\mu\}$ and a non-trivial linear relation
$$\sum_{\lambda|\mu\in S}c_{\lambda|\mu}m_{\lambda|\mu}=0$$
with $c_{\lambda|\mu}\neq 0$ for all $\lambda|\mu\in S$. 
Let $T\subset S$ be the subset containing the symbols $\lambda|\mu$ of which 
the quantity $|\lambda^+|+|\mu|$ is maximal. Let 
$\eta|\tau\in T$ be a symbol with the minimal length $l(\eta)$ 
among the symbols in $T$. Pick a large enough $N$ such that 
$N+ l(\eta)> l(\lambda)$ for all $\lambda|\mu\in S$. 
Then observe the monomial 
$$x^{\eta}\prod_{i\geq 1}x_{i+l(\eta)+N}^{\tau_i}$$
appears in the monomial expansion of $m_{\eta|\tau}$ and 
and cannot appear in the monomial expansion of any other 
$m_{\lambda|\mu}$ with $\lambda|\mu\in S$. To see this, 
note that this monomial does not appear in the monomial expansion of any 
$m_{\lambda|\mu}\in S\backslash T$ since $|\lambda^+|+|\mu|<|\eta^+|+|\tau|$. 
On the other hand, for any $m_{\lambda|\mu}\in T$ with $l(\lambda)>l(\eta)$, 
any term in the monomial expansion of $m_{(\lambda|\mu)}$ contains 
a non-zero power of some $x_i$ with $l(\eta)<i<l(\eta)+N$ and 
therefore cannot be the desired monomial. As a consequence this 
implies that $c_{\eta|\tau}=0$ and yields a contradiction. 
Therefore the set $\{m_{\lambda\vert\mu}\}$ is linearly independent.
\end{proof}

\subsection{Double affine Hecke 
algebra of type $(C_n^{\vee},C_n)$}\label{sec: DAHA}
\begin{dfn}
The Hecke algebra $H_n$ of type $BC_n$ is the 
  $\mathbb{Q}(t,t_n)$-algebra generated by $T_1,\dots,T_{n-1},T_n$ 
with the following generating relations:
\begin{subequations}\label{Hecke relations}
		\begin{equation}\label{Hecke quadratic}
			\begin{gathered}
			(T_{n}-1)(T_{n}+t_n)=0, \\
			(T_{i}-1)(T_{i}+t)=0, \quad 1\leq i\leq n-1,
			\end{gathered}
  		\end{equation}
        \begin{equation}\label{Hecke T relation ii}
          \begin{gathered}
          T_{i}T_{j}=T_{j}T_{i}, \quad |i-j|>1,\\
          T_{i}T_{i+1}T_{i}=T_{i+1}T_{i}T_{i+1}, \quad 1\leq i\leq n-2,\\
		  T_{n-1}T_n T_{n-1} T_n = T_n T_{n-1} T_n T_{n-1}.
          \end{gathered}
          \end{equation}
\end{subequations}
\end{dfn}
Let $\chi: H_n\rightarrow \mathbb{K}$ be the multiplicative character defined as 
$$\chi(T_i)=t, \quad 1\leq i\leq n-1,\quad \chi(T_n)=t_n.$$
Then there exists an idempotent $e_n\in H_n$ defined by 
$$e_n = \frac{1}{[n]_{t^{-1}}!\prod_{i=1}^{n}(1+t_n^{-1} t^{1-i})}\sum_{w\in W}\chi(T_w)^{-1}T_w.$$

To better discuss the stablization process, instead of working over $\mathbb{K}$ we set our ambient field to be 
the subfield $\mathbb{K}'=\mathbb{Q}(q,t,t_0,t_n,a,c)$ of $\mathbb{K}$, where 
$$a=t_n^{1/2}u_n^{1/2},\quad c=q^{1/2}t_0^{1/2}u_0^{1/2},$$
with some parameter shifts. More precisely, we first define the DAHA as follows.
\begin{dfn}\label{def: DAHA}
  The double affine Hecke algebra $\H_n$ of type $(C_n^{\vee},C_n)$ is the 
  $\mathbb{K}'$-algebra generated by 
$$T_0,T_1,\dots,T_{n-1},T_n,X_1^{\pm1},\dots,X_n^{\pm1}$$
satisfying the following relations:
    \begin{subequations}\label{DAHA relations}
		\begin{equation}\label{quadratic}
			\begin{gathered}
			(T_{0}-1)(T_{0}+t_0)=0, \\
			(T_{i}-1)(T_{i}+t)=0, \quad 1\leq i\leq n-1,\\
			(T_{n}-1)(T_{n}+t_n)=0,
			\end{gathered}
  		\end{equation}
        \begin{equation}\label{T relation ii}
          \begin{gathered}
          T_{i}T_{j}=T_{j}T_{i}, \quad |i-j|>1,\\
          T_{i}T_{i+1}T_{i}=T_{i+1}T_{i}T_{i+1}, \quad 1\leq i\leq n-2,\\
		  T_0 T_1 T_0 T_1 = T_1 T_0 T_1 T_0,\\
		  T_{n-1}T_n T_{n-1} T_n = T_n T_{n-1} T_n T_{n-1},
          \end{gathered}
          \end{equation}
        \begin{equation}\label{X relation ii}
            \begin{gathered}
                t T_i^{-1} X_i T_i^{-1}=X_{i+1}, \quad 1\leq i\leq n-1\\
                T_{i}X_{j}=X_{j}T_{i},\quad  \langle \alpha_i,\epsilon_j\rangle=0,\\
                X_i X_j=X_j X_i \quad 1\leq i,j\leq n,\\
				t_n X_n T_n^{-1} = T_n X_n^{-1} + (a-t_n a^{-1}),\\
				t_0 T_0^{-1} X_1^{-1} = q X_1 T_0 + (c-qt_0 c^{-1}).
            \end{gathered}
        \end{equation}
    \end{subequations}
\end{dfn}
Denote
$$Y_i = (t_0 t_n)t^{n-i}T_{i-1}...T_2 T_1 T_0^{-1} T_1^{-1}...T_n^{-1}T_{n-1}^{-1}...T_i^{-1}.$$
These elements form a commutative family and obey the following relations
\begin{subequations}\label{DAHA Y relations}
	\begin{equation}\label{Y relation ii}
		\begin{gathered}
			t^{-1} T_i Y_i T_i=Y_{i+1}, \quad 1\leq i\leq n-1\\
			T_{i}Y_{j}=Y_{j}T_{i},\quad  \langle \alpha_i,\epsilon_j\rangle=0,\\
			Y_i Y_j=Y_j Y_i \quad 1\leq i,j\leq n,\\
			 T_n^{-1} Y_n^{-1} = t_0 Y_n T_n + (t_0-1).
		\end{gathered}
	\end{equation}
\end{subequations}

The Definition \ref{def: DAHA} of DAHA 
is not the most convenient presentation for stabilization. 
In order to apply the stabilization process, 
we will be using the following alternative definition.
\begin{dfn}\label{def: Limit DAHA-alt}
	The double affine Hecke algebra $\mathscr{H}_n$ of 
	type $(C_n^{\vee},C_n)$ is alternatively defined as the 
	$\mathbb{K}'$-algebra generated by 
  $$T_0,T_1,T_2,\dots,T_{n-1},T_n,
  X_1^{\pm1},X_2^{\pm1},\dots,X_n^{\pm1},Y_1^{\pm1},Y_2^{\pm1},\dots,Y_n^{\pm1}$$
  satisfying the following relations:
	  \begin{subequations}\label{Limit DAHA relations-alt}
		  \begin{equation}\label{Limit quadratic-alt}
			  \begin{gathered}
			  (T_{0}-1)(T_{0}+t_0)=0, \\
			  (T_{i}-1)(T_{i}+t)=0, \quad 1 \leq i\leq n-1,\\
			  (T_{n}-1)(T_{n}+t_n)=0,
			  \end{gathered}
			\end{equation}
		  \begin{equation}\label{Limit T relation ii-alt}
			\begin{gathered}
			T_{i}T_{j}=T_{j}T_{i}, \quad |i-j|>1,\\
			T_{i}T_{i+1}T_{i}=T_{i+1}T_{i}T_{i+1}, \quad 1 \leq i\leq n-2,\\
			T_0 T_1 T_0 T_1 = T_1 T_0 T_1 T_0,
			\end{gathered}
			\end{equation}
		  \begin{equation}\label{Limit X relation ii-alt}
			  \begin{gathered}
				  t T_i^{-1} X_i T_i^{-1}=X_{i+1}, \quad 1 \leq i\leq n-1\\
				  T_{i}X_{j}=X_{j}T_{i},\quad  \langle \alpha_i,\epsilon_j\rangle=0,\\
				  X_i X_j=X_j X_i \quad 1 \leq i,j\leq n,\\
				  (X_1 T_0 - t_0 c^{-1})(X_1 T_0 + q^{-1}c)=0,
			  \end{gathered}
		  \end{equation}
		  \begin{equation}\label{Limit Y relation ii-alt}
			  \begin{gathered}
				  t T_i Y_i T_i=Y_{i+1}, \quad 1 \leq i\leq n-1\\
				  T_{i}Y_{j}=Y_{j}T_{i},\quad  \langle \alpha_i,\epsilon_j\rangle=0,\\
				  Y_i Y_j=Y_j Y_i \quad 1 \leq i,j\leq n,\\
				  (X_1T_0 Y_1  - t_0 a)(X_1T_0 Y_1+ t_0 t_{n}a^{-1})=0.
			  \end{gathered}
		  \end{equation}
	  \end{subequations}
  \end{dfn}

\begin{rmk}
	To see the equivalence of the two definitions of DAHA, note that 
	\begin{itemize}
		\item The relation $Y_i Y_j=Y_j Y_i$ implies the relation 
		$T_{n-1}T_n T_{n-1} T_n = T_n T_{n-1} T_n T_{n-1}$.
		\item The relation 
		$(X_1 T_0 - t_0 c^{-1})(X_1 T_0 + q^{-1}c)=0$ 
		is equivalent to the relation 
		$$t_0 T_0^{-1} X_1^{-1} = q X_1 T_0 + (c-qt_0 c^{-1}).$$
		\item The relation $(X_1T_0 Y_1  - t_0 a)(X_1T_0 Y_1+ t_0 t_{n}a^{-1})=0$ 
		is interchangeable with the relation $t_n T_n^{-1} X_n T_n^{-1} = X_n^{-1} + (a-t_n a^{-1})T_n^{-1}$ 
		by applying the relation 
		$$T_n^{-1}=(t_0 t_n)^{-1}t^{1-n}T_{n-1}...T_1 T_0 Y_1 T_1 T_2...T_{n-1}.$$
	\end{itemize}
\end{rmk}

\begin{rmk}
	Compared to the definition of DAHA in \cites{Nou,Sa}, whose generators 
	will be denoted by 
	$$\mathscr{T}_0,\mathscr{T}_1,...,
	\mathscr{T}_n,\mathscr{X}_1^{\pm1},...,\mathscr{X}_n^{\pm1}, 
	\mathscr{Y}_1^{\pm1},...,\mathscr{Y}_n^{\pm1}$$
	and whose parameters will be denoted by 
	$$\mathbf{t},\mathbf{t}_0,\mathbf{t}_n,	\mathbf{q},
	\mathbf{u}_0,\mathbf{u}_n,$$
	our definition of the double affine Hecke algebra $\mathscr{H}_n$ of 
	type $(C_n^{\vee},C_n)$ coincide with their definition 
	under the following identifications:
	$$T_0\rightarrow \mathbf{t}_0^{-1/2}\mathscr{T}_0,\quad
	T_n\rightarrow \mathbf{t}_n^{-1/2}\mathscr{T}_n,\quad
	T_i\rightarrow \mathbf{t}_i^{-1/2}\mathscr{T}_i, 
	\quad 1\leq i\leq n-1,$$
	$$X_i\rightarrow \mathscr{X}_i^{-1}, 
	\quad Y_i\rightarrow \mathscr{Y}_i^{-1},$$
	$$t\rightarrow \mathbf{t}^{-1},\quad t_0\rightarrow \mathbf{t}_0^{-1},\quad
	t_n\rightarrow \mathbf{t}_n^{-1},\quad q\rightarrow \mathbf{q}^{-1},$$
	$$a\rightarrow \mathbf{t}_n^{-1/2}\mathbf{u}_n^{1/2},\quad
	c\rightarrow \mathbf{q}^{-1/2}\mathbf{t}_0^{-1/2}\mathbf{u}_0^{1/2}.$$
\end{rmk}

\subsection{Standard representation of DAHA}
We then introduce the standard representation of the double affine Hecke algebra. 
The stabilization process will be applied to the standard representation 
to obtain the stable limit almost symmetric representation, 
which will be the main object of study in the rest of the paper. 
The existence of the stable limit representation will reduce to 
understanding the asymptotic behavior of the standard representation 
on Laurent monomials.
\begin{dfn}\label{def: standard representation}
	The standard representation of the double affine Hecke algebra $\H_n$ is 
	defined by the following action of $\H_n$ on the Laurent polynomial ring 
	$\mathbb{K}'[x_1^{\pm 1},\ldots,x_n^{\pm 1}]$: 
	$$T_i = s_i + (1-t)x_i\frac{1-s_i}{x_i-x_{i+1}}$$
	$$T_0 = s_0 + \frac{(c-qt_0 c^{-1}) x_1+(1-t_0)}{1-q x_1^2}(1-s_0)$$
	$$T_n = s_n + \frac{(a-t_n a^{-1}) x_n^{-1}+(1-t_n)}{1-x_n^{-2}}(1-s_n)$$
	$$X_i = x_i, \quad Y_i = (t_0 t_n)t^{n-i}T_{i-1}...T_2 T_1 
	T_0^{-1} T_1^{-1}...T_n^{-1}T_{n-1}^{-1}...T_i^{-1}.$$
\end{dfn}
\begin{rmk}\label{rmk: symmetry}
We note the following observation: a 
Laurent polynomial 
$$f\in \mathbb{K}'[x_1^{\pm 1},\ldots,x_n^{\pm 1}]$$
is symmetric in $x_i$ and $x_{i+1}$ for $1\leq i\leq n-1$ if and only if 
$T_i f = f$, and is symmetric in $x_n$ and $x_n^{-1}$ if and only if $T_n f = f$. 
As a simple result, 
$f$ is symmetric in $x_k,x_{k+1},...,x_n$ and also under the inversion 
$x_i\mapsto x_i^{-1}$ for some $1\leq k\leq n$ and $i=k,...,n$ 
if and only if $T_i f = f$ for all $k\leq i\leq n$.
\end{rmk}

For convenience, 
we define 
$\alpha = c-qt_0 c^{-1}$ 
and 
$\beta = a-t_n a^{-1}$. 
Then by a direct computation we have
\begin{prop}
\begin{enumerate}
	\item For any $m>0$, we have
	\begin{align*}
		T_{n}x_n^{m} &= 
		(1-t_n)(x_{n}^{m}+x_{n}^{m-2}+...+x_n^{-m}) + t_n x_{n}^{-m} \\
		&+	\beta (x_n^{m-1} + x_n^{m-3} + ... + x_n^{-(m-1)}) \\
		&= h_m[(1-t_n)x_{n}+x_n^{-1}]+\beta h_{m-1}[x_{n}+x_{n}^{-1}]
	\end{align*}
	and 
	\begin{align*}
		t_n T_{n}^{-1}x_n^{m} &= 
		(1-t_n)(x_{n}^{m-2}+x_{n}^{m-4}+...+x_n^{-m}) + t_n x_{n}^{-m} \\
		&+	\beta (x_n^{m-1} + x_n^{m-3} +  ... + x_n^{-(m-1)}) \\
		&= h_m[(1-t_n)x_{n}+x_n^{-1}]+\beta h_{m-1}[x_{n}+x_{n}^{-1}] - h_m[(1-t_n)x_{n}]
	\end{align*}
	\item For any $m>0$, we have 
	\begin{align*}
		T_{n}x_n^{-m} &= 
		(t_n-1)(x_{n}^{m-2}+x_{n}^{m-4}+...+x_n^{-m}) + t_n x_{n}^{m} \\
		&-t_n\beta (x_n^{m-1} + x_n^{m-3} + ... + x_n^{-(m-1)}) \\
		&= t_n(h_m[(1-t_n^{-1})x_{n}^{-1}+x_n]-h_m[(1-t_n^{-1})x_{n}^{-1}])
		-\beta h_{m-1}[x_{n}+x_{n}^{-1}]
	\end{align*}
	and 
	\begin{align*}
		T_{n}^{-1}x_n^{-m} &= 
		(1-t_n^{-1})(x_{n}^{m}+x_{n}^{m-2}+...+x_n^{-m}) + t_n^{-1} x_{n}^{m} \\
		&-t_n^{-1}\beta (x_n^{m-1} + x_n^{m-3} +... + x_n^{-(m-1)}) \\
		&=h_m[(1-t_n^{-1})x_{n}^{-1}+x_n]-
		t_n^{-1}\beta h_{m-1}[x_{n}+x_{n}^{-1}]
	\end{align*}
\end{enumerate}
\end{prop}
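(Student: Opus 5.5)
The plan is a direct computation from the Demazure--Lusztig form of $T_n$ in Definition \ref{def: standard representation}, followed by a plethystic repackaging of the resulting geometric sums. Write $x=x_n$, so that $T_n = s_n + \frac{\beta x^{-1}+(1-t_n)}{1-x^{-2}}(1-s_n)$. The whole computation reduces to one divided difference. For $m>0$,
$$\frac{(1-s_n)x^{m}}{1-x^{-2}}=\frac{x^m-x^{-m}}{1-x^{-2}}=x^m\frac{1-x^{-2m}}{1-x^{-2}}=x^m+x^{m-2}+\dots+x^{-m+2},$$
and $(1-s_n)x^{-m}=-(1-s_n)x^m$ gives the negative of this sum.

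For part (1), I will first substitute this into $T_n x^m$. This gives
$$T_n x^m = x^{-m}+(1-t_n)(x^m+\dots+x^{-m+2})+\beta(x^{m-1}+\dots+x^{-(m-1)}).$$
Adding and subtracting $(1-t_n)x^{-m}$ turns this into the stated form $(1-t_n)(x^m+\dots+x^{-m})+t_nx^{-m}+\beta(\cdots)$.

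To get $T_n^{-1}$ without further divided differences, I will use the quadratic relation $(T_n-1)(T_n+t_n)=0$. It gives $T_n(T_n+t_n-1)=t_n$, hence
$$t_nT_n^{-1}=T_n+(t_n-1).$$
So $t_nT_n^{-1}x^m=T_nx^m-(1-t_n)x^m$, which removes the top term $(1-t_n)x^m$ and produces the second displayed formula.

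For part (2), I will apply $T_n$ to $x^{-m}$ in the same way. The $s_n$ term gives $x^m$, and the divided difference contributes $-(1-t_n)(x^m+\dots+x^{-m+2})-\beta(x^{m-1}+\dots+x^{-(m-1)})$. Regrouping $x^m-(1-t_n)x^m=t_nx^m$ gives $(t_n-1)(x^{m-2}+\dots+x^{-m+2})+t_nx^m-\beta(\cdots)$. This should be compared with the stated version, whose sum extends down to $x^{-m}$ and whose $\beta$ term carries a coefficient $t_n$. I will reconcile these carefully at this step and correct the coefficients or ranges if the paper has a misprint. Then $T_n^{-1}x^{-m}=t_n^{-1}\bigl(T_nx^{-m}+(t_n-1)x^{-m}\bigr)$ follows from the same inverse identity.

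Finally, for the plethystic expressions I will use $\textrm{Exp}$ multiplicativity:
$$h_m[(1-t)x+x^{-1}]=\sum_{j=0}^m h_j[(1-t)x]\,h_{m-j}[x^{-1}].$$
Here $h_j[(1-t)x]=(1-t)x^j$ for $j\ge1$, which is the coefficient of $z^j$ in $\frac{1-txz}{1-xz}$, and $h_0=1$. This gives $x^{-m}+(1-t)(x^{-m+2}+\dots+x^m)$. Likewise $h_{m-1}[x+x^{-1}]=x^{m-1}+x^{m-3}+\dots+x^{-(m-1)}$, and $h_m[(1-t_n)x]=(1-t_n)x^m$. The $t_n^{-1}$-versions are obtained by swapping $x\leftrightarrow x^{-1}$ and $t_n\leftrightarrow t_n^{-1}$.

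There is no conceptual obstacle here. The main risk is the bookkeeping of endpoint terms ($x^{\pm m}$) and signs in part (2), so I will verify each identity at $m=1$ as a check.
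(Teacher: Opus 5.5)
Your plan is correct and is essentially the paper's ``direct computation'': the divided difference $(x_n^m-x_n^{-m})/(1-x_n^{-2})=x_n^m+x_n^{m-2}+\dots+x_n^{-m+2}$, together with $t_nT_n^{-1}=T_n+(t_n-1)$ from the quadratic relation, gives all four formulas. The discrepancy you flagged in part (2) is a genuine misprint in the expanded middle line of the statement. The correct value is $T_nx_n^{-m}=t_nx_n^{m}+(t_n-1)(x_n^{m-2}+\dots+x_n^{-m+2})-\beta(x_n^{m-1}+\dots+x_n^{-(m-1)})$; for example $T_nx_n^{-1}=t_nx_n-\beta$, consistent with the relation $t_nX_nT_n^{-1}=T_nX_n^{-1}+\beta$ applied to $1$. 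This agrees with the plethystic form $t_n(h_m[(1-t_n^{-1})x_n^{-1}+x_n]-h_m[(1-t_n^{-1})x_n^{-1}])-\beta h_{m-1}[x_n+x_n^{-1}]$ given in the statement, and the other three formulas hold exactly as stated.
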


To study the stability of the Cherednik operators, 
we need to explore the action of the chain of $T$ operators 
applied to Laurent polynomials. We will first 
be working with the raising part. 
The following formula from \cite{IW} will also be useful.
\begin{prop}[\cite{IW}*{Lemma 6.31}]\label{prop: IW formula}
	For any $m>0$ and $1\leq k\leq n-1$, we have
	$$t^{n-k} T_{n-1}^{-1} ...T_{k+1}^{-1} T_{k}^{-1} x_k^{m} = 
		\sum_{i=0}^{m-1} x_{n}^{m-i} h_i[(1-t)\bar{X}_{[k,n-1]}]
		$$
\end{prop}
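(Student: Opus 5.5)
The plan is to prove the formula by induction on $n-k\geq 1$, using only the quadratic relation for $T_i$ and the explicit Demazure--Lusztig form of $T_i$ from Definition~\ref{def: standard representation}.

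\emph{Base case $n=k+1$.} The quadratic relation $(T_k-1)(T_k+t)=0$ gives $T_k^2=(1-t)T_k+t$, so
$$tT_k^{-1}=T_k-(1-t).$$
Since $(x_k^m-x_{k+1}^m)/(x_k-x_{k+1})=h_{m-1}[x_k+x_{k+1}]$, the formula for $T_k$ gives
$$T_kx_k^m=x_{k+1}^m+(1-t)\,x_k\,h_{m-1}[x_k+x_{k+1}].$$
Subtracting $(1-t)x_k^m$ removes the $j=m$ term of $x_k h_{m-1}[x_k+x_{k+1}]=\sum_{j=1}^{m}x_k^j x_{k+1}^{m-j}$, leaving
$$tT_k^{-1}x_k^m=x_{k+1}^m+(1-t)\sum_{j=1}^{m-1}x_k^jx_{k+1}^{m-j}.$$
A single letter gives $h_0[(1-t)x_k]=1$ and $h_j[(1-t)x_k]=(1-t)x_k^j$ for $j\geq 1$. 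Hence this equals $\sum_{i=0}^{m-1}x_{k+1}^{m-i}h_i[(1-t)x_k]$, which is the claim for $n=k+1$.

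\emph{Inductive step.} Assume the identity with $n-1$ in place of $n$:
$$t^{n-1-k}T_{n-2}^{-1}\cdots T_k^{-1}x_k^m=\sum_{i=0}^{m-1}x_{n-1}^{m-i}\,h_i[(1-t)\bar{X}_{[k,n-2]}].$$
Apply $tT_{n-1}^{-1}$ to both sides. Each factor $h_i[(1-t)\bar{X}_{[k,n-2]}]$ involves only $x_k,\dots,x_{n-2}$, so it is invariant under $s_{n-1}$. From the explicit form of $T_{n-1}$, multiplication by such a function commutes with $T_{n-1}$, and hence with $T_{n-1}^{-1}$. It therefore suffices to apply the base case, with $k$ replaced by $n-1$, to $x_{n-1}^{m-i}$. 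This is legitimate because $m-i\geq 1$, and it yields
$$tT_{n-1}^{-1}x_{n-1}^{m-i}=\sum_{j=0}^{m-i-1}x_n^{m-i-j}\,h_j[(1-t)x_{n-1}].$$

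\emph{Recombination.} The result is the double sum
$$\sum_{i+j\leq m-1}x_n^{m-i-j}\,h_i[(1-t)\bar{X}_{[k,n-2]}]\,h_j[(1-t)x_{n-1}].$$
Grouping by $l=i+j$ and using the plethystic addition rule $h_l[A+B]=\sum_{i+j=l}h_i[A]h_j[B]$ (equivalently $\mathrm{Exp}[A+B]=\mathrm{Exp}[A]\,\mathrm{Exp}[B]$), with $A=(1-t)\bar{X}_{[k,n-2]}$ and $B=(1-t)x_{n-1}$, gives
$$\sum_{l=0}^{m-1}x_n^{m-l}\,h_l[(1-t)\bar{X}_{[k,n-1]}],$$
which completes the induction.

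The only points needing care are the commutation of $T_{n-1}^{-1}$ with functions of $x_k,\dots,x_{n-2}$, and the summation ranges. In particular, the condition $i\leq m-1$ at each stage ensures every exponent of $x_n$ is positive, so the base case applies.
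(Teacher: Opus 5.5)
Your proof is correct. The identity $tT_k^{-1}=T_k-(1-t)$ follows from the quadratic relation, and your one-step computation of $tT_k^{-1}x_k^m$ is right. $T_{n-1}^{\pm1}$ is indeed linear over functions of $x_k,\dots,x_{n-2}$. Regrouping with $h_l[A+B]=\sum_{i+j=l}h_i[A]h_j[B]$ then gives exactly the stated sum.

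The paper does not prove this statement itself; it only cites \cite{IW}. The nearest in-paper argument is the proof of the companion Proposition~\ref{prop: IW formula negative}. There the generating series $1/(1-x_k^{-1}z)$ is pushed through the chain of $T_j^{-1}$, building the product $\prod_j(1-t^{-1}x_j^{-1}z)/(1-x_j^{-1}z)$ one factor at a time, and the coefficient of $z^m$ is read off. Your induction is the coefficientwise form of that computation, with the addition rule for $h_l$ replacing multiplication of generating series.

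In the positive case your version is slightly cleaner. On $1/(1-x_kz)$ the constant term does not follow the pattern, since $tT_k^{-1}\cdot 1=t$. The generating-function route therefore yields $t^{n-k}+\frac{x_nz}{1-x_nz}\prod_{j=k}^{n-1}\frac{1-tx_jz}{1-x_jz}$, and the stray $t^{n-k}$ must be discarded. You only ever apply the base case to strictly positive powers $x_{n-1}^{m-i}$, which is exactly where the hypothesis $m>0$ is used.
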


We prove also a similar formula for the negative monomials. 
\begin{prop}\label{prop: IW formula negative}
	For any $m>0$ and $1\leq k\leq n-1$, we have
	$$ T_{n-1}^{-1} ...T_{k+1}^{-1} T_{k}^{-1} x_k^{-m} = 
	\sum_{i=0}^{m}x_{n}^{-i} h_{m-i}
	[(1-t^{-1})\bar{X}_{[k,n-1]}^{-1}]$$
\end{prop}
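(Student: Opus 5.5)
The plan is induction on $n-k$, in the same way one proves Proposition \ref{prop: IW formula}. The base case is the single step $n=k+1$, and the inductive step is a plethystic convolution identity. Throughout I use the quadratic relation $(T_i-1)(T_i+t)=0$ in the form
$$T_i^{-1}=t^{-1}\bigl(T_i-(1-t)\bigr),$$
together with the explicit Demazure–Lusztig formula $T_i=s_i+(1-t)x_i\frac{1-s_i}{x_i-x_{i+1}}$ from Definition \ref{def: standard representation}.

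\textbf{Base case ($n=k+1$).} Since $h_j[(1-t^{-1})y]=(1-t^{-1})y^j$ for $j\geq 1$ and a single letter $y$, the claimed identity reads
$$T_k^{-1}x_k^{-m}=x_{k+1}^{-m}+(1-t^{-1})\sum_{i=0}^{m-1}x_{k+1}^{-i}x_k^{-(m-i)}.$$
I will verify this directly. First compute the divided difference
$$\frac{x_k^{-m}-x_{k+1}^{-m}}{x_k-x_{k+1}}=-\sum_{j=1}^{m}x_k^{-j}x_{k+1}^{-(m+1-j)}.$$
This gives
$$T_kx_k^{-m}=x_{k+1}^{-m}-(1-t)\sum_{j=1}^m x_k^{1-j}x_{k+1}^{-(m+1-j)}.$$
Then $t^{-1}\bigl(T_k-(1-t)\bigr)x_k^{-m}$ is a finite sum of monomials, and I will match it against the right-hand side above. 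The reindexing here is the one place where the sign and the $t$ versus $t^{-1}$ bookkeeping can go wrong, so it is the main obstacle. As a cross-check, apply $T_k$ to the claimed right-hand side and confirm that it returns $x_k^{-m}$.

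\textbf{Inductive step.} Assume the formula holds with $n-1$ in place of $n$, that is,
$$T_{n-2}^{-1}\cdots T_k^{-1}x_k^{-m}=\sum_{i=0}^m x_{n-1}^{-i}\,h_{m-i}[(1-t^{-1})\bar X^{-1}_{[k,n-2]}].$$
Now apply $T_{n-1}^{-1}$. The coefficients $h_{m-i}[(1-t^{-1})\bar X^{-1}_{[k,n-2]}]$ involve only $x_k,\dots,x_{n-2}$, so they commute with $T_{n-1}$. The $i=0$ term is fixed, since $T_{n-1}^{-1}1=1$. For $i\geq1$, the base case gives
$$T_{n-1}^{-1}x_{n-1}^{-i}=\sum_{j=0}^i x_n^{-j}h_{i-j}[(1-t^{-1})x_{n-1}^{-1}],$$
and this is also valid for $i=0$. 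Substituting and interchanging the sums yields
$$\sum_{j=0}^m x_n^{-j}\sum_{i=j}^m h_{m-i}[(1-t^{-1})\bar X^{-1}_{[k,n-2]}]\,h_{i-j}[(1-t^{-1})x_{n-1}^{-1}].$$
By the coproduct identity $h_N[A+B]=\sum_{r}h_{N-r}[A]h_r[B]$, which follows from $\mathrm{Exp}[A+B]=\mathrm{Exp}[A]\mathrm{Exp}[B]$, the inner sum equals $h_{m-j}[(1-t^{-1})\bar X^{-1}_{[k,n-1]}]$. This is exactly the claimed formula for $n$, which completes the induction.
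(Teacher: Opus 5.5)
Your argument is correct and is essentially the paper's proof written coefficientwise. The paper encodes $x_k^{-m}$ in the series $\sum_{m\geq 0}x_k^{-m}z^m=(1-x_k^{-1}z)^{-1}$, computes the single step $T_k^{-1}$ on it, and iterates using that the accumulated factor $\prod_i\frac{1-t^{-1}x_i^{-1}z}{1-x_i^{-1}z}$ commutes with the later $T_j^{-1}$; this is your base case followed by your induction, with multiplication of series playing the role of your identity $h_N[A+B]=\sum_r h_{N-r}[A]h_r[B]$. Your base case closes as you expected: $t^{-1}(T_k-(1-t))x_k^{-m}=t^{-1}x_{k+1}^{-m}+(1-t^{-1})\sum_{l=0}^{m}x_k^{-l}x_{k+1}^{-(m-l)}$, and merging the $l=0$ term with $t^{-1}x_{k+1}^{-m}$ gives coefficient $1$ on $x_{k+1}^{-m}$, so the one-step factor is $\frac{1}{1-x_{k+1}^{-1}z}\cdot\frac{1-t^{-1}x_k^{-1}z}{1-x_k^{-1}z}$ (the extra $t^{-1}$ in front of $x_{k+1}^{-1}z$ in the paper's displayed intermediate step is a slip, and the stated formula, which your computation confirms, does not carry it).
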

\begin{proof}
	Let the generating function of $x_k^{-m}$ be defined by 
	$$\sum_{m=0}^{\infty} x_k^{-m} z^m = \frac{1}{1-x_k^{-1} z}.$$
	Then we compute as follows:
	\begin{align*}
	&T_{n-1}^{-1} ...T_{k+1}^{-1} T_{k}^{-1} \frac{1}{1-x_k^{-1} z}\\
=& T_{n-1}^{-1} ...T_{k+1}^{-1} \frac{1}{1-t^{-1} x_{k+1}^{-1} z}
\frac{1-t^{-1}x_{k}^{-1}z}{1-x_k^{-1}z} \\
=& \frac{1}{1-t^{-1} x_{n}^{-1} z}
\prod_{i=k}^{n-1}\frac{1-t^{-1}x_{i}^{-1}z}{1-x_i^{-1}z}.
	\end{align*}
	By taking the coefficients of $z^m$ we obtain 
	\begin{align*}
		T_{n-1}^{-1} ...T_{k+1}^{-1} T_{k}^{-1} x_k^{-m} =&
		h_{m}[(1-t^{-1})\bar{X}_{[k,n-1]}^{-1}+x_n^{-1}]\\
		=& \sum_{i=0}^{m} x_{n}^{-i} h_{m-i}[(1-t^{-1})\bar{X}_{[k,n-1]}^{-1}]
	\end{align*}
which proves the proposition.
\end{proof}

As a corollary, we obtain the following formula which will 
be used in the proof of the stabilization process.
\begin{cor}\label{cor: raising part}
	\begin{enumerate}
		\item 	For any $m>0$ and $1\leq k\leq n-1$, we have
		\begin{align*}
			&t_n t^{n-k} T_{n}^{-1} ...T_{k+1}^{-1} T_{k}^{-1} x_k^{m} \\
			&= h_m[(1-t)\bar{X}_{[k,n-1]}+(1-t_n)x_{n}+x_n^{-1}] 
			- h_m[(1-t)\bar{X}_{[k,n-1]}+(1-t_n)x_{n}] + \\
			&\beta h_{m-1}[(1-t)\bar{X}_{[k,n-1]}+x_{n}+x_n^{-1}]
		\end{align*}
		\item 	For any $m>0$ and $1\leq k\leq n-1$, we have
		\begin{align*}
			T_{n}^{-1} ...T_{k+1}^{-1} T_{k}^{-1} x_k^{-m} 
			=& h_m[(1-t^{-1})\bar{X}_{[k,n-1]}^{-1}+(1-t_n^{-1})x_{n}^{-1}+x_n] \\
			&- t_n^{-1}\beta h_{m-1}[(1-t^{-1})\bar{X}_{[k,n-1]}^{-1}+x_{n}+x_n^{-1}]
		\end{align*}
	\end{enumerate}
\end{cor}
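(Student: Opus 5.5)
The plan is to compose the already established one-step formulas. Corollary \ref{cor: raising part} is obtained by applying $T_n^{-1}$ (respectively $t_nT_n^{-1}$) to the outputs of Proposition \ref{prop: IW formula} and Proposition \ref{prop: IW formula negative}. The cleanest way to do this is through generating functions in an auxiliary variable $z$, exactly as in the proof of Proposition \ref{prop: IW formula negative}.

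For part (1), I will start from Proposition \ref{prop: IW formula}. It gives
$$t^{n-k}T_{n-1}^{-1}\cdots T_k^{-1}x_k^m=\sum_{i=0}^{m-1}x_n^{m-i}h_i[(1-t)\bar X_{[k,n-1]}].$$
The coefficients $h_i[(1-t)\bar X_{[k,n-1]}]$ do not involve $x_n$, so $T_n$ commutes with multiplication by them, since $T_n$ only acts on $x_n$. Hence I will apply $t_nT_n^{-1}$ termwise to the powers $x_n^{m-i}$ with $m-i>0$, using the formula for $t_nT_n^{-1}x_n^{m}$ in part (1) of the preceding proposition:
$$t_nT_n^{-1}x_n^{j}=h_j[(1-t_n)x_n+x_n^{-1}]-h_j[(1-t_n)x_n]+\beta h_{j-1}[x_n+x_n^{-1}].$$
After substituting $j=m-i$, the first two pieces recombine by the plethystic addition rule
$$\sum_{i+j=m}h_i[A]h_j[B]=h_m[A+B],$$
with $A=(1-t)\bar X_{[k,n-1]}$. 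The $\beta$ piece gives $\sum_{i=0}^{m-1}h_i[A]h_{m-1-i}[x_n+x_n^{-1}]=h_{m-1}[A+x_n+x_n^{-1}]$.

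The one point of care is the boundary term $j=0$, i.e.\ $i=m$, which is missing from the sum because Proposition \ref{prop: IW formula} stops at $i=m-1$. I will check that for $j=0$ the combination $h_0[\cdot]-h_0[\cdot]$ vanishes, and that $h_{-1}=0$. So including $i=m$ changes nothing, and the full convolution identity applies. This yields exactly the three terms in (1).

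For part (2), I will proceed the same way from Proposition \ref{prop: IW formula negative}:
$$T_{n-1}^{-1}\cdots T_k^{-1}x_k^{-m}=\sum_{i=0}^m x_n^{-i}h_{m-i}[B],\qquad B=(1-t^{-1})\bar X_{[k,n-1]}^{-1}.$$
I will apply $T_n^{-1}$ using the formula for $T_n^{-1}x_n^{-i}$ from part (2) of the preceding proposition, namely $h_i[(1-t_n^{-1})x_n^{-1}+x_n]-t_n^{-1}\beta h_{i-1}[x_n+x_n^{-1}]$. For $i=0$ we have $T_n^{-1}1=1$, consistent with $h_0=1$ and $h_{-1}=0$. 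Convolving again gives
$$h_m[B+(1-t_n^{-1})x_n^{-1}+x_n]-t_n^{-1}\beta h_{m-1}[B+x_n+x_n^{-1}].$$

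The only genuine obstacle is verifying the boundary-index conventions, so that both convolution sums run over the full range. I also need to confirm that the cited single-variable formulas hold for $m=0$ in the needed form, for instance $h_{-1}=0$ and $T_n1=1$. Everything else is the addition rule $\mathrm{Exp}[A+B]=\mathrm{Exp}[A]\,\mathrm{Exp}[B]$ read off at the coefficient of $z^m$.
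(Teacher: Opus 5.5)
Your proposal is correct and is essentially the paper's proof. The paper applies $t_nT_n^{-1}$ (resp.\ $T_n^{-1}$) termwise to the output of Proposition \ref{prop: IW formula} (resp.\ Proposition \ref{prop: IW formula negative}), using that the coefficients do not involve $x_n$, and recombines via the plethystic addition rule; your boundary-index bookkeeping just makes explicit what the paper leaves implicit. One small caution on your closing remark: in part (1) the single-variable formula genuinely fails at $j=0$ (it gives $0$, whereas $t_nT_n^{-1}1=t_n$), so the $i=m$ term must be read purely as a zero padding term, as your middle paragraph correctly does, and not as an application of that formula at $m=0$.
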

\begin{proof}
	We compute directly as follows:
\begin{align*}
	&t_n t^{n-k} T_{n}^{-1} ...T_{k+1}^{-1} T_{k}^{-1} x_k^{m} \\
	&= 
	\sum_{l=0}^{m-1}t_n T_{n}^{-1} x_{n}^{m-l} h_l[(1-t)\bar{X}_{[k,n-1]}]\\
	&= h_m[(1-t)\bar{X}_{[k,n-1]}+(1-t_n)x_{n}+x_n^{-1}] 
	- h_m[(1-t)\bar{X}_{[k,n-1]}+(1-t_n)x_{n}] + \\
	&\beta h_{m-1}[(1-t)\bar{X}_{[k,n-1]}+x_{n}+x_n^{-1}]
\end{align*}
and 
\begin{align*}
	T_{n}^{-1} ...T_{k+1}^{-1} T_{k}^{-1} x_k^{-m}
	= &
	\sum_{l=0}^{m}T_{n}^{-1} x_{n}^{-l} h_{m-l}[(1-t^{-1})\bar{X}_{[k,n-1]}^{-1}]\\
	=&  h_m[(1-t^{-1})\bar{X}_{[k,n-1]}^{-1}+(1-t_n^{-1})x_{n}^{-1}+x_n] \\
	&- t_n^{-1}\beta h_{m-1}[(1-t^{-1})\bar{X}_{[k,n-1]}^{-1}+x_{n}+x_n^{-1}]
\end{align*}
which proves the corollary.
\end{proof}

\section{Stable limit DAHA of type $(C^{\vee},C)$}\label{sec: stable limit DAHA}
We now proceed proving the existence of the stable limit action. 
There are two asymptotic stabilities for the finite rank Cherednik 
operators with the positive rescaling by $t^n$ with respect to
the $t$-adic topology, and the negative rescaling by $t^{-n}$ with 
respect to the $t^{-1}$-adic topology. 
We first deal with the positive stabilization. 
We will analyze first apply the raising part of the Cherednik operators to 
Laurent monomials and 
then prove the convergence and the almost symmetric behavior 
of the resulting expressions. The negative stabilization will be 
proved in a similar way.
\subsection{Stabilization of the positive DAHA action on positive monomials}
From Corollary \ref{cor: raising part} we notice that the raising 
part of the action of the Cherednik operators on the monomial $x_k^m$ 
can be expressed as a sum of two independent parts
plethystic expressions as $\beta$ is essentially an algebraic independent parameter 
from $t$ and $t_n$. Therefore to finish the lowering part of 
the action of the Cherednik operators for stabilization, 
we need to compute separately each portion of the expression. We 
begin with monomials with positive powers. First, we have
\begin{lem}\label{lem: symmetric part 1}
	The following plethystic identity holds for all $m>0$:
	\begin{align*}
	& tT_{n-1}^{-1} (h_m[(1-t)\bar{X}_{[k,n-1]}+(1-t_n)x_{n}+x_n^{-1}] 
	- h_m[(1-t)\bar{X}_{[k,n-1]}+(1-t_n)x_{n}])\\
	=&(1-tt_n^{-1})x_{n-1}^{-1}h_{m-1}[(1-t)\bar{X}_{[k,n-2]}+(1-t t_n) x_{n-1}+x_{n-1}^{-1}+x_n+x_{n}^{-1}]\\
	&-(t_n-t)x_{n-1}^{-1}h_{m-3}[(1-t)\bar{X}_{[k,n-2]}+(1-t t_n) x_{n-1}+x_{n-1}^{-1}+x_n+x_n^{-1}]\\
	& + tt_n^{-1}x_{n-1}^{-1}h_{m-1}[(1-t)\bar{X}_{[k,n-2]}+(1-t_n)x_{n-1}+x_{n-1}^{-1}+(1-t_n)x_n+(1-t_n)x_n^{-1}]
	\end{align*}
\end{lem}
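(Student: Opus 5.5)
The natural route is through generating functions, as in the proof of Proposition \ref{prop: IW formula negative}. Write $A=(1-t)\bar{X}_{[k,n-2]}$. This part of the alphabet is symmetric in $x_{n-1},x_n$, so $T_{n-1}^{-1}$ treats the factor $\textrm{Exp}[zA]$ as a scalar. Expanding the $(1-t)x_{n-1}$ term inside $\bar{X}_{[k,n-1]}$ gives
\begin{align*}
\sum_{m\geq 0} z^m\bigl(h_m[(1-t)\bar{X}_{[k,n-1]}+(1-t_n)x_{n}+x_n^{-1}] - h_m[(1-t)\bar{X}_{[k,n-1]}+(1-t_n)x_{n}]\bigr)\\
=\textrm{Exp}[zA]\,\frac{1-tx_{n-1}z}{1-x_{n-1}z}\cdot\frac{1-t_nx_nz}{1-x_nz}\cdot\frac{x_n^{-1}z}{1-x_n^{-1}z}.
\end{align*}
The task then reduces to computing $tT_{n-1}^{-1}$ on the rational function $F(x_{n-1},x_n)=\frac{(1-tx_{n-1}z)(1-t_nx_nz)x_n^{-1}z}{(1-x_{n-1}z)(1-x_nz)(1-x_n^{-1}z)}$, and comparing with the generating functions of the three right-hand terms. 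Those are
\[
\frac{(1-tt_nx_{n-1}z)\,z}{x_{n-1}(1-x_{n-1}z)(1-x_{n-1}^{-1}z)(1-x_nz)(1-x_n^{-1}z)}
\]
times $(1-tt_n^{-1})$, the same expression times $z^2$ times $-(t_n-t)$, and
\[
tt_n^{-1}\,x_{n-1}^{-1}z\,\frac{(1-t_nx_{n-1}z)(1-t_nx_nz)(1-t_nx_n^{-1}z)}{(1-x_{n-1}z)(1-x_{n-1}^{-1}z)(1-x_nz)(1-x_n^{-1}z)},
\]
each multiplied by $\textrm{Exp}[zA]$.

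Next I would compute $T_{n-1}^{-1}$ explicitly. From the quadratic relation, $T_{n-1}^{-1}=t^{-1}(T_{n-1}+t-1)$, and by Definition \ref{def: standard representation},
\[
T_{n-1}g = s_{n-1}g+(1-t)x_{n-1}\frac{g-s_{n-1}g}{x_{n-1}-x_n}.
\]
This gives
\[
tT_{n-1}^{-1}F=s_{n-1}F+(t-1)F+(1-t)x_{n-1}\frac{F-s_{n-1}F}{x_{n-1}-x_n}=s_{n-1}F+(1-t)\frac{x_n(s_{n-1}F-F)}{x_{n-1}-x_n}.
\]
Since $F$ is rational with the explicit poles $(1-x_iz)$ and $(1-x_n^{-1}z)$, I would put $F$ and $s_{n-1}F$ over the common symmetric denominator $(1-x_{n-1}z)(1-x_nz)(1-x_{n-1}^{-1}z)(1-x_n^{-1}z)$. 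The divided difference then yields a polynomial numerator in $z$ with coefficients that are Laurent polynomials in $x_{n-1},x_n$.

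The final step is to check that this numerator equals the numerator of the claimed right side. That amounts to a polynomial identity of degree at most about $4$ in $z$. After extracting the common factor $x_{n-1}^{-1}z$, I would verify it coefficient by coefficient in $z$, or alternatively show that both sides have the same residues at $z=x_{n-1}^{\pm1},x_n^{\pm1}$ and the same behavior at $z=0$ and $z=\infty$. This step is the main obstacle. The mixing of $x_{n-1}^{-1}$ with $x_n$ and the asymmetric factor $(1-t_nx_nz)$ make the bookkeeping error-prone, and the $z^2$ shift producing the $h_{m-3}$ term has to emerge from a factor like $(1-tt_n^{-1})-(t_n-t)z^2$ appearing in the numerator.

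As a sanity check before the full verification, I would compare the cases $m=1$ and $m=2$ directly from the monomial formulas. Once the generating-function identity holds, extracting the coefficient of $z^m$ gives the lemma for all $m>0$. Negative-index $h$ terms vanish by convention, which covers small $m$ in the $h_{m-3}$ term.
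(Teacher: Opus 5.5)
Your plan is the paper's proof: the same generating series $S_1(z)$, the same removal of the factor $\textrm{Exp}[zA]$ (which is symmetric in $x_{n-1},x_n$), and the same two-variable rational function $F$. Your generating functions for the left side and for the three right-hand terms are all correct.

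The gap is the operator formula you would actually compute with. Your first expression $s_{n-1}F+(t-1)F+(1-t)x_{n-1}\frac{F-s_{n-1}F}{x_{n-1}-x_n}$ is right, but the simplification is not. Collecting terms correctly gives
\[
tT_{n-1}^{-1}F=s_{n-1}F+(1-t)\frac{x_nF-x_{n-1}\,s_{n-1}F}{x_{n-1}-x_n}=t\,s_{n-1}F+(1-t)\frac{x_n(F-s_{n-1}F)}{x_{n-1}-x_n},
\]
not $s_{n-1}F+(1-t)\frac{x_n(s_{n-1}F-F)}{x_{n-1}-x_n}$. Your version gives $tT_{n-1}^{-1}1=1$ instead of $t$. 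At $m=1$ it sends $x_n^{-1}$ to $t\,x_{n-1}^{-1}$, whereas both sides of the lemma equal $x_{n-1}^{-1}$. So the numerator comparison, as you set it up, would already fail at order $z$. Your planned $m=1$ sanity check would catch this, but the proposal as written contains no working formula for the key step.

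With the corrected formula the check is short. Put $u=x_{n-1}$, $v=x_n$ and $D=(1-uz)(1-u^{-1}z)(1-vz)(1-v^{-1}z)$. Then $D\cdot tT_{n-1}^{-1}F$ and $D$ times the sum of your three right-hand series (with $\textrm{Exp}[zA]$ removed) both equal
\[
z\Bigl(u^{-1}+\bigl(t^2-t-tt_n-tu^{-1}(v+v^{-1})\bigr)z+\bigl((t-t_n+tt_n)u^{-1}+tt_n(v+v^{-1})\bigr)z^2-t^2t_nz^3\Bigr).
\]
This is exactly the numerator the paper records; there the first two terms are combined as $(1-tt_n^{-1})u^{-1}(z-t_nz^3)(1-tt_nuz)$. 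Extracting the coefficient of $z^m$ then gives the lemma.
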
 
\begin{proof}
Define the generating series 
\begin{align*}
&S_1(z) \\
=& \sum_{m\geq 0}(h_m[(1-t)\bar{X}_{[k,n-1]}+(1-t_n)x_{n}+x_n^{-1}] 
- h_m[(1-t)\bar{X}_{[k,n-1]}+(1-t_n)x_{n}])z^m\\
=& \frac{x_n^{-1}z(1-t_n x_{n}z)}{(1-x_n z)(1-x_n^{-1} z)}
\prod_{i=k}^{n-1}\frac{1-t x_i z}{1-x_i z}
\end{align*}
Then we have
\begin{align*}
tT_{n-1}^{-1} (S_1(z)\prod_{i=k}^{n-2}\frac{1-x_i z}{1-t x_i z}) 
=& tT_{n-1}^{-1} \frac{x_n^{-1}z(1-t_n x_{n}z)(1-t x_{n-1}z)}
{(1-x_n z)(1-x_n^{-1} z)(1-x_{n-1} z)} \\
=&\frac{(1-t t_n^{-1})x_{n-1}^{-1}(z- t_n z^3)(1-t t_n x_{n-1}z)}{(1-x_{n-1}z)(1-x_n z)(1-x_{n-1}^{-1}z)(1-x_n^{-1}z)}\\
& +  \frac{t t_n^{-1} x_{n-1}^{-1}z(1-t_n x_{n}z)(1-t_n x_{n}^{-1}z)(1-t_n x_{n-1}z)}
{(1-x_{n-1}z)(1-x_n z)(1-x_{n-1}^{-1}z)(1-x_n^{-1}z)}
\end{align*}
By taking the coefficients of $z^m$ we obtain the desired identity.
\end{proof}

As a corollary, we can prove the following stabilization result.
\begin{cor}\label{cor: symmetric part 1}
For $k>0$ and $m\geq 0$, let the sequence $\{a_n\}_{n\geq k}$ be defined by 
\begin{align*}
	a_n =& t^{n-k}T_{k}^{-1}...T_{n-1}^{-1}\\
	&(h_m[(1-t)\bar{X}_{[k,n-1]}+(1-t_{\infty})x_{n}+x_n^{-1}] 
	- h_m[(1-t)\bar{X}_{[k,n-1]}+(1-t_{\infty})x_{n}])
\end{align*}
where $t_{\infty}$ is a fixed parameter. 
Then there exists a limit $\lim_{n\rightarrow \infty}a_n=a\in \Pas^{\pm}$ 
for the sequence.
\end{cor}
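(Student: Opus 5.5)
The plan is to compute $a_n$ in closed form by iterating Lemma \ref{lem: symmetric part 1} down the chain $T_{n-1}^{-1},T_{n-2}^{-1},\dots,T_k^{-1}$. Then I will read off a decomposition $a_n=h_n+\sum_i a_{i,n}g_{i,n}$ of the type required in the definition of convergence.

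\textbf{Step 1 (generating function).} As in the proof of Lemma \ref{lem: symmetric part 1}, I would package everything into the series $S_1(z)$. The first application of $tT_{n-1}^{-1}$ yields two rational functions. Each has a factor $x_{n-1}^{-1}z$, a product over $\bar X_{[k,n-2]}$ of $\frac{1-tx_iz}{1-x_iz}$, and a part in $x_{n-1},x_n$ in which $x_n$ now appears only through the symmetric combination $(1-x_nz)(1-x_n^{-1}z)$. This form should be stable under the next operator $tT_{n-2}^{-1}$. The operator acts only on $x_{n-2},x_{n-1}$, and the symmetric factor in $x_n^{\pm}$ passes through it as a scalar by Remark \ref{rmk: symmetry}.

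\textbf{Step 2 (inductive closed form).} I would prove by induction on $j=n-1,n-2,\dots,k$ that
$$t^{n-j}T_j^{-1}\cdots T_{n-1}^{-1}S_1(z)\prod_{i<j}\frac{1-x_iz}{1-tx_iz}$$
equals a \emph{fixed, finite} number of terms (independent of $n$) of the form
$$c_{r}(t,t_\infty,z)\,x_j^{-1}z\;\frac{(1-\gamma_r x_jz)\,\Phi_r(z)}{(1-x_jz)(1-x_j^{-1}z)}\;\mathrm{Exp}\bigl[z\,\epsilon_r \bar X^{\pm}_{[j+1,n]}\bigr].$$
Here $\epsilon_r\in\{1,(1-t_\infty)\}$, and the scalars $c_r$, $\gamma_r$ involve only $t$ and $t_\infty$ times powers $t^{n-j}$ or similar. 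Each inductive step is a two-variable computation identical to the one in Lemma \ref{lem: symmetric part 1}. I expect this bookkeeping to be the main obstacle. One must check that the family of shapes closes up under applying $tT_j^{-1}$, rather than proliferating new terms at each step; the term with $(1-tt_\infty)x_{n-1}$ in the lemma suggests that one parameter gets multiplied by $t$ at each step. If the shape does not close, I would instead prove a weaker statement. Namely, the terms produced are of the form $x_j^{-1}\times$ (an $n$-independent Laurent polynomial in $x_j$) $\times$ plethysms in $\bar X^{\pm}_{[j+1,n]}$, with coefficients that are polynomials in $t^{n-j}$. That weaker statement would suffice for Step 3.

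\textbf{Step 3 (passage to the limit).} Taking $j=k$ and extracting the coefficient of $z^m$, $a_n$ becomes a finite sum $\sum_r a_{r,n}\,g_{r,n}$. Each $g_{r,n}$ has the form $x_k^{\pm\ast}\,h_{m'}[\,\ast\, x_k+x_k^{-1}+\epsilon_r\bar X^{\pm}_{[k+1,n]}]$, or equivalently $x_k^{\ast}\,F[\bar X^{\pm}_{[k+1,n]}]$ with $F\in\Sym[X^{\pm}]$. The coefficients $a_{r,n}\in K$ are polynomials in $t,t_\infty$ and powers of $t^{n-k}$. Since $\pi_{n}$ kills exactly the monomials involving $x_n^{\pm1}$, we have $\pi_n(F[\bar X^{\pm}_{[k+1,n]}])=F[\bar X^{\pm}_{[k+1,n-1]}]$, so each $(g_{r,n})_n$ is $\pi$-compatible. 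Its limit is $x_k^{\ast}F[X_k^{\pm}]\in\Pas^{\pm}$. Each $a_{r,n}$ converges $t$-adically, because the powers $t^{n-k}$ tend to $0$. By Proposition \ref{prop: well-defined limit}, $a=\sum_r(\lim_n a_{r,n})\,g_r\in\Pas^{\pm}$ is then a well-defined limit.
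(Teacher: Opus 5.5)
\textbf{There is a genuine gap in Step~2.} As stated, the step fails, and the fallback is not something that iterating Lemma~\ref{lem: symmetric part 1} gives you.

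Apply the lemma with $t_\infty$ replaced by the current head parameter $\gamma$. Each time the head moves from $x_l$ to $x_{l-1}$, every term splits in two:
\begin{enumerate}
\item[(i)] a branch of weight $(1-t\gamma^{-1})(1-\gamma z^2)$, in which the head parameter becomes $t\gamma$ and $x_l$ is left with $\frac{1}{(1-x_lz)(1-x_l^{-1}z)}$;
\item[(ii)] a branch of weight $t\gamma^{-1}$, in which the head keeps $\gamma$ and $x_l$ is left with $\frac{(1-\gamma x_lz)(1-\gamma x_l^{-1}z)}{(1-x_lz)(1-x_l^{-1}z)}$.
\end{enumerate}
After $n-k$ steps you have $2^{n-k}$ terms, and different tail variables carry different parameters $t_\infty t^{s}$. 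The mixed term in the paper's two-step display already shows this. So no finite family of shapes $\mathrm{Exp}[z\epsilon_r\bar{X}^{\pm}_{[j+1,n]}]$ with $\epsilon_r\in\{1,1-t_\infty\}$ is closed under the iteration.

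The fallback has the same problem. The branch weights $1-t^{1-s}t_\infty^{-1}$ and $t^{1-s}t_\infty^{-1}$ contain negative powers of $t$ and $t_\infty$. So ``coefficients polynomial in $t^{n-j}$'' can only appear after cancellation across branches, and that cancellation is the whole content of the corollary. The paper works with this $2^{n-k}$-fold sum directly and tries to control it through $\deg_{t_\infty}\le 1$. Its exponents $t^{1+\sum\epsilon}$ should read $t^{1-\sum\epsilon}$.

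\textbf{The resummation you need does exist.} In branch (ii), write $1+\gamma^2z^2-\gamma z(x_l+x_l^{-1})$ and take the part linear in $x_l+x_l^{-1}$. It has weight $-tz(x_l+x_l^{-1})$, independent of $\gamma$, and does not change the head. What remains is the operator
\[
f(\gamma)\mapsto(1-t\gamma^{-1})(1-\gamma z^2)f(t\gamma)+t\gamma^{-1}(1+\gamma^2z^2)f(\gamma)
\]
acting on the head. It preserves functions affine in $\gamma$, with eigenvalue $1+t^2z^2$ on $1-z^2\gamma$ and eigenvalue $t(1+z^2)$ on $\gamma-t$. These two pieces resum to $\mathrm{Exp}[z(1-t)\bar{X}^{\pm}_{[k+1,n]}]$ and $t^{n-k}$ respectively. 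Writing $a_n^{(m)}$ for $a_n$, you obtain
\[
\sum_{m\ge 0}a_n^{(m)}z^m=\frac{x_k^{-1}z(1-tx_kz)(1-t_\infty z^2)}{(1-tz^2)(1-x_kz)(1-x_k^{-1}z)}\,\mathrm{Exp}\bigl[z(1-t)\bar{X}^{\pm}_{[k+1,n]}\bigr]+t^{n-k}\,\frac{(t-t_\infty)z^2}{(1-tz^2)(1-x_kz)}.
\]
This is the analogue of Lemma~\ref{lem: symmetric part 2}. It reduces correctly at $n=k$ and agrees with Lemma~\ref{lem: symmetric part 1} at $n=k+1$. With it, your Step~3 strategy goes through. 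Note, however, that the tail alphabet is $(1-t)\bar{X}^{\pm}$, not $\epsilon_r\bar{X}^{\pm}$.

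\textbf{Step~3 also contains a false identity.} The claim $\pi_n(F[\bar{X}^{\pm}_{[k+1,n]}])=F[\bar{X}^{\pm}_{[k+1,n-1]}]$ fails for nonlinear $F$, because $x_nx_n^{-1}=1$. For example,
\[
\pi_n h_2[(1-t)\bar{X}^{\pm}_{[k+1,n]}]=h_2[(1-t)\bar{X}^{\pm}_{[k+1,n-1]}]+(1-t)^2.
\]
This matters. By the closed form, for $k=1$, $m=3$ the coefficient of the monomial $x_1^{-1}$ in $a_n$ is $(1-t_\infty)+(n-1)(1-t)^2$; you can check $n=1,2$ by hand. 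So $a_n$ does not converge under the paper's $\pi$-compatible definition, and the paper's own claim that every monomial coefficient converges also fails.

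What is true is the following. Replace $\bar{X}^{\pm}_{[k+1,n]}$ by $X_k^{\pm}$ in the first term; this gives a fixed element of $\Pas^{\pm}$. Then $a_n$ equals $\Pi_n$ of that element plus an $O(t^{n-k})$ term. You should say explicitly that this plethystic convergence is what you are proving.
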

\begin{proof}
Note first that for any $\mu\in \mathbb{Z}^{n-k}$ the coefficient of the monomial 
$x^{(0^{k-1},\mu)}$ in each $a_n$ is always a polynomial in $t_{\infty}$ and 
Laurent polynomial in $t$, 
and the degree of $t_{\infty}$ is at most $1$. Indeed, as
$$t^{n-k}T_{k}^{-1}...T_{n-1}^{-1}=(T_k+(t-1))...(T_{n-1}+(t-1))$$
and by definition $T_i = s_i + (1-t)x_i\frac{1-s_i}{x_i-x_{i+1}}$, 
we see the action of $t^{n-k+1}T_{k-1}^{-1}...T_{n-1}^{-1}$ only 
creates polynomial coefficients in $t$. On the other hand, 
the plethystic expression in the definition of $a_n$ can be expanded as:
\begin{align*}
&h_m[(1-t)\bar{X}_{[k,n-1]}+(1-t_{\infty})x_{n}+x_n^{-1}]
- h_m[(1-t)\bar{X}_{[k,n-1]}+(1-t_{\infty})x_{n}]\\
=&(h_m[(1-t)\bar{X}_{[k,n-1]}+x_{n}+x_n^{-1}]
- t_{\infty} x_n h_{m-1}[(1-t)\bar{X}_{[k,n-1]}+x_{n}+x_n^{-1}])\\
& - (h_m[(1-t)\bar{X}_{[k,n-1]}+x_{n}]
- t_{\infty} x_n h_{m-1}[(1-t)\bar{X}_{[k,n-1]}+x_{n}])
\end{align*}
which means the degree of $t_{\infty}$ in any coefficient of $a_n$ is either $0$ or $1$.

Let $S_1(z)$ be the generating series defined in Lemma \ref{lem: symmetric part 1}. 
Then by Lemma \ref{lem: symmetric part 1} we have 
{\allowdisplaybreaks
\begin{align*}
	& t^2T_{n-2}^{-1}T_{n-1}^{-1}
	(S_1(z)\prod_{i=k}^{n-3}\frac{1-x_i z}{1-t x_i z})\\
	=& tT_{n-2}^{-1}(\frac{1-t x_{n-2}z}{1- x_{n-2}z}
	(\frac{(1-t t_{\infty}^{-1})x_{n-1}^{-1}z(1- t_{\infty} z^2)(1-t t_{\infty} x_{n-1}z)}
	{(1-x_{n-1}z)(1-x_{n-1}^{-1}z)(1-x_n z)(1-x_n^{-1}z)}\\
	& +  \frac{t t_{\infty}^{-1} x_{n-1}^{-1}z(1-t_{\infty} x_{n}z)(1-t_{\infty} x_{n}^{-1}z)(1-t_{\infty} x_{n-1}z)}
	{(1-x_{n-1}z)(1-x_{n-1}^{-1}z)(1-x_n z)(1-x_n^{-1}z)}))\\
	=&\frac{(1-t t_{\infty}^{-1})(1-t_{\infty}^{-1})
	x_{n-2}^{-1}z(1- t t_{\infty} z^2)(1- t_{\infty} z^2)(1-t^2 t_{\infty} x_{n-2}z)}
	{(1-x_{n-2}z)(1-x_{n-2}^{-1}z)}\\
	&\frac{1}{(1-x_{n-1}z)(1-x_{n-1}^{-1}z)(1-x_n z)(1-x_n^{-1}z)}\\
	&+\frac{t_{\infty}^{-1}(1-t t_{\infty}^{-1})x_{n-2}^{-1}z(1- t_{\infty} z^2)(1-t t_{\infty} x_{n-2}z)}
	{(1-x_{n-2}z)(1-x_{n-2}^{-1}z)}\\
	&\frac{(1-t)(1-t t_{\infty}^2 z^2)}
	{(1-x_{n-1}z)(1-x_{n-1}^{-1}z)(1-x_n z)(1-x_n^{-1}z)}\\
	&+\frac{tt_{\infty}^{-1}(1-t t_{\infty}^{-1})x_{n-2}^{-1}z(1- t_{\infty} z^2)(1-t t_{\infty} x_{n-2}z)}
	{(1-x_{n-2}z)(1-x_{n-2}^{-1}z)}\\
	&\frac{(1-t_{\infty} x_{n}z)(1-t_{\infty} x_{n}^{-1}z)+
	(1-t_{\infty} x_{n-1}z)(1-t_{\infty} x_{n-1}^{-1}z)}
	{(1-x_{n-1}z)(1-x_{n-1}^{-1}z)(1-x_n z)(1-x_n^{-1}z)}\\
	&+\frac{t^2t_{\infty}^{-2}x_{n-2}^{-1}z(1-t_{\infty} x_{n-2}z)}
	{(1-x_{n-2}z)(1-x_{n-2}^{-1}z)}
	\frac{(1-t_{\infty} x_{n-1}z)(1-t_{\infty} x_{n-1}^{-1}z)(1-t_{\infty} x_{n}z)(1-t_{\infty} x_{n}^{-1}z)}
	{(1-x_{n-1}z)(1-x_{n-1}^{-1}z)(1-x_n z)(1-x_n^{-1}z)}\\
\end{align*}}

This expression is symmetric in $x_{n-1},x_{n-1}^{-1}$ and $x_n,x_n^{-1}$. 
Hence the action of\\
$t^{n-k}T_{k}^{-1}...T_{n-1}^{-1}$ on the plethystic 
expression in the definition of $a_n$ yields a symmetric Laurent polynomial in 
$x_{i},x_{i}^{-1}$ for $k+1\leq i\leq n$. It remains to check 
the convergence of all the coefficients. Since $a_n$ 
is symmetric $x_{i},x_{i}^{-1}$ for $k\leq i\leq n$, it can 
be written as a linear combination of almost symmetric polynomials 
$$a_n = \sum_{r,\lambda}c_{r,\lambda} x_{k}^{r}
m_{\lambda}[\bar{X}_{[k,n]}^{\pm}],$$
summed over all $|r|\leq m$ and $|\lambda|\leq m$, 
with each $c_{r,\lambda}$ being a polynomial in $t_{\infty}$ and $t$. It 
suffices to prove the stability of each coefficient $c_{r,\lambda}$ 
in front of the monomial $x_{k}^{r}x^{(0^{k-1},\lambda)}$ when 
$n$ is large enough. 

To show this, let the generating series $S_2$ be defined as
	\begin{align*}
		S_2(z) =& \sum_{m\geq 0}
		(h_m[(1-t)\bar{X}_{[k,n-1]}+(1-t_{\infty})x_{n}+x_n^{-1}] \\
		&- h_m[(1-t)\bar{X}_{[k,n-1]}+(1-t_{\infty})x_{n}])z^{m}\\
		=& \frac{x_n^{-1}z(1-t_{\infty}x_n z)}{(1-x_n z)(1-x_n^{-1}z)}
		\prod_{i=k}^{n-1}\frac{1-t x_i z}{1-x_i z}
	\end{align*}
	Note that in the statement of \ref{lem: symmetric part 1}, 
	$t_{\infty}$ is a free parameter and 
	therefore can be replaced by $t_{\infty}t^i$ for arbitrary $i$. 
	Hence by \ref{lem: symmetric part 1} we have
{\allowdisplaybreaks
\begin{align*}
	& t^{n-k}T_{k}^{-1}...T_{n-1}^{-1}S_2(z)\\
	=&(\frac{(1-t t_{\infty}^{-1})x_{n-1}^{-1}z(1- t_{\infty} z^2)(1-t t_{\infty} x_{n-1}z)}
	{(1-x_{n-1}z)(1-x_{n-1}^{-1}z)}
	\frac{1}{(1-x_n z)(1-x_n^{-1}z)}\\
	& +  \frac{t t_{\infty}^{-1} x_{n-1}^{-1}z(1-t_{\infty} x_{n-1}z)}
	{(1-x_{n-1}z)(1-x_{n-1}^{-1}z)}
	\frac{(1-t_{\infty} x_{n}z)(1-t_{\infty} x_{n}^{-1}z)}{(1-x_n z)(1-x_n^{-1}z)})
	\frac{\prod_{i=k}^{n-2}(1-t x_i z)}{\prod_{i=k}^{n-2}(1-x_i z)}\\
	=&\sum_{\substack{{\epsilon=\{\epsilon_0,...,\epsilon_{n-k-1}\}}
	\\{\epsilon_i\in \{0,1\}}}}
	\frac{x_k^{-1}z}{(1-x_{k} z)(1-x_{k}^{-1}z)}\\
	&\prod_{j=0}^{n-k-1}(\frac{(1-\bar{\epsilon}_j t_{\infty}
	 t^{\sum_{l=0}^{j-1}\epsilon_l}	 x_{n-j}z)
	(1-\bar{\epsilon}_j t_{\infty} t^{\sum_{l=0}^{j-1}\epsilon_l}x_{n-j}^{-1}z)}
	{(1-x_{n-j} z)(1-x_{n-j}^{-1}z)}\\
	&\prod_{j=0}^{n-k-1}(1-t_{\infty}^{-1}t^{1+\sum_{l=0}^{j-1}\epsilon_l} -
	(1-2t_{\infty}^{-1} t^{1+\sum_{l=0}^{j-1}\epsilon_l} )\epsilon_j)
	(1-t_{\infty}  t^{\sum_{l=0}^{j-1}\epsilon_l} z^2)^{\epsilon_j})
\end{align*}}
where $\bar{\epsilon}_j=1-\epsilon_j$. First note 
that $x_{k}^{r}x^{(0^{k-1},\lambda)}$ does not 
contain any variable $x_{n-j}$ for $n-j-k+1>l(\lambda)$, and therefore 
up to a finite number of terms in the generating series, 
we only need to consider the convergence the constant terms that may 
contribute to the coefficient of $x_{k}^{r}x^{(0^{k-1},\lambda)}$ 
in the generating series. Since $m$ is fixed and finite, 
the coefficient of $z^m$ in the generating series can only take a 
finite sum of nontrivial constant terms from 
$$\prod(1-\bar{\epsilon}_j t_{\infty}
t^{\sum_{l=0}^{j-1}\epsilon_l} x_{n-j}z)
(1-\bar{\epsilon}_j t_{\infty} t^{\sum_{l=0}^{j-1}\epsilon_l}x_{n-j}^{-1}z)
(1-t_{\infty}  t^{\sum_{l=0}^{j-1}\epsilon_l} z^2)^{\epsilon_j}$$
for $n-j-k+1>l(\lambda)$. The contribution to the 
degree of $t_{\infty}$ from these terms are hence bounded above. 
On the other hand, the degree of $t_{\infty}$ in each coefficient of $a_n$ 
is non-negative and at most 1. Therefore, the coefficient of the monomial 
$x_{k}^{r}x^{(0^{k-1},\lambda)}$ in $a_n$ can only take a 
finite sum of nontrivial terms from 
$$\prod(1-t_{\infty}^{-1}t^{1+\sum_{l=0}^{j-1}\epsilon_l} -
	(1-2t_{\infty}^{-1} t^{1+\sum_{l=0}^{j-1}\epsilon_l} )\epsilon_j)$$
Finally, observe the fact that for each finite $N>0$, the coefficient of $t_{\infty}^N$
\begin{align*}
	&\lim_{n\rightarrow\infty}[t_{\infty}^N]
	\sum_{\epsilon}\prod(1-t_{\infty}^{-1}t^{1+\sum_{l=0}^{j-1}\epsilon_l} -
		(1-2t_{\infty}^{-1} t^{1+\sum_{l=0}^{j-1}\epsilon_l} )\epsilon_j)\\
	=&\lim_{n\rightarrow\infty}[t_{\infty}^N]
	\sum_{\epsilon}\prod(1-t_{\infty}^{-1}t^{1+\sum_{l=0}^{j-1}\epsilon_l})^{1-\epsilon_j}
	(t_{\infty}^{-1}t^{1+\sum_{l=0}^{j-1}\epsilon_l})^{\epsilon_j}\\
=&\lim_{n\rightarrow\infty}\sum_{l(\epsilon)<n-k}
(-1)^{\sum \bar{\epsilon}_j}t^{N+\sum_{j}\sum_{l<j}\epsilon_{l}}
\end{align*}
is convergent. Therefore the coefficient of $x_{k}^{r}x^{(0^{k-1},\lambda)}$ 
in $a_n$ is also convergent as $n\rightarrow \infty$.
\end{proof}

Then we need to control the second portition of the resulting Laurent 
polynomials occurred in Corollary \ref{cor: raising part}, 
namely the term proportional to $\beta$. We then 
need the following plethystic identity.
\begin{lem}\label{lem: symmetric part 2}
For $k>0$ and $m\geq 1$, we have 
\begin{align*}
	&t^{n-k} T_{k}^{-1}...T_{n-1}^{-1} 
	h_{m-1}[(1-t)\bar{X}_{[k,n-1]}+x_{n}+x_n^{-1}]\\
=& h_{m-1}[x_{k}+x_{k}^{-1}+(1-t)\bar{X}_{[k+1,n]}^{\pm}]
-\sum_{i=0}^{\lfloor m/2\rfloor}(t^{i}h_{m-1-2i}[x_k+(1-t)\bar{X}_{[k+1,n]}^{\pm}]
-t^{n-k+i})
\end{align*}
\end{lem}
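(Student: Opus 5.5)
The plan is to prove the identity at the level of generating functions in an auxiliary variable $z$, as in the proofs of Proposition \ref{prop: IW formula negative} and Lemma \ref{lem: symmetric part 1}, and then induct on $n-k$. Multiplying the left-hand side by $z^m$ and summing over $m\ge 1$ turns the input into
$$G_k(z)=\frac{z}{(1-x_nz)(1-x_n^{-1}z)}\prod_{i=k}^{n-1}\frac{1-tx_iz}{1-x_iz}.$$
On the right-hand side, the first term sums to
$$A_k(z)=\frac{z}{(1-x_kz)(1-x_k^{-1}z)}\prod_{i=k+1}^{n}\frac{(1-tx_iz)(1-tx_i^{-1}z)}{(1-x_iz)(1-x_i^{-1}z)}.$$
The subtracted part splits into two pieces.
\begin{itemize}
\item The $h$-terms give $B_k(z)=\dfrac{z}{(1-tz^2)(1-x_kz)}\displaystyle\prod_{i=k+1}^{n}\frac{(1-tx_iz)(1-tx_i^{-1}z)}{(1-x_iz)(1-x_i^{-1}z)}$.
\item The constants give $C_k(z)=\dfrac{t^{n-k}z^{\epsilon}}{(1-z)(1-tz^2)}$, with the power $z^\epsilon$ fixed by matching the range $0\le i\le\lfloor m/2\rfloor$. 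This range includes the boundary case $m$ even, $i=m/2$, where $h_{-1}=0$ but the constant $t^{n-k+m/2}$ survives.
\end{itemize}
So the statement is equivalent to $t^{n-k}T_k^{-1}\cdots T_{n-1}^{-1}G_k=A_k-B_k-C_k$. The indexing of the constant piece is the first thing I will check, using small $m$.

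I will prove a stronger statement by downward induction on $j$ from $n$ to $k$:
$$t^{n-j}T_j^{-1}\cdots T_{n-1}^{-1}G_k=\Bigl(\prod_{i=k}^{j-1}\frac{1-tx_iz}{1-x_iz}\Bigr)\bigl(A_j-B_j\bigr)-C_j.$$
Here $A_j,B_j,C_j$ are the series above with $k$ replaced by $j$. The base case $j=n$ is the observation that $G_k=\prod_{i<n}(\cdots)\,(A_n-B_n)-C_n$. For this one checks that $\frac{1}{(1-x_nz)(1-x_n^{-1}z)}-\frac{1}{(1-tz^2)(1-x_nz)}$ together with $C_n=\frac{z}{(1-z)(1-tz^2)}$ reproduces $\frac{1}{(1-x_nz)(1-x_n^{-1}z)}$ after multiplying by $z$. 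This check may force a small correction to how I split off the constant, and I will adjust $C_j$ accordingly.

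For the inductive step, note that all factors involving $x_{j+1},\dots,x_n$ in $A_j$ and $B_j$ are symmetric under the relevant transposition, and the factors with $i<j-1$ do not involve $x_{j-1},x_j$. Hence $tT_{j-1}^{-1}=T_{j-1}+(t-1)$ only acts on a two-variable rational function in $x_{j-1},x_j$, and this reduces the step to three two-variable identities:
\begin{enumerate}
\item $tT_{j-1}^{-1}$ applied to $\dfrac{1-tx_{j-1}z}{1-x_{j-1}z}\cdot\dfrac{z}{(1-x_jz)(1-x_j^{-1}z)}$ produces $\dfrac{z(1-tx_jz)(1-tx_j^{-1}z)}{(1-x_{j-1}z)(1-x_{j-1}^{-1}z)(1-x_jz)(1-x_j^{-1}z)}$, plus a remainder.
\item The same operator applied to $\dfrac{1-tx_{j-1}z}{1-x_{j-1}z}\cdot\dfrac{z}{(1-tz^2)(1-x_jz)}$ produces the analogous $B_{j-1}$-shape, plus a remainder.
\item $tT_{j-1}^{-1}$ fixes constants up to the factor $t$, so $C_j\mapsto tC_j=C_{j-1}$. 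This is exactly the case $m=0$ of Proposition \ref{prop: IW formula}.
\end{enumerate}
Each of these is verified directly from $T_{j-1}=s_{j-1}+(1-t)x_{j-1}\frac{1-s_{j-1}}{x_{j-1}-x_j}$ by clearing denominators, in the same way as the computation in Lemma \ref{lem: symmetric part 1}.

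The main obstacle is identities (1) and (2). There the remainders must cancel against each other, or combine into a multiple of the constant series, so that exactly $A_{j-1}-B_{j-1}$ remains. I would first compute the difference of the two inputs as a single rational function, apply $T_{j-1}$ to that, and compare with $A_{j-1}-B_{j-1}$ by clearing the common denominator $(1-tz^2)(1-x_{j-1}z)(1-x_{j-1}^{-1}z)(1-x_jz)(1-x_j^{-1}z)$. If a stray constant appears, it will be absorbed by redefining $C_j$, and the result then checked against $m=1,2$. Finally, extracting the coefficient of $z^m$ from the case $j=k$ gives the lemma.
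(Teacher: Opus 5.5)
There is a genuine gap, and it is exactly where you hedged: the subtracted ``constants'' cannot be $x$-free, because the identity as printed is false. Since $tT_i^{-1}=T_i+(t-1)$ multiplies functions symmetric in $x_i,x_{i+1}$ by $t$ and sends $x_i\mapsto x_{i+1}$, $x_{i+1}\mapsto tx_i+(t-1)x_{i+1}$, $x_{i+1}^{-1}\mapsto x_i^{-1}$, the left-hand side at $m=2$ equals $t^{n-k}x_k+x_k^{-1}$. The printed right-hand side is $x_k^{-1}+t^{n-k}+t^{n-k+1}$. The true identity is
\[
t^{n-k}T_k^{-1}\cdots T_{n-1}^{-1}G_k=A_k-B_k+\frac{t^{n-k}z}{(1-x_kz)(1-tz^2)},
\]
that is, $i$ ranges over $0\le i\le\lfloor (m-1)/2\rfloor$ and the last term is $t^{n-k+i}x_k^{m-1-2i}$. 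The paper's own telescoping in fact gives $(1-t)S_3(z)=\frac{1}{(1-tz^2)(1-x_kz)}\bigl(\prod_{j=k+1}^{n}R_j-t^{n-k}\bigr)$, with $R_j=\frac{(1-tx_jz)(1-tx_j^{-1}z)}{(1-x_jz)(1-x_j^{-1}z)}$. The factor $\frac{1}{1-x_kz}$ on $t^{n-k}$ is dropped in the final expansion; the later use of the lemma only needs convergence, which survives. So no choice of $z^{\epsilon}$ rescues your $C_k$. Your base case $j=n$ would need $\prod_{i=k}^{n-1}\frac{1-tx_iz}{1-x_iz}\,B_n+C_n=0$, but the first term depends on $x_k,\dots,x_n$ and $C_n$ does not. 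For the same reason, item (3) is the wrong propagation rule.

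Your induction survives once the third piece is made $x$-dependent. Replace $-C_j$ by $+\prod_{i=k}^{j-1}\frac{1-tx_iz}{1-x_iz}\cdot\frac{t^{n-j}z}{(1-tz^2)(1-x_jz)}$; this turns the base case into the tautology $G_k=\prod_{i<n}\frac{1-tx_iz}{1-x_iz}\,A_n$. Replace (3) by the two-variable identity $tT_{j-1}^{-1}\frac{1-tx_{j-1}z}{(1-x_{j-1}z)(1-x_jz)}=\frac{t}{1-x_{j-1}z}$. It follows from $tT_{j-1}^{-1}x_{j-1}=x_j$ after pulling out the symmetric factor, and the same identity settles (2). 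Items (1) and (2) then work as you hoped: both remainders equal $\frac{(t-1)z}{(1-x_{j-1}z)(1-x_jz)(1-x_j^{-1}z)}\prod_{i>j}R_i$ and cancel, leaving exactly $A_{j-1}-B_{j-1}$. At $j=k$, extracting $z^m$ gives the corrected formula. This is a more direct route than the paper's. The paper peels off $T_{n-1}^{-1}$ once, collects defect terms $-(1-t)t^{n-k-i-1}h_{m-1}[\cdots]$, and telescopes them using $\frac{1}{(1-x_jz)(1-x_j^{-1}z)}=\frac{R_j-t}{(1-t)(1-tz^2)}$.
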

\begin{proof}
We first compute directly as follows.
\begin{align*}
	& t^{n-k} T_{k}^{-1}...T_{n-1}^{-1} 
	h_{m-1}[(1-t)\bar{X}_{[k,n-1]}+x_{n}+x_n^{-1}] \\
	=& t^{n-k} T_{k}^{-1}...T_{n-1}^{-1} 
	\sum_{l=0}^{m-1} h_l[-t x_{n-1}+x_{n}^{-1}]
	h_{m-l-1}[(1-t)\bar{X}_{[k,n-2]}+(x_{n}+x_{n-1})] \\
	=& t^{n-k} T_{k}^{-1}...T_{n-1}^{-1} 
	(\sum_{l=1}^{m-1} (1-t x_n x_{n-1})x_n^{-l}
	h_{m-l-1}[(1-t)\bar{X}_{[k,n-2]}+(x_{n}+x_{n-1})]\\
	&+h_{m-1}[(1-t)\bar{X}_{[k,n-2]}+(x_{n}+x_{n-1})])\\
	=& t^{n-k-1} T_{k}^{-1}...T_{n-2}^{-1}
	\sum_{l=0}^{m-1}h_{m-l-1}[(1-t)\bar{X}_{[k,n-2]}+(x_{n}+x_{n-1})]\\
	& ((h_l[(1-t)x_{n}^{-1}+x_{n-1}^{-1}]-(1-t)x_{n}^{-l})
	-tx_n h_{l-1}[(1-t)x_{n}^{-1}+x_{n-1}^{-1}])\\
	=& t^{n-k-1} T_{k}^{-1}...T_{n-2}^{-1}
	 (h_{m-1}[(1-t)\bar{X}_{[k,n-2]}+x_{n-1}+x_{n-1}^{-1}+(1-t)(x_{n}+x_n^{-1})]\\
	&- (1-t)h_{m-1}[(1-t)\bar{X}_{[k,n-2]}+x_{n-1}+x_n+x_n^{-1}])
\end{align*}
Note that
$$T_{k}^{-1}...T_{n-2}^{-1}h_{m-1}[(1-t)\bar{X}_{[k,n-2]}
+x_{n-1}+x_n+x_n^{-1}]= t^{n-k-1}h_{m-1}[x_{k}+x_n+x_n^{-1}].$$
Thus we have 
\begin{align*}
	& t^{n-k} T_{k}^{-1}...T_{n-1}^{-1} 
	h_{m-1}[(1-t)\bar{X}_{[k,n-1]}+x_{n}+x_n^{-1}] \\
	=& h_{m-1}[x_{k}+x_{k}^{-1}+(1-t)\bar{X}_{[k+1,n]}^{\pm}]\\
	&-(1-t)\sum_{i=0}^{n-k-1}t^{n-k-i-1}h_{m-1}
	[x_{k}+x_{n-i}+x_{n-i}^{-1}+(1-t)\sum_{j=0}^{i-1}
	(x_{n-j}+x_{n-j}^{-1})]
\end{align*}

Now define the generating series
$$
	S_3(z)=\sum_{m\geq 1}\sum_{i=k+1}^{n}t^{i-k-1}h_{m-1}
	[x_k+x_{i}+x_{i}^{-1}+(1-t)\bar{X}_{[i+1,n]}]z^{m-1}
$$

Then we have
\begin{align*}
S_3(z)(1-x_k z)
=&\sum_{i=k+1}^{n} \frac{t^{i-k-1}}{(1-x_i z)(1-x_i^{-1} z)}
\prod_{j=i+1}^{n}\frac{(1-t x_j z)(1-t x_j^{-1} z)}{(1-x_j z)(1-x_j^{-1} z)} \\
=&\frac{1}{(1-t)(1-tz^2)}\sum_{i=k+1}^{n}(t^{i-k-1}\prod_{j=i}^{n}\frac{(1-t x_j z)(1-t x_j^{-1} z)}{(1-x_j z)(1-x_j^{-1} z)}\\
&-t^{i-k}\prod_{j=i+1}^{n}\frac{(1-t x_j z)(1-t x_j^{-1} z)}{(1-x_j z)(1-x_j^{-1} z)})\\
=&\frac{1}{(1-t)(1-tz^2)}(\prod_{j=k+1}^{n}\frac{(1-t x_j z)(1-t x_j^{-1} z)}{(1-x_j z)(1-x_j^{-1} z)}-t^{n-k})
\end{align*}
By expanding the series 
we obtain the statement in the lemma.
\end{proof}

Now we are ready to prove the stabilization of the DAHA action 
on finite monomials. More precisely, we have the following result.
\begin{prop}\label{prop: stable limit of positive Y action 1}
Let $x^{\mu}$ be a monomial with $\mu\in \mathbb{Z}^k_{\geq 0}$ for some 
$k\geq 0$. Then for each fixed $i$, the sequence of Laurent polynomials 
$$\{t^n Y_i^{(n)}x^{\mu}\}_{n\geq \max\{i,k\}}$$
is convergent to an almost symmetric Laurent polynomial in $\Pas^{\pm}$ 
as $n\rightarrow \infty$, where $Y_i^{(n)}$ is the $Y_i$ element in the double 
affine Hecke algebra $\H_n$ of type $(C_n^{\vee}, C_n)$ with the parameters 
$(q,t,t_0,t_{\infty},a,c)$.
\end{prop}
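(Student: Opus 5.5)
We will decompose the Cherednik operator as
$$t^nY_i^{(n)}=(t_0t_\infty)\,t^{i}\,\bigl(T_{i-1}\cdots T_1T_0^{-1}\bigr)\,\bigl(T_1^{-1}\cdots T_{i-1}^{-1}\bigr)\,\bigl(t^{n-i}T_i^{-1}\cdots T_{n-1}^{-1}\bigr)\,T_n^{-1}\,\bigl(T_{n-1}^{-1}\cdots T_i^{-1}\bigr),$$
where $t_n=t_\infty$ is fixed. The plan is to treat the part that depends on $n$ (the raising chain through $T_n^{-1}$ and back) separately from the finitely many operators $T_0,\dots,T_{i-1}$, whose number does not depend on $n$. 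Since the $T_j$ with $j<i$ and $T_0$ act only on $x_1,\dots,x_i$ by fixed formulas, Proposition \ref{prop: continuity} reduces the statement to the following claim: the sequence
$$b_n:=t_\infty\,t^{n-i}\,T_i^{-1}\cdots T_{n-1}^{-1}\,T_n^{-1}\,T_{n-1}^{-1}\cdots T_i^{-1}\,x^{\mu}$$
converges in $\Pas^{\pm}$ and its limit lies in $\P(i)^{\pm}$. Powers of $t$ are redistributed as needed; the factor $t^{n-i}$ pairs with the ascending chain.

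First, write $x^\mu=x'\,x_i^{\mu_i}\,x''$, with $x'$ in the variables $x_1,\dots,x_{i-1}$ and $x''$ in $x_{i+1},\dots,x_k$. The $T_j^{-1}$ with $j\ge i$ commute with $x'$. We then move the variables of $x''$ step by step with the relation $tT_j^{-1}X_jT_j^{-1}=X_{j+1}$ (equivalently, $T_j^{-1}x_{j+1}=t^{-1}x_jT_j$) together with the quadratic relation. This rewrites $T_{n-1}^{-1}\cdots T_i^{-1}x^\mu$ as a finite combination, with coefficients that are polynomial in $t^{\pm1}$ and independent of $n$, of terms of the form
$$x_i^{a}\,x_{i+1}^{b_1}\cdots\ \text{(in finitely many variables)}\cdot T_{n-1}^{-1}\cdots T_{j}^{-1}\ \text{applied to a single power } x_j^{m}.$$
An easier route may be induction on $k$. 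Terms in which $T_n^{-1}$ sees no power of $x_n$ give symmetric contributions directly. The essential case is the single power $x_j^m$, $m>0$. For this case, Corollary \ref{cor: raising part}(1) writes $t_\infty t^{n-j}T_n^{-1}\cdots T_j^{-1}x_j^m$ as the difference-of-$h_m$ part plus $\beta$ times $h_{m-1}[(1-t)\bar X_{[j,n-1]}+x_n+x_n^{-1}]$.

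Second, we apply the descending chain $t^{n-j}T_j^{-1}\cdots T_{n-1}^{-1}$ to each part. For the first part, Corollary \ref{cor: symmetric part 1} gives exactly the convergence to an element of $\Pas^{\pm}$ that is symmetric in $x_{j+1}^{\pm},x_{j+2}^{\pm},\dots$. For the $\beta$ part, Lemma \ref{lem: symmetric part 2} gives an explicit expression. Its terms $h_{m-1}[x_j+x_j^{-1}+(1-t)\bar X^{\pm}_{[j+1,n]}]$ and $t^ih_{m-1-2i}[x_j+(1-t)\bar X^{\pm}_{[j+1,n]}]$ are projections $\Pi_n$ of fixed elements of $\Pas^{\pm}$, obtained by the plethystic substitution $X^{\pm}_j$. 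The remaining term $t^{n-j+i}\to0$ $t$-adically. So this part converges as well.

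Third, the outer finite operators are applied using Proposition \ref{prop: continuity}. The main obstacle I expect is the first step: justifying that the multi-variable monomial reduces, uniformly in $n$, to finitely many single-power chains, with the intermediate $T_j^{-1}$ commutations introducing no $n$-dependent coefficients that fail to converge $t$-adically. If the direct commutation becomes messy, I would first try induction on $k$ instead, peeling off $x_k^{\mu_k}$ via $x_k=t^{k-i}T_{k-1}^{-1}\cdots T_i^{-1}x_iT_i^{-1}\cdots T_{k-1}^{-1}$. I would then use that $T_j^{-1}$ for $j>k$ acts on symmetric pieces by the scalar $1$ (Remark \ref{rmk: symmetry}). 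With this, the limits from the previous steps can be combined by linearity and by the well-definedness in Proposition \ref{prop: well-defined limit}.
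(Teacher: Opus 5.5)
Your architecture is the paper's: reduce to a single power $x_k^m$, apply Corollary~\ref{cor: raising part}(1), treat the two resulting pieces with Corollary~\ref{cor: symmetric part 1} and Lemma~\ref{lem: symmetric part 2}, and finish with $n$-independent outer operators via Proposition~\ref{prop: continuity}. The gap is the reduction step you flag as the main obstacle, and the mechanism you sketch for it does not work.

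The identity $T_j^{-1}x_{j+1}=t^{-1}x_jT_j$ is false. From $tT_j^{-1}X_jT_j^{-1}=X_{j+1}$ one gets $T_jX_{j+1}=tX_jT_j^{-1}$ and $T_j^{-1}X_j=t^{-1}X_{j+1}T_j$. Already on the constant $1$ one has $T_j^{-1}x_{j+1}=x_j+(1-t^{-1})x_{j+1}\neq t^{-1}x_j$. Moreover, for Corollary~\ref{cor: symmetric part 1} and Lemma~\ref{lem: symmetric part 2} to apply to the chains starting at $T_j$, the monomial prefactor must involve only $x_1,\dots,x_{j-1}$. You also need only $m\geq 0$ to occur, so that Corollary~\ref{cor: raising part}(1) covers everything. ``Finitely many variables'' is not enough.

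The paper removes the obstacle by splitting both chains at $k$ instead of at $i$. The finite piece $t^{k-i}T_{k-1}^{-1}\cdots T_i^{-1}x^{\mu}=\sum c_{\nu,m}(t)\,x^{\nu}x_k^{m}$ is computed once, independently of $n$. Here $x^{\nu}$ is a monomial in $x_1,\dots,x_{k-1}$ and $m\geq 0$, since the $T_j$, $1\leq j\leq n-1$, preserve polynomials. The $T_j$ with $k\leq j\leq n-1$ are linear over $s_j$-invariant functions, and $T_n$ over $s_n$-invariant ones. Hence $x^{\nu}$ passes through the whole $n$-dependent block $t_{\infty}t^{2(n-k)}T_k^{-1}\cdots T_{n-1}^{-1}T_n^{-1}T_{n-1}^{-1}\cdots T_k^{-1}$, which then acts on $x_k^m$ alone. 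What remains, $t_0t^{k}\,T_{i-1}\cdots T_1T_0^{-1}T_1^{-1}\cdots T_{k-1}^{-1}$, is the fixed outer operator. No commutation with the $X$'s and no induction on $k$ is needed. The $m=0$ terms tend to $0$ because of the leftover factor $t^{2(n-k)}$, and the whole case $i>k$ tends to $0$ because of the factor $t^{2n-i}$.

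Second, your power of $t$ is wrong. We have $t^nY_i^{(n)}=(t_0t_{\infty})t^{2n-i}(\cdots)$, but your displayed factorization carries only $t^{n}$. So your $b_n$ is $t^{-(n-i)}$ times the correct quantity and diverges $t$-adically whenever the limit is nonzero. For instance, in the paper's example $t^nY_1^{(n)}x_1^2$, the $x_1^2$-coefficient $q^2t$ would become $q^2t^{2-n}$. Each of the two $n$-dependent chains needs its own factor $t^{n-i}$ (or $t^{n-k}$ after splitting at $k$), which is what your second step tacitly uses.

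Finally, drop the requirement that the limit of $b_n$ lie in $\P(i)^{\pm}$. For $k>i$ it lies in general only in $\P(k)^{\pm}$. For example, take $i=1$ and $x^{\mu}=x_1x_2^2$: the outer operator is a multiple of $T_0^{-1}$, which preserves $\P(1)^{\pm}$. Yet $\tilde{Y}_1^+(x_1x_2^2)$ has coefficient $qt^2\neq0$ on $m_{(1,2)\vert\emptyset}$, by Proposition~\ref{prop: triangularity of positive limit Y}. Nothing beyond membership in $\Pas^{\pm}$ is needed for the outer operators anyway.
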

\begin{proof}
	First suppose $i\leq k$, we have 
	$$t^n Y_i^{(n)}x^{\mu}=(t_0 t_{\infty})t^{2n-k}
	T_{i-1}...T_1 T_0^{-1}T_1^{-1}...(T_n^{(n)})^{-1}T_{n-1}...T_k^{-1}
	(t^{k-i}T_{k-1}^{-1}...T_i^{-1}x^{\mu})$$
	Note that the action of $T_0,...,T_{n-1}$ 
	are stable for rank greater than or equal to $n$ and hence 
	we only indicate the rank for $T_n^{(n)}$ for brevity. Now that 
	$$(t^{k-i}T_{k-1}^{-1}...T_i^{-1}x^{\mu})$$
	is a polynomial in $x_1,...,x_k$ and 
	can be written as a linear combination of monomials as
	$$(t^{k-i}T_{k-1}^{-1}...T_i^{-1}x^{\mu})=\sum_{\nu\in\mathbb{Z}^{k-1}_{\geq 0},m\geq 0}
	c_{\nu,m}(t)x^{\nu}x_k^{m}$$
	where each $c_{\nu,m}(t)$ is a polynomial in $t$ with integer coefficients. 
	Hence it suffices to show the convergence of each monomial 
	$x^{\nu}x_k^{m}$. We can further assume $m>0$, as for $m=0$ it is 
	not difficult to see that the limit of the action on the monomial is 0. 
	Then by Corollary \ref{cor: raising part} we have 
	\begin{align*}
	&t_{\infty}t^{n-k}(T_n^{(n)})^{-1}T_{n-1}...T_k^{-1}x^{\nu}x_k^{m}\\
	&= x^{\nu}(h_m[(1-t)\bar{X}_{[k,n-1]}+(1-t_n)x_{n}+x_n^{-1}] 
	- h_m[(1-t)\bar{X}_{[k,n-1]}+(1-t_n)x_{n}] \\
	&+\beta h_{m-1}[(1-t)\bar{X}_{[k,n-1]}+x_{n}+x_n^{-1}])
	\end{align*}
	By Corollary \ref{cor: symmetric part 1} and Lemma \ref{lem: symmetric part 2} 
	we have that the action of $t^{n-k}T_{k}^{-1}...T_{n-1}^{-1}$ on 
	both 
	$$x^{\nu}(h_m[(1-t)\bar{X}_{[k,n-1]}+(1-t_n)x_{n}+x_n^{-1}] 
	- h_m[(1-t)\bar{X}_{[k,n-1]}+(1-t_n)x_{n}])$$
	and 
	$$x^{\nu}h_{m-1}[(1-t)\bar{X}_{[k,n-1]}+x_{n}+x_n^{-1}]$$
	are convergent as $n\rightarrow \infty$. Write the limit function in 
	$\Pas^{\pm}(k)$ as a linear combination of the natural basis
	$$t_{\infty}t^{n-k}(T_n^{(n)})^{-1}T_{n-1}...T_k^{-1}x^{\nu}x_k^{m}=
	\sum_{\substack{{\sigma,\lambda}\\{l(\sigma)\leq k}}}
	C(\sigma,\lambda,\nu,m)x^{\sigma}m_{\lambda}[X^{\pm}]\in\Pas^{\pm}(k)$$
	Then we see the sequence $t^n Y_i^{(n)}x^{\mu}$ converges to 
	\begin{align*}
		&t^k T_{i-1}...T_1 T_0^{-1}T_1^{-1}...T_{k-1}^{-1}
		\sum_{\substack{{\nu,m,\sigma,\lambda}\\{l(\sigma)\leq k}}}c_{\nu,m}C(\sigma,\lambda,\nu,m)
		x^{\sigma}m_{\lambda}[X^{\pm}]\\
		=&\sum_{\substack{{\nu,m,\sigma,\lambda}\\{l(\sigma)\leq k}}}
		t^k T_{i-1}...T_1 T_0^{-1}(m_{\lambda}[X^{\pm}]T_1^{-1}...T_{k-1}^{-1}
		c_{\nu,m}C(\sigma,\lambda,\nu,m)x^{\sigma})
	\end{align*}
	which is an element in $\Pas^{\pm}(k)$. This 
	proves the convergence of the sequence $t^n Y_i^{(n)}x^{\mu}$ when $i\leq k$. 
	For $i>k$ it is easy to see the sequence $t^n Y_i^{(n)}x^{\mu}$ 
	converges to zero. 
\end{proof}

\subsection{Stabilization of the positive DAHA action on negative monomials}
Now we analyze the action of the positively rescaled Cherednik 
operators on the negative monomials. We 
still compute term by term. First we have
\begin{lem}\label{lem: symmetric part 3}
	For $k>0$ and $m\geq 0$, we have 
	\begin{align*}
		&tT_{n-1}^{-1}h_m[(1-t^{-1})\bar{X}_{[k,n-1]}^{-1}+(1-t_n^{-1})x_{n}^{-1}+x_n]\\
		=& t_n h_m[(1-t^{-1})\bar{X}_{[k,n-2]}^{-1}+x_{n-1}+(1-t_n^{-1})x_{n-1}^{-1}+
		(1-t_n^{-1})x_n+(1-t_n^{-1})x_n^{-1}]\\
		& + (t-t_n)h_m[(1-t^{-1})\bar{X}_{[k,n-2]}^{-1}+x_{n-1}+
		(1-t^{-1}t_n^{-1})x_{n-1}^{-1}+x_n+x_n^{-1}]\\
		& + (1-t t_n^{-1})h_{m-2}[(1-t^{-1})\bar{X}_{[k,n-2]}^{-1}+
		x_{n-1}+(1-t^{-1}t_n^{-1})x_{n-1}^{-1}+x_n+x_n^{-1}].
	\end{align*}
\end{lem}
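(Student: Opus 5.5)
We will follow the generating-function method used for Lemma \ref{lem: symmetric part 1}. Define
$$S_4(z)=\sum_{m\geq 0}h_m[(1-t^{-1})\bar{X}_{[k,n-1]}^{-1}+(1-t_n^{-1})x_{n}^{-1}+x_n]z^m
=\frac{1-t_n^{-1}x_n^{-1}z}{(1-x_n^{-1}z)(1-x_nz)}\prod_{i=k}^{n-1}\frac{1-t^{-1}x_i^{-1}z}{1-x_i^{-1}z}.$$
The factor $\prod_{i=k}^{n-2}\frac{1-t^{-1}x_i^{-1}z}{1-x_i^{-1}z}$ is symmetric in $x_{n-1},x_n$. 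Since $T_{n-1}^{-1}$ commutes with multiplication by such functions, it can be pulled out. It then suffices to compute
$$tT_{n-1}^{-1}\,\frac{(1-t_n^{-1}x_n^{-1}z)(1-t^{-1}x_{n-1}^{-1}z)}{(1-x_n^{-1}z)(1-x_nz)(1-x_{n-1}^{-1}z)}$$
as a rational function in $z$. Afterwards we multiply the prefactor back and read off coefficients of $z^m$.

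For the computation we use $tT_{n-1}^{-1}=T_{n-1}+(t-1)$ and the explicit formula $T_{n-1}=s_{n-1}+(1-t)x_{n-1}\frac{1-s_{n-1}}{x_{n-1}-x_n}$. First we split the rational function into a part symmetric in $x_{n-1},x_n$, on which $T_{n-1}$ acts as $1$ and $tT_{n-1}^{-1}$ acts as $t$, and a remainder. A natural choice is to write the numerator over the common denominator $D=(1-x_{n-1}z)(1-x_{n-1}^{-1}z)(1-x_nz)(1-x_n^{-1}z)$; this denominator is symmetric in $x_{n-1},x_n$. The numerator $N(x_{n-1},x_n)$ is then a polynomial in $z$ of degree at most $3$, and we need $tT_{n-1}^{-1}N$, with $N$ viewed as a Laurent polynomial. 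Only finitely many monomials occur, namely products of $x_{n-1}^{\pm1},x_n^{\pm1}$ to low degree. Their images under $T_{n-1}$ follow from the divided-difference formula: $T_{n-1}$ fixes symmetric functions, and for instance $T_{n-1}x_{n-1}^{-1}=t x_n^{-1}+(1-t)x_{n-1}^{-1}$-type formulas, as in Proposition \ref{prop: IW formula negative}.

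The target has the same shape. Its three terms have generating functions
$$t_n\frac{(1-t_n^{-1}x_{n-1}^{-1}z)(1-t_n^{-1}x_nz)(1-t_n^{-1}x_n^{-1}z)}{D},\qquad
(t-t_n)\frac{1-t^{-1}t_n^{-1}x_{n-1}^{-1}z}{D},\qquad
(1-tt_n^{-1})z^2\frac{1-t^{-1}t_n^{-1}x_{n-1}^{-1}z}{D},$$
each times $\prod_{i=k}^{n-2}\frac{1-t^{-1}x_i^{-1}z}{1-x_i^{-1}z}$. The proof therefore reduces to the identity of numerators
$$tT_{n-1}^{-1}\bigl[(1-t_n^{-1}x_n^{-1}z)(1-t^{-1}x_{n-1}^{-1}z)(1-x_{n-1}z)\bigr]=t_n(\cdots)+(t-t_n+(1-tt_n^{-1})z^2)(1-t^{-1}t_n^{-1}x_{n-1}^{-1}z).$$
This is a finite identity among Laurent polynomials in $x_{n-1},x_n$ with polynomial dependence on $z$. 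We verify it coefficient by coefficient in $z^0,\dots,z^3$, using $T_{n-1}$ on monomials of total degree in $[-2,1]$.

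The main obstacle is bookkeeping in this verification, especially for the terms mixing $x_{n-1}^{-1}$ with $x_n^{\pm1}$, such as $x_{n-1}^{-1}x_n^{-1}$ and $x_{n-1}x_n^{-1}$. For these, $T_{n-1}$ produces non-obvious corrections, and the $(1-tt_n^{-1})z^2$ term must absorb them. To keep this manageable, we first check the specializations $z=0$ and $t_n=t$, where the middle and last terms vanish. We then compare the $t_n$-linear parts, since both sides are affine in $t_n^{-1}$ after multiplying by $t_n$. The symmetry of $D$ ensures no further denominators appear, and taking $[z^m]$ gives the lemma.
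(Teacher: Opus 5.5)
Your proposal is correct and is essentially the paper's proof. The paper likewise multiplies $S_4(z)$ by the inverse of the prefactor symmetric in $x_{n-1},x_n$, computes $tT_{n-1}^{-1}$ of the resulting rational function directly, and obtains exactly your two target numerators over the symmetric denominator $D$ before reading off $[z^m]$; your numerator identity does hold in each power of $z$. One slip in your illustration: in fact $T_{n-1}x_{n-1}^{-1}=t\,x_n^{-1}$, while it is $T_{n-1}x_n^{-1}$ that equals $x_{n-1}^{-1}+(1-t)x_n^{-1}$, so make sure you use the correct values when you carry out the coefficient check.
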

\begin{proof}
	Define the generating series 
	\begin{align*}
		S_4(z)=&\sum_{m\geq 0}
	h_m[(1-t^{-1})\bar{X}_{[k,n-1]}^{-1}+(1-t_n^{-1})x_{n}^{-1}+x_n]z^m	\\
	=& \frac{1-t_n^{-1}x_{n}^{-1}z}{(1-x_n z)(1-x_n^{-1}z)}
	\prod_{i=k}^{n-1}\frac{1-t^{-1}x_i^{-1}z}{1-x_i^{-1}z}
	\end{align*}
	Then by direct computation we have
	\begin{align*}
		tT_{n-1}^{-1}
		(S_4(z)\prod_{i=k}^{n-2}\frac{1-x_i^{-1}z}{1-t^{-1}x_i^{-1}z})
		=& tT_{n-1}^{-1}(\frac{1-t_n^{-1}x_{n}^{-1}z}{(1-x_n z)(1-x_n^{-1}z)}
		\frac{1-t^{-1}x_{n-1}^{-1}z}{1-x_{n-1}^{-1}z})\\
		=& \frac{t_n(1-t_n^{-1}x_{n-1}^{-1}z)(1-t_n^{-1}x_n z)(1-t_n^{-1}x_n^{-1}z)}
		{(1-x_{n-1}z)(1-x_{n-1}^{-1}z)(1-x_n z)(1-x_n^{-1}z)}\\
		&+\frac{(1-t^{-1}t_n^{-1}x_{n-1}^{-1}z)((t-t_n)+(1-t t_n^{-1})z^2)}
		{(1-x_{n-1}z)(1-x_{n-1}^{-1}z)(1-x_n z)(1-x_n^{-1}z)}
	\end{align*}
	By taking the coefficient of $z^m$ we obtain the desired formula.
\end{proof}
As a corollary we have
\begin{cor}\label{cor: symmetric part 3}
	For $k>0$ and $m\geq 0$, 
	let the sequence $\{a_n\}_{n\geq k}$ be defined by 
	$$a_n=t^{2n-k}T_k^{-1}...T_{n-1}^{-1}
	h_m[(1-t^{-1})\bar{X}_{[k,n-1]}^{-1}+(1-t_{\infty}^{-1})x_{n}^{-1}+x_n]$$
where $t_{\infty}$ is a fixed parameter. Then each $a_n$ 
is a symmetric Laurent polynomial in $x_{i},x_{i}^{-1}$ for $k+1\leq i\leq n$, 
and the sequence $\{a_n\}_{n\geq k}$ is convergent to 0 as 
$n\rightarrow \infty$.
\end{cor}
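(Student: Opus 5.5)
We plan to follow the strategy of Corollary \ref{cor: symmetric part 1}: iterate Lemma \ref{lem: symmetric part 3} along the chain $T_{n-1}^{-1},T_{n-2}^{-1},\dots,T_k^{-1}$ at the level of generating series. We then read off two things from the resulting closed form: the symmetry in the variables $x_i^{\pm1}$ for $i>k$, and a lower bound on the $t$-adic valuation of each coefficient that grows linearly in $n$.

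Let $S_4(z)$ be the generating series from the proof of Lemma \ref{lem: symmetric part 3}, with $t_n$ replaced by $t_\infty$. Lemma \ref{lem: symmetric part 3} expresses $tT_{n-1}^{-1}S_4(z)$ as a sum of three terms. Each term has the shape
$$\prod_{i=k}^{n-2}\frac{1-t^{-1}x_i^{-1}z}{1-x_i^{-1}z}\cdot\frac{1-c\,x_{n-1}^{-1}z}{(1-x_{n-1}z)(1-x_{n-1}^{-1}z)}\cdot F(x_n^{\pm})\]
with $F$ symmetric in $x_n^{\pm1}$. The parameter $c$ is either $t_\infty^{-1}$ or $t^{-1}t_\infty^{-1}$, and the scalar prefactors are $t_\infty$, $(t-t_\infty)$ and $(1-tt_\infty^{-1})z^2$. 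Since $t_\infty$ is a free parameter in the lemma, we may substitute $t_\infty\mapsto t_\infty t^{j}$ at each step. We then apply the lemma again to $tT_{n-2}^{-1}$ acting on each term, treating the factors in $x_{n-1},x_n$ as passive. They commute with $T_{n-2}$ only after symmetrization, so as in Corollary \ref{cor: symmetric part 1} we must check by direct computation that the two-step output is symmetric in $(x_{n-1}^{\pm},x_n^{\pm})$.

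By induction this yields a sum over binary strings $\epsilon\in\{0,1,2\}^{n-k}$ recording which of the three branches was chosen at each step. Each branch contributes a factor that is symmetric in its own variable $x_{n-j}^{\pm1}$ and symmetric overall in $x_{k+1}^{\pm},\dots,x_n^{\pm}$. This proves the first assertion, which is equivalent, by Remark \ref{rmk: symmetry}, to $T_i a_n=a_n$ for $k<i<n$ together with inversion symmetry.

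For convergence to $0$, we track the power of $t$. The prefactor $t^{2n-k}=t^{n}\cdot t^{n-k}$ supplies $t^{n-k}$ to normalize the $n-k$ applications of $tT_i^{-1}$; the substituted prefactors $t_\infty t^{j}$, $(t-t_\infty t^j)$ and $(1-t^{1-j}t_\infty^{-1})$ then appear. The remaining factor $t^{n}$ must dominate any negative powers of $t$ created. Those negative powers come only from the substituted factors $c=t^{-1-j}t_\infty^{-1}$ multiplying $x^{-1}z$ and from $t^{-1}$ in the $\bar X^{-1}_{[k,n-2]}$ part.

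Fix a target coefficient of $x_k^{r}x^{(0^{k-1},\lambda)}$ and a fixed $m$. Only boundedly many variables carry nonconstant monomials, and only boundedly many factors $z$ are used, so only finitely many positions $j$ can contribute a nonconstant term. At all other positions the constant term of the factor is taken, and these constant terms are polynomial in $t$ with nonnegative valuation. Hence each coefficient of $a_n$ has $t$-adic valuation at least $n-C(m,\lambda,r)$. We expect a cleaner route to be to observe, as in Corollary \ref{cor: symmetric part 1}, that $T_k^{-1}\cdots T_{n-1}^{-1}t^{n-k}$ equals $(T_k+t-1)\cdots(T_{n-1}+t-1)$. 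This operator creates only polynomial coefficients in $t$, while the input has $t$-degree bounded below by $-m$. Then $a_n\in t^{n-m}\mathbb{K}'[t][x^{\pm}]$, and all coefficients tend to $0$ $t$-adically.

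The main obstacle is to guarantee that the number of monomials in the basis $x_k^{r}m_\lambda[\bar X^{\pm}_{[k+1,n]}]$ stays bounded, which is what the convergence definition requires. This follows because the total degree is at most $m$, so the $r$ and $\lambda$ that occur range over a finite set independent of $n$. Each coefficient is then bounded in valuation below by $n-m$, and the limit is $0$.
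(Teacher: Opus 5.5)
Your proposal is correct, and in its final form (the ``cleaner route'') it is essentially the paper's proof. Symmetry comes from iterating Lemma \ref{lem: symmetric part 3} on the generating series with $t_\infty$ shifted by powers of $t$, checked by the explicit two-step computation. Vanishing comes from $t^{n-k}T_k^{-1}\cdots T_{n-1}^{-1}=(T_k+t-1)\cdots(T_{n-1}+t-1)$ having coefficients in $\mathbb{Z}[t]$ on monomials, while the input has $t$-valuation at least $-m$. Hence $a_n=t^n\cdot(\cdots)$ has valuation at least $n-m$ on a finite, $n$-independent set of basis elements $x_k^r m_\lambda[\bar X^{\pm}_{[k+1,n]}]$.

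Two things to fix. First, discard the first valuation-tracking sketch: its bookkeeping is wrong. The factors $c=t^{-j}t_\infty^{-1}$ and the branch prefactors $1-t^{1-j}t_\infty^{-1}$ have valuation as negative as about $-(n-k)$, and are offset only by the factors of $t$ coming from earlier branch choices, so no bound $n-C(m,\lambda,r)$ follows from that accounting. Second, in the recursion only the $x_n$-factor is passive; the $x_{n-1}$-factor is exactly what the index-shifted lemma acts on at the next step.
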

\begin{proof}
	Let $S_4(z)$ be the generating series defined in 
	Lemma \ref{lem: symmetric part 3} with $t_n$ replaced by $t_{\infty}$. 
	Then by Lemma \ref{lem: symmetric part 3} we have
	{\allowdisplaybreaks
	\begin{align*}
		& t^2T_{n-2}^{-1}T_{n-1}^{-1}
		(S_4(z)\prod_{i=k}^{n-3}\frac{1-x_i^{-1}z}{1-t^{-1}x_i^{-1}z})\\
		=& tT_{n-2}^{-1}(\frac{1-t^{-1}x_{n-2}^{-1}z}{1-x_{n-2}^{-1}z}
		(\frac{t_{\infty}(1-t_{\infty}^{-1}x_{n-1}^{-1}z)(1-t_{\infty}^{-1}x_n z)
		(1-t_{\infty}^{-1}x_n^{-1}z)}
		{(1-x_{n-1}z)(1-x_{n-1}^{-1}z)(1-x_n z)(1-x_n^{-1}z)}\\
		&+\frac{(1-t^{-1}t_{\infty}^{-1}x_{n-1}^{-1}z)((t-t_{\infty})+
		(1-t t_{\infty}^{-1})z^2)}
		{(1-x_{n-1}z)(1-x_{n-1}^{-1}z)(1-x_n z)(1-x_n^{-1}z)}))\\
		=& \frac{t_{\infty}^2(1-t_{\infty}^{-1}x_{n-2}^{-1}z)}
		{(1-x_{n-2}z)(1-x_{n-2}^{-1}z)}
		\frac{(1-t_{\infty}^{-1}x_n z)(1-t_{\infty}^{-1}x_n^{-1}z)
		(1-t_{\infty}^{-1}x_{n-1} z)(1-t_{\infty}^{-1}x_{n-1}^{-1}z)}
		{(1-x_n z)(1-x_n^{-1}z)(1-x_{n-1} z)(1-x_{n-1}^{-1}z)}\\
		& + \frac{t_{\infty}((t-t_{\infty})+
		(1-t t_{\infty}^{-1})z^2)(1-t^{-1}t_{\infty}^{-1}x_{n-2}^{-1}z)}
		{(1-x_{n-2}z)(1-x_{n-2}^{-1}z)}\\
		&\frac{(1-t_{\infty}^{-1}x_n z)(1-t_{\infty}^{-1}x_n^{-1}z)
		+(1-t_{\infty}^{-1}x_{n-1} z)(1-t_{\infty}^{-1}x_{n-1}^{-1}z)
		+(t-1)(1-t^{-1}z^2)}
		{(1-x_{n-1} z)(1-x_{n-1}^{-1}z)(1-x_n z)(1-x_n^{-1}z)}\\
		& + \frac{((t-t t_{\infty})+
		(1-t_{\infty}^{-1})z^2)((t-t_{\infty})+
		(1-t t_{\infty}^{-1})z^2)(1-t^{-2}t_{\infty}^{-1}x_{n-2}^{-1}z)}
		{(1-x_{n-2}z)(1-x_{n-2}^{-1}z)}\\
		& \frac{1}
		{(1-x_n z)(1-x_n^{-1}z)(1-x_{n-1} z)(1-x_{n-1}^{-1}z)}
	\end{align*}}
This expression is symmetric in $x_{n-1},x_{n-1}^{-1}$ and $x_n,x_n^{-1}$. 
Hence each $a_n$ is symmetric in $x_{n-1},x_{n-1}^{-1}$ and $x_n,x_n^{-1}$. 
By the same argument we obtain that 
each $a_n$ is a symmetric Laurent polynomial 
in $x_{i},x_{i}^{-1}$ for all $k+1\leq i\leq n$. 

Note now that $m$ is fixed, hence each monomial in the expression of
$$h_m[(1-t^{-1})\bar{X}_{[k,n-1]}^{-1}+(1-t_{\infty}^{-1})x_{n}^{-1}+x_n]$$
has the coefficient with degree at least $-m$ in $t$ and 
degree either 0 or $-1$ in $t_{\infty}$. Again, as
$$t^{n-k}T_{k}^{-1}...T_{n-1}^{-1}=(T_k+(t-1))...(T_{n-1}+(t-1))$$
and by the definition of $T_i$, 
we see the action of $t^{n-k+1}T_{k-1}^{-1}...T_{n-1}^{-1}$ only 
creates polynomial coefficients in $t$. Therefore the coefficient of 
each monomial in the expression of 
$$t^{n-k}T_k^{-1}...T_{n-1}^{-1}
	h_m[(1-t^{-1})\bar{X}_{[k,n-1]}^{-1}+(1-t_{\infty}^{-1})x_{n}^{-1}+x_n]$$
has the coefficient with degree at least $-m$ in $t$ and 
degree either 0 or $-1$ in $t_{\infty}$. Now that 
$$a_n = t^n(t^{n-k}T_k^{-1}...T_{n-1}^{-1}
h_m[(1-t^{-1})\bar{X}_{[k,n-1]}^{-1}+(1-t_{\infty}^{-1})x_{n}^{-1}+x_n])$$
We see clearly $a_n$ is convergent to 0 as 
$n\rightarrow \infty$.
\end{proof}

Next we have
\begin{lem}\label{lem: symmetric part 4}
	For $k>0$ and $m\geq 1$, we have 
	\begin{align*}
		&t^{n-k}T_k^{-1}...T_{n-1}^{-1}
		h_{m-1}[(1-t^{-1})\bar{X}_{[k,n-1]}^{-1}+x_{n}+x_n^{-1}]\\
		=& h_{m-1}[x_k+x_k^{-1}]-
		\sum_{i=0}^{\lfloor (m-1)/2 \rfloor}
		(t^{-i}h_{m-2i-1}[x_k+(1-t^{-1})x_{k}^{-1}]\\
		&-t^{n-k-i}h_{m-2i-1}[x_k+(1-t^{-1})x_k^{-1}
		+(1-t^{-1})\bar{X}_{[k+1,n]}^{-1}])
	\end{align*}
\end{lem}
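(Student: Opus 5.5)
I will follow the proof of Lemma \ref{lem: symmetric part 2} closely, now for the alphabet with inverted variables. The proof has three steps: a single-step recursion, an unrolling, and a generating-series summation.

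\textbf{Single step.} First expand the plethystic expression by splitting off the last two variables:
$$h_{m-1}[(1-t^{-1})\bar{X}_{[k,n-1]}^{-1}+x_n+x_n^{-1}]=\sum_{l}h_{m-1-l}[(1-t^{-1})\bar{X}_{[k,n-2]}^{-1}]\,h_l[(1-t^{-1})x_{n-1}^{-1}+x_n+x_n^{-1}].$$
The operator $tT_{n-1}^{-1}$ acts only on the second factor, because the first factor does not involve $x_{n-1},x_n$. Here I plan to use the generating function
$$\frac{1-t^{-1}x_{n-1}^{-1}z}{(1-x_{n-1}^{-1}z)(1-x_nz)(1-x_n^{-1}z)}.$$
Write it as the part symmetric in $x_{n-1},x_n$ plus a remainder, and apply $tT_{n-1}^{-1}=T_{n-1}+(t-1)$ directly, as in the computation of Proposition \ref{prop: IW formula negative}.

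I expect this to give an identity of the shape
$$tT_{n-1}^{-1}h_{m-1}[(1-t^{-1})\bar{X}_{[k,n-1]}^{-1}+x_n+x_n^{-1}]=h_{m-1}[(1-t^{-1})\bar{X}_{[k,n-2]}^{-1}+x_{n-1}+x_{n-1}^{-1}+(1-t^{-1})(x_n+x_n^{-1})]\cdot(\ast)+(\text{correction}).$$
The correction should be of the form $(t-1)h_{m-1}[(1-t^{-1})\bar{X}_{[k,n-2]}^{-1}+x_{n-1}^{-1}+x_n+x_n^{-1}]$, possibly rescaled by $t^{-1}$. It is the analogue of the $(1-t)h_{m-1}[\cdots+x_{n-1}+x_n+x_n^{-1}]$ term in Lemma \ref{lem: symmetric part 2}.

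\textbf{Unrolling.} The correction term contains the alphabet $(1-t^{-1})\bar{X}_{[k,n-2]}^{-1}+x_{n-1}^{-1}$. By Proposition \ref{prop: IW formula negative} with its generating function, the remaining chain $T_k^{-1}\cdots T_{n-2}^{-1}$ collapses it to a single variable $x_k^{-1}$. Thus $t^{n-k-1}T_k^{-1}\cdots T_{n-2}^{-1}$ applied to the correction gives an explicit term $h_{m-1}[x_k^{-1}+x_n+x_n^{-1}]$ times a power of $t$. Iterating the single step $n-k$ times then expresses the left side as a main term plus a telescoping sum. The main term has every variable $x_{k+1},\dots,x_n$ carrying $(1-t^{-1})(x_i+x_i^{-1})$ and a leftover $x_k$-factor. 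The telescoping sum runs over $i=k+1,\dots,n$, with terms $t^{\ast}h_{m-1}[x_k\text{-part}+x_i+x_i^{-1}+(1-t^{-1})\bar{X}^{\pm}_{[i+1,n]}]$.

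\textbf{Summation.} Define the analogue $S_3'(z)$ of $S_3$ and multiply by the $x_k$-denominator. Then use the telescoping identity
$$\frac{1}{(1-x_iz)(1-x_i^{-1}z)}=\frac{1}{(1-t^{-1})(1-t^{-1}z^2)}\left(\frac{(1-t^{-1}x_iz)(1-t^{-1}x_i^{-1}z)}{(1-x_iz)(1-x_i^{-1}z)}-t^{-1}\right).$$
This collapses the sum to
$$\frac{1}{(1-t^{-1})(1-t^{-1}z^2)}\Bigl(\prod_{j}\tfrac{(1-t^{-1}x_jz)(1-t^{-1}x_j^{-1}z)}{(1-x_jz)(1-x_j^{-1}z)}-t^{-(n-k)}\Bigr).$$
Expanding $1/(1-t^{-1}z^2)=\sum_i t^{-i}z^{2i}$ produces the sum over $i\le\lfloor(m-1)/2\rfloor$ with weights $t^{-i}$ and $t^{n-k-i}$ once the overall factor $t^{n-k}$ is restored. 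The main term, combined with the $t^{-(n-k)}$ piece, should yield $h_{m-1}[x_k+x_k^{-1}]$ and the $h_{m-2i-1}[x_k+(1-t^{-1})x_k^{-1}]$ terms.

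The main obstacle is the single-step identity. The exact form of the correction term, and whether the leftover alphabet attached to $x_k$ is $x_k+(1-t^{-1})x_k^{-1}$, must be pinned down by careful computation. I would verify it first for $n=k+1$ and small $m$ before unrolling, and adjust the telescoping constants to match the stated formula.
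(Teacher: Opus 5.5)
\textbf{There is a genuine gap.} Your overall plan matches the paper's: a generating series, one application of $tT_{n-1}^{-1}=T_{n-1}+(t-1)$, unrolling, then telescoping. Your telescoping identity is exactly the right one. The gap is the single-step identity, which is the whole content of the lemma. You leave it uncomputed and predict it in the shape of Lemma~\ref{lem: symmetric part 2}, but in the negative case the roles of the two pieces are reversed.

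\textbf{The correct single step.} Clear the $s_{n-1}$-symmetric denominator and use $T_{n-1}x_{n-1}=x_n+(1-t)x_{n-1}$ and $T_{n-1}x_{n-1}^{-1}=tx_n^{-1}$. This gives
\begin{align*}
&tT_{n-1}^{-1}\frac{1-t^{-1}x_{n-1}^{-1}z}{(1-x_{n-1}^{-1}z)(1-x_nz)(1-x_n^{-1}z)}\\
&=\frac{1}{(1-x_{n-1}z)(1-x_{n-1}^{-1}z)}-(1-t)\frac{1-t^{-1}x_{n-1}^{-1}z}{(1-x_{n-1}z)(1-x_{n-1}^{-1}z)(1-x_nz)(1-x_n^{-1}z)}.
\end{align*}
\begin{itemize}
\item The main term loses $x_n$ entirely and reproduces the input with $n$ replaced by $n-1$. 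Iterating it therefore ends in $h_{m-1}[x_k+x_k^{-1}]$. No term in which every $x_{k+1},\dots,x_n$ carries $(1-t^{-1})(x_i+x_i^{-1})$ comes from it.
\item The correction is $-(1-t)h_{m-1}[(1-t^{-1})\bar{X}^{-1}_{[k,n-1]}+x_{n-1}+x_n+x_n^{-1}]$, and it is not collapsed by the rest of the chain.
\item Your appeal to Proposition~\ref{prop: IW formula negative} runs the wrong way. That proposition says $T_{n-1}^{-1}\cdots T_k^{-1}$ spreads $x_k^{-1}$ out, so undoing it would need $T_k\cdots T_{n-2}$, not $T_k^{-1}\cdots T_{n-2}^{-1}$.
\item What actually holds is $T_k^{-1}\cdots T_{n-2}^{-1}\frac{1}{1-x_{n-1}z}=\frac{1}{1-x_kz}\prod_{i=k+1}^{n-1}\frac{1-t^{-1}x_iz}{1-x_iz}$. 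The factors in $x_k^{-1},\dots,x_{n-1}^{-1}$ are symmetric and pass through unchanged.
\item Hence the correction produced when $tT_{j-1}^{-1}$ acts equals $-(1-t)t^{j-k-1}$ times $h_{m-1}$ of the alphabet $x_k+(1-t^{-1})x_k^{-1}+(1-t^{-1})\bar{X}^{\pm}_{[k+1,j-1]}+x_j+x_j^{-1}$. These corrections spread out rather than collapse.
\end{itemize}

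\textbf{Finishing with your telescoping identity.} With these corrections your identity applies, with the accumulated product on the left and weights $t^{j-k-1}$. The sum telescopes to $\frac{1}{(1-t^{-1})(1-t^{-1}z^2)}\bigl(t^{n-k-1}P-t^{-1}\bigr)$, where $P=\prod_{j=k+1}^{n}\frac{(1-t^{-1}x_jz)(1-t^{-1}x_j^{-1}z)}{(1-x_jz)(1-x_j^{-1}z)}$. Since $-(1-t)/(1-t^{-1})=t$, taking the coefficient of $z^{m-1}$ gives the formula. So the term with every variable carrying $(1-t^{-1})(x_i+x_i^{-1})$ comes from the telescoped corrections, not from a main term.

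\textbf{The printed statement has a typo.} This computation produces $(1-t^{-1})\bar{X}^{\pm}_{[k+1,n]}$ in the last term, not $(1-t^{-1})\bar{X}^{-1}_{[k+1,n]}$ as printed. Already for $n=k+1$, $m=2$ the left side equals $tx_k+(t-1+t^{-1})x_k^{-1}+(t-1)(x_{k+1}+x_{k+1}^{-1})$. The printed right side lacks the $(t-1)x_{k+1}$ term, so no adjustment of constants can match the statement literally.
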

\begin{proof}
Define the generating series 
\begin{align*}
	S_5(z)=&\sum_{m\geq 1}
h_{m-1}[(1-t^{-1})\bar{X}_{[k,n-1]}^{-1}+x_{n}+x_n^{-1}]z^{m-1}	\\
=& \frac{1}{(1-x_n z)(1-x_n^{-1}z)}
\prod_{i=k}^{n-1}\frac{1-t^{-1}x_i^{-1}z}{1-x_i^{-1}z}
\end{align*}
Then by direct computation we have
\begin{align*}
	tT_{n-1}^{-1}
	(S_5(z)\prod_{i=k}^{n-2}\frac{1-x_i^{-1}z}{1-t^{-1}x_i^{-1}z})
	=& tT_{n-1}^{-1}(\frac{1}{(1-x_n z)(1-x_n^{-1}z)}
	\frac{1-t^{-1}x_{n-1}^{-1}z}{1-x_{n-1}^{-1}z})\\
	=& -(1-t)\frac{1-t^{-1}x_{n-1}^{-1}z}
	{(1-x_{n-1}z)(1-x_{n-1}^{-1}z)(1-x_n z)(1-x_n^{-1}z)}\\
	&+\frac{1}{(1-x_{n-1}z)(1-x_{n-1}^{-1}z)}
\end{align*}
Note that 
\begin{align*}
	&T_k^{-1}...T_{n-2}^{-1}
	((\prod_{i=k}^{n-2}\frac{1-x_i^{-1}z}{1-t^{-1}x_i^{-1}z})
	\frac{1-t^{-1}x_{n-1}^{-1}z}
	{(1-x_{n-1}z)(1-x_{n-1}^{-1}z)})\\
	=& T_k^{-1}...T_{n-3}^{-1}((\prod_{i=k}^{n-3}
	\frac{1-x_i^{-1}z}{1-t^{-1}x_i^{-1}z})
	\frac{1-t^{-1}x_{n-2}^{-1}z}
	{(1-x_{n-2}z)(1-x_{n-2}^{-1}z)}
	\frac{(1-t^{-1}x_{n-1}^{-1}z)(1-t^{-1}x_{n-1}z)}
	{(1-x_{n-2}z)(1-x_{n-2}^{-1}z)})\\
	=& \frac{1-t^{-1}x_{k}^{-1}z}
	{(1-x_{k}z)(1-x_{k}^{-1}z)}
	\prod_{i=k+1}^{n-1}\frac{(1-t^{-1}x_{i}^{-1}z)(1-t^{-1}x_{i}z)}
	{(1-x_{i}z)(1-x_{i}^{-1}z)}
\end{align*}
Hence we have
\begin{align*}
	&t^{n-k}T_k^{-1}...T_{n-1}^{-1}
	h_{m-1}[(1-t^{-1})\bar{X}_{[k,n-1]}^{-1}+x_{n}+x_n^{-1}]\\
	=& h_{m-1}[x_k+x_k^{-1}]-
	\sum_{i=k}^{n-1}(1-t)t^{i-k}
	h_{m-1}[t^{-1}x_k+(1-t^{-1})\bar{X}_{[k,i]}^{-1}+x_{i+1}+x_{i+1}^{-1}]
\end{align*}
Again, by the telescoping argument we see
\begin{align*}
&\sum_{i=k}^{n-1}\frac{t^{i-k}}{(1-x_{i+1}z)(1-x_{i+1}^{-1}z)}
\prod_{j=k+1}^{i}\frac{(1-t^{-1}x_{j}^{-1}z)(1-t^{-1}x_{j}z)}
{(1-x_{j}z)(1-x_{j}^{-1}z)}\\
=& \frac{1}{(1-t^{-1}z^2)(1-t)}
\sum_{i=k}^{n-1}(\prod_{j=k+1}^{i}t^{i-k}
\frac{(1-t^{-1}x_{j}^{-1}z)(1-t^{-1}x_{j}z)}{(1-x_{j}z)(1-x_{j}^{-1}z)}\\
&-\prod_{j=k+1}^{i+1}t^{i-k+1}\frac{(1-t^{-1}x_{j}^{-1}z)(1-t^{-1}x_{j}z)}
{(1-x_{j}z)(1-x_{j}^{-1}z)})\\
=& \frac{1}{(1-t^{-1}z^2)(1-t)}(1
-\prod_{j=k+1}^{n}t^{n-k}\frac{(1-t^{-1}x_{j}^{-1}z)(1-t^{-1}x_{j}z)}
{(1-x_{j}z)(1-x_{j}^{-1}z)})
\end{align*}
Therefore this yields 
\begin{align*}
	&t^{n-k}T_k^{-1}...T_{n-1}^{-1}
	h_{m-1}[(1-t^{-1})\bar{X}_{[k,n-1]}^{-1}+x_{n}+x_n^{-1}]\\
	=& h_{m-1}[x_k+x_k^{-1}]-
	\sum_{i=0}^{\lfloor (m-1)/2 \rfloor}
	(t^{-i}h_{m-2i-1}[x_k+(1-t^{-1})x_{k}^{-1}]\\
	&-t^{n-k-i}h_{m-2i-1}[x_k+(1-t^{-1})x_k^{-1}
	+(1-t^{-1})\bar{X}_{[k+1,n]}^{-1}])
\end{align*}
\end{proof}

We can now show the stability of the action of $t^n Y_i^{(n)}$ on 
negative monomials. More specifically, we have
\begin{prop}\label{prop: stable limit of positive Y action 2}
	Let $x^{\mu}$ be a monomial with $\mu\in \mathbb{Z}^k_{\leq 0}$ for some 
	$k\geq 0$. Then for each fixed $i$, the sequence of Laurent polynomials 
	$$\{t^n Y_i^{(n)}x^{\mu}\}_{n\geq \max\{i,k\}}$$
	is convergent to 0.
\end{prop}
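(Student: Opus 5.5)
I will follow the proof of Proposition \ref{prop: stable limit of positive Y action 1}, with Corollary \ref{cor: symmetric part 3} and Lemma \ref{lem: symmetric part 4} in place of Corollary \ref{cor: symmetric part 1} and Lemma \ref{lem: symmetric part 2}.

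\textbf{Case $i>k$.} Here $x^{\mu}$ does not involve $x_i,\dots,x_n$, so $T_j^{-1}$ for $j\geq i$ acts on it by the scalar $1$, and by $t_n^{-1}$ for $j=n$. Thus $t^nY_i^{(n)}x^{\mu}$ equals $t^n\cdot t^{n-i}\cdot(\text{fixed finite-rank operator applied to }x^\mu)$, up to bounded factors. This tends to $0$ $t$-adically.

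\textbf{Case $i\leq k$.} Write
$$t^nY_i^{(n)}x^{\mu}=(t_0t_\infty)t^{2n-k}T_{i-1}\cdots T_1T_0^{-1}T_1^{-1}\cdots T_{k-1}^{-1}(T_k^{-1}\cdots T_{n-1}^{-1}T_n^{-1}T_{n-1}^{-1}\cdots T_k^{-1})\,(t^{k-i}T_{k-1}^{-1}\cdots T_i^{-1}x^{\mu}).$$
The inner factor $t^{k-i}T_{k-1}^{-1}\cdots T_i^{-1}x^\mu$ is a finite combination $\sum c_{\nu,m}x^\nu x_k^{-m}$ with $\nu\in\mathbb{Z}^{k-1}_{\leq0}$ and $m\geq0$. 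This holds because $T_j^{-1}$ preserves nonpositive-exponent Laurent polynomials, as Proposition \ref{prop: IW formula negative} shows. The coefficients $c_{\nu,m}$ do not depend on $n$. So it suffices to treat a single $x^\nu x_k^{-m}$.

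\textbf{The term $m=0$.} The element $x^\nu$ is fixed by $T_k,\dots,T_n$. Hence the total $t$-power is $t^{2n-k}t^{-(2n-2k)}t_\infty^{-1}$, up to the prefactor, so the term is independent of $n$ and not small. This is the \textbf{main obstacle}: no power of $t$ escapes to make it vanish. One has to inspect the normalization of $Y_i$ more carefully. Either the $t$-powers coming from the first half $T_{n-1}^{-1}\cdots T_k^{-1}$ are not exactly inverted, or the claim needs $\mu\neq0$ (the proof of Proposition \ref{prop: stable limit of positive Y action 1} also dismissed $m=0$ as giving limit $0$). I would first verify directly in rank $1$ and $2$ that $t^nY_i^{(n)}1\to0$, to fix the exact power of $t$. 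I would then track that the total scalar is $t^{n}\cdot(\text{const})$, which tends to $0$.

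\textbf{The terms $m>0$.} Apply Corollary \ref{cor: raising part}(2) to the first half:
$$T_n^{-1}\cdots T_k^{-1}x_k^{-m}=h_m\big[(1-t^{-1})\bar X^{-1}_{[k,n-1]}+(1-t_\infty^{-1})x_n^{-1}+x_n\big]-t_\infty^{-1}\beta\,h_{m-1}\big[(1-t^{-1})\bar X^{-1}_{[k,n-1]}+x_n+x_n^{-1}\big].$$
Then apply $t^{2n-k}T_k^{-1}\cdots T_{n-1}^{-1}$. By Corollary \ref{cor: symmetric part 3}, the first summand tends to $0$. For the second summand, Lemma \ref{lem: symmetric part 4} gives
$$t^{n-k}T_k^{-1}\cdots T_{n-1}^{-1}(\cdots)=(\text{$n$-independent polynomial in }x_k^{\pm1})+t^{n-k}(\cdots).$$
After multiplying by the remaining $t^{n}$, both pieces carry at least $t^{n-\lfloor(m-1)/2\rfloor}$. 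The coefficients of the second piece stay bounded below in $t$-degree, since $m$ is fixed. So the second summand also tends to $0$ $t$-adically.

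\textbf{Conclusion.} The outer operator $T_{i-1}\cdots T_0^{-1}\cdots T_{k-1}^{-1}$ is $n$-independent and acts only on $x_1,\dots,x_k$. By Proposition \ref{prop: continuity} it preserves convergence to $0$. Hence $t^nY_i^{(n)}x^\mu\to0$.
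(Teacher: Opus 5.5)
Your treatment of the terms with $m>0$ is the paper's argument. You reduce to $x^\nu x_k^{-m}$, apply Corollary \ref{cor: raising part}(2), kill the first summand with Corollary \ref{cor: symmetric part 3}, and kill the $\beta$-summand with Lemma \ref{lem: symmetric part 4} plus the extra factor $t^n$.

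\textbf{The gap is the case $m=0$.} You flag it as the main obstacle, claim the term is $n$-independent and not small, and then leave it unresolved. That claim comes from a miscomputation: you let each factor $T_j^{-1}$ ($k\le j\le n-1$) act on $x^\nu$ by $t^{-1}$, and $T_n^{-1}$ by $t_\infty^{-1}$. In the normalization used here, $T_j=s_j+(1-t)x_j\frac{1-s_j}{x_j-x_{j+1}}$. So $T_jf=f$ whenever $s_jf=f$ (Remark \ref{rmk: symmetry}). Then $T_j^{-1}f=t^{-1}(T_j+t-1)f=f$ as well. The same holds for $T_n$, with $s_n$ and $t_\infty$ in place of $s_j$ and $t$.

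Since $x^\nu$ involves only $x_1,\dots,x_{k-1}$, the whole middle chain $T_k^{-1}\cdots T_{n-1}^{-1}T_n^{-1}T_{n-1}^{-1}\cdots T_k^{-1}$ acts on it as the identity. Therefore
\[
t^nY_i^{(n)}x^\nu=(t_0t_\infty)\,t^{2n-k}\,T_{i-1}\cdots T_1T_0^{-1}T_1^{-1}\cdots T_{k-1}^{-1}x^\nu .
\]
This is $t^{2n-k}$ times a fixed Laurent polynomial, so it tends to $0$. This is the ``same argument as in Proposition \ref{prop: stable limit of positive Y action 1}'' by which the paper dismisses $m=0$. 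Your own $i>k$ paragraph correctly uses eigenvalue $1$ for $T_j^{-1}$ with $j<n$, which contradicts your $m=0$ computation. The factor $t_\infty^{-1}$ you attach to $T_n^{-1}$ there is also wrong, though harmless in that case.

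Your fallback, that the statement might need $\mu\neq 0$, would not help. Terms with $m=0$ arise from nonzero $\mu$ as well. For example, take $k=2$, $i=1$, $\mu=(-1,0)$: then $tT_1^{-1}x_1^{-1}=tx_2^{-1}+(t-1)x_1^{-1}$, and the second term has $m=0$. The statement also holds for $\mu=0$, since $t^nY_i^{(n)}1=(t_0t_\infty)t^{2n-i}$. So the case must be handled, and with the eigenvalue-$1$ observation above it is immediate. Once it is, your proof coincides with the paper's.
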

\begin{proof}
	For $i>k$ the sequence clearly converges to 0. 
	So we assume $i\leq k$, we have 
	$$t^n Y_i^{(n)}x^{\mu}=(t_0 t_{\infty})t^{2n-k}
	T_{i-1}...T_1 T_0^{-1}T_1^{-1}...(T_n^{(n)})^{-1}T_{n-1}^{-1}...T_k^{-1}
	(t^{k-i}T_{k-1}^{-1}...T_i^{-1}x^{\mu})$$
	Now that 
	$$(t^{k-i}T_{k-1}^{-1}...T_i^{-1}x^{\mu})$$
	is a polynomial in $x_1^{-1},...,x_k^{-1}$ and 
	can be written as a linear combination of monomials as
	$$(t^{k-i}T_{k-1}^{-1}...T_i^{-1}x^{\mu})=
	\sum_{\nu\in\mathbb{Z}^{k-1}_{\leq 0},m\geq 0}
	c_{\nu,m}(t)x^{\nu}x_k^{-m}$$
	where each $c_{\nu,m}(t)$ is a polynomial in $t$ with integer coefficients. 
	Hence it suffices to show the convergence of each monomial 
	$x^{\nu}x_k^{-m}$. Using the same argument as in 
	the proof of Proposition \ref{prop: stable limit of positive Y action 1} 
	we can assume $m>0$. Then by Corollary \ref{cor: raising part}, 
	for each monomial $x^{\nu}x_k^{-m}$ we have 
	\begin{align*}
		 (T_n^{(n)})^{-1}T_{n-1}^{-1}T_{k}^{-1}x^{\nu}x_k^{-m}
		=& x^{\nu}(h_m[(1-t^{-1})\bar{X}_{[k,n-1]}^{-1}+(1-t_n^{-1})x_{n}^{-1}+x_n] \\
		&- t_n^{-1}\beta h_{m-1}[(1-t^{-1})\bar{X}_{[k,n-1]}^{-1}+x_{n}+x_n^{-1}])
	\end{align*}
	By Corollary \ref{cor: symmetric part 3} we see 
	that 
	$$t^{2n-k}T_k^{-1}...T_{n-1}^{-1}
	(x^{\nu}h_m[(1-t^{-1})\bar{X}_{[k,n-1]}^{-1}+(1-t_n^{-1})x_{n}^{-1}+x_n])$$
	converges to 0 and by Lemma \ref{lem: symmetric part 4} we see
	\begin{align*}
		&t^{2n-k}T_k^{-1}...T_{n-1}^{-1}(x^{\nu}h_{m-1}[(1-t^{-1})\bar{X}_{[k,n-1]}^{-1}+x_{n}+x_n^{-1}])\\
		=& t^n(t^{n-k}T_k^{-1}...T_{n-1}^{-1}(x^{\nu}h_{m-1}[(1-t^{-1})\bar{X}_{[k,n-1]}^{-1}+x_{n}+x_n^{-1}]))
	\end{align*}
	also converges to 0. This finished the proof.
\end{proof}

Combining Proposition \ref{prop: stable limit of positive Y action 1} and 
Proposition \ref{prop: stable limit of positive Y action 2} we obtain the 
following stability result for the action of $t^n Y_i^{(n)}$.
\begin{cor}\label{cor: stability of positive Y action}
	Let $\mu\in\mathbb{Z}^k$ for some $k\geq 0$ and $\lambda$ be 
	a partition of length $k$. Then 
	the sequence of Laurent polynomials 
	$$\{t^n Y_i^{(n)}(x^{\mu}m_{\lambda}[\bar{X}_n^{\pm}])\}_{n\geq \max\{i,k\}}$$
	is convergent to an almost symmetric Laurent polynomial in $\Pas^{\pm}$.
\end{cor}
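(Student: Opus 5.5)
We reduce the statement to Propositions \ref{prop: stable limit of positive Y action 1} and \ref{prop: stable limit of positive Y action 2} by expanding $x^{\mu}m_{\lambda}[\bar{X}_n^{\pm}]$ into Laurent monomials in a way that is uniform in $n$, and then using linearity of the limit (Proposition \ref{prop: well-defined limit}). The reading of the hypothesis is that $m_\lambda$ is taken in the tail variables; in any case, by the splitting $m_{\lambda}[\bar{X}_n^{\pm}]=\sum_{\nu\sqsubseteq\lambda}m_{\nu}[\bar{X}_k^{\pm}]m_{\lambda\backslash\nu}[\bar{X}_{[k+1,n]}^{\pm}]$ used in the proof of Proposition \ref{prop: standard basis 1}, it suffices to treat $x^{\mu}m_{\lambda}[\bar{X}_{[k+1,n]}^{\pm}]$ with $\mu\in\mathbb{Z}^k$ arbitrary. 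The finitely many prefactors $x^{\mu}m_\nu[\bar X_k^{\pm}]$ are fixed Laurent polynomials in $x_1,\dots,x_k$, independent of $n$.

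\textbf{Step 1: reducing the tail to a first block.} Write $\lambda=(\lambda_1,\dots,\lambda_r)$ and set $f=x^{\mu}x_{k+1}^{\lambda_1}\cdots x_{k+r}^{\lambda_r}$. Then $x^{\mu}m_{\lambda}[\bar{X}_{[k+1,n]}^{\pm}]$ is, up to a normalising scalar, the sum of $w f$ over $w$ in the hyperoctahedral group acting on $x_{k+1},\dots,x_n$. Equivalently, it is obtained from the symmetrizer built from $T_{k+1},\dots,T_{n-1},T_n$; by Remark \ref{rmk: symmetry}, this symmetrizer fixes invariant elements.

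The cleaner route is to use that $Y_i^{(n)}$ commutes with $T_j$ for $j>i$ only in a twisted sense. So instead I would argue directly from the structure of the proofs of Propositions \ref{prop: stable limit of positive Y action 1} and \ref{prop: stable limit of positive Y action 2}. Write $t^nY_i^{(n)}=(t_0t_\infty)t^{2n-K}T_{i-1}\cdots T_0^{-1}\cdots(T_n^{(n)})^{-1}T_{n-1}^{-1}\cdots T_K^{-1}\,(t^{K-i}T_{K-1}^{-1}\cdots T_i^{-1})$ with $K=k+r$ fixed. Since $m_\lambda[\bar X_{[K+1,n]}^{\pm}]$-type tails are killed or fixed by $T_j$, $j\geq K+1$, the chain $T_{n-1}^{-1}\cdots T_K^{-1}$ acting on a polynomial of the form $g(x_1,\dots,x_K)\cdot(\text{symmetric in }x_{K+1},\dots,x_n)$ can be handled as follows. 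Expand $g$ into monomials $x^{\nu}x_K^{m}$ with $m\in\mathbb{Z}$. For $m>0$, Corollary \ref{cor: raising part} together with Corollary \ref{cor: symmetric part 1} and Lemma \ref{lem: symmetric part 2} gives convergence. For $m<0$, Corollary \ref{cor: raising part} together with Corollary \ref{cor: symmetric part 3} and Lemma \ref{lem: symmetric part 4} gives limit $0$. For $m=0$, the term is symmetric in $x_K,\dots,x_n$, so each $T_j$ acts as $1$ and the prefactor $t^{2n-K}$ drives it to zero $t$-adically.

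\textbf{Step 2: the multiplicative tail factor.} The symmetric tail factor in the remaining variables must then be carried along. The plethystic generating series in Lemmas \ref{lem: symmetric part 1}--\ref{lem: symmetric part 4} are multiplied by an extra product $\prod_{j}$ over the tail. This is the main obstacle: one has to check that the generating-function computations (the telescoping in Lemma \ref{lem: symmetric part 2}, and the $\epsilon$-sum in Corollary \ref{cor: symmetric part 1}) remain valid with such a factor present. I would first try to avoid this by noting the following. Inside the tail, $m_{\lambda}[\bar X_{[K+1,n]}^{\pm}]$ itself equals $\Pi_n$ of an element of $\Pas^{\pm}$ expressible, via Example \ref{ex: monomial expansion}, as a finite combination of $x$-monomials times $m_{\rho}[\bar X_n^{\pm}]$ with $|\rho|\leq|\lambda|$. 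The full-alphabet symmetric function $m_\rho[\bar X_n^{\pm}]$ is $W_n$-invariant, so it commutes with every $T_j$, $1\leq j\leq n$, and with $T_0^{-1}$ in the sense that $T_0$ is $\mathbb{K}'[X^{\pm}]^{W}$-linear up to the $s_0$-twist acting only on $x_1$. This reduces the problem, by induction on $|\lambda|$, to Propositions \ref{prop: stable limit of positive Y action 1} and \ref{prop: stable limit of positive Y action 2} for the monomial prefactors, with the factor $m_\rho[X^{\pm}]$ pulled outside the limit.

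\textbf{Step 3: assembling the limit.} Finally, combine the finitely many convergent sequences using Proposition \ref{prop: continuity} and linearity. All coefficients are independent of $n$, so the limit exists in $\Pas^{\pm}$.
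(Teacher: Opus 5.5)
There is a genuine gap in Step~2. You correctly use that a full-alphabet function $m_\rho[\bar X_n^{\pm}]$ is $W_n$-invariant and therefore commutes with $T_1,\dots,T_n$; this is also the key point of the paper's proof. But you then also pass it through $T_0^{-1}$ and pull $m_\rho[X^{\pm}]$ outside the limit, and that step is false. $W_n$-invariance says nothing about $s_0$. Since $(1-s_0)(x_1+x_1^{-1})=(1-q)x_1+(1-q^{-1})x_1^{-1}\neq 0$, one gets $T_0\,m_1[\bar X_n^{\pm}]\neq m_1[\bar X_n^{\pm}]=m_1[\bar X_n^{\pm}]\,T_0(1)$. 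So $T_0$ is not linear over $W_n$-invariants, ``up to a twist'' or otherwise.

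The factored value is in fact wrong. Take $i=1$, $\mu=(1)$, $\lambda=(1)$. A direct computation gives $\lim_n t^nY_1^{(n)}x_1=qt\,x_1+\alpha t$. On the other hand, $\lim_n t^nY_1^{(n)}(x_1m_1[\bar X_n^{\pm}])=t\,t_0T_0^{-1}\bigl(x_1^{-1}m_1[X^{\pm}]\bigr)$ has $x_1^2$-coefficient $q^2t$, not $qt$.

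The repair, which is the paper's argument, is to move the symmetric factor only as far as the right of $T_0^{-1}$:
\[
t^nY_i^{(n)}\bigl(x^{\mu}m_\lambda[\bar X_n^{\pm}]\bigr)=(t_0t_\infty)\,T_{i-1}\cdots T_1T_0^{-1}\Bigl(m_\lambda[\bar X_n^{\pm}]\cdot t^{2n-i}\,T_1^{-1}\cdots T_{n-1}^{-1}(T_n^{(n)})^{-1}T_{n-1}^{-1}\cdots T_i^{-1}x^{\mu}\Bigr).
\]
The inner sequence converges by your Step~1 run with \emph{no} tail factor. For $i\leq k$, expand $t^{k-i}T_{k-1}^{-1}\cdots T_i^{-1}x^{\mu}$ into monomials $x^{\nu}x_k^{m}$ and split by the sign of $m$. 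Use Corollary~\ref{cor: raising part} with Corollary~\ref{cor: symmetric part 1} and Lemma~\ref{lem: symmetric part 2} for $m>0$, and with Corollary~\ref{cor: symmetric part 3} and Lemma~\ref{lem: symmetric part 4} for $m<0$; for $i>k$ the sequence tends to $0$. This is what the \emph{proofs} of Propositions~\ref{prop: stable limit of positive Y action 1} and~\ref{prop: stable limit of positive Y action 2} establish. Their statements alone would not even cover mixed-sign $\mu$. Finally, multiplication by $m_\lambda$ (for which $\Pi_n(m_\lambda[X^{\pm}]g)=m_\lambda[\bar X_n^{\pm}]\,\Pi_n(g)$) and the $n$-independent operator $T_{i-1}\cdots T_1T_0^{-1}$ are handled by Proposition~\ref{prop: continuity}.

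Separately, the tail detour is self-inflicted. In the paper's notation $\bar X_n^{\pm}$ is the alphabet $x_1^{\pm1},\dots,x_n^{\pm1}$. So $x^{\mu}m_\lambda[\bar X_n^{\pm}]=\Pi_n\bigl(x^{\mu}m_\lambda[X^{\pm}]\bigr)$ is already the $W_n$-invariant case. Splitting it into tail alphabets turns the easy case into the harder one you identify as the ``main obstacle''. Step~2 then merely undoes that splitting via Example~\ref{ex: monomial expansion}, so neither the tail reduction nor the induction on $|\lambda|$ is needed.
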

\begin{proof}
	We simply observe that 
	$$(t^{k-i}T_{k-1}^{-1}...T_i^{-1}x^{\mu})$$
	is a Laurent polynomial in $x_1^{\pm},...,x_k^{\pm}$ and 
	can be written as a linear combination of monomials as
	$$(t^{k-i}T_{k-1}^{-1}...T_i^{-1}x^{\mu})=
	\sum_{\nu\in\mathbb{Z}^{k-1},m\in \mathbb{Z}}
	c_{\nu,m} x^{\nu}x_k^{m}$$
	Then we have 
	By Proposition \ref{prop: stable limit of positive Y action 1} 
	and Proposition \ref{prop: stable limit of positive Y action 2} 
	we obtain the convergence for each monomial $x^{\nu}x_k^{m}$. Therefore 
	\begin{align*}
		&t^n Y_i^{(n)}(x^{\mu}m_{\lambda}[\bar{X}_n^{\pm}])\\
		=& t_0 	T_{i-1}...T_1 T_0^{-1}m_{\lambda}[\bar{X}_n^{\pm}]\\
		&T_1^{-1}...T_{k-1}^{-1}\sum_{\nu\in\mathbb{Z}^{k-1},m\in \mathbb{Z}}
		c_{\nu,m} (T_{k}^{-1}...T_{n}(T_n^{(n)})^{-1}T_{n-1}^{-1}...T_k^{-1}
		x^{\nu}x_k^{m})
	\end{align*}
	also converges in $\Pas^{\pm}$. Thus we obtain the 
	convergence for the whole expression\\
	$t^n Y_i^{(n)}(x^{\mu}m_{\lambda}[\bar{X}_n^{\pm}])$.
\end{proof}
\begin{ex}
	Let us consider the action of $t^n Y_1^{(n)}$ on the monomial 
	$x_1^{2}$. We have 
	\begin{align*}
		t^n Y_1^{(n)}x_1^{2}=& 
		q^2 t x_1^2 + qt(1-t)x_1(x_2+x_2^{-1}+...+x_n+x_n^{-1})
		+ (qt(\alpha+\beta)-t^n(1-t_0)\beta)x_1\\
		& + t(1-t)\alpha (x_2+x_2^{-1}+...+x_n+x_n^{-1}) 
		+ (t\alpha+q^{-1}t^n t_0\beta)x_1^{-1}\\
		& + (qt(1-t_0)+(1-t+t^n)t t_0+\alpha\beta (t-t^n)-t^n t_0 t_{\infty})
	\end{align*}
	The stable limit as $n\rightarrow \infty$ is 
	\begin{align*}
		\lim_{n\rightarrow \infty}t^n Y_1^{(n)}x_1^{2}=& 
		q^2 t x_1^2 + qt(1-t)x_1 e_1[X_1^{\pm}] + qt(\alpha+\beta)x_1\\
		& + \alpha t(1-t) e_1[X_1^{\pm}] + \alpha t x_1^{-1} + 
		(qt(1-t_0)+(1-t)t t_0+\alpha\beta t)
	\end{align*}
\end{ex}

\subsection{Positive stable limit DAHA action}
As we are done with the technical part proving the stability 
of the rescaled Cherednik operators in the previous section, 
we can now define the stable limit DAHA and its action on the almost 
symmetric polynomial space $\Pas^{\pm}$ 
as follows.

\begin{dfn}\label{def: Limit DAHA}
	The stable limit double affine Hecke algebra 
	$\mathscr{H}$ of type $(C^{\vee},C)$ 
	is the 
	$\mathbb{Q}(q,t,t_0,t_{\infty},a,c)$-algebra generated by 
  $$T_0,T_1,T_2,\dots,X_1^{\pm1},X_2^{\pm1},\dots,\tilde{Y}_1,\tilde{Y}_2,\dots$$
  satisfying the following relations:
	  \begin{subequations}\label{Limit DAHA relations}
		  \begin{equation}\label{Limit quadratic}
			  \begin{gathered}
			  (T_{0}-1)(T_{0}+t_0)=0, \\
			  (T_{i}-1)(T_{i}+t)=0, \quad i\geq 1,
			  \end{gathered}
			\end{equation}
		  \begin{equation}\label{Limit T relation ii}
			\begin{gathered}
			T_{i}T_{j}=T_{j}T_{i}, \quad |i-j|>1,\\
			T_{i}T_{i+1}T_{i}=T_{i+1}T_{i}T_{i+1}, \quad i\geq 1,\\
			T_0 T_1 T_0 T_1 = T_1 T_0 T_1 T_0,
			\end{gathered}
			\end{equation}
		  \begin{equation}\label{Limit X relation ii}
			  \begin{gathered}
				  t T_i^{-1} X_i T_i^{-1}=X_{i+1}, \quad i\geq 1\\
				  T_{i}X_{j}=X_{j}T_{i},\quad  j\neq i,i+1,\\
				  X_i X_j=X_j X_i \quad i,j\geq 1,\\
				  (X_1 T_0 - t_0 c^{-1})(X_1 T_0 + q^{-1}c)=0,
			  \end{gathered}
		  \end{equation}
		  \begin{equation}\label{Limit Y relation ii}
			  \begin{gathered}
				  t T_i \tilde{Y}_i T_i=\tilde{Y}_{i+1}, \quad i\geq 1\\
				  T_{i}\tilde{Y}_{j}=\tilde{Y}_{j}T_{i},\quad  j\neq i,i+1,\\
				  \tilde{Y}_i \tilde{Y}_j=\tilde{Y}_j \tilde{Y}_i \quad i,j\geq 1,\\
				  \tilde{Y}_1 X_1 T_0 \tilde{Y}_1 =0.
			  \end{gathered}
		  \end{equation}
	  \end{subequations}
  \end{dfn}
\begin{rmk}
	The last relation in \eqref{Limit Y relation ii} 
	can be regarded as the limit relation of the relation 
	$$(X_1T_0 (t^n Y_1^{(n)})  - t^n t_0 a)(X_1T_0 (t^n Y_1^{(n)})+ t^n t_0 t_{n} a^{-1})=0.$$
	in the finite DAHA.
\end{rmk}
Then from Corollary \ref{cor: stability of positive Y action} 
we obtain 
  \begin{thm}\label{def: positive Limit DAHA action}
	The stable limit double affine Hecke algebra 
	$\mathscr{H}$ of type $(C^{\vee},C)$ acts on the space $\Pas^{\pm}$ 
	of almost symmetric Laurent polynomials, with 
	the action of $Y_i$ defined as 
	$$\tilde{Y}_i = \lim_{n\rightarrow\infty}t^n Y_i^{(n)}\Pi_n$$
	and the action of $T_i,X_i^{\pm}$ defined in 
	Definition \ref{def: standard representation} for $n>i$.
  \end{thm}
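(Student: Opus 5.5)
The plan has two parts: first show that the limit operators $\tilde{Y}_i$ are well defined on $\Pas^{\pm}$, then verify the defining relations of $\mathscr{H}$ by passing each finite-rank relation to the limit through Proposition \ref{prop: continuity}.

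\textbf{Well-definedness.} By Proposition \ref{prop: standard basis 1}, every element of $\Pas^{\pm}$ is a finite combination of the $m_{\lambda\vert\mu}$. The truncations $\Pi_n m_{\lambda\vert\mu}$ are finite combinations of elements $x^{\nu}m_{\rho}[\bar{X}_n^{\pm}]$, with coefficients stable in $n$, by Example \ref{ex: monomial expansion}. Corollary \ref{cor: stability of positive Y action} then shows that $t^nY_i^{(n)}\Pi_n f$ converges in $\Pas^{\pm}$ for every $f$. Proposition \ref{prop: well-defined limit} makes the limit independent of choices, so $\tilde{Y}_i$ is a well-defined linear operator.

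For $T_0,T_1,\dots$ and $X_i^{\pm1}$, the classical formulas of Definition \ref{def: standard representation} do not depend on $n$ once $n>i$. They commute with $\Pi_n$ up to terms that vanish in the limit, and they preserve $\Pas^{\pm}$: $T_i$ for $i\ge1$ only moves finitely many variables, and $T_0$ only touches $x_1$. So they are limits of the constant sequences $T_i^{(n)}$ and $X_i^{(n)}$, and Proposition \ref{prop: continuity} applies to all generators.

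\textbf{Relations.} Every relation of Definition \ref{def: Limit DAHA} not involving $\tilde{Y}$ already holds in $\H_n$ for all large $n$ (Definition \ref{def: Limit DAHA-alt}). These relations are independent of $n$, so by Proposition \ref{prop: continuity}(2) they pass to the limit.

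The relations $tT_i\tilde{Y}_iT_i=\tilde{Y}_{i+1}$, $T_i\tilde{Y}_j=\tilde{Y}_jT_i$ and $\tilde{Y}_i\tilde{Y}_j=\tilde{Y}_j\tilde{Y}_i$ are homogeneous of the same degree in the $Y$'s on both sides. Multiplying the finite-rank relations by $t^n$ or $t^{2n}$ and applying Proposition \ref{prop: continuity}(2) to products of the rescaled operators $t^nY^{(n)}$ gives them directly.

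For the last relation, start from the finite identity
$$(X_1T_0Y_1^{(n)}-t_0a)(X_1T_0Y_1^{(n)}+t_0t_na^{-1})=0$$
with $t_n=t_\infty$, multiply by $t^{2n}$, and expand:
$$(X_1T_0\,t^nY_1^{(n)})^2+t^n t_0(t_\infty a^{-1}-a)\,X_1T_0\,t^nY_1^{(n)}-t^{2n}t_0^2t_\infty=0.$$
The operators $X_1T_0\,t^nY_1^{(n)}$ have limits, and the scalars $t^n$ and $t^{2n}$ tend to $0$ $t$-adically. Hence the limit gives $(X_1T_0\tilde{Y}_1)^2=0$. Multiplying on the left by $(X_1T_0)^{-1}$, which is invertible in the algebra since $T_0$ and $X_1$ are, yields $\tilde{Y}_1X_1T_0\tilde{Y}_1=0$.

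\textbf{Main obstacle.} The delicate point is Proposition \ref{prop: continuity}(2) for products: it requires that $t^nY^{(n)}_i\Pi_n$ converge for \emph{every} input, not just for truncations of elements of $\Pas^{\pm}$. In the composite $t^nY_i^{(n)}\,t^nY_j^{(n)}\Pi_n f$, the inner output is a finite-rank polynomial that is not exactly $\Pi_n$ of its limit. To handle this, I would show that the inner outputs form a convergent sequence in the sense of \S2, namely $h_n+\sum a_{i,n}g_{i,n}$ with stable $g_{i,n}$. Then part (1) of Proposition \ref{prop: continuity} applies, using the uniform control of the $t$-degrees of the coefficients established in the proofs of Corollaries \ref{cor: symmetric part 1} and \ref{cor: symmetric part 3}.
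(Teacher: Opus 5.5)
Your proposal is correct and is essentially the paper's own proof. In both, $\tilde{Y}_i$ is well defined by Corollary \ref{cor: stability of positive Y action}, and every relation of $\mathscr{H}$ is obtained by passing the finite-rank relation to the limit with Proposition \ref{prop: continuity}. The last relation is, as the paper's remark indicates, the $t$-adic limit of the quadratic relation rescaled by $t^{2n}$, and you correctly finish it by cancelling the invertible factor $X_1T_0$.

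The ``main obstacle'' you flag needs no extra work. Proposition \ref{prop: continuity} only assumes convergence of $t^nY_i^{(n)}\Pi_n f$ for $f\in\Pas^{\pm}$, which is exactly what the corollary supplies. Its part (1) is precisely what handles the non-truncated inner outputs in a composite.
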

  \begin{proof}
	From Corollary \ref{cor: stability of positive Y action} 
	we see the action of $\tilde{Y}_i$ is well-defined. And 
	the stability of the action of $T_i$ and $X_i$ are clear from the definition. 
	All relations in Definition \ref{def: Limit DAHA} 
	can be obtained by applying Proposition \ref{prop: continuity} 
	to the corresponding relations in the finite DAHA.
  \end{proof}

\subsection{Stabilization of the negative DAHA action}
In this section we repeat the stabilization procedure 
in the opposite side, namely  
the standard representation of DAHA on the Laurent polynomial ring with 
the Cherednik operators rescaled by $t^{-n}$, 
and use the $t^{-1}$-adic topology. 
The structure of the proof is parallel to the positive case, 
but the roles of the two plethystic contributions are reversed: 
terms that survived in the $t$-adic limit may now vanish, 
while terms involving negative powers are controlled in the $t^{-1}$-adic 
topology. We again start from the two separate summands in Corollary 
\ref{cor: raising part} and show the stability of each summand separately.
\begin{prop}\label{prop: symmetric part 5}
	For $k\in \mathbb{Z}_+$, let the sequence $\{a_n\}_{n\geq k}$ be defined by 
	\begin{align*}
		a_n =& t^{-n-k}T_{k}^{-1}...T_{n-1}^{-1}\\
		&(h_m[(1-t)\bar{X}_{[k,n-1]}+(1-t_{\infty})x_{n}+x_n^{-1}] 
		- h_m[(1-t)\bar{X}_{[k,n-1]}+(1-t_{\infty})x_{n}])
	\end{align*}
	where $t_{\infty}$ is a fixed parameter. 
	Then $a_n$ converges to 0 as $n\rightarrow \infty$.
\end{prop}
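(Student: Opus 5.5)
The plan is to reduce the statement to the positive-case computation already carried out in Corollary \ref{cor: symmetric part 1}. Write
$$b_n = t^{n-k}T_{k}^{-1}\cdots T_{n-1}^{-1}\bigl(h_m[(1-t)\bar{X}_{[k,n-1]}+(1-t_{\infty})x_{n}+x_n^{-1}] - h_m[(1-t)\bar{X}_{[k,n-1]}+(1-t_{\infty})x_{n}]\bigr).$$
This is exactly the sequence treated in Corollary \ref{cor: symmetric part 1}, so $a_n = t^{-2n}b_n$. Convergence of $a_n$ to $0$ in the $t^{-1}$-adic topology therefore amounts to a uniform upper bound on the $t$-degree of the coefficients of $b_n$. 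Concretely, I want an $N(m)$ independent of $n$ such that every coefficient of every monomial in $b_n$ has $t$-degree at most $n+N(m)$. Then $t^{-2n}b_n$ has all coefficients of $t$-degree at most $-n+N(m)\to-\infty$.

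Before bounding degrees, I will use two facts already established in the proof of Corollary \ref{cor: symmetric part 1}. First, $b_n$ is symmetric in $x_i^{\pm1}$ for $k+1\le i\le n$. Second, $b_n$ is a combination of $x_k^r m_\lambda[\bar X^{\pm}_{[k+1,n]}]$ with $|r|,|\lambda|\le m$. So only finitely many basis shapes occur, and it suffices to bound the $t$-degree of each coefficient $c_{r,\lambda}^{(n)}$. Together with the standard convergence framework and Proposition \ref{prop: well-defined limit}, this gives convergence to $0$ in $\Pas^{\pm}$.

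For the degree bound I would use the explicit generating-function formula for $t^{n-k}T_k^{-1}\cdots T_{n-1}^{-1}S_2(z)$, obtained by iterating Lemma \ref{lem: symmetric part 1} with $t_\infty$ replaced by $t_\infty t^{i}$. That formula is a sum over $\epsilon\in\{0,1\}^{n-k}$ of products of factors. These factors carry powers $t^{\sum_{l<j}\epsilon_l}\le t^{j}$ inside terms like $(1-\bar\epsilon_j t_\infty t^{\sum\epsilon_l}x_{n-j}^{\pm1}z)$ and $(1-t_\infty t^{\sum\epsilon_l}z^2)^{\epsilon_j}$, together with scalar prefactors $(1-t_\infty^{-1}t^{1+\sum\epsilon_l})$ or $t_\infty^{-1}t^{1+\sum\epsilon_l}$.

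The key observations for the bound are the following. Only the coefficient of $z^m$ matters, and $m$ is fixed, so at most $m$ factors contribute nonconstant $z$-terms, each with $t$-degree at most $n$. The scalar prefactors, however, appear for every $j$ and could accumulate degree of order $n^2$. To control this I would instead argue directly from the operator: $t^{n-k}T_k^{-1}\cdots T_{n-1}^{-1}=(T_k+t-1)\cdots(T_{n-1}+t-1)$. Each factor applied to a Laurent polynomial raises the $t$-degree of coefficients by at most one, since $T_i$ has coefficients linear in $t$. The input plethystic expression has coefficients of $t$-degree at most $m$. Hence the coefficients of $b_n$ have $t$-degree at most $n-k+m$, and $a_n=t^{-2n}b_n$ has degree at most $-n-k+m$, which tends to $-\infty$.

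The main obstacle is making sure this crude bound is legitimate. The divided-difference part of $T_i$ does not introduce denominators in $t$, only polynomial coefficients, so that part is fine. The real issue is that the prefactor in the statement must really be $t^{-n-k}$ (equivalently $t^{-2n}$ times the positive normalization $t^{n-k}$, up to the exponent convention), so that the net exponent beats the linear growth $n$. If the exponent in the statement were only $t^{-(n-k)}$, I would fall back on the finer generating-function analysis above. There the goal would be to show that the top $t$-degree terms cancel in the symmetric expansion, using the telescoping as in Lemma \ref{lem: symmetric part 2}.
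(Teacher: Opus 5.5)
Your argument is correct and is essentially the paper's proof: the paper notes that $T_i^{-1}=s_i+(1-t^{-1})x_{i+1}\frac{1-s_i}{x_{i+1}-x_i}$ never raises the $t$-degree, so the coefficients of $t^{-k}T_k^{-1}\cdots T_{n-1}^{-1}(\cdots)$ have $t$-degree at most $m$, and the remaining factor $t^{-n}$ forces $t^{-1}$-adic convergence to $0$. This is the same bound $\deg_t\le m-n-k$ that you obtain via $tT_i^{-1}=T_i+t-1$. Your appeal to the symmetry from Corollary \ref{cor: symmetric part 1}, which reduces everything to finitely many coefficients $c_{r,\lambda}^{(n)}$, is a bookkeeping step that the paper leaves implicit, while the generating-function detour and the hedge about the exponent are unnecessary.
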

\begin{proof}
	Note that 
	$$T_i^{-1}= s_i + (1-t^{-1})x_{i+1}\frac{1-s_i}{x_{i+1}-x_i}$$
	Hence the action of $T_k^{-1}...T_{n-1}^{-1}$ on Laurent polynomials 
	only creates coefficients formed by negative powers of $t$. Since $m$ 
	is fixed, we see the coefficient of each monomial in the expression of 
	$$h_m[(1-t)\bar{X}_{[k,n-1]}+(1-t_{\infty})x_{n}+x_n^{-1}]
	- h_m[(1-t)\bar{X}_{[k,n-1]}+(1-t_{\infty})x_{n}]$$
	has the coefficient with degree at most $m$ in $t$. Therefore 
	the coefficient of each monomial in the expression of 
	$$t^{-k}T_k^{-1}...T_{n-1}^{-1}(
	h_m[(1-t)\bar{X}_{[k,n-1]}+(1-t_{\infty})x_{n}+x_n^{-1}]
	- h_m[(1-t)\bar{X}_{[k,n-1]}+(1-t_{\infty})x_{n}])$$
	has the coefficient with degree at most $m$ in $t$. Thus 
	$a_n$ converges to 0 as $n\rightarrow \infty$ in 
	the $t^{-1}$-adic topology.
\end{proof}
\begin{prop}\label{prop: symmetric part 6}
	For $k>0$ and $m\geq 0$, 
	let the sequence $\{a_n\}_{n\geq k}$ be defined by 
	$$a_n=t^{-k}T_k^{-1}...T_{n-1}^{-1}
	h_m[(1-t^{-1})\bar{X}_{[k,n-1]}^{-1}+(1-t_{\infty}^{-1})x_{n}^{-1}+x_n]$$
	where $t_{\infty}$ is a fixed parameter. Then the sequence 
	$\{a_n\}_{n\geq k}$ converges in $\Pas^{\pm}$ as 
	$n\rightarrow \infty$.
\end{prop}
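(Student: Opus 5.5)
We follow the template of Corollary \ref{cor: symmetric part 1}, now in the $t^{-1}$-adic topology and building on Lemma \ref{lem: symmetric part 3}.

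\textbf{Step 1: iterate Lemma \ref{lem: symmetric part 3}.} Let $S_4(z)$ be the generating series of Lemma \ref{lem: symmetric part 3}, with $t_n$ replaced by $t_\infty$. In that lemma $t_\infty$ is a free parameter, so it may be replaced by $t_\infty t^{-j}$, or by $t_\infty t^{-1}$, at each step. Using this, we apply the lemma repeatedly to $t^{n-k}T_k^{-1}\cdots T_{n-1}^{-1}S_4(z)$. Each step either keeps the weight $t_\infty$ on the variable $x_{n-j}$ (first summand) or shifts it by $t^{-1}$ (second and third summands). This yields a sum over binary strings $\epsilon=(\epsilon_0,\dots,\epsilon_{n-k-1})$ of the form
\[
\frac{(\cdots)\,x_k\text{-factor}}{(1-x_kz)(1-x_k^{-1}z)}\prod_{j}\frac{(1-\bar\epsilon_j\,\cdot\,x_{n-j}z)(1-\bar\epsilon_j\,\cdot\,x_{n-j}^{-1}z)}{(1-x_{n-j}z)(1-x_{n-j}^{-1}z)}\cdot C_\epsilon(z),
\]
with scalar factors $C_\epsilon(z)$ built from $t_\infty$, $(t-t_\infty t^{-j})$ and $(1-t^{1+j}t_\infty^{-1})z^2$. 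Exactly as in the computation in the proof of Corollary \ref{cor: symmetric part 3}, this expression is visibly symmetric in each pair $x_i,x_i^{-1}$ and in the variables $x_{k+1},\dots,x_n$. Hence $a_n$, which equals this expression times $t^{-n}$, is a linear combination
\[
a_n=\sum_{r,\lambda}c^{(n)}_{r,\lambda}\,x_k^r\,m_\lambda[\bar X^{\pm}_{[k+1,n]}],
\]
where $|r|\le m$, $|\lambda|\le m$, and the index set is finite and independent of $n$.

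\textbf{Step 2: convergence of the coefficients.} It remains to show that each $c^{(n)}_{r,\lambda}$ converges $t^{-1}$-adically; the limit is then $\sum c_{r,\lambda}x_k^r m_\lambda[X_k^{\pm}]\in\Pas^{\pm}$. We use the degree bounds from the proof of Corollary \ref{cor: symmetric part 3}.
\begin{itemize}
\item Every coefficient of $t^{n-k}T_k^{-1}\cdots T_{n-1}^{-1}h_m[\cdots]$ is a Laurent polynomial in $t$ of $t$-degree at least $-m$.
\item Its $t_\infty$-degree lies in $\{0,-1\}$.
\end{itemize}
Writing $t^{n-k}T_k^{-1}\cdots T_{n-1}^{-1}=\prod(T_i+(t-1))$, the $t$-degree also grows at most linearly in $n$. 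After multiplying by $t^{-n}$ this growth must be controlled.

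Fix a target monomial $x_k^r x^{(0^{k},\lambda)}$. For $j$ with $n-j-k>l(\lambda)$, only the constant terms (in the $x$'s) of the $j$-th factor contribute. Since $m$ is fixed, only boundedly many of these factors can contribute a nonconstant power of $z$. So the coefficient reduces, as in Corollary \ref{cor: symmetric part 1}, to a finite combination of sums of the form
\[
t^{-n}\sum_{\epsilon}\prod_j\bigl(\text{constant part of the $j$-th scalar factor}\bigr).
\]
The constant parts have the shape $t_\infty$ or $(t-t_\infty t^{-s})$, where $s=\sum_{l<j}\epsilon_l$. After the prefactor $t^{-n}$, the top $t$-degree of the $\epsilon$-summand is governed by $\#\{j:\epsilon_j=1\}-n$. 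Only strings with few $\epsilon_j=0$ survive modulo any fixed power $t^{-N}$.

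\textbf{Step 3: the main obstacle.} The hardest step is to make the last claim precise. We must show that for each fixed $N$, the part of $c^{(n)}_{r,\lambda}$ of $t$-degree $\ge -N$ stabilizes for large $n$. The plan is the following.
\begin{itemize}
\item Expand each $(t-t_\infty t^{-s})$ as $t\bigl(1-t_\infty t^{-s-1}\bigr)$ and collect powers of $t_\infty$.
\item Use the $t_\infty$-degree constraint $\{0,-1\}$ to cancel all but boundedly many $t_\infty$-powers. This is the analogue of the final computation in Corollary \ref{cor: symmetric part 1}.
\item Bound the number of factors where the $t^{-s-1}$ term is chosen. Each such choice lowers the degree by at least $1$, and the lowering accumulates in $s$, so each fixed $t$-degree window receives contributions only from strings with boundedly many deviations.
\item Conclude that the resulting sums over $\epsilon$ are geometric-type series in $t^{-1}$, hence convergent.
\end{itemize}
If the bookkeeping is unwieldy, we would instead derive a closed telescoping form analogous to Lemma \ref{lem: symmetric part 4}, expressing $a_n$ via $h$'s of $(1-t^{-1})\bar X^{\pm}_{[k+1,n]}$ with explicit powers $t^{-i}$, and then read off convergence directly.
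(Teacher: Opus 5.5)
\textbf{Verdict.} Your route is the paper's: iterate Lemma~\ref{lem: symmetric part 3}, deduce tail symmetry, expand $a_n=\sum c^{(n)}_{r,\lambda}x_k^r m_\lambda$, then control the coefficients ``as in Corollary~\ref{cor: symmetric part 1}''. However, Step~2--3 is not only unfinished; it fails as set up, for two concrete reasons.

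\textbf{First gap: the scalar weights are wrong.} Choosing the second or third summand of Lemma~\ref{lem: symmetric part 3} replaces $t_\infty$ by $t\,t_\infty$, not by $t^{-1}t_\infty$. The new factor is $1-t^{-1}t_\infty^{-1}x^{-1}z$; compare the $t^{-2}t_\infty^{-1}x_{n-2}^{-1}$ in the proof of Corollary~\ref{cor: symmetric part 3}. The first summand carries the current parameter, not $t_\infty$.
\begin{itemize}
\item After $s$ earlier advances, the two $z^0$-weights are therefore $t^{s}t_\infty$ and $t-t^{s}t_\infty$.
\item After your factor $t^{-n}$ (the paper's normalization) they become $w_s=t^{s-1}t_\infty$ and $1-w_s$. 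The latter is multiplied by $1-u_sz^2$, where $u_s=t^{-s}t_\infty^{-1}$.
\item These weights are large in the $t^{-1}$-adic topology. A string with many advances followed by many stays has $t$-degree of order $n^2$.
\end{itemize}
So your estimate ``top degree $=\#\{j:\epsilon_j=1\}-n$'' is false, and no termwise truncation modulo $t^{-N}$ can work. Convergence comes only from exact cancellation. The identities $w_s+(1-w_s)=1$ and $w_su_s=t^{-1}$ turn the $\epsilon$-sum into a telescoping, Markov-chain-like recursion. For instance, let $P_j(s)$ be the total $z^0$-weight of length-$j$ strings with $s$ advances, and set $E_j=\sum_sP_j(s)u_s$. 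Then $E_{j+1}=t^{-1}E_j+t^{-1}(1-t^{-1})$. This cancellation mechanism is the missing idea, and your Step~3 bullets rest on the wrong estimate instead.

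\textbf{Second gap: a monomial coefficient is not $c^{(n)}_{r,\lambda}$.} The coefficient of the fixed monomial $x_k^r x^{(0^k,\lambda)}$ picks up more than $c^{(n)}_{r,\lambda}$. The function $m_{\lambda'}[\bar{X}^{\pm}_{[k+1,n]}]$ is taken on the alphabet with separate letters $x_i$ and $x_i^{-1}$. Hence every $\lambda'=\lambda\cup(a,a)$ contributes to that monomial, with multiplicity growing in $n$, through the products $x_i^a x_i^{-a}=1$.
\begin{itemize}
\item In your Step~2 this shows up as the unbounded number of ways to choose which far-away factors supply the $z^2$. Recall that the constant term in $x$ of $(1-xz)^{-1}(1-x^{-1}z)^{-1}$ is $(1-z^2)^{-1}$.
\item Concretely, for $m=2$ the recursion gives constant term $t^{-k}\bigl((n-k)(1-t^{-1})^2+1-t_\infty^{-1}\bigr)$; the case $n=k+1$ can be checked by hand. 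This diverges.
\item Yet $c^{(n)}_{0,(1,1)}=t^{-k}(1-t^{-1})^2$ and $c^{(n)}_{0,\emptyset}=t^{-k}(1-t_\infty^{-1})$ are independent of $n$.
\end{itemize}
So the monomial sums you set out to prove convergent do not converge. You must first peel off the contributions of the larger $\lambda'$, which is a unitriangular system with $n$-dependent entries, or extract $c_{r,\lambda}$ at the level of symmetric functions. Only then can the cancellation above be used.

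The paper's own proof makes the same monomial reduction and then appeals to Corollary~\ref{cor: symmetric part 1} in one line, so it does not supply this argument either. Your fallback, a closed telescoping formula as in Lemma~\ref{lem: symmetric part 4}, is the most promising way to close the gap.
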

\begin{proof}
From Corollary \ref{cor: symmetric part 3} we see that each $a_n$ is a symmetric 
Laurent polynomial in $x_{i},x_{i}^{-1}$ for all $k+1\leq i\leq n$, so we 
can write $a_n$ as a linear combination of almost symmetric polynomials 
$$a_n = \sum_{r,\lambda}c_{r,\lambda} x_{k}^{-r}
m_{\lambda}[\bar{X}_{[k,n]}^{\pm}],$$
summed over all $|r|\leq m$ and $|\lambda|\leq m$. It 
suffices to prove the stability of each coefficient $c_{r,\lambda}$ 
in front of the monomial $x_{k}^{r}x^{(0^{k-1},\lambda)}$ when 
$n$ is large enough. 
Now following the computation in the proof of Lemma \ref{lem: symmetric part 3} 
Corollary \ref{cor: symmetric part 3}, we still use the generating series 
\begin{align*}
	S_4(z)=&\sum_{m\geq 0}
h_m[(1-t^{-1})\bar{X}_{[k,n-1]}^{-1}+(1-t_{\infty}^{-1})x_{n}^{-1}+x_n]z^m	\\
=& \frac{1-t_{\infty}^{-1}x_{n}^{-1}z}{(1-x_n z)(1-x_n^{-1}z)}
\prod_{i=k}^{n-1}\frac{1-t^{-1}x_i^{-1}z}{1-x_i^{-1}z}
\end{align*}
Then from Lemma \ref{lem: symmetric part 3} we have
\begin{align*}
	T_{n-1}^{-1}
	(S_4(z)\prod_{i=k}^{n-2}\frac{1-x_i^{-1}z}{1-t^{-1}x_i^{-1}z})
	=& \frac{t^{-1}t_{\infty}(1-t_{\infty}^{-1}x_{n-1}^{-1}z)(1-t_{\infty}^{-1}x_n z)(1-t_{\infty}^{-1}x_n^{-1}z)}
	{(1-x_{n-1}z)(1-x_{n-1}^{-1}z)(1-x_n z)(1-x_n^{-1}z)}\\
	&+\frac{(1-t^{-1}t_{\infty}^{-1}x_{n-1}^{-1}z)((1-t^{-1}t_{\infty})+(t^{-1}-t_{\infty}^{-1})z^2)}
	{(1-x_{n-1}z)(1-x_{n-1}^{-1}z)(1-x_n z)(1-x_n^{-1}z)}
\end{align*}
Then by a recursion we see that
\begin{align*}
a_n=& t^{-k}\sum_{\substack{{\epsilon=\{\epsilon_0,...,\epsilon_{n-k-1}\}}
\\{\epsilon_i\in \{0,1\}}}}
\prod_{j=0}^{n-k-1}(\frac{(1-\bar{\epsilon}_j t_{\infty}^{-1}
	t^{-\sum_{l=0}^{j-1}\epsilon_l}x_{n-j}z)
(1-\bar{\epsilon}_j t_{\infty}^{-1} t^{-\sum_{l=0}^{j-1}\epsilon_l}x_{n-j}^{-1}z)}
{(1-x_{n-j} z)(1-x_{n-j}^{-1}z)}\\
&\prod_{j=0}^{n-k-1}(t^{-1+\sum_{l=0}^{j-1}\epsilon_l}t_{\infty})^{\epsilon_j}
((1-t^{-1+\sum_{l=0}^{j-1}\epsilon_l}t_{\infty})
(1-t_{\infty}^{-1} t^{-\sum_{l=0}^{j-1}\epsilon_l} z^2))^{\bar{\epsilon}_j})
\end{align*}
where $\bar{\epsilon}_j=1-\epsilon_j$. By exactly the same argument as in the proof of 
Corollary \ref{cor: symmetric part 1}, we obtain that 
the coefficient of $x_{k}^{r}x^{(0^{k-1},\lambda)}$ 
in $a_n$ is convergent as $n\rightarrow \infty$. 
This finished the proof.
\end{proof}
From the two propositions \ref{prop: symmetric part 5} and 
\ref{prop: symmetric part 6} and by the same argument as in 
Corollary \ref{cor: stability of positive Y action} 
we obtain a similar stability result for the action of 
$t^{-n}Y_i^{(n)}$ on $\Pas^{\pm}$.
\begin{prop}\label{prop: stability of negative Y action}
	Let $\mu\in\mathbb{Z}^k$ for some $k\geq 0$ and $\lambda$ be 
	a partition of length $k$. Then 
	the sequence of Laurent polynomials 
	$$\{t^{-n} Y_i^{(n)}(x^{\mu}m_{\lambda}[\bar{X}_n^{\pm}])\}_{n\geq \max\{i,k\}}$$
	is convergent to an almost symmetric Laurent polynomial in $\Pas^{\pm}$.	
\end{prop}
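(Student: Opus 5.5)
The plan is to mirror the proof of Corollary \ref{cor: stability of positive Y action}, now in the $t^{-1}$-adic topology. Write
$$t^{-n}Y_i^{(n)}=(t_0t_{\infty})\,t^{-i}\,T_{i-1}\cdots T_1T_0^{-1}\,T_1^{-1}\cdots T_{k-1}^{-1}\,\bigl(t^{-k}T_k^{-1}\cdots T_{n-1}^{-1}(T_n^{(n)})^{-1}T_{n-1}^{-1}\cdots T_k^{-1}\bigr)\,\bigl(t^{k}T_{k-1}^{-1}\cdots T_i^{-1}\bigr).$$
This factorization is designed so that the powers of $t$ are absorbed into the part depending on $n$. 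First, $m_{\lambda}[\bar X_n^{\pm}]$ is symmetric in $x_{k+1},\dots,x_n$ and under their inversions, so by Remark \ref{rmk: symmetry} it is fixed by $T_{k+1},\dots,T_n$. We will check that it commutes through the whole middle chain after splitting $m_\lambda[\bar X_n^\pm]$ into pieces in $\bar X_k^\pm$ times $m_{\nu}[\bar X_{[k+1,n]}^{\pm}]$. The first factor only involves $x_1,\dots,x_k$ and is fixed independently of $n$, which reduces the statement to a single monomial $x^{\nu}x_k^{m}$ with $\nu\in\mathbb Z^{k-1}$ and $m\in\mathbb Z$. Its prefactor $x^\nu$ commutes with $T_k,\dots,T_n$.

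For the monomial $x_k^m$, apply Corollary \ref{cor: raising part}, with $t_n$ replaced by the fixed parameter $t_{\infty}$.

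For $m>0$, the result is $t^{-(n-k)}t_\infty^{-1}$ times the difference $h_m[\cdots+x_n^{-1}]-h_m[\cdots]$ plus $\beta h_{m-1}[\cdots]$. After the lowering chain $T_k^{-1}\cdots T_{n-1}^{-1}$:
\begin{itemize}
\item the difference term gives exactly the sequence of Proposition \ref{prop: symmetric part 5}, which tends to $0$;
\item the $\beta$ term is handled by Lemma \ref{lem: symmetric part 2}. Rewriting $t^{-n-k}\cdot t^{n-k}T^{-1}\cdots$, its contribution consists of terms carrying $t^{-2n}$ times bounded-degree coefficients, which vanish $t^{-1}$-adically, plus at most the $t^{n-k+i}$ pieces. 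Multiplied by $t^{-2n}$, these also vanish.
\end{itemize}
So positive monomials contribute $0$ in the limit, except for the $m=0$ constant, which we compute directly.

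For $m<0$, part (2) of Corollary \ref{cor: raising part} gives $h_{|m|}[(1-t^{-1})\bar X^{-1}_{[k,n-1]}+(1-t_\infty^{-1})x_n^{-1}+x_n]$, whose image under $t^{-k}T_k^{-1}\cdots T_{n-1}^{-1}$ converges by Proposition \ref{prop: symmetric part 6}. The $\beta$ term is handled by Lemma \ref{lem: symmetric part 4} at scale $t^{-n}$. Its expansion contains $t^{-n}h_{m-1}[x_k+x_k^{-1}]$-type pieces, which tend to $0$, and $t^{n-k-i}\cdot t^{-n}$ pieces, which stabilize to $t^{-k-i}h_{m-2i-1}[x_k+(1-t^{-1})x_k^{-1}+(1-t^{-1})X_k^{-1}]$. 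These are not obviously almost symmetric, since they involve only $X^{-1}_{k}$ rather than $X_k^\pm$. I expect this to be the main obstacle: either the bookkeeping of powers of $t$ must show such pieces actually cancel or vanish, or the symmetrization from Corollary \ref{cor: symmetric part 3}, applied to the full sum, must show that the total is symmetric under inversion. I would first try to verify the total symmetry via Remark \ref{rmk: symmetry} by checking $T_j a_n=a_n$ for $j>k$ before the limit, as is done for $S_4$.

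Finally, by Proposition \ref{prop: continuity}, the fixed outer operators $T_{i-1}\cdots T_0^{-1}\cdots T_{k-1}^{-1}$ and multiplication by the convergent scalars $c_{\nu,m}$ pass through the limit. The limits lie in $\P(k)^{\pm}$ and the outer operators preserve $\Pas^\pm$, so the sequence converges in $\Pas^{\pm}$. The case $i>k$ reduces to the same computation with $k$ replaced by $i$.
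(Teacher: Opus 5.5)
Your route is the paper's. The paper's proof of this proposition is only the remark that Propositions \ref{prop: symmetric part 5}, \ref{prop: symmetric part 6} and the argument of Corollary \ref{cor: stability of positive Y action} suffice. You are more careful than that: you treat the $\beta$-terms of Corollary \ref{cor: raising part} explicitly, and you correctly keep the $m=0$ and $i>k$ contributions, which do not vanish $t^{-1}$-adically (e.g.\ $t^{-n}Y_i^{(n)}1=t_0t_\infty t^{-i}$ for every $n$).

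The step you leave open is not a real obstacle. The alphabet $(1-t^{-1})X_k^{-1}$ comes from a misprint in the statement of Lemma \ref{lem: symmetric part 4}. The telescoped product in that lemma's own proof is $\prod_{j=k+1}^{n}\frac{(1-t^{-1}x_jz)(1-t^{-1}x_j^{-1}z)}{(1-x_jz)(1-x_j^{-1}z)}$. So the last plethysm should read $h_{m-2i-1}[x_k+(1-t^{-1})x_k^{-1}+(1-t^{-1})\bar{X}^{\pm}_{[k+1,n]}]$.

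You can check this at $k=n-1$, $m=2$. Directly, $tT_{n-1}^{-1}\bigl((1-t^{-1})x_{n-1}^{-1}+x_n+x_n^{-1}\bigr)=tx_{n-1}+(t-1+t^{-1})x_{n-1}^{-1}+(t-1)(x_n+x_n^{-1})$, while the printed right-hand side lacks the term $(t-1)x_n$. With the correction, the $\beta$-part of your middle factor applied to $x^\nu x_k^{-m}$ ($m>0$) converges to $-t_\infty^{-1}\beta\,x^\nu\sum_i t^{-k-i}h_{m-2i-1}[x_k+(1-t^{-1})x_k^{-1}+(1-t^{-1})X_k^{\pm}]\in\P(k)^{\pm}$. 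The rest of your argument then goes through.

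Your proposed fallback would also have settled it, and it is the structural reason behind the correction.
\begin{itemize}
\item The operator $T_k^{-1}\cdots T_{n-1}^{-1}T_n^{-1}T_{n-1}^{-1}\cdots T_k^{-1}$ commutes with $T_{k+1},\dots,T_n$. This is the type-$B$ Jucys--Murphy argument from the braid relations, including $T_{n-1}T_nT_{n-1}T_n=T_nT_{n-1}T_nT_{n-1}$.
\item Hence its value on $x^\nu x_k^{-m}$ is $s_{k+1},\dots,s_n$-invariant by Remark \ref{rmk: symmetry}.
\item Write this value as $f_0+\beta f_1$ with $f_0,f_1$ free of $a$; this is possible because only the single $T_n^{-1}=t_n^{-1}(T_n+t_n-1)$ involves $a$, and it does so linearly through $\beta$.
\item The $s_j$ do not touch $a$, so $f_0$ and $f_1$ are separately invariant. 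In particular the printed form of the lemma cannot be right.
\end{itemize}

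One small correction: do not split $m_\lambda[\bar X_n^{\pm}]$. The pieces $m_\nu[\bar X^{\pm}_{[k+1,n]}]$ are not $s_k$-invariant, so they do not commute with the $T_k^{\pm1}$ in the middle chain. Instead, use that $m_\lambda[\bar X_n^{\pm}]$ is $W_n$-invariant. It therefore commutes with all of $T_1^{\pm1},\dots,T_n^{\pm1}$ and can be moved directly to the right of $T_0^{-1}$, as in the paper's positive corollary.
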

Finally, we can define the negative stable limit DAHA and 
its action on $\Pas^{\pm}$ as follows. 
\begin{thm}\label{def: negative Limit DAHA action}
	The stable limit double affine Hecke algebra 
	$\mathscr{H}$ of type $(C^{\vee},C)$ acts on the space $\Pas^{\pm}$ 
	of almost symmetric Laurent polynomials, with 
	the action of $\tilde{Y}_i$ defined as 
	$$\tilde{Y}_i = \lim_{n\rightarrow\infty}t^{-n} Y_i^{(n)}\Pi_n$$
	and the action of $T_i,X_i^{\pm}$ defined in 
	Definition \ref{def: standard representation} for $n>i$.
\end{thm}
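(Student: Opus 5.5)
The plan mirrors the proof of Theorem \ref{def: positive Limit DAHA action}, with the $t$-adic topology replaced by the $t^{-1}$-adic topology throughout. There are three steps: show that each $\tilde{Y}_i$ is a well-defined operator on $\Pas^{\pm}$, note that $T_i$ and $X_i^{\pm1}$ act stably, and then transport every defining relation of $\mathscr{H}$ from the finite-rank algebras $\mathscr{H}_n$ to the limit using Proposition \ref{prop: continuity}.

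For the first step, I would use the basis $\{m_{\lambda\vert\mu}\}$ of Proposition \ref{prop: standard basis 1}. Each basis element is a finite combination of elements $x^{\mu}m_{\lambda}[X^{\pm}]$. The truncation $\Pi_n$ of such an element is $x^{\mu}m_\lambda[\bar{X}_n^{\pm}]$, possibly after expanding with Example \ref{ex: monomial expansion}.

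Proposition \ref{prop: stability of negative Y action} then shows that $t^{-n}Y_i^{(n)}\Pi_n f$ converges in $\Pas^{\pm}$ for every $f$. That proposition rests on Propositions \ref{prop: symmetric part 5} and \ref{prop: symmetric part 6}, which handle the two summands of Corollary \ref{cor: raising part}. The $\beta$-terms are also needed: the term $\beta h_{m-1}[(1-t)\bar{X}_{[k,n-1]}+x_n+x_n^{-1}]$ is treated by Lemma \ref{lem: symmetric part 2}, and the term $t_n^{-1}\beta h_{m-1}[\ldots]$ by Lemma \ref{lem: symmetric part 4}. In both lemmas the explicit closed forms split into an $n$-independent part and parts carrying $t^{n-k-i}$. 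After multiplying by $t^{-n}$, these either stabilize or tend to zero $t^{-1}$-adically, which I would check by reading off $t$-degrees. By Proposition \ref{prop: well-defined limit}, the limit operator $\tilde{Y}_i$ is then well defined and linear.

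The operators $T_i$ (for $i<n$), $T_0$ and $X_i^{\pm1}$ are given by formulas independent of $n$ once $n>i$, and they commute with truncation. So their action on $\Pas^{\pm}$ is stable, and it preserves almost symmetry because it only touches finitely many variables.

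For the relations, the ones involving only $T$'s and $X$'s hold already in every finite rank with the same operators, so they pass to the limit directly. For $tT_i\tilde{Y}_iT_i=\tilde{Y}_{i+1}$, I would multiply the finite relation $t^{-1}T_iY_i^{(n)}T_i=Y_{i+1}^{(n)}$, written in the normalization of Definition \ref{def: Limit DAHA-alt}, by $t^{-n}$, and then apply Proposition \ref{prop: continuity}(2). The commutation relations $T_i\tilde{Y}_j=\tilde{Y}_jT_i$ and $\tilde{Y}_i\tilde{Y}_j=\tilde{Y}_j\tilde{Y}_i$ are handled the same way, since products of convergent operator sequences converge to the product of the limits.

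The main obstacle is the last relation, $\tilde{Y}_1X_1T_0\tilde{Y}_1=0$. Here I would expand the finite relation
$$(X_1T_0Y_1^{(n)}-t_0a)(X_1T_0Y_1^{(n)}+t_0t_na^{-1})=0$$
and multiply it by $t^{-2n}$ (by $t^{-n}$ on each side of $X_1T_0$, so that each $Y_1^{(n)}$ is rescaled). This gives
$$(t^{-n}Y_1^{(n)})X_1T_0(t^{-n}Y_1^{(n)})\cdot(\text{conjugation factors}) + t^{-n}(\ldots)X_1T_0(t^{-n}Y_1^{(n)}) - t^{-2n}t_0^2t_n=0.$$
The delicate point is that the parameter $t_n$ is identified with $t_\infty$ here. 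The cross terms carry an extra factor $t^{-n}$ multiplying a convergent operator, so they tend to $0$ in the $t^{-1}$-adic topology, and the constant term $t^{-2n}t_0^2t_\infty$ also vanishes. To make this rigorous, I would first rewrite the quadratic relation so that $X_1T_0$ is sandwiched between copies of $Y_1^{(n)}$. Then I would verify, with the same degree-counting as in Proposition \ref{prop: symmetric part 5}, that $t^{-n}$ times any bounded operator sequence converges to zero on each $f\in\Pas^{\pm}$. Only $\tilde{Y}_1X_1T_0\tilde{Y}_1=0$ survives.
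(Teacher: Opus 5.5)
Your route is the one the paper uses: convergence of $t^{-n}Y_i^{(n)}\Pi_n$ from Proposition~\ref{prop: stability of negative Y action}, stability of $T_0,T_i,X_i^{\pm1}$, and transport of the finite relations through Proposition~\ref{prop: continuity}, with the last relation read off from the rescaled quadratic relation as in the remark after the theorem. Most of it goes through.

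\textbf{The gap.} The step that fails is your derivation of $tT_i\tilde{Y}_iT_i=\tilde{Y}_{i+1}$. The rescaling $t^{-n}$ is the same for every $i$. So multiplying $t^{-1}T_iY_i^{(n)}T_i=Y_{i+1}^{(n)}$ by $t^{-n}$ and applying Proposition~\ref{prop: continuity} gives $t^{-1}T_i\tilde{Y}_iT_i=\tilde{Y}_{i+1}$, not the relation with $t$. Appealing to ``the normalization of Definition~\ref{def: Limit DAHA-alt}'' cannot change $t^{-1}$ into $t$. The theorem uses the operators $Y_i=(t_0t_n)t^{n-i}T_{i-1}\cdots T_1T_0^{-1}T_1^{-1}\cdots T_n^{-1}\cdots T_i^{-1}$, and cancelling $T_i^{-1}T_i$ in this word gives $T_iY_iT_i=tY_{i+1}$ exactly.

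In fact the printed relation is false in this representation. Every $T_j^{\pm1}$ fixes $1$, so $t^{-n}Y_i^{(n)}\Pi_n1=t_0t_\infty t^{-i}$ for all $n$, hence $\tilde{Y}_i1=t_0t_\infty t^{-i}$. Then $tT_i\tilde{Y}_iT_i1=t_0t_\infty t^{1-i}$, whereas $\tilde{Y}_{i+1}1=t_0t_\infty t^{-i-1}$. So your argument proves the action of the algebra in which this relation reads $t^{-1}T_i\tilde{Y}_iT_i=\tilde{Y}_{i+1}$, and you should state it in that form. The paper's one-line appeal to Proposition~\ref{prop: continuity}, given for the positive case and reused here, has the same slip.

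\textbf{Minor points.} For the last relation, left-multiply the finite quadratic relation by the $n$-independent invertible operator $(X_1T_0)^{-1}$ and by $t^{-2n}$. This gives exactly
\[
(t^{-n}Y_1^{(n)})X_1T_0(t^{-n}Y_1^{(n)})+t^{-n}(t_0t_\infty a^{-1}-t_0a)(t^{-n}Y_1^{(n)})-t^{-2n}t_0^2t_\infty(X_1T_0)^{-1}=0,
\]
with no conjugation factors. The last two terms vanish because a $t^{-1}$-adically null scalar sequence times a convergent sequence is null, so no degree counting is needed.

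Your remarks on the $\beta$-terms agree with what the paper leaves implicit. On positive monomials the whole contribution, $\beta$-part included, tends to $0$ by a crude degree bound, since the $T_j^{-1}$ only produce coefficients in $\mathbb{Z}[t^{-1}]$. On negative monomials the $\beta$-part stabilizes. Its limit lies in $\Pas^{\pm}$ because the tail alphabet in Lemma~\ref{lem: symmetric part 4} is really $\bar{X}^{\pm}_{[k+1,n]}$, as its proof shows, not the $\bar{X}^{-1}_{[k+1,n]}$ printed in the statement.
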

\begin{rmk}
	We are using the same stable limit DAHA $\mathscr{H}$ 
	for both the positive and negative stabilization, since 
	the last relation in \eqref{Limit Y relation ii} 
	is also the limit relation in the $t^{-1}$-adic sense of the relation 
	$$(X_1T_0 (t^{-n} Y_1^{(n)})  - t^{-n} t_0 a)(X_1T_0 (t^{-n} Y_1^{(n)})+ t^{-n} t_0 t_{n} a^{-1})=0.$$
	in the finite DAHA. We will then be 
	using $\tilde{Y}_i^+$ and $\tilde{Y}_i^-$ to 
	denote and distinguish the two different actions of
	the positive and negative stable limit DAHA actions.
\end{rmk}
\begin{ex}
	Let us consider the action of $t^{-n} Y_1^{(n)}$ on the monomial 
	$x_1^{-1}$. We have 
	\begin{align*}
		t^{-n} Y_1^{(n)}x_1^{-1}=& 
		q^{-1} t^{-1} t_0 t_{\infty} x_1^{-1} 
		- t^{-1}(1-t^{-1})t_0 t_{\infty}(x_2+x_2^{-1}+...+x_n+x_n^{-1})\\
		& + (q t^{-1} (1-t^{-1}) t_{\infty} 
		-t^{-1}t_{\infty}(1-t_0)-qt^{-n}(1-t^{-1}t_{\infty}))x_1\\
		&+ (t^{-1} (1-t^{-1}) t_{\infty}\alpha
		-t^{-1}(t_0\beta+q^{-1}t_{\infty}\alpha)-\alpha t^{-n}(1-t^{-1}t_{\infty}))
	\end{align*}
	The stable limit as $n\rightarrow\infty$ is 
	\begin{align*}
		\lim_{n\rightarrow\infty}t^{-n} Y_1^{(n)}x_1^{-1}=& 
		q^{-1} t^{-1} t_0 t_{\infty} x_1^{-1} 
		- t^{-1}(1-t^{-1})t_0 t_{\infty}e_1[X_1^{\pm}]\\
		& + (q t^{-1} (1-t^{-1}) t_{\infty} 
		-t^{-1}t_{\infty}(1-t_0))x_1\\
		&+ (t^{-1} (1-t^{-1}) t_{\infty}\alpha
		-t^{-1}(t_0\beta+q^{-1}t_{\infty}\alpha))
	\end{align*}
\end{ex}

The results of this section complete 
the construction of the stable limit DAHA action of type $(C^\vee,C)$ 
on almost symmetric Laurent polynomials. 
This provides the algebraic foundation 
for studying stable limit Koornwinder polynomials and for 
comparing the $(C^\vee,C)$ theory with the type $A$ stable limit DAHA.

\section{Stable limit Koornwinder polynomials}\label{sec: stable limit Koornwinder polynomials}
In this section we will study the stable limit Koornwinder polynomials. 
As shown in \cite{BW,IW2}, the non-symmetric Macdonald polynomials of type $A$ 
do stabilize in infinite rank with respect to the $t$-adic topology and 
still serve as the eigenfunctions of the limit Cherednik operators. 
But they do not form a basis of the almost symmetric polynomial space 
which is too large. A tail symmetrization process 
is needed to obtain sufficient eigenfunctions of the limit Cherednik operators. 
In type $(C^\vee,C)$, the situation is similar. Therefore, 
we will break the stabilization into two steps. 
First we will show that the non-symmetric Koornwinder polynomials 
stabilize in infinite rank. Second we will use the tail symmetrization 
process to obtain all other stable limit Koornwinder polynomials 
which will form a eigenbasis of the limit Cherednik operators. 
We will discuss the stabilization for both the positive and negative stable limit DAHA actions.
\subsection{Triangularity}
The following result is a consequence of the triangularity property stated in 
\cite{Sto}*{Proposition 4.5}.
\begin{prop}\label{prop: triangularity of Y}
	Let $\lambda\in\mathbb{Z}^n$ and $1\leq i \leq n$. Then we have 
	$$Y_i^{(n)}x^{\lambda}\in q^{\lambda_i}(t_0t_n)^{\delta(\lambda_i\leq 0)} 
	t^{-\epsilon(\lambda_i)\beta_{\lambda}(i)}x^{\lambda}
	+\sum_{\mu<\lambda}\mathbb{K}'x^{\mu},$$
	where $\delta(\lambda_i\leq 0)=1$ if $\lambda_i\leq 0$ and 0 otherwise, 
	$\epsilon(\lambda_i)=1$ if $\lambda_i>0$ and $-1$ otherwise, and 
	\[
	\beta_{\lambda}(i)=
	\begin{cases}
	\#\{j<i|\lambda_j\preceq\lambda_i\}+\#\{j\geq i|\lambda_j\prec\lambda_i\}, & \text{if } \lambda_i>0,\\
	\#\{j\leq i|\lambda_j\prec\lambda_i\}+\#\{j>i|\lambda_j\preceq \lambda_i\}, & \text{if } \lambda_i\leq 0
	\end{cases}
	\]
	in which the ordering $\prec$ on integers is defined by
	$$0\prec 1\prec -1\prec 2\prec -2\prec ...$$
\end{prop}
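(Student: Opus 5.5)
We will not prove the triangularity from scratch. Instead we translate Stokman's triangularity result \cite{Sto}*{Proposition 4.5} into the conventions of Definition \ref{def: DAHA}, using the dictionary in the second remark following Definition \ref{def: Limit DAHA-alt}. Stokman's statement is of the form
$$\mathscr{Y}_i x^{\lambda}\in s_\lambda(i)\,x^{\lambda}+\text{lower},$$
where $s_\lambda$ is the spectral point of $\lambda$, written as $\mathbf{q}^{\cdot}\mathbf{t}^{\cdot}\cdots$ and evaluated via $w_\lambda^{-1}$ applied to a $\rho$-type vector. The lower terms are monomials strictly smaller in the Bruhat order. We will carry this out in three steps.

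\textbf{Step 1: transfer the operators and the order.} Under $X_i\mapsto\mathscr{X}_i^{-1}$, monomials $x^\lambda$ are sent to $\mathscr{x}^{-\lambda}$. The normalization $T_j\mapsto \mathbf{t}_j^{-1/2}\mathscr{T}_j$ shows that $Y_i^{(n)}$, as defined in Definition \ref{def: standard representation}, equals $\mathscr{Y}_i^{-1}$ up to an explicit scalar. This scalar comes from the prefactor $(t_0t_n)t^{n-i}$ together with the product of the $\mathbf{t}_j^{\pm1/2}$ appearing in the reduced word $T_{i-1}\cdots T_0^{-1}\cdots T_i^{-1}$. The inversion $\lambda\mapsto -\lambda$ is realized by the longest element of $W_n$, and the Bruhat order on $\Lambda_n$ defined in the paper is built from $\lambda^+$ and minimal coset representatives. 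We will therefore check that Stokman's order, pulled back, coincides with ours. The dominant parts agree, since $\lambda^+=(-\lambda)^+$. For the second case, the minimal-length representatives in $W_n\lambda$ and $W_n(-\lambda)$ are related by a fixed conjugation that preserves the Bruhat order on $W_n^{\mathrm{aff}}$. This yields the triangular shape with the sum over $\mu<\lambda$.

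\textbf{Step 2: compute the diagonal coefficient.} The eigenvalue is $q^{\lambda_i}$ times a product of $t$-powers coming from $\langle w_\lambda\rho,\epsilon_i\rangle$, where $\rho$ is the deformed half-sum involving $t$ for the roots $\epsilon_j\pm\epsilon_l$ and $t_0t_n$ for the short roots $\epsilon_j$. We will compute $(w_\lambda\rho)_i$ combinatorially by the standard sorting argument. The minimal element $w_\lambda$ sends the antidominant representative $\lambda^-$ to $\lambda$. The coordinate $i$ receives $\pm1$ times the rank of $|\lambda_i|$ among the entries, ordered by absolute value with ties broken by sign and position. This is exactly the count $\beta_\lambda(i)$ with respect to the order $0\prec1\prec-1\prec2\prec\cdots$. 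The sign $\epsilon(\lambda_i)$ records whether a simple reflection $s_n$ was used, i.e.\ whether $\lambda_i\le 0$. That same reflection contributes the short-root factor $t_0t_n$, hence the factor $(t_0t_n)^{\delta(\lambda_i\le 0)}$.

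\textbf{Step 3: consistency checks.} We will normalize using $\lambda=0$. There $Y_i^{(n)}1=(t_0t_n)t^{n-i}\cdot t^{-(n-i)}\cdots$, which must equal $(t_0t_n)t^{\beta_0(i)}$ with $\beta_0(i)=\#\{j>i\}$, so any leftover global scalar can be fixed. We will also check the case $n=1$ directly from the formula for $T_0,T_1$.

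The main obstacle is Step 2. Matching tie-breaking conventions (the choice of $\preceq$ versus $\prec$ for $j<i$ and $j\ge i$, and the switch between these when $\lambda_i\le0$) against Stokman's $\rho$ after the inversion $\lambda\mapsto-\lambda$ and $t\mapsto t^{-1}$ is error-prone. To settle it, we will first verify the formula on rank $2$ by hand, e.g.\ for $\lambda=(1,-1)$ and $(-1,1)$, and then argue the general case by induction along $Y_{i+1}=t^{-1}T_iY_iT_i$, using the action of $T_i$ on $x^\lambda$ when $\lambda_i\neq\lambda_{i+1}$.
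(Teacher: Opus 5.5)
Your strategy is the paper's own: the paper gives no argument beyond citing Stokman's Proposition~4.5. Your plan to transport that result through the dictionary $X_i\mapsto\mathscr{X}_i^{-1}$, $Y_i\mapsto\mathscr{Y}_i^{-1}$, $t\mapsto\mathbf{t}^{-1}$ is the natural way to make the citation precise. The problem is how Step~1 transports the \emph{order}.

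\textbf{The order-transfer step rests on a false claim.} You assert that pulling back along $\lambda\mapsto-\lambda$ returns the same Bruhat order, because the minimal coset representatives for $\lambda$ and $-\lambda$ are related by a conjugation that preserves the Bruhat order of $W_n^{\mathrm{aff}}$. This is false.
\begin{itemize}
\item Conjugation by $w_0=-1$ sends $s_0$ to the reflection in $\tfrac12\delta+\epsilon_1$, which is not simple. So it does not even preserve length: in rank one, $s_1s_0s_1$ has length $3$.
\item Negation is not an automorphism of $(\Lambda_n,\le)$. In rank one the order is the chain $0<1<-1<2<-2<\cdots$. Negation reverses each of the paper's rank-two chains, e.g.\ $(1,0)<(0,1)<(0,-1)<(-1,0)$ and $(1,1)<(1,-1)<(-1,1)<(-1,-1)$.
\end{itemize}

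\textbf{The operators themselves fix the orientation.} A direct rank-one computation with the standard representation (here $t_n=t_1$) gives
\[
Y_1x_1=qx_1+(\alpha+t_0\beta),\qquad
Y_1x_1^{-1}=q^{-1}t_0t_1\,x_1^{-1}+\bigl(t_1(t_0-1)+q(t_1-1)\bigr)x_1+\mathrm{const}.
\]
So $x_1$ must lie below $x_1^{-1}$. Suppose Stokman's triangularity were with respect to the same Bruhat order on his exponents. Then your transfer would give triangularity with respect to the opposite order inside each $W_n$-orbit. That is not the proposition, and by the computation above it is false.

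What you actually need to check is that Stokman's order on $\mathscr{X}$-exponents is the image of the paper's order under $\lambda\mapsto-\lambda$. There are two ways to do this:
\begin{itemize}
\item read it off from his definition and conventions; or
\item prove the orientation directly, for instance from the rank-one case together with $Y_{i+1}=t^{-1}T_iY_iT_i$ and the triangularity of $T_0,\dots,T_n$ on monomials.
\end{itemize}
An invariance argument cannot do this, because the needed invariance does not hold.

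\textbf{A smaller slip in Step~3.} In this normalization every $T_j$, including $T_0$ and $T_n$, fixes constants. Hence $Y_i^{(n)}1=(t_0t_n)t^{n-i}$ exactly, with no extra factor $t^{-(n-i)}$. This agrees with $\beta_0(i)=n-i$. Had there been a mismatch, it could not legitimately be absorbed by ``fixing a leftover global scalar'', since $Y_i^{(n)}$ is a fixed operator.
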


We then first prove  
the triangularity property of the positive stable limit Cherednik operators 
on the monomial basis of $\Pas^{\pm}$. The proof is basically 
the same as the proof of the triangularity property of the stable limit 
Cherednik operators in type $A$ presented in \cite{IW2}.
\begin{prop}\label{prop: triangularity of positive limit Y}
	For symbol $\lambda\vert\mu$ and $i\geq 1$, we have 
	$$\tilde{Y}_i^+ m_{\lambda\vert\mu}\in 
	\delta_i(\lambda)q^{\lambda_i}
	t^{u_{\lambda\mu}(i)}m_{\lambda\vert\mu}
	+\sum_{\nu\vert\eta<\lambda\vert\mu}\mathbb{K}'m_{\nu\vert\eta},$$
	where $\delta_i(\lambda)=1$ if $i\leq l(\lambda)$ and 
	$\lambda_i>0$, and 0 otherwise, and 
	\[
	u_{\lambda}(i)=
	\begin{cases}
	\#\{j<i|\lambda_j\succ\lambda_i\}+
	\#\{j\geq i|\lambda_j\succeq\lambda_i\}, & \text{if } \lambda_i>0,\\
	\#\{j\leq i|\lambda_j\succeq\lambda_i\}+
	\#\{j> i|\lambda_j\succ\lambda_i\}, & \text{if } \lambda_i\leq 0
	\end{cases}	
	\]
\end{prop}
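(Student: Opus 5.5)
We approach this by reducing to the finite-rank triangularity of Proposition \ref{prop: triangularity of Y} and passing to the limit, following the type $A$ argument of \cite{IW2}. Fix a symbol $\lambda\vert\mu$ with $k=l(\lambda)$. For $n$ large, the projection $\Pi_n(m_{\lambda\vert\mu})=x^{\lambda}m_{\mu}[\bar{X}_{[k+1,n]}^{\pm}]$ is a sum of monomials $x^{\lambda\gamma}$. Here $\gamma\in\mathbb{Z}^{n-k}$ runs over all signed rearrangements of $\mu 0^{n-k-l(\mu)}$. Among them is the distinguished term $x^{\lambda\mu^{\ast}}$ whose tail is largest in the Bruhat order. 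By the rank 2 example, in which $(0,-2)>(0,2)$, this should be the antidominant-type arrangement of the signed parts of $\mu$ in the tail.

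We apply $t^nY_i^{(n)}$ to each monomial $x^{\lambda\gamma}$ using Proposition \ref{prop: triangularity of Y}. This gives a diagonal coefficient
\[
q^{\lambda_i}(t_0t_n)^{\delta(\lambda_i\le 0)}t^{\,n-\epsilon(\lambda_i)\beta_{\lambda\gamma}(i)}
\]
plus lower terms $x^{\nu}$ with $\nu<\lambda\gamma$. When $i\le k$ and $\lambda_i>0$, the count $\beta_{\lambda\gamma}(i)$ includes all $j>k$ with $\gamma_j\prec\lambda_i$. This includes all the zeros of the tail, so it equals $n-k-\#\{j>k:\gamma_j\succeq\lambda_i\}+(\text{contribution from }j\le k)$. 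Hence $n-\beta_{\lambda\gamma}(i)$ stabilises to a finite exponent. A direct count should identify this exponent with $u_{\lambda\mu}(i)$, because $\succ$ and $\preceq$ are complementary in the definitions of $u$ and $\beta$. The count does not depend on the signs or ordering within $\gamma$.

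When $\lambda_i\le 0$ or $i>k$, the exponent is $n+\beta$ with $\beta\ge 0$, or the coefficient otherwise carries a surviving factor $t^{n}$. So the diagonal coefficient tends to $0$ in the $t$-adic topology, which matches $\delta_i(\lambda)=0$. For $i>k$ we instead use $Y_i^{(n)}$ acting on a monomial whose $i$-th entry lies in the tail. In that case $\lambda_i=0$ or a part of $\mu$, and $\beta$ is already counted with the sign giving $t^{n+\cdots}$.

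Next we organise the result in $\P(k)^{\pm}$-terms. The operator $\tilde Y_i^+$ commutes with $T_j$ for $j>i$, and by Remark \ref{rmk: symmetry} the limit image lies in $\P(\max(i,k))^{\pm}$. We expand $\tilde Y_i^+ m_{\lambda\vert\mu}$ in the basis of Proposition \ref{prop: standard basis 1}. The coefficient of $m_{\nu\vert\eta}$ is read off from the coefficient of the distinguished monomial $x^{\nu}x^{\eta^{\ast}}$, shifted far to the right as in the linear independence argument. Every monomial produced for finite $n$ is $\le\lambda\gamma\le\lambda\mu^{\ast}0\cdots$. By Proposition \ref{prop: Bruhat order}(2), passing to the symbols gives $\nu\vert\eta\le\lambda\vert\mu$. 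Equality can occur only through the diagonal terms computed above. These contribute the coefficient of $x^{\lambda\mu^\ast}$, which is exactly the limit of the diagonal coefficients, namely $\delta_i(\lambda)q^{\lambda_i}t^{u_{\lambda\mu}(i)}$. Continuity of the limit (Proposition \ref{prop: continuity} and Proposition \ref{prop: well-defined limit}) lets us take these coefficientwise limits.

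The main obstacle is the bookkeeping in the second and third steps. The first part is making the exponent count exact, that is, checking that $n-\epsilon(\lambda_i)\beta_{\lambda\gamma}(i)$ converges to $u_{\lambda\mu}(i)$ uniformly over the tails $\gamma$. The second part is ensuring that no lower monomial $x^{\nu}$ of some non-distinguished $x^{\lambda\gamma}$ can coincide with a distinguished monomial of a symbol $\nu\vert\eta$ that is not $<\lambda\vert\mu$. We would handle the latter first by noting that the whole expression is symmetric in the tail, so only the $W$-orbit dominance data $(\lambda,\mu)$ matter. We would then use Proposition \ref{prop: Bruhat order}(1) to replace any tail by its sorted version, checking first in the simplest case $\mu=\emptyset$.
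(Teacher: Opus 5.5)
For $i\le k=l(\lambda)$ your plan is the paper's proof. Tail symmetry (commutation with $T_j$ for $j>k$, plus Remark~\ref{rmk: symmetry}) places $\tilde Y_i^+m_{\lambda\vert\mu}$ in $\P(k)^{\pm}$. Finite-rank triangularity together with Proposition~\ref{prop: Bruhat order}(2) then controls the lower terms. The diagonal coefficient is read at the Bruhat-maximal monomial $x^{\lambda\mu^-0^{n-k-l(\mu)}}$: there $q^{\lambda_i}t^{\,n-\beta}$ equals $q^{\lambda_i}t^{u_{\lambda\mu}(i)}$ when $\lambda_i>0$, and $t^{\,n+\beta}\to 0$ when $\lambda_i\le 0$.

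The gap is the case $i>k$.

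\begin{itemize}
\item \emph{Off-diagonal terms.} Here you only obtain $\tilde Y_i^+m_{\lambda\vert\mu}\in\P(i)^{\pm}$, so the expansion may a priori contain $m_{\nu\vert\eta}$ with $k<l(\nu)\le i$. Such symbols are never $<\lambda\vert\mu$, because the order on symbols requires $l(\nu)\le l(\lambda)$; this is exactly the length hypothesis of Proposition~\ref{prop: Bruhat order}(2). A finite-rank comparison $\rho\le\lambda\gamma$ gives no control on where $\rho$ is supported. For example, $(0,1)<(2,0)$ in the rank-$2$ example, yet $(0,1)\vert\emptyset\not<(2)\vert\emptyset$.
\item \emph{Diagonal claim.} Suppose the $i$-th entry of $\lambda\gamma$ is a positive part of $\mu$. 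Then $\epsilon=+1$, and $t^{\,n-\beta_{\lambda\gamma}(i)}$ stays bounded, by the same count you did for $\lambda_i>0$. So it is not true that these coefficients carry a surviving factor $t^n$.
\end{itemize}

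The missing idea is that for $i>k$ the whole image vanishes. The function $\Pi_nm_{\lambda\vert\mu}$ is fixed by $T_{k+1},\dots,T_n$, hence also by their inverses. So every factor $T_i^{-1},\dots,T_{n-1}^{-1},T_n^{-1},T_{n-1}^{-1},\dots,T_{k+1}^{-1}$ of $Y_i^{(n)}$ acts trivially, and
\[
t^nY_i^{(n)}\Pi_nm_{\lambda\vert\mu}=t_0t_{\infty}\,t^{2n-i}\,T_{i-1}\cdots T_1T_0^{-1}T_1^{-1}\cdots T_k^{-1}\,\Pi_nm_{\lambda\vert\mu}.
\]
The operator $T_{i-1}\cdots T_1T_0^{-1}T_1^{-1}\cdots T_k^{-1}$ does not depend on $n$, so the part after the prefactor converges in $\Pas^{\pm}$ by Proposition~\ref{prop: continuity}. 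The prefactor $t^{2n-i}$ then forces the $t$-adic limit to be $0$.

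This is the paper's argument, stated there as $\pi_{n+1}t^{n+1}Y_i^{(n+1)}\Pi_{n+1}m_{\lambda\vert\mu}=t^2\,t^nY_i^{(n)}\Pi_nm_{\lambda\vert\mu}$. It gives the statement with $\delta_i(\lambda)=0$ and an empty lower sum.
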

\begin{proof}
	Suppose that $l(\lambda)=k$. 
	We first assume $i\leq k$. Note first that 
	$$\tilde{Y}_i^+ m_{\lambda\vert\mu}\in \P(k)^{\pm}.$$
	This is because for each $n> k$, we have that 
	$\Pi_n m_{\lambda\vert\mu}$ is symmetric in $x_{k+1},...,x_n$, and 
	also $x_j$ and $x_j^{-1}$ for $k\leq j\leq n$. Therefore for 
	all $k\leq j\leq n$ we have 
	$$t^n Y_i^{(n)}\Pi_n m_{\lambda\vert\mu}
	=t^n Y_i^{(n)}T_j^{(n)}\Pi_n m_{\lambda\vert\mu}
	=T_j^{(n)}(t^n Y_i^{(n)}\Pi_n m_{\lambda\vert\mu}).$$
	By Remark \ref{rmk: symmetry} we see that 
	$t^n Y_i^{(n)}\Pi_n m_{\lambda\vert\mu}$ is also 
	symmetric in $x_{k+1},...,x_n$ and in $x_j$ and 
	$x_j^{-1}$ for $k\leq j\leq n$. By Theorem 
	\ref{def: positive Limit DAHA action} we can 
	take the limit to obtain that 
	$\tilde{Y}_i^+ m_{\lambda\vert\mu}\in \P(k)^{\pm}$. 
	
	Then by 
	Proposition \ref{prop: standard basis 1} $\tilde{Y}_i^+ m_{\lambda\vert\mu}$ 
	is a linear combination of basis elements of the form 
	$m_{\nu\vert\eta}$ with $l(\nu)\leq k$. Consider a basis 
	element $m_{\nu\vert\eta}$ that appears in the linear combination of 
	$\tilde{Y}_i^+ m_{\lambda\vert\mu}$ with non-zero coefficient. 
	Then the monomial $x^{\nu 0^{k-l(\nu)}\eta}$ must appear 
	in $Y_i^{(n)}\Pi_n m_{\lambda\vert\mu}$ with non-zero coefficient for 
	sufficiently large $n$. By the triangularity property 
	Proposition \ref{prop: triangularity of Y}of 
	$Y_i^{(n)}$ we see that $\nu 0^{k-l(\nu)}\eta\leq \lambda\mu$ 
	and therefore by Proposition \ref{prop: Bruhat order} we have 
	$\nu\vert\eta\leq \lambda\vert\mu$. 

	It remains to determine the top coefficient of $m_{\lambda\vert\mu}$. 
	For $n$ sufficiently large, the largest monomial 
	that appears in the expansion of $\Pi_n m_{\lambda\vert\mu}$ 
	is $x^{\lambda \mu^{-} 0^{n-k-l(\mu)}}$ 
	with coefficient 1. If $\lambda_i\leq 0$, 
	by Proposition \ref{prop: triangularity of Y} the coefficient of 
	$x^{\lambda \mu^{-} 0^{n-k-l(\mu)}}$ in the expansion of 
	$t^n Y_i^{(n)}x^{\lambda \mu^{-} 0^{n-k-l(\mu)}}$ is therefore
	$$q^{\lambda_i}(t_0t_n)t^{\beta_{\lambda\mu^{-}0^{n-k-l(\mu)}}(i)+n}
	=q^{\lambda_i}(t_0t_n)t^{\beta_{\lambda\mu^{-}}(i)+2n-k-l(\mu)}$$
	and converges to 0 as $n\rightarrow \infty$. If $\lambda_i>0$, 
	this coefficient is 
	$$q^{\lambda_i}t^{n-\beta_{\lambda\mu^{-}0^{n-k-l(\mu)}}(i)}
	=q^{\lambda_i}t^{u_{\lambda\mu^{-}}(i)}=q^{\lambda_i}t^{u_{\lambda\mu}(i)}$$
	which is exactly as stated. Note the last equality holds since 
	$\lambda_i>0$.

	For $i>k$, we observe that
	$$\pi_{n+1} t^{n+1}Y_i^{(n+1)}\Pi_{n+1}m_{\lambda\vert\mu}
	=t^2(t^{n}Y_i^{(n)}\Pi_{n}m_{\lambda\vert\mu})$$
	for $n>i$. Now by the same argument as above 
	$\tilde{Y}_i^+ m_{\lambda\vert\mu}\in \P(i)^{\pm}$. 
	Then by Proposition \ref{prop: standard basis 1} 
	$\tilde{Y}_i^+ m_{\lambda\vert\mu}$ is a linear combination of 
	basis elements of the form 
	$m_{\nu\vert\eta}$ with $l(\nu)\leq i$. When $n\geq i+|\lambda^+|+|\mu|$, 
	from the observation above we have 
	$$[m_{\nu\vert\eta}]t^{n+1}Y_i^{(n+1)}\Pi_{n+1}m_{\lambda\vert\mu}
	=t^2[m_{\nu\vert\eta}](t^{n}Y_i^{(n)}\Pi_{n}m_{\lambda\vert\mu})$$
	which converges to 0 as $n\rightarrow \infty$. Therefore 
	$\tilde{Y}_i^+ m_{\lambda\vert\mu}=0$ for $i>k$. This 
	completes the proof.
\end{proof}

Using exactly the same argument we can also prove the following 
triangularity property of the negative stable limit Cherednik 
operators on the monomial basis of $\Pas^{\pm}$.
\begin{prop}\label{prop: triangularity of negative limit Y}
	For symbol $\lambda\vert\mu$ and $i\geq 1$, we have 
	$$\tilde{Y}_i^- m_{\lambda\vert\mu}\in 
	\delta_i'(\lambda)q^{\lambda_i}(t_0t_{\infty})
	t^{-u_{\lambda\mu}(i)}m_{\lambda\vert\mu}
	+\sum_{\nu\vert\eta<\lambda\vert\mu}\mathbb{K}'m_{\nu\vert\eta},$$
	where $\delta_i'(\lambda)=1$ if $i\leq l(\lambda)$ and 
	$\lambda_i\leq 0$, and 0 otherwise.
\end{prop}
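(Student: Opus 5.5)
We mirror the proof of Proposition \ref{prop: triangularity of positive limit Y}, with $t^n$ replaced by $t^{-n}$, the $t$-adic topology replaced by the $t^{-1}$-adic topology, and Theorem \ref{def: negative Limit DAHA action} used in place of Theorem \ref{def: positive Limit DAHA action}. Let $k=l(\lambda)$.

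\emph{Case $i\leq k$.} First we show $\tilde{Y}_i^- m_{\lambda\vert\mu}\in\P(k)^{\pm}$. For $n>k$ the projection $\Pi_n m_{\lambda\vert\mu}$ is fixed by $T_j^{(n)}$ for $k<j\leq n$, by Remark \ref{rmk: symmetry}. These $T_j$ commute with $Y_i^{(n)}$ for $i\leq k<j$. Hence $t^{-n}Y_i^{(n)}\Pi_n m_{\lambda\vert\mu}$ has the same symmetry, and this property survives the $t^{-1}$-adic limit. By Proposition \ref{prop: standard basis 1}, the limit expands in the basis $m_{\nu\vert\eta}$ with $l(\nu)\leq k$.

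If some $m_{\nu\vert\eta}$ has a nonzero coefficient, then the monomial $x^{\nu0^{k-l(\nu)}\eta}$ occurs in $Y_i^{(n)}\Pi_n m_{\lambda\vert\mu}$ for all large $n$. By Proposition \ref{prop: triangularity of Y}, this monomial is Bruhat-below some monomial $x^{\lambda\sigma 0^{\cdots}}$ of $\Pi_n m_{\lambda\vert\mu}$, where $\sigma$ is a rearrangement of $\pm\mu$. Each such monomial is itself at most $x^{\lambda\mu^-0^{n-k-l(\mu)}}$. Proposition \ref{prop: Bruhat order} then gives $\nu\vert\eta\leq\lambda\vert\mu$.

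For the leading coefficient, we take the top monomial $x^{\lambda\mu^-0^{n-k-l(\mu)}}$. Its diagonal coefficient under $t^{-n}Y_i^{(n)}$ is
$$q^{\lambda_i}(t_0t_n)^{\delta(\lambda_i\leq0)}\,t^{-n-\epsilon(\lambda_i)\beta(i)},$$
where $\beta(i)=\beta_{\lambda\mu^-0^{n-k-l(\mu)}}(i)$.

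If $\lambda_i>0$, we have $\epsilon=1$ and $\beta(i)\geq0$, so the exponent tends to $-\infty$. The coefficient therefore tends to $0$ $t^{-1}$-adically, which gives $\delta_i'(\lambda)=0$.

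If $\lambda_i\leq0$, the coefficient is $q^{\lambda_i}(t_0t_\infty)t^{\beta(i)-n}$. Here we must check the counting identity $\beta(i)-n=-u_{\lambda\mu}(i)$. The $n-k-l(\mu)$ trailing zeros satisfy $0\preceq\lambda_i$, so each contributes $1$ to $\beta(i)$. The remaining terms of $\beta(i)$ and $u(i)$ are complementary counts over the same index set of size $k+l(\mu)$. Their sum with the appropriate boundary convention at $j=i$ equals $k+l(\mu)$ plus a correction of $1$. This correction is the same one already absorbed in the positive case, where $n-\beta=u$.

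\emph{Case $i>k$.} We reproduce the shifting argument of the positive case. Comparing ranks $n$ and $n+1$, the coefficient of each $m_{\nu\vert\eta}$ picks up a factor $t^{-2}$ in place of $t^{2}$, so it converges to $0$ in the $t^{-1}$-adic topology. Hence $\tilde{Y}_i^- m_{\lambda\vert\mu}=0$ for $i>k$, consistent with $\delta_i'(\lambda)=0$.

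The main obstacle is the exponent bookkeeping in the subcase $\lambda_i\leq 0$. One must verify that $n-\beta_{\lambda\mu^-0^{n-k-l(\mu)}}(i)$ equals exactly $u_{\lambda\mu}(i)$ for the $\lambda_i\leq0$ branch of the definitions, with the ties $j=i$ and the zeros counted correctly. One must also justify replacing $\mu^-$ by $\mu$ in $u$; this holds because only $|\mu_j|$ matters when comparing against $\lambda_i\leq 0$ under $\prec$. I would verify this first on small examples such as $\lambda=(-1)$ and $\mu=(1)$ before writing the general count.
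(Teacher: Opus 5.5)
Your case $i\le k$ is the paper's argument; the paper's entire proof is the sentence that the positive proof carries over. The exponent bookkeeping you worry about is exact. Take $\lambda_i\le 0$ and $\tau=\lambda\mu^-0^{n-k-l(\mu)}$. Each index $j$ is counted in exactly one of $\beta_\tau(i)$ and $u_\tau(i)$. All trailing zeros go to $\beta_\tau(i)$. The $\mu$-entries sit at positions $j>i$, where $u$ uses the strict comparison $\succ$, which is insensitive to sign. Hence $\beta_\tau(i)=n-u_{\lambda\mu}(i)$, and there is no ``correction of $1$''.

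The genuine gap is the case $i>k$, and it cannot be closed, because the rank-shift factor is $1$, not $t^{-2}$. $\Pi_n m_{\lambda\vert\mu}$ is fixed by every $T_j$ with $j\ge k+1$, including $T_n$. So every factor $T_j^{-1}$ with $j\ge i$ in $Y_i^{(n)}$ acts trivially, and
\[
t^{-n}Y_i^{(n)}\Pi_n m_{\lambda\vert\mu}
=t_0t_{\infty}\,t^{-i}\,T_{i-1}\cdots T_1T_0^{-1}T_1^{-1}\cdots T_{i-1}^{-1}\,\Pi_n m_{\lambda\vert\mu}.
\]
The external $t^{-n}$ cancels the internal prefactor $t^{n-i}$ of $Y_i^{(n)}$. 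In the positive case these two powers add to $t^{2n-i}$, which is where $t^2$ came from; you flipped the sign of both.

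So on $\P(k)^{\pm}$ the operator $\tilde Y_i^-$ is $t_0t_\infty t^{-i}$ times an invertible operator, and it never kills $m_{\lambda\vert\mu}$. Counterexamples:
\begin{enumerate}
\item $\tilde Y_1^-1=t^{-1}t_0t_\infty$.
\item Using $T_1^{-1}x_1=t^{-1}x_2$ and $T_1x_2=tx_1$, one gets $\tilde Y_2^-m_{(1)\vert\emptyset}=t^{-2}t_0t_\infty\, m_{(1)\vert\emptyset}$. The statement predicts diagonal coefficient $\delta_2'((1))=0$.
\item $\tilde Y_1^-m_{\emptyset\vert(1)}=t^{-1}t_0t_\infty\,T_0^{-1}e_1[X^{\pm}]$ is not invariant under $x_1\mapsto x_1^{-1}$: in $T_0^{-1}e_1[X^{\pm}]$ the coefficients of $x_1$ and $x_1^{-1}$ are $t_0^{-1}(q+t_0-1)$ and $q^{-1}$. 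So it is not even in the span of the $m_{\nu\vert\eta}$ with $\nu\vert\eta\le\emptyset\vert(1)$, all of which lie in $\P(0)^{\pm}$.
\end{enumerate}

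The proposition is therefore false as stated for $i>l(\lambda)$, and the paper's ``exactly the same argument'' has the same defect. What your argument actually establishes, at the paper's level of rigor, is the statement for $i\le l(\lambda)$.
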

By setting $\mu=\emptyset$ in Propositions 
\ref{prop: triangularity of positive limit Y} and 
\ref{prop: triangularity of negative limit Y} we obtain the 
following corollary.
\begin{cor}\label{cor: limit of Y coefficients}
	Let $\lambda\in\mathbb{Z}^k$ for some $k\geq 0$ and 
	$i\geq 1$. Then we have the expansion formula
	$$Y_i^{(n)}x^{\lambda} = 
	\sum_{\substack{{\nu\leq \lambda 0^{n-k}}\\
	{\{\nu_{k+1},...,\nu_{l(\nu)}\}\in\Pi}}} c_{\lambda0^{n-k},\nu,i}^{(n)}
	\sum_{\sigma\in S_{1^k,n-k}\rtimes\mathbb{Z}^{n-k}}x^{\sigma\nu}$$
	for $n>\max\{k,i\}$, and the coefficients $c_{\lambda0^{n-k},\nu,i}^{(n)}$ 
	satisfy the following properties:
	\begin{enumerate}
		\item For each $\nu\leq \lambda0^{n-k}$, the sequence 
		$\{t^n c_{\lambda0^{n-k},\nu,i}^{(n)}\}$ has a $t$-adic limit in 
		$\mathbb{K}'$ as $n\rightarrow \infty$. 
		In particular, 
		$$ \lim_{n\rightarrow \infty} t^n c_{\lambda0^{n-k},\lambda0^{n-k},i}^{(n)} =
		\delta_i(\lambda)q^{\lambda_i}t^{u_{\lambda}(i)}$$
		Furthermore, if 
		$ \lim_{n\rightarrow \infty} t^n c_{\lambda0^{n-k},\nu,i}^{(n)}=0$, 
		then the sequence 
		$\{c_{\lambda0^{n-k},\nu,i}^{(n)}\}$ also converges $t$-adically  
		as $n\rightarrow \infty$.
		\item For each $\nu\leq \lambda0^{n-k}$, the sequence 
		$\{t^{-n} c_{\lambda0^{n-k},\nu,i}^{(n)}\}$ has a $t^{-1}$-adic limit in 
		$\mathbb{K}'$ as $n\rightarrow \infty$. 
		In particular, 
		$$ \lim_{n\rightarrow \infty} t^{-n} c_{\lambda0^{n-k},\lambda0^{n-k},i}^{(n)} =
		\delta_i'(\lambda)q^{\lambda_i}(t_0t_{\infty})
		t^{-u_{\lambda}(i)}$$
		Furthermore, if 
		$ \lim_{n\rightarrow \infty} t^{-n} c_{\lambda0^{n-k},\nu,i}^{(n)}=0$, 
		then the sequence 
		$\{c_{\lambda0^{n-k},\nu,i}^{(n)}\}$ also converges $t^{-1}$-adically  
		as $n\rightarrow \infty$.
	\end{enumerate}	
\end{cor}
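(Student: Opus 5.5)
The plan is to derive the expansion formula from the finite-rank symmetry of $Y_i^{(n)}x^{\lambda}$, and then obtain the coefficient statements by reading off coefficients of the limit expansions in Propositions \ref{prop: triangularity of positive limit Y} and \ref{prop: triangularity of negative limit Y} with $\mu=\emptyset$.

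\textbf{Step 1: the expansion formula.} Since $x^{\lambda}$, viewed in rank $n>k$, is fixed by $T_j$ for $k<j\leq n$, the argument at the start of the proof of Proposition \ref{prop: triangularity of positive limit Y} applies verbatim. Combined with Remark \ref{rmk: symmetry}, it shows that $Y_i^{(n)}x^{\lambda}$ is invariant under permutations and inversions of $x_{k+1},\dots,x_n$ when $i\leq k$. Grouping the monomials into orbits of $S_{1^k,n-k}\rtimes\mathbb{Z}_2^{n-k}$ gives the displayed sum. Each orbit is indexed by a representative $\nu$ whose tail $(\nu_{k+1},\dots)$ is a partition. Proposition \ref{prop: triangularity of Y} shows that only $\nu\leq\lambda0^{n-k}$ occur, after passing to the dominant tail via Proposition \ref{prop: Bruhat order}.

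The case $i>k$ needs separate handling. Here $Y_i^{(n)}x^{\lambda}$ is only symmetric in the variables after $x_i$, so the orbit sum must be read in $\P(i)^{\pm}$. I would check that the statement is still meaningful there, or restrict attention to $i\leq k$ as the limit propositions effectively do.

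\textbf{Step 2: convergence of $t^{\pm n}c^{(n)}_{\lambda0^{n-k},\nu,i}$.} Rewrite the orbit sums as $m_{\nu'\vert\eta}$'s restricted to rank $n$, where $\nu'$ is the strict part of $(\nu_1,\dots,\nu_k)$ and $\eta$ is the tail. This is a finite unitriangular change of basis with $n$-independent integer coefficients, because $x^{\nu'}m_\eta[\bar X^{\pm}_{[l(\nu')+1,n]}]$ expands into orbit sums independently of $n$ once $n$ is large (Example \ref{ex: monomial expansion}). Convergence of $t^nY_i^{(n)}\Pi_n x^\lambda$ in $\Pas^{\pm}$ (Theorem \ref{def: positive Limit DAHA action}) comes with a decomposition $h_n+\sum a_{j,n}g_{j,n}$ with convergent scalars. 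Applying Lemma \ref{lem: asymptotic independence} to the finitely many basis elements involved then forces each orbit coefficient $t^nc^{(n)}$ to converge $t$-adically. The diagonal value follows from Proposition \ref{prop: triangularity of positive limit Y}. It can also be read directly from Proposition \ref{prop: triangularity of Y}, which gives $t^n q^{\lambda_i}t^{-\beta(i)}$ or $t^n q^{\lambda_i}(t_0t_n)t^{\beta(i)}$; the exponents computed in that proof show these tend to $\delta_i(\lambda)q^{\lambda_i}t^{u_\lambda(i)}$. The negative case is identical, using Theorem \ref{def: negative Limit DAHA action}, Proposition \ref{prop: triangularity of negative limit Y}, and $t^{-1}$-adic limits.

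\textbf{Step 3: the ``furthermore'' clause, which I expect to be the main obstacle.} From $\lim t^nc^{(n)}=0$ one cannot formally conclude that $c^{(n)}$ converges, so more structure is needed. I would use the explicit formulas from Section \ref{sec: stable limit DAHA}. In the proofs of Corollary \ref{cor: symmetric part 1}, Lemma \ref{lem: symmetric part 2}, and Corollaries \ref{cor: symmetric part 3} and \ref{prop: symmetric part 5}, every coefficient of $Y_i^{(n)}x^\lambda$ has the form $t^{-n}A_n+B_n+t^{n}C_n$. Here $A_n,B_n,C_n$ are series whose coefficients have $t$-degree bounded below uniformly in $n$, for instance the $t^{n-k-i}$ terms in Lemmas \ref{lem: symmetric part 2} and \ref{lem: symmetric part 4}. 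In the positive case, $\lim t^nc^{(n)}=0$ forces $\lim A_n=0$, so $c^{(n)}$ differs from a convergent quantity by $t^{-n}A_n$. One then has to show that $A_n$ actually decays fast enough, i.e.\ that its lowest $t$-degree grows with $n$. I would first attempt this by tracking the $\epsilon$-sum expansions in those proofs: each nonzero $\epsilon_j$ contributes an extra power of $t$, which is what makes the tails converge. If that fails, the fallback is to prove only the weaker statement actually needed later, namely convergence of $c^{(n)}$ for those $\nu$ that appear in the partial symmetrization of Section \ref{sec: stable limit Koornwinder polynomials}.
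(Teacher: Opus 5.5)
\textbf{Steps 1 and 2.} These follow the paper, which obtains this corollary just by specializing $\mu=\emptyset$ in Propositions \ref{prop: triangularity of positive limit Y} and \ref{prop: triangularity of negative limit Y}. Their proofs supply the tail symmetry, the bound $\nu\le\lambda0^{n-k}$, and the existence and diagonal values of $\lim t^{\pm n}c^{(n)}_{\lambda0^{n-k},\nu,i}$. For coefficientwise convergence you do not need Lemma \ref{lem: asymptotic independence}: in $h_n+\sum_j a_{j,n}g_{j,n}$, the coefficient of a fixed-support monomial is independent of $n$ in $h_n$ and in each $g_{j,n}$.

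Your doubt about $i>k$ is justified, and it should become an explicit restriction to $i\le k$ (or replace $\lambda$ by $\lambda0^{i-k}$). Two examples show why:
\begin{itemize}
\item $Y_2^{(n)}x_1^{-1}$ is $t_0t_\infty t^{n-2}$ times a Laurent polynomial in $x_1,x_2$ alone, in which $x_2^{-1}$ and $x_2$ have coefficients $1-t$ and $qt_0^{-1}(1-t^{-1})$. So it has no expansion into orbit sums over the tail $x_2,\dots,x_n$.
\item Every $T_j$ fixes $1$, so $Y_i^{(n)}1=t_0t_\infty t^{n-i}$ and $t^{-n}c^{(n)}\to t_0t_\infty t^{-i}\neq0$, although $\delta_i'(\emptyset)=0$. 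Part 2 as stated therefore fails for $i>k$.
\end{itemize}

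\textbf{Step 3 is the genuine gap.} The paper's one-line derivation does not close it either, since the triangularity propositions only describe rescaled limits and say nothing about the unrescaled $c^{(n)}$. Your plan does not supply the missing argument. Using $T_j^{-1}=t^{-1}(T_j+t-1)$ and $T_n^{-1}=t_\infty^{-1}(T_n+t_\infty-1)$ one gets
\[
t^nY_i^{(n)}=t_0t\,T_{i-1}\cdots T_1T_0^{-1}(T_1+t-1)\cdots(T_{n-1}+t-1)(T_n+t_\infty-1)(T_{n-1}+t-1)\cdots(T_i+t-1),
\]
so $P_n:=t^nc^{(n)}$ is a polynomial in $t$. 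Three consequences follow:
\begin{itemize}
\item Your decomposition $t^{-n}A_n+B_n+t^nC_n$ carries no information by itself (take $A_n=P_n$).
\item The convergence of $B_n$ is assumed rather than proved.
\item Your stated target, that the lowest $t$-degree of $A_n$ grows with $n$, merely restates the hypothesis $\lim P_n=0$.
\end{itemize}
What the clause actually needs is the sharper statement
\[
P_n=P_\infty+t^nD_n\qquad\text{with }(D_n)_n\text{ convergent $t$-adically},
\]
together with the mirror statement in $t^{-n}$ for part 2. Then $P_\infty=0$ gives $c^{(n)}=D_n$.

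The explicit computations are consistent with this. In Lemma \ref{lem: symmetric part 2} the $n$-dependence enters only through the constants $t^{n-k+i}$. In the example $t^nY_1^{(n)}x_1^2$, every $n$-dependent term is $t^n$ times an $n$-independent scalar. For the $\epsilon$-sums in Corollary \ref{cor: symmetric part 1} and Proposition \ref{prop: symmetric part 6}, however, you would have to show that the deviation from the limit has $t$-order at least $n-O(1)$ with a convergent quotient. The observation that each $\epsilon_j=1$ contributes an extra power of $t$ has not been shown to give this. Your fallback of handling only the $\nu$ used in Section \ref{sec: stable limit Koornwinder polynomials} would prove a different statement.
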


\subsection{Stablization of non-symmetric Koornwinder polynomials}
In this section we will process the stabilization of the non-symmetric 
Koornwinder polynomials. 
Consider the double affine Hecke algebra of 
type $(C_n^{\vee},C_n)$ of finite rank $n$. 
The Koornwinder polynomials $E_{\lambda}\in\P_n^{\pm}$, 
$\lambda\in\mathbb{Z}^n$ 
are defined as the simultaneous eigenbasis of 
the Cherednik operators $Y_i^{(n)}$. Each 
$E_{\lambda}$ satisfies the triangularity property 
$$E_{\lambda}\in x^{\lambda}+\sum_{\mu<\lambda}\mathbb{K}'x^{\mu}$$
and has the eigenvalues explicitly given by
$$Y_i^{(n)}E_{\lambda}=q^{\lambda_i}(t_0t_n)^{\delta(\lambda_i\leq 0)}
t^{-\epsilon(\lambda_i)\beta_{\lambda}(i)}E_{\lambda}.$$

The Koornwinder polynomials satisfy the following recursion relation 
\begin{prop}[See \cite{Sa}*{Theorem 4.1}]\label{prop: recursion of Koornwinder}
	Suppose $\lambda\in\mathbb{Z}^n$. Then 
	\begin{enumerate}
		\item For $1\leq i\leq n-1$, if 
	$\lambda_i> \lambda_{i+1}$, we have
	$$E_{s_i\lambda}=
	(T_i+\frac{(1-t)q^{\lambda_{i+1}}(t_0t_n)^{\delta(\lambda_{i+1}\leq 0)}
	t^{-\epsilon(\lambda_{i+1})\beta_{\lambda}(i+1)}}
	{q^{\lambda_i}(t_0t_n)^{\delta(\lambda_i\leq 0)}
	t^{-\epsilon(\lambda_i)\beta_{\lambda}(i)}
	-q^{\lambda_{i+1}}(t_0t_n)^{\delta(\lambda_{i+1}\leq 0)}
	t^{-\epsilon(\lambda_{i+1})\beta_{\lambda}(i+1)}})E_{\lambda}$$
	and if $\lambda_i=\lambda_{i+1}$, we have 
	$E_{s_i\lambda}=T_i E_{\lambda}.$
	\item If $\lambda_n>0$, we have
	$$E_{s_n\lambda}= (T_n+\frac{(1-t_n)
	q^{-\lambda_n}(t_0t_n)
	t^{\beta_{\lambda}(n)}
	+(1-t_0)t_n}
	{q^{\lambda_n}t^{-\beta_{\lambda}(n)}-q^{-\lambda_n}(t_0t_n)
	t^{\beta_{\lambda}(n)}})E_{\lambda}$$
	and if $\lambda_n=0$, we have 
	$E_{s_n\lambda}=T_n E_{\lambda}.$
	\end{enumerate}
\end{prop}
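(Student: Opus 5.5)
The approach is the standard intertwiner argument. Write $y_j(\lambda)=q^{\lambda_j}(t_0t_n)^{\delta(\lambda_j\leq 0)}t^{-\epsilon(\lambda_j)\beta_\lambda(j)}$ for the eigenvalue of $Y_j^{(n)}$ on $E_\lambda$. For each case I will build the candidate $F$ given by the right-hand side and prove two things. First, $F$ is a simultaneous eigenvector of all $Y_j^{(n)}$ with eigenvalues $y_j(s\lambda)$, where $s=s_i$ or $s_n$. Second, $F\in x^{s\lambda}+\sum_{\nu<s\lambda}\mathbb{K}'x^\nu$. Since the $E_\lambda$ are characterized by triangularity together with the eigenvalues (the spectrum separates weights), $F=E_{s\lambda}$ follows.

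\emph{Case $1\leq i\leq n-1$.} From $tT_i^{-1}Y_iT_i^{-1}=Y_{i+1}$ in \eqref{DAHA Y relations}, together with the quadratic relation $T_i^{-1}=t^{-1}(T_i+t-1)$, I will derive the commutation formula
\[
T_iY_{i+1}-Y_iT_i=(t-1)Y_{i+1}.
\]
The sign and the exact form must be checked against the conventions of Definition \ref{def: standard representation}. Applying this to $E_\lambda$ gives
\[
Y_i(T_i+c)E_\lambda=y_{i+1}(T_i+c)E_\lambda
\]
precisely when $c=(1-t)y_{i+1}/(y_i-y_{i+1})$, which is the stated constant. The symmetric computation gives eigenvalue $y_i$ for $Y_{i+1}$. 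Since $T_i$ commutes with $Y_j$ for $j\neq i,i+1$, the other eigenvalues are unchanged. I will then check combinatorially that $y_{i+1}(\lambda)=y_i(s_i\lambda)$, that is, $\beta_{s_i\lambda}(i)=\beta_\lambda(i+1)$ and the reverse, directly from the definition of $\beta$ with the ordering $\prec$.

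For triangularity, the explicit formula for $T_i$ shows that $T_ix^\lambda=x^{s_i\lambda}+(\text{lower terms})$ when $\lambda_i>\lambda_{i+1}$, since $s_i\lambda>\lambda$ in the Bruhat order. Also, $T_i$ maps $\sum_{\nu<\lambda}\mathbb{K}'x^\nu$ into $\sum_{\nu<s_i\lambda}\mathbb{K}'x^\nu$, by the standard Bruhat-interval property. If $\lambda_i=\lambda_{i+1}$, then $y_i=t^{-1}y_{i+1}$ (the ratio needs checking), and $E_\lambda$ is symmetric in $x_i,x_{i+1}$, so $T_iE_\lambda=E_\lambda$ by Remark \ref{rmk: symmetry}. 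This is consistent with $s_i\lambda=\lambda$.

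\emph{Case $i=n$.} The key relation is the last one in \eqref{DAHA Y relations}, namely
\[
T_n^{-1}Y_n^{-1}=t_0Y_nT_n+(t_0-1),
\]
rewritten as a commutation rule between $T_n$ and $Y_n^{\pm1}$. Using $T_n^{-1}=t_n^{-1}(T_n+t_n-1)$, this becomes an affine-Hecke (Noumi--Sahi) type relation of the form
\[
T_nY_n^{-1}-Y_n'T_n=(\text{linear combination of }Y_n\text{ and }Y_n^{-1}).
\]
Its eigenvalue version says that $Y_n$ acts on $(T_n+c)E_\lambda$ by $(t_0t_n)y_n^{-1}$ exactly for the stated $c$, whose numerator is $(1-t_n)(t_0t_n)y_n^{-1}+(1-t_0)t_n$ and whose denominator is $y_n-(t_0t_n)y_n^{-1}$. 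Here $y_n(\lambda)=q^{\lambda_n}t^{-\beta_\lambda(n)}$ for $\lambda_n>0$. The eigenvalues $Y_j$ for $j<n$ are unchanged because $T_n$ commutes with them. I must then verify that
\[
y_n(s_n\lambda)=q^{-\lambda_n}(t_0t_n)t^{\beta_{s_n\lambda}(n)}
\]
agrees with this, which requires $\beta_{s_n\lambda}(n)=\beta_\lambda(n)$ under the ordering $k\prec -k$. Triangularity again uses $s_n\lambda>\lambda$ for $\lambda_n>0$ together with the explicit formula for $T_nx_n^m$. When $\lambda_n=0$, $E_\lambda$ is $s_n$-invariant and $T_nE_\lambda=E_\lambda$.

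\emph{Main obstacle.} The hardest step is deriving the exact inhomogeneous commutation relation between $T_n$ and $Y_n$ in the present normalization, and matching the constant term $(1-t_0)t_n$ in the numerator. I would first derive it by writing $Y_n=(t_0t_n)T_{n-1}\cdots T_1T_0^{-1}T_1^{-1}\cdots T_n^{-1}$ and using the quadratic relations for $T_0$ and $T_n$. If that becomes messy, I would instead test the formula in rank $1$, where $E_{(-m)}$ and $E_{(m)}$ can be computed by hand, and read off the constant from there. Everything else is the bookkeeping that the $\beta$-statistics transform correctly, plus the standard fact that $T_s$ preserves the lower Bruhat ideals.
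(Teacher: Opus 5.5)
The paper gives no proof here beyond citing Sahi, and your intertwiner strategy (a joint eigenvector with permuted spectrum, leading coefficient $1$, simple spectrum) is the standard argument behind that citation. As written, though, it would not produce the stated constants, and the degenerate cases are asserted rather than proved.

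\emph{The relations.} The relation you quote, $tT_i^{-1}Y_iT_i^{-1}=Y_{i+1}$, is the $X$-relation. The definition of $Y_i$ gives $T_iY_iT_i=tY_{i+1}$. Using $tT_i^{-1}=T_i+t-1$, this becomes
\[
Y_iT_i=T_iY_{i+1}+(t-1)Y_{i+1},\qquad Y_{i+1}T_i=T_iY_i-(t-1)Y_{i+1}.
\]
These give exactly $c=(1-t)y_{i+1}/(y_i-y_{i+1})$. Your formula $T_iY_{i+1}-Y_iT_i=(t-1)Y_{i+1}$ has the wrong sign and would give $-c$.

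For $i=n$, the last relation of \eqref{DAHA Y relations} as printed is inconsistent with the definition of $Y_n$: it would force the eigenvalue $(t_0t_n)^{-1}y_n^{-1}$. So derive it yourself. Set $A=T_{n-1}\cdots T_1T_0^{-1}T_1^{-1}\cdots T_{n-1}^{-1}$. Then $Y_n=t_0t_nAT_n^{-1}$, and $T_0=t_0T_0^{-1}+1-t_0$ gives $A^{-1}=t_0A+(1-t_0)$. Hence
\[
Y_nT_n=t_0t_n(T_n+t_n-1)Y_n^{-1}-(1-t_0)t_n .
\]
Solving $Y_n(T_n+c)E_\lambda=t_0t_ny_n^{-1}(T_n+c)E_\lambda$ then returns precisely the stated constant.

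You also need $y_j(s\lambda)=y_j(\lambda)$ for the indices $j$ that are not moved. This holds because $i$ and $i+1$ lie on the same side of $j$, and because $m$ and $-m$ are adjacent in $\prec$.

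\emph{The degenerate cases.} Saying that $E_\lambda$ is symmetric is the claim itself, not a proof of it.
\begin{enumerate}
\item If $\lambda_i=\lambda_{i+1}$, the definition of $\beta_\lambda$ gives $y_i=ty_{i+1}$ (not $t^{-1}y_{i+1}$). The displayed relations then show that $(T_i-1)E_\lambda$ is a joint eigenvector with $y_i$ and $y_{i+1}$ swapped and all other eigenvalues unchanged. Reading off $q$-exponents, the only candidate weight is $s_i\lambda=\lambda$, whose spectrum is different because $y_i\neq y_{i+1}$. Since $\{E_\mu\}$ is an eigenbasis with simple spectrum, $(T_i-1)E_\lambda=0$.
\item If $\lambda_n=0$, then $y_n=t_0t_n$, and the $T_n$--$Y_n$ relation above gives $Y_n(T_n-1)E_\lambda=(T_n-1)E_\lambda$. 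Since $1$ is never an eigenvalue of $Y_n$ for generic parameters, $(T_n-1)E_\lambda=0$.
\end{enumerate}

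\emph{A simplification.} The lifting property of the Bruhat order is more than you need. By simplicity of the spectrum, $F=\kappa E_{s\lambda}$, and $\kappa=1$ for the following reasons.
\begin{enumerate}
\item The term $cE_\lambda$ contains no $x^{s\lambda}$.
\item For $\nu\le\lambda$, the support of $Tx^\nu$ consists of $\nu$, $s\nu$, and points strictly between them, which have strictly smaller dominant weight.
\item So $x^{s\lambda}$ arises only from $Tx^\lambda$, where its coefficient is $1$ by the explicit formulas.
\end{enumerate}
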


\begin{thm}\label{thm: stable limit Koornwinder}
	Let $\lambda\in\mathbb{Z}^k$ with $\lambda_k\neq 0$. Then the 
	sequence of Koornwinder polynomials $\{E_{\lambda 0^n}\}_{n\geq 0}$ 
	is convergent $t$-adically 
	to an almost symmetric Laurent polynomial $\E_{\lambda}^+$ 
	and $t^{-1}$-adically to an almost symmetric Laurent polynomial 
	$\E_{\lambda}^-$ in $\Pas^{\pm}$ as $n\rightarrow \infty$. Furthermore, 
	$\E_{\lambda}^+$ and $\E_{\lambda}^-$ are respectively 
	eigenfunctions of the positive and negative stable limit Cherednik 
	operators $\tilde{Y}_i^+$ and $\tilde{Y}_i^-$ with eigenvalues
	$$\tilde{Y}_i^+\E_{\lambda}^+ = \delta_i(\lambda)q^{\lambda_i}
	t^{u_{\lambda}(i)}\E_{\lambda}^+$$
	and 
	$$\tilde{Y}_i^-\E_{\lambda}^- = \delta_i'(\lambda)q^{\lambda_i}(t_0t_{\infty})
	t^{-u_{\lambda}(i)}\E_{\lambda}^-.$$	
	We call $\E_{\lambda}^+$ the positive 
	stable limit Koornwinder polynomial 
	and $\E_{\lambda}^-$ the negative stable limit Koornwinder polynomial 
	associated to the symbol $\lambda$.
\end{thm}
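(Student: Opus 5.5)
We follow the type $A$ strategy of \cite{BW,IW2} and build $E_{\lambda 0^n}$ by an eigenvector (Gram–Schmidt type) construction from the triangularity of the rescaled Cherednik operators, then pass to the limit coefficientwise. Fix $\lambda\in\mathbb{Z}^k$ with $\lambda_k\neq 0$ and work in the positive case; the negative case is the same with $t^n$ replaced by $t^{-n}$ and part (2) of Corollary \ref{cor: limit of Y coefficients} used instead of part (1). Since $E_{\lambda0^n}$ is fixed by $T_j$ for $k<j\le n$ (the tail is zero, so by Proposition \ref{prop: recursion of Koornwinder} $T_jE_{\lambda0^n}=E_{\lambda0^n}$), Remark \ref{rmk: symmetry} shows that $E_{\lambda0^n}$ is symmetric in $x_{k+1},\dots,x_n$ and under $x_j\mapsto x_j^{-1}$ for $j>k$. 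Hence we may expand
$$E_{\lambda0^n}=\sum_{\nu\vert\eta\le\lambda\vert\emptyset,\ l(\nu)\le k} d^{(n)}_{\nu\vert\eta}\,\Pi_n m_{\nu\vert\eta},\qquad d^{(n)}_{\lambda\vert\emptyset}=1 .$$
The index set is finite and independent of $n$, because $\nu\vert\eta\le\lambda\vert\emptyset$ forces $|\nu^+|+|\eta|\le|\lambda^+|$. It therefore suffices to show that each $d^{(n)}_{\nu\vert\eta}$ converges $t$-adically. Convergence of $E_{\lambda0^n}$ in the sense of Proposition \ref{prop: well-defined limit} then follows.

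To identify the coefficients we use the eigenvalue equations. Consider the operator $A_n=\sum_{i\le k} z_i\,t^nY_i^{(n)}$ for generic auxiliary scalars $z_i$, or equivalently a suitable product of the operators $(t^nY_i^{(n)}-\text{eigenvalue})$. By Corollary \ref{cor: limit of Y coefficients}, together with the proof of Proposition \ref{prop: triangularity of positive limit Y}, the matrix of $A_n$ on the finite span of the $\Pi_n m_{\nu\vert\eta}$ is upper triangular in the Bruhat order. Its entries converge $t$-adically to the matrix of $\sum z_i\tilde{Y}_i^+$, and the diagonal entries converge to $\sum_i z_i\delta_i(\nu)q^{\nu_i}t^{u_{\nu\eta}(i)}$. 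The eigenvector condition $A_nE_{\lambda0^n}=c_nE_{\lambda0^n}$ then gives the usual recursion
$$d^{(n)}_{\nu\vert\eta}=\frac{1}{c_n-a^{(n)}_{\nu\vert\eta,\nu\vert\eta}}\sum_{\nu\vert\eta<\sigma\vert\rho\le\lambda\vert\emptyset} a^{(n)}_{\nu\vert\eta,\sigma\vert\rho}\,d^{(n)}_{\sigma\vert\rho}.$$
By downward induction on the finite poset, each $d^{(n)}_{\nu\vert\eta}$ converges, provided the limiting denominators $c-a_{\nu\vert\eta,\nu\vert\eta}$ are nonzero.

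The main obstacle is this separation of limiting eigenvalues, together with controlling the entries of the triangular matrix. The positive limit eigenvalues vanish whenever $\nu_i\le0$, and they vanish identically for $i>l(\nu)$. Distinct symbols $\nu\vert\eta$ may therefore share limiting eigenvalues, in which case the naive recursion degenerates. We would first try to overcome this using the explicit finite-rank eigenvalues, before any rescaling. For $\nu0\dots\eta^-\ne\lambda0^n$, at least one index $i\le k$ has a finite eigenvalue differing from that of $\lambda0^n$ by a nonzero element. We would choose this index (or a combination) so that, after multiplying by $t^n$ or by $1$, the difference still has a nonzero $t$-adic limit. This is where the "furthermore" clauses of Corollary \ref{cor: limit of Y coefficients}, which give convergence of the unrescaled coefficients when the rescaled ones tend to zero, will be used.

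Once $\E_\lambda^+=\lim_n E_{\lambda0^n}$ exists, the eigenvalue statement follows from Proposition \ref{prop: continuity}(1). Indeed, $\tilde{Y}_i^+\E_\lambda^+=\lim_n t^nY_i^{(n)}E_{\lambda0^n}=\lim_n t^n q^{\lambda_i}(t_0t_n)^{\delta(\lambda_i\le0)}t^{-\epsilon(\lambda_i)\beta_{\lambda0^n}(i)}E_{\lambda0^n}$. The scalar tends to $\delta_i(\lambda)q^{\lambda_i}t^{u_\lambda(i)}$ by the computation in the proof of Proposition \ref{prop: triangularity of positive limit Y}. In the negative case the analogous scalar gives $\delta'_i(\lambda)q^{\lambda_i}(t_0t_\infty)t^{-u_\lambda(i)}$.
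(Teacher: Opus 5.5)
There is a genuine gap. Your scheme matches the paper's: a downward induction on finitely many coefficients via the eigenvalue equation, Corollary~\ref{cor: limit of Y coefficients} for the rescaled entries, its ``furthermore'' clause in the degenerate case, and Proposition~\ref{prop: continuity} for the eigenvalues. Your last paragraph is fine. But the step you call the main obstacle is left open, and the remedy you sketch does not work.

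\textbf{The degenerate case.} If $i\le k$ and $\lambda_i\le 0$, the finite eigenvalue is $q^{\lambda_i}t_0t_\infty t^{\beta}$ with $\beta\ge n-O(1)$, because every tail zero is counted ($0$ is $\prec$-minimal). The same holds for a competitor with $\nu_i\le 0$. Hence both the $t^n$-rescaled and the unrescaled diagonal entries tend to $0$ $t$-adically. No combination or polynomial of the $t^nY_i^{(n)}$, $i\le k$, can separate them.

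Concretely, work in rank $n$ with $\lambda=(-1)$, so $k=1$. Take the tail orbit sum $\sum_{j=2}^n(x_j+x_j^{-1})$; its top monomial is $x_2^{-1}$ and it produces the $e_1[X_1^{\pm}]$ term of $\E^+_{(-1)}$. The only available index is $i=1$, with diagonal entries $q^{-1}t_0t_\infty t^{n-1}$ and $t_0t_\infty t^{n-2}$. The coefficient $\frac{q(t-1)}{t-q}$ is the quotient of $[x_2^{-1}]\,Y_1^{(n)}x_1^{-1}$ by this gap, and both have exact order $t^{n-2}$.

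So the missing input is quantitative. In a degenerate direction the numerator must vanish at the same rate as the gap; that is, suitably $t^{-n}$-rescaled entries must converge. The ``furthermore'' clause only gives convergence of the unrescaled entries (here to $0$), which says nothing about a $0/0$ quotient. You cannot borrow the paper's one-sentence treatment either: it asserts that the unrescaled eigenvalues then converge to distinct monomials, which fails for the same reason when the separating index is $\le k$.

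\textbf{The basis.} $\Pi_nm_{\nu\vert\eta}$ does not make $Y_i^{(n)}$ triangular at finite rank, because distinct symbols can share a Bruhat-top monomial. For instance, $x_1^{-1}$ is the top of both $\Pi_nm_{(-1)\vert\emptyset}=x_1^{-1}$ and $\Pi_nm_{\emptyset\vert(1)}=\sum_{j\le n}(x_j+x_j^{-1})$, yet $\emptyset\vert(1)<(-1)\vert\emptyset$.

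The latter element is $W_n$-invariant, so $Y_1^{(n)}$ acts on it as $t_0t_\infty t^{n-1}T_0^{-1}$. A one-line computation shows it acquires the component $(q^{-1}-1)t_0t_\infty t^{n-1}$ along the \emph{higher} element $x_1^{-1}$. Likewise, the coefficient of $m_{(-1)\vert\emptyset}$ in $E_{(-1,0^{n-1})}$ is $1-\frac{q(t-1)}{t-q}=\frac{t(1-q)}{t-q}\neq 1$. So both your normalization $d^{(n)}_{\lambda\vert\emptyset}=1$ and your recursion fail as stated; triangularity in this basis appears only after the rescaled limit.

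Instead, work with tail orbit sums $x^{\nu}m_{\eta}[\bar{X}^{\pm}_{[k+1,n]}]$, where $\nu\in\mathbb{Z}^k$ need not be strict, as the paper does. These have pairwise distinct top monomials and are preserved by $Y_i^{(n)}$ for $i\le k$, so genuine triangularity holds. Pass to the $m_{\nu\vert\eta}$ only at the end, using the $n$-independent expansion of Example~\ref{ex: monomial expansion}.
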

\begin{proof}
	We just prove the positive case and the proof of the negative 
	case is exactly the same. 
	First note that each $E_{\lambda 0^n}$ is symmetric 
	in $x_{k+1},...,x_n$ and in $x_i$ and $x_i^{-1}$ for $k+1\leq i\leq n$ 
	by Proposition \ref{prop: recursion of Koornwinder}.  
	Then we can write
	the monomial expansion of $E_{\lambda 0^n}$ as 
	$$E_{\lambda 0^n} = \sum_{\substack{{\nu\leq \lambda 0^n}\\
	{\{\nu_{k+1},...,\nu_{k+n}\}\in \Pi}}} a_{\lambda 0^n,\nu}^{(n)} 
	\sum_{\sigma\in S_{1^k,n}\rtimes\mathbb{Z}^{n}}x^{\sigma\nu}.$$
	One important fact to be noted is 
	that for a fixed $\lambda$, $\nu$ has only finite many choices. 
	Now it suffices to prove the convergence of the coefficients 
	$a_{\lambda 0^n,\nu}^{(n)}$ for each $\nu\leq \lambda 0^n$ as 
	$n\rightarrow \infty$. We will prove this by induction 
	on the Bruhat order of $\nu$. Clearly 
	the top coefficient $a_{\lambda 0^n,\lambda 0^n}^{(n)}=1$ 
	and therefore is convergent. Suppose 
	that for all $\nu<\nu'$, the sequence 
	$\{a_{\lambda 0^n,\nu'}^{(n)}\}_{n\geq 0}$ is convergent. 
	Choose $i$ such that 
	$$c_{\lambda 0^n,\lambda 0^n,i}^{(n+k)}\neq c_{\nu\nu,i}^{(n+k)}.$$
	This is possible since the spectrum 
	of the Cherednik operators is simple. 
	By comparing the 
	coefficient of $x^{\nu}$ in the eigenvalue equation 
	$$Y_i^{(n+k)}E_{\lambda 0^n} = 
	c_{\lambda 0^n,\lambda 0^n,i}^{(n+k)} E_{\lambda 0^n}$$
	and applying Corollary \ref{cor: limit of Y coefficients}, we have 
	$$a_{\lambda 0^n,\nu}^{(n)}=
	\frac{\sum_{\nu<\nu'\leq \lambda 0^n}c_{\nu'\nu,i}^{(n+k)}
	a_{\lambda 0^n,\nu'}^{(n)}}
	{c_{\lambda 0^n,\lambda 0^n,i}^{(n+k)}-c_{\nu\nu,i}^{(n+k)}}=
	\frac{\sum_{\nu<\nu'\leq \lambda 0^n}(t^{n+k} c_{\nu'\nu,i}^{(n+k)})
	a_{\lambda 0^n,\nu'}^{(n)}}
	{(t^{n+k} c_{\lambda 0^n,\lambda 0^n,i}^{(n+k)})-(t^{n+k} c_{\nu\nu,i}^{(n+k)})}.$$
	If $(t^{n+k} c_{\lambda 0^n,\lambda 0^n,i}^{(n+k)})-(t^{n+k} c_{\nu\nu,i}^{(n+k)})$ 
	does not converge to zero, then 
	by the induction hypothesis and Corollary 
	\ref{cor: limit of Y coefficients} the sequence $a_{\lambda 0^n,\nu}^{(n)}$
	is convergent as $n\rightarrow \infty$. Otherwise, we have 
	$$\lim_{n\rightarrow\infty}
	(t^{n+k} c_{\lambda 0^n,\lambda 0^n,i}^{(n+k)})-
	(t^{n+k} c_{\nu\nu,i}^{(n+k)})=0.$$
	Since $c_{\lambda 0^n,\lambda 0^n,i}^{(n+k)}$ and 
	$c_{\nu\nu,i}^{(n+k)}$ are both monomials and not equal, both 
	$t^{n+k} c_{\lambda 0^n,\lambda 0^n,i}^{(n+k)}$ 
	and $t^{n+k} c_{\nu\nu,i}^{(n+k)}$ converge to 0 as $n\rightarrow \infty$. 
	This is equivalent to saying that $\lambda_i\leq 0$ and $\nu_i\leq 0$. 
	Then by the explicit formula of $c_{\lambda 0^n,\lambda 0^n,i}^{(n+k)}$ and 
	$c_{\nu\nu,i}^{(n+k)}$ in Proposition \ref{prop: triangularity of Y} 
	we have that they both converge $t$-adically to different monomials. 
	Furthermore, since $\nu_i\leq 0$, we have that
	$$\lim_{n\rightarrow \infty} t^{n+k} c_{\nu'\nu,i}^{(n+k)}=0.$$
	for all $\nu'>\nu$. By Corollary \ref{cor: limit of Y coefficients} this 
	implies that the sequence $\{c_{\nu'\nu,i}^{(n+k)}\}$ 
	converges $t$-adically as $n\rightarrow \infty$. 
	Then we use the first equation above for $a_{\lambda 0^n,\nu}^{(n)}$ to see that 
	$\{a_{\lambda 0^n,\nu}^{(n)}\}$ is convergent as $n\rightarrow \infty$. 
	Then by induction we obtain the convergence of all coefficients 
	and therefore the convergence of the sequence 
	$\{E_{\lambda 0^n}\}_{n\geq 0}$. The 
	eigenvalue property of $\E_{\lambda}^+$ follows from the 
	application of Proposition \ref{prop: continuity} 
	on the sequence of operators $\{t^{n+k} Y_{i}^{(n+k)}\}$ acting on 
	the sequence $\{E_{\lambda 0^n}\}$.
\end{proof}
\begin{ex}
	\begin{enumerate}
		\item When $\lambda=(0,1)$, we have 
		\begin{align*}
			E_{(0,1,0^n)}=&x_2
			+\frac{q(1-t)}{q-t^{2n+1}t_0 t_n}x_1
			+\frac{\alpha+t^{n+1} t_0\beta}{q-t^{2n+1}t_0 t_n}\\
			=&x_2+\frac{qt^{-2n-1}(1-t)}{qt^{-2n-1}-t_0 t_n}x_1
			+\frac{t^{-n}(t^{-n-1}\alpha+t_0\beta)}{t^{-2n-1}q-t_0 t_n}
		\end{align*}
		and therefore 
		$$\E_{(0,1)}^+=x_2+(1-t)x_1+q^{-1}\alpha,\qquad \E_{(0,1)}^-=x_2.$$
		The action of Cherednik operators on the stable limit 
		Koornwinder polynomials is
		$$Y_1^+\E_{(0,1)}^+=0,\qquad Y_2^+\E_{(0,1)}^+=qt\E_{(0,1)}^+$$
		and 
		$$Y_1^-\E_{(0,1)}^-=t^{-2}t_0t_{\infty}\E_{(0,1)}^-,\qquad 
		Y_2^-\E_{(0,1)}^-=0.$$
		\item When $\lambda=(-1)$, we have 
		\begin{align*}
		E_{(-1,0^n)}=&x_1^{-1}+
		\frac{q(t-1)}{t-q}(x_2+x_2^{-1}+...+x_n+x_n^{-1})\\
		+&\frac{q^3(t-1)-(q-1)q^2 t^n(t-t_n)-qt^{2n}t_n(qt+tt_0-t-qt_0)}
		{(t-q)(q^2-t^{2n}t_0t_n)}x_1\\
		+&\frac{-(q^2 \alpha +q^3 \beta)
		+q t^n(t\alpha-t_n\alpha+qt\beta-qt_0\beta)
		+t^{2n+1}(t_n\alpha+qt_0\beta)}
		{(t-q)(q^2-t^{2n}t_0t_n)}
		\end{align*}
		and therefore
		$$\E_{(-1)}^+=x_1^{-1}+\frac{q(t-1)}{t-q}e_1[X_1^{\pm}]
		+\frac{q(t-1)}{t-q}x_1-\frac{\alpha +q \beta}{t-q}$$
		and 
		$$\E_{(-1)}^-=x_1^{-1}+\frac{q(t-1)}{t-q}e_1[X_1^{\pm}]
		+\frac{q(qt+tt_0-t-qt_0)}{(t-q)t_0}x_1-
		\frac{t(t_{\infty}\alpha+qt_0\beta)}{(t-q)t_0t_{\infty}}.$$
	\end{enumerate}
\end{ex}

As a corollary, we have
\begin{cor}\label{cor: expansion of stable limit Koornwinder}
Let $\lambda\in\mathbb{Z}^k$ with $\lambda_k\neq 0$ and $N=|\lambda^+|$. 
Then we have an expansion formula
$$E_{\lambda 0^{N}} = \sum_{\substack{{\nu\eta\leq \lambda 0^{N}}\\
{\nu_{l(\nu)}\neq 0,\eta\in\Pi}}} 
\tilde{a}_{\lambda 0^{N},\nu\eta}^{(N)}
x^{\nu}m_{\eta}[\bar{X}_{[l(\nu)+1,k+N]}^{\pm}]$$
with all summands $\nu\eta$ satisfying $l(\nu)\leq k$. 
Furthermore, for all $n>N$, 
$$E_{\lambda 0^{n}} = \sum_{\substack{{\nu\eta\leq \lambda 0^{N}}\\
{\nu_{l(\nu)}\neq 0,\eta\in\Pi}}} 
\tilde{a}_{\lambda 0^{N},\nu\eta}^{(n)}
x^{\nu}m_{\eta}[\bar{X}_{[l(\nu)+1,k+n]}^{\pm}].$$
for some $\tilde{a}_{\lambda 0^{N},\nu\eta}^{(n)}\in \mathbb{K}'$. 
In particular, 
$\tilde{a}_{\lambda 0^{N},\lambda 0^{N}}^{(n)}=1$ for all $n>N$. 
The sequence $\{\tilde{a}_{\lambda 0^{N},\nu\eta}^{(n)}\}_{n>N}$ 
is convergent $t$-adically (resp. $t^{-1}$-adically) to some 
$\tilde{a}_{\lambda,\nu\eta}^+\in \mathbb{K}'$ 
(resp. $\tilde{a}_{\lambda,\nu\eta}^-\in \mathbb{K}'$)
as $n\rightarrow \infty$. 
Finally, we have the expansion formulae
$$\E_{\lambda}^+ = \sum_{\substack{{\nu\eta\leq \lambda 0^{N}}\\
{\nu_{l(\nu)}\neq 0,\eta\in\Pi}}} 
\tilde{a}_{\lambda 0^{N},\nu\eta}^+
m_{\nu\vert\eta},\qquad
\E_{\lambda}^- = \sum_{\substack{{\nu\eta\leq \lambda 0^{N}}\\
{\nu_{l(\nu)}\neq 0,\eta\in\Pi}}}
\tilde{a}_{\lambda 0^{N},\nu\eta}^-
m_{\nu\vert\eta}$$
with 
$\tilde{a}_{\lambda 0^{N},\lambda 0^{N}}^+ =
\tilde{a}_{\lambda 0^{N},\lambda 0^{N}}^- = 1.$
\end{cor}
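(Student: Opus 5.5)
The plan is to re-express the monomial expansion of $E_{\lambda 0^n}$ from the proof of Theorem \ref{thm: stable limit Koornwinder} in the basis $x^{\nu}m_{\eta}[\bar{X}^{\pm}_{[l(\nu)+1,k+n]}]$. Then the convergence of the new coefficients follows from the convergence of the old ones, and the result is identified with an expansion in the basis $m_{\nu\vert\eta}$ of Proposition \ref{prop: standard basis 1}.

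\textbf{Step 1: the tail is symmetric.} By Proposition \ref{prop: recursion of Koornwinder} and Remark \ref{rmk: symmetry}, $E_{\lambda 0^n}$ is fixed by $T_{k+1},\dots,T_{k+n}$. It is therefore symmetric in $x_{k+1},\dots,x_{k+n}$ and invariant under each inversion $x_j\mapsto x_j^{-1}$ with $j>k$.

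\textbf{Step 2: the expansion at rank $N$ and its shape.} Write the finite analogue of the basis: each such polynomial is a combination of $x^{\sigma}m_{\eta}[\bar{X}^{\pm}_{[k+1,k+n]}]$ with $\sigma\in\mathbb{Z}^k$. Rewrite $x^{\sigma}$ as $x^{\nu}$ with $\nu$ strict, where $\nu$ is $\sigma$ with trailing zeros removed. Then apply Example \ref{ex: monomial expansion} in reverse, exactly as in the spanning argument of Proposition \ref{prop: standard basis 1}, to absorb the zero positions $l(\nu)+1,\dots,k$ into $m_{\eta}[\bar{X}^{\pm}_{[l(\nu)+1,k+n]}]$. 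This is a unitriangular change of basis whose entries are integers independent of $n$. It gives the expansion at rank $N$ with $l(\nu)\le k$.

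The summands satisfy $\nu\eta\le\lambda 0^N$ because every monomial occurring in $E_{\lambda 0^N}$ lies below $\lambda0^N$ in the Bruhat order, by the triangularity $E_{\lambda}\in x^{\lambda}+\sum_{\mu<\lambda}\mathbb{K}'x^{\mu}$. One then passes to partition tails via Proposition \ref{prop: Bruhat order}.

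The choice $N=|\lambda^+|$ is what makes the index set stable. Any $\nu\eta\le\lambda0^n$ has $|\nu^+|+|\eta|\le N$, so $\eta$ has at most $N$ nonzero parts. Hence for $n>N$ the same finite index set of pairs $(\nu,\eta)$ occurs, and the expansion for $E_{\lambda0^n}$ has the displayed form.

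\textbf{Step 3: convergence and the leading term.} The coefficients $\tilde a^{(n)}$ are fixed integer linear combinations (independent of $n$) of the coefficients $a^{(n)}_{\lambda0^n,\nu}$ from the proof of Theorem \ref{thm: stable limit Koornwinder}. Each of these converges $t$-adically, and likewise $t^{-1}$-adically, so $\tilde a^{(n)}$ converges as well. The top coefficient equals $1$ because the leading monomial $x^{\lambda}$ has coefficient $1$ and no other basis element contains it.

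Finally, since $\Pi_{k+n}m_{\nu\vert\eta}=x^{\nu}m_{\eta}[\bar{X}^{\pm}_{[l(\nu)+1,k+n]}]$, the sequence is of the convergent form in the definition of the limit. Proposition \ref{prop: well-defined limit} then identifies the limit with $\sum\tilde a^{\pm}m_{\nu\vert\eta}$. Uniqueness comes from Proposition \ref{prop: standard basis 1}.

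\textbf{Main obstacle.} I expect Step 2 to be the main difficulty: showing that the index set really stabilises at $N=|\lambda^+|$ and that the change-of-basis coefficients do not depend on $n$. The first attempt will be to bound the number of nonzero tail entries by the total degree $|\lambda^+|$, using the dominance condition built into the Bruhat order.
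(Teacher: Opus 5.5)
Your route is the paper's own: take the tail-symmetric expansion of $E_{\lambda0^n}$ from the proof of Theorem~\ref{thm: stable limit Koornwinder}, strip trailing zeros from the head, and re-expand with Example~\ref{ex: monomial expansion} using $n$-independent coefficients, so that convergence is inherited from the $a^{(n)}_{\lambda0^n,\nu}$. Three parts of that are fine: the convergence, the stabilisation of the index set for $n\ge N$ via $|\nu^+|+|\eta|\le N$, and the identification of the limit through $\Pi_{k+n}m_{\nu\vert\eta}$.

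The gap is in the two claims you treat as routine, and the paper's proof dismisses them as obvious too.

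\emph{The top coefficient.} It is false that $x^{\lambda}$ occurs in no other basis element. Let $\nu$ be the strict part of a proper prefix of $\lambda$, and let $\eta$ be the partition formed by the nonzero $|\lambda_j|$ with $l(\nu)<j\le k$. Then $x^{\nu}m_\eta[\bar{X}^{\pm}_{[l(\nu)+1,k+n]}]$ also contains $x^{\lambda}$, and such elements do occur in $E_{\lambda0^n}$. The paper's own example shows this. There $E_{(-1,0^n)}=x_1^{-1}+c\sum_{j\ge2}(x_j+x_j^{-1})+\cdots$ with $c=q(t-1)/(t-q)$. Since $\sum_{j\ge2}(x_j+x_j^{-1})=m_1[\bar{X}^{\pm}_{[1,n+1]}]-x_1-x_1^{-1}$, the coefficient of $m_{(-1)\vert\emptyset}$ is $1-c=t(1-q)/(t-q)\neq1$.

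\emph{The order bound.} The Bruhat bound you get from triangularity holds for the unstripped expansion in $x^{\sigma}m_\eta[\bar{X}^{\pm}_{[k+1,k+n]}]$ with $\sigma\in\mathbb{Z}^k$. Re-expanding $m_\eta$ over the slots $l(\nu)+1,\dots,k$ can create Bruhat-larger indices. Take $\lambda=(0,-1)$. Proposition~\ref{prop: recursion of Koornwinder} gives $E_{(0,-1,0)}=(T_2+d)E_{(0,0,-1)}=x_2^{-1}+c'(x_3+x_3^{-1})+(\text{multiples of }x_1,\,x_2,\,1)$, with $c'=q(1-t)/(q-t^2)$ and no $x_1^{-1}$ term. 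Comparing the coefficients of $x_3^{-1}$, $x_1^{-1}$ and $x_2^{-1}$ forces:
\begin{itemize}
\item coefficient $c'$ on $m_{\emptyset\vert(1)}$;
\item coefficient $-c'\neq0$ on $m_{(-1)\vert\emptyset}$;
\item coefficient $1-c'$ on $m_{(0,-1)\vert\emptyset}$.
\end{itemize}
Yet $(-1,0)>(0,-1)$, so $(-1)\vert\emptyset\not\le(0,-1)\vert\emptyset$.

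Your argument does establish convergence in the strict basis $m_{\nu\vert\eta}$. Monicity and the order bound, however, hold only in the fixed-$k$ basis $x^{\sigma}m_\eta[X_k^{\pm}]$, where $x^{\lambda}m_\emptyset$ is the unique element containing $x^{\lambda}$. The change of basis to $m_{\nu\vert\eta}$ is triangular with respect to head length and tail size, not with respect to the Bruhat order, and it destroys both properties. The step you flagged as the main obstacle, the index-set stabilisation, is actually fine; the failure is in the step you called easy.
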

\begin{proof}
	From the proof of Theorem \ref{thm: stable limit Koornwinder} we have 
	\begin{align*}
		E_{\lambda 0^N} =& \sum_{\substack{{\nu\leq \lambda 0^N}\\
		{\{\nu_{k+1},...,\nu_{k+N}\}\in \Pi}}} a_{\lambda 0^N,\nu}^{(N)} 
		\sum_{\sigma\in S_{1^k,N}\rtimes\mathbb{Z}^{N}}x^{\sigma\nu}\\
		=& \sum_{\substack{{\nu\leq \lambda 0^N}\\
		{\{\nu_{k+1},...,\nu_{k+N}\}\in \Pi}}} a_{\lambda 0^N,\nu}^{(N)} 
		\sum_{\sigma\in S_{1^k,N}\rtimes\mathbb{Z}^{N}}x^{\nu^{L}}
		m_{\nu^{R}}[\bar{X}_{[k+1,k+N]}]
	\end{align*}	
	where $\nu^{L}=(\nu_1,...,\nu_k)$ and $\nu^{R}=(\nu_{k+1},...,\nu_{k+N})$. 
	If $\nu^{L}_{k}\neq 0$ or $\nu^{R}=\emptyset$, 
	then 
	$$x^{\nu^{L}}m_{\nu^{R}}[\bar{X}_{[k+1,k+N]}]=
	x^{\nu^{L,\circ}}m_{\nu^{R}}[\bar{X}_{[l(\nu^{L,\circ})+1,k+N]}^{\pm}]$$
	where 
	$\nu^{L,\circ}=(\nu_1,...,\nu_{l(\nu^{L,\circ})})$ is the subpartition 
	of $\nu^{L}$ satisfying $\nu_{l(\nu^{L,\circ})}\neq 0$, 
	$\nu_{l(\nu^{L,\circ})+1}=\cdots=\nu_k=0$, 
	and hence $l(\nu^{L,\circ})\leq k$. Otherwise, we use the formula in Example 
	\ref{ex: monomial expansion} and the method in the proof of Proposition 
	\ref{prop: standard basis 1} to rewrite 
	$x^{\nu^{L}}m_{\nu^{R}}[\bar{X}_{[k+1,k+N]}]^{\pm}$
	as a linear combination of 
	$x^{\nu^{L,\circ}}m_{\eta}[\bar{X}_{[l(\nu^{L,\circ})+1,k+N]}^{\pm}]$. 
	Note that this expansion is independent of the 
	rank $k+N$ with fixed coefficients. Then we can apply 
	the result in Theorem \ref{thm: stable limit Koornwinder} to obtain the 
	convergence of the coefficients. The top coefficients 
	statement is obvious.
\end{proof}

\subsection{Stable limit Koornwinder polynomials with tail symmetrization}
Just like the case for type $A$ stable limit Macdonald polynomials, 
the stable limit Koornwinder polynomials $\E_{\lambda}^+$ (and $\E_{\lambda}^-$) 
are respectively linearly independent but not enough to form a basis of $\Pas^{\pm}$.  
To obtain a set of eigenbasis we need to further apply 
the tail-symmetrization operators which are defined as the idempotent operators
$$e_k^{(n)}(t_n) = 
\frac{1}{[n-k]_{t^{-1}}!\prod_{i=1}^{n-k}(1+t_n^{-1} t^{1-i})}
\sum_{w\in S_{1^k,n-k}\rtimes\mathbb{Z}^{n-k}}\chi(T_w)^{-1}T_w$$
for $k\geq 0$ and all $n\geq k$. 

\begin{prop}\label{prop: tail symmetrization}
	\begin{enumerate}
		\item (See \cite{Mac}*{(5.5.7), (5.7.8)})		
		Let $\lambda\in \Pi$. Then 
		$$e_0^{(n)}(t_n)E_{\lambda} 
		= v_{\lambda}(t_n)
		\Pt_{\lambda}(x_{1},...,x_n;q^{-1},t,u_0,t_0,u_n,t_{n})$$
		where
		$$v_{\lambda}(t_n) =
		(\prod_{i\geq 0}\prod_{j=1}^{m_i(\lambda^+)}
		\frac{1-t^{j}}{1-t}
		\prod_{j=1}^{m_0(\lambda^+)}(1+t_{n}t^{j-1}))
		/(\prod_{j=1}^{n}
		\frac{1-t^{j}}{1-t}(1+t_{n}t^{j-1})),$$		
		and $\Pt_{\lambda^+}(X;q^{-1},t,u_0,t_0,u_n,t_{n})$ 
		is the symmetric Koornwinder polynomial associated to the partition 
		$\lambda^+$.
		\item (See \cite{MA}*{Theorem 4.1.2}) 
		Let $\lambda\in \Pi$. Then 
		$$e_0^{(n)}(t_n)x^{\lambda} = v_{\lambda}(t_n)
		\Pt_{\lambda}(x_{1},...,x_n;t,u_n,t_{n})$$
		where 
		$$\Pt_{\lambda}(X;t,u_n,t_{n})
		=v_{\lambda}(t_n)^{-1}\sum_{w\in W_{n}}w(x^{\lambda}\prod_{\alpha\in\Phi_n^+}
		\frac{1-t_{\alpha/2}^{1/2}t_{\alpha}X^{-\alpha}}{1-t_{\alpha/2}^{1/2}X^{-\alpha}})$$
		is the Hall-Littlewood polynomial of type $BC_n$ associated to the partition 
		$\lambda$. By convention we set 
		$t_{\alpha}=1$ if $\alpha\notin \Phi_n^+$.
	\end{enumerate}
\end{prop}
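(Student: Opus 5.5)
The statement to prove is Proposition~\ref{prop: tail symmetrization}. Both parts are cited from the literature, so the plan is to reduce each to the cited result by matching conventions. I would not rederive the symmetric theory.

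\textbf{Step 1: identify $e_0^{(n)}(t_n)$.} For $k=0$ the tail-symmetrization operator $e_0^{(n)}(t_n)$ is exactly the idempotent $e_n\in H_n$ of Section~\ref{sec: DAHA}, because $S_{1^0,n}\rtimes\mathbb{Z}^n=W_n$. So it suffices to verify two facts about $e_n$:
\begin{itemize}
\item $T_i e_n=e_n$ for $1\le i\le n$. This follows from $\chi(T_w)^{-1}T_w$ summing over the full group together with the quadratic relations.
\item $e_n^2=e_n$. The normalizing constant $[n]_{t^{-1}}!\prod_{i=1}^n(1+t_n^{-1}t^{1-i})$ is the Poincaré polynomial $\sum_w\chi(T_w)^{-1}$ of $W_n$ with parameters $(t^{-1},t_n^{-1})$.
\end{itemize}
By Remark~\ref{rmk: symmetry}, $e_n f$ is then $W_n$-invariant for every $f$.

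\textbf{Step 2: part (1).} Since $E_\lambda$ is a $Y$-eigenfunction, $e_nE_\lambda$ is a $W_n$-invariant Laurent polynomial. Symmetric Laurent polynomials commute with $Y$-symmetric functions, and the symmetric Koornwinder polynomials are characterized as $W_n$-invariant eigenfunctions of the symmetric Koornwinder operator with a simple spectrum. Hence $e_nE_\lambda$ is a scalar multiple of $\Pt_{\lambda^+}$.

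To compute the scalar, compare leading coefficients in the Bruhat-triangular expansion $E_\lambda\in x^\lambda+\sum_{\mu<\lambda}\mathbb{K}'x^\mu$:
\begin{itemize}
\item The coefficient of $x^{\lambda}$ in $e_nE_\lambda$ comes only from $T_w$ with $w$ in the stabilizer of $\lambda$, since the other $T_w x^\lambda$ contribute strictly larger or incomparable orbit elements.
\item The stabilizer of a partition $\lambda$ in $W_n$ is $\prod_i S_{m_i(\lambda)}$ together with the $BC_{m_0}$ factor on the zero parts.
\item The resulting ratio of Poincaré polynomials is the quoted $v_\lambda(t_n)$. This is Macdonald's computation in \cite{Mac}*{(5.5.7),(5.7.8)}.
\end{itemize}
The main work here is translating parameters via the Noumi--Sahi dictionary in the Remark (where $q\mapsto q^{-1}$ explains the $q^{-1}$ in $\Pt_\lambda$). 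One must also check that the normalization of $\chi$ (with $T_i$ rescaled by $\mathbf t^{-1/2}$) matches Macdonald's $\tau$-convention.

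\textbf{Step 3: part (2).} This is the $q$-independent, ``$X$-only'' version. Expand $\sum_w\chi(T_w)^{-1}T_w$ as a symmetrizer via the standard Demazure–Lusztig factorization. This gives
\[
e_n f=\frac{1}{W_n(\cdot)}\sum_{w\in W_n}w\Big(f\prod_{\alpha\in\Phi_n^+}\frac{1-t_{\alpha/2}^{1/2}t_\alpha X^{-\alpha}}{1-t_{\alpha/2}^{1/2}X^{-\alpha}}\Big),
\]
where the $c$-function for the short roots $\epsilon_i$ and $2\epsilon_i$ combines into the $t_n,u_n$ factor coming from $T_n$. This identity is \cite{MA}*{Theorem 4.1.2}, and applying it to $f=x^\lambda$ gives the claim by definition of $\Pt_\lambda(X;t,u_n,t_n)$.

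\textbf{Main obstacle.} The hard step is the parameter bookkeeping between this paper's $(t,t_n,a)$ and the cited sources' $(t_\alpha,t_{\alpha/2})$. The $T_n$-operator mixes $\beta=a-t_na^{-1}$ with $1-t_n$, so the rank-one factor for the pair $\{\epsilon_n,2\epsilon_n\}$ must be verified directly. First I would check the rank-one case $n=1$ by hand, using the explicit formulas for $T_nx_n^{\pm m}$; the general case then follows by the factorization.
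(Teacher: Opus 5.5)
Your overall route is the standard argument behind the results the paper cites; the paper gives no proof of its own. Step 1 is correct, and so is proportionality of $e_nE_\lambda$ to $\Pt_\lambda$ via centrality of $W_n$-invariant functions of $Y$ and simple spectrum. \textbf{The genuine gap is your computation of the scalar.}

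Your claim that the coefficient of $x^\lambda$ in $e_nE_\lambda$ comes only from $T_w$ with $w$ in the stabilizer $W_\lambda$ is false. For $\mu_i>\mu_{i+1}$, $T_ix^\mu$ contains the diagonal term $(1-t)x^\mu$, and $T_nx_n^m$ contains $(1-t_n)x_n^m$. So $x^\lambda$ appears in $T_wx^\lambda$ for many $w\notin W_\lambda$.

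Already $n=1$, $\lambda=(1)$ shows this. There $T_nx_1=(1-t_n)x_1+x_1^{-1}+\beta$, hence $e_1x_1=\frac{1}{1+t_n}(x_1+x_1^{-1}+\beta)$. The coefficient of $x_1$ is $1/(1+t_n)=v_{(1)}(t_n)$, whereas counting only the stabilizer gives $t_n/(1+t_n)$.

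In general, put $W_\lambda(t;t_n)=\sum_{w\in W_\lambda}\chi(T_w)$ and define $W_n(t;t_n)$ likewise. Then $v_\lambda(t_n)=W_\lambda(t;t_n)/W_n(t;t_n)$ and the normalizer of $e_n$ is $W_n(t^{-1};t_n^{-1})$. Your recipe yields $W_\lambda(t^{-1};t_n^{-1})/W_n(t^{-1};t_n^{-1})$, which differs from $v_\lambda(t_n)$ by the factor $\chi(T_{w_0})/\chi(T_{w_{0,\lambda}})$. This is not harmless: the later $t$-adic normalizations use $\lim_n v_\mu(t_\infty)=(1-t)^{l(\mu)}\prod_i[m_i(\mu)]_t!$.

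\textbf{The repair.} In this paper's order a partition $\lambda$ is the Bruhat-minimal point of its orbit and $\lambda^-$ is the maximal one. Since $e_nE_\lambda$ is $W_n$-invariant and the lower terms of $E_\lambda$ lie in strictly smaller orbits, you may read off the coefficient of $x^{\lambda^-}$ instead. Factor $w=w^\lambda u$ with $u\in W_\lambda$. Then $T_ux^\lambda=x^\lambda$, and $T_{w^\lambda}x^\lambda\in x^{w^\lambda\lambda}+(\text{lower})$ with leading coefficient exactly $1$. So only the coset $w^\lambda_{\max}W_\lambda$ contributes, giving $\chi(T_{w^\lambda_{\max}})^{-1}W_\lambda(t^{-1};t_n^{-1})/W_n(t^{-1};t_n^{-1})=W_\lambda(t;t_n)/W_n(t;t_n)$.

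\textbf{A normalization problem in Step 3.} Your identity $e_nf=W_n(t;t_n)^{-1}\sum_{w}w(f\,c)$ is correct. For $n=1$, with the rank-one factor $c=(1+ax_1^{-1})(1-a^{-1}t_nx_1^{-1})/(1-x_1^{-2})$, one checks $(1+t_n^{-1}T_n)f=t_n^{-1}(cf+s_n(cf))$.

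However, it does not give the statement ``by definition''. With $\Pt_\lambda$ defined as printed, $v_\lambda(t_n)^{-1}\sum_w w(\cdots)$, your identity gives $e_nx^\lambda=W_n(t;t_n)^{-1}v_\lambda(t_n)\Pt_\lambda$. For instance, with $n=1$, $\lambda=0$ one has $\sum_w w(c)=1+t_n$, while $e_1\cdot 1=1$.

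The identity holds with the monic normalization $\Pt_\lambda=W_\lambda(t;t_n)^{-1}\sum_w w(\cdots)$, because $v_\lambda W_n=W_\lambda$. This is exactly the bookkeeping you named as the main obstacle, so you should make the normalization explicit rather than inherit the printed formula.
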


\begin{prop}\label{cor: Hall-Littlewood expansion}
	Let $\lambda\in\mathbb{Z}^n$. Denote 
	$$\mathcal{R}_{\lambda} = \sum_{w\in W_{n}}w(x^{\lambda}\prod_{\alpha\in\Phi_n^+}
		\frac{1-t_{\alpha/2}^{1/2}t_{\alpha}X^{-\alpha}}{1-t_{\alpha/2}^{1/2}X^{-\alpha}})$$
	and suppose for some simple coroot $\alpha_i^{\vee}$ with $0<i\leq n$, 
	$\langle \lambda, \alpha_i\rangle=-m<0$. 
	Then we have
	\begin{enumerate}
		\item If $0<i<n$, then
		$$\mathcal{R}_{\lambda}=t\mathcal{R}_{s_i\lambda}+
		(t-1)\sum_{k=1}^{m-1}\mathcal{R}_{\lambda+k\alpha_i}$$
		\item If $i=n$, then 
		$$\mathcal{R}_{\lambda}=t_n\mathcal{R}_{s_n\lambda}+
		(t_n-1)\sum_{k=1}^{m-1}\mathcal{R}_{\lambda+2k\alpha_n}
		-\beta \sum_{k=1}^{m}\mathcal{R}_{\lambda+(2k-1)\alpha_n}.$$
	\end{enumerate}
\end{prop}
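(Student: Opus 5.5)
The plan is the classical straightening argument for Hall--Littlewood sums: reduce both identities to a rank-one (or rank-two) statement about symmetrizing over the small subgroup generated by $s_i$. Since $\mathcal{R}_\lambda=\sum_{w\in W_n}w\bigl(x^\lambda\Delta\bigr)$, with $\Delta=\prod_{\alpha\in\Phi_n^+}\frac{1-t_{\alpha/2}^{1/2}t_\alpha X^{-\alpha}}{1-t_{\alpha/2}^{1/2}X^{-\alpha}}$, I would factor $\Delta=\Delta_i\cdot\Delta'$. Here $\Delta_i$ collects the factors attached to the positive roots in the span of $\alpha_i$: only $\alpha_i$ when $0<i<n$, and $\epsilon_n,2\epsilon_n$ when $i=n$. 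Since $s_i$ permutes $\Phi_n^+\setminus\{\text{roots proportional to }\alpha_i\}$, the remaining product $\Delta'$ is $s_i$-invariant. Writing $W_n=\bigsqcup_{u} u\{1,s_i\}$ over minimal coset representatives, we get $\mathcal{R}_\lambda=\sum_u u\bigl(\Delta'\,(1+s_i)(x^\lambda\Delta_i)\bigr)$. Both statements then reduce to the corresponding identity for the rank-one symmetrizer $F\mapsto(1+s_i)(F\Delta_i)$: it suffices to show
$$(1+s_i)(x^\lambda\Delta_i)=t(1+s_i)(x^{s_i\lambda}\Delta_i)+(t-1)\sum_{k=1}^{m-1}(1+s_i)(x^{\lambda+k\alpha_i}\Delta_i),$$
together with the analogous formula for $i=n$. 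After that, apply $\sum_u u(\Delta'\,\cdot\,)$ to both sides.

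For $0<i<n$ only the variables $x_i,x_{i+1}$ matter. Put $y=x^{-\alpha_i}=x_{i+1}/x_i$, so that $\Delta_i=\frac{1-ty}{1-y}$ and $x^{\lambda+k\alpha_i}=x^\lambda y^{-k}$. Since $\langle\lambda,\alpha_i\rangle=-m$, we have $x^{s_i\lambda}=x^\lambda y^{-m}$, and $s_i$ fixes $x^{\lambda}y^{-m/2}$ formally. The identity is then the standard computation with $\pi(f)=\frac{f-ty\,s_if}{1-y}$ (or equivalently the Demazure--Lusztig form of $T_i$). I would clear denominators and check the resulting identity of Laurent polynomials in $y$ as a geometric series $\sum_k y^{-k}$ telescoping. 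This is exactly the type $A$ Hall--Littlewood straightening rule, so it can be done quickly with the generating function $\sum_k y^{-k}$.

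For $i=n$ the relevant factor involves $\alpha=\epsilon_n$ with $t_{\alpha/2}$ conventions and $\alpha=2\epsilon_n$. Here $\Delta_n$ becomes a rational function of $x_n^{-1}$ whose numerator carries $t_n$ and the parameter $\beta=a-t_na^{-1}$; the combination matches the formula for $T_n$ in Definition~\ref{def: standard representation}, namely $\frac{\beta x_n^{-1}+(1-t_n)}{1-x_n^{-2}}$. I would first verify this identification of $\Delta_n$ as the rank-one $BC_1$ symmetrizing factor. Then I would reuse the explicit expansions already computed, namely the proposition giving $T_nx_n^{\pm m}$ as $h_m[(1-t_n)x_n+x_n^{-1}]+\beta h_{m-1}[x_n+x_n^{-1}]$. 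The even shifts $\lambda+2k\alpha_n$ then come from the $(1-t_n)$ part and the odd shifts $\lambda+(2k-1)\alpha_n$ from the $\beta$ part, which produces the three sums in the statement. With $\langle\lambda,\alpha_n\rangle=-m$ we have $\lambda_n=-m$ (under the paper's normalization with $\alpha_n^\vee$), and the ranges $k=1,\dots,m-1$ and $k=1,\dots,m$ are read off from $h_m$ and $h_{m-1}$ respectively.

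The main obstacle is the $i=n$ case: it means pinning down exactly what $t_{\alpha/2}^{1/2}$ and $t_\alpha$ are for $\alpha=\epsilon_n,2\epsilon_n$, so that the product of the two factors reproduces $\beta$ and $t_n$ with the correct normalizations and signs. It also means tracking the $m$ versus $2m$ ambiguity coming from the pairing with $\alpha_n$ versus the coroot. I would first settle this by checking the case $n=1$, $\lambda=(-1)$ directly, which must give $\mathcal{R}_{(-1)}=t_n\mathcal{R}_{(1)}-\beta\mathcal{R}_{(0)}$. I would then do the general rank-one computation with generating functions in $z$, as in Proposition~\ref{prop: IW formula negative}.
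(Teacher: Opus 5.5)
Your reduction is the same as the paper's. You split $W_n$ into cosets of $\langle s_i\rangle$, use $s_i\Delta'=\Delta'$, and reduce to a rank-one identity. For $0<i<n$ the paper also checks that identity by a direct telescoping rearrangement, as you propose. One small slip: your $\pi(f)=\frac{f-tys_if}{1-y}$ is not the rank-one operator. The correct one is $(1+s_i)(f\Delta_i)=\frac{(1-ty)f+(t-y)s_if}{1-y}$, but your telescoping check goes through with it.

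For $i=n$ the paper does not use $T_n$ at all. It takes $\Delta_n=\frac{(1+ax_n^{-1})(1-a^{-1}t_nx_n^{-1})}{1-x_n^{-2}}$ as given; this is Macdonald's normalization, and it settles the convention question you flag as the main obstacle. It then rearranges $x^\lambda\Delta_n+x^\lambda x_n^{2m}s_n(\Delta_n)$ by hand.

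Your $T_n$ route is a genuinely cleaner alternative, but you need to state the link it relies on, which the proposal leaves implicit.
\begin{itemize}
\item Since $\Delta_n=t_n+\frac{\beta x_n^{-1}+(1-t_n)}{1-x_n^{-2}}$, one gets $(1+s_n)(f\Delta_n)=(T_n+t_n)f$.
\item Likewise $(1+s_i)(f\Delta_i)=(T_i+t)f$ for $i<n$.
\item Extend $\mathcal{R}$ linearly in the monomial. The quadratic relations then give $\mathcal{R}_{(1-T_i)x^\lambda}=0$.
\end{itemize}
So both parts of the statement say exactly that the right-hand side is $\mathcal{R}$ applied to the monomial expansion of $T_ix^\lambda$. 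This handles both cases uniformly and explains where the coefficients come from; the paper's by-hand manipulation gives no such explanation.

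One caution about reusing the paper's expansions. What you need is $T_nx_n^{-m}$, not $T_nx_n^{m}$. The formula for $T_nx_n^{-m}$ printed in the paper's proposition on $T_n^{\pm1}x_n^{\pm m}$ is inconsistent with the definition of $T_n$ and with the quadratic relation. For $m=1$ it gives $(t_n-1)x_n^{-1}+t_nx_n-t_n\beta$, whereas the definition gives $t_nx_n-\beta$. The correct expansion is $T_nx_n^{-m}=t_nx_n^{m}+(t_n-1)\sum_{k=1}^{m-1}x_n^{2k-m}-\beta\sum_{k=1}^{m}x_n^{2k-1-m}$, which matches part 2 term by term. Your planned sanity check at $n=1$, $\lambda=(-1)$ would catch the discrepancy.
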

\begin{proof}
	We have $s_i \lambda =\lambda + m\alpha_i$. First we assume $i<n$ and denote 
	$$\Delta'= \prod_{\alpha\in\Phi_n^+\backslash\{\alpha_i\}}
		\frac{1-t_{\alpha/2}^{1/2}t_{\alpha}X^{-\alpha}}{1-t_{\alpha/2}^{1/2}X^{-\alpha}}$$
	Then $s_i$ permutes $\Phi_n^+\backslash\{\alpha_i\}$ and therefore 
	$s_i\Delta' = \Delta'.$	Then we have 
	\begin{align*}
		\mathcal{R}_{\lambda}=&\sum_{w\in W_{n}/\langle s_i\rangle}
		w(\Delta'(x^{\lambda}\frac{1-t X^{-\alpha_i}}{1-X^{-\alpha_i}}
		+x^{s_i\lambda}\frac{1-t X^{\alpha_i}}{1-X^{\alpha_i}}))\\
		=&\sum_{w\in W_{n}/\langle s_i\rangle}
		w(x^{\lambda}\Delta'(\frac{1-t X^{-\alpha_i}}{1-X^{-\alpha_i}}
		+x^{m\alpha_i}\frac{1-t X^{\alpha_i}}{1-X^{\alpha_i}}))\\
		=&\sum_{w\in W_{n}/\langle s_i\rangle}
		w(x^{\lambda}\Delta'(\frac{x_{i+1}^{-m}}{x_i-x_{i+1}}
		((x_i-t x_{i+1})x_{i+1}^m-(x_{i+1}-t x_i)x_i^m)))\\	
		=&\sum_{w\in W_{n}/\langle s_i\rangle}
		w(x^{\lambda}\Delta'(\frac{x_{i+1}^{-m}}{x_i-x_{i+1}}
		(t((x_i-t x_{i+1})x_{i}^m-(x_{i+1}-t x_i)x_{i+1}^m)\\
		&+(t-1)\sum_{i=1}^{\lfloor\frac{m-1}{2}\rfloor}((x_{i+1}x_i)^{m-k}
		((x_i-t x_{i+1})x_{i+1}^k-(x_{i+1}-t x_i)x_{i}^k)\\
		&+((x_i-t x_{i+1})x_{i}^k-(x_{i+1}-t x_i)x_{i+1}^k)))))\\	
		=& t \mathcal{R}_{s_i\lambda} + (t-1)\sum_{k=1}^{m-1}
		\mathcal{R}_{\lambda+k\alpha_i}.
	\end{align*}
	Next we assume $i=n$. Similarly we denote 
	$$\Delta'= \prod_{\alpha\in\Phi_n^+\backslash\{\alpha_n,2\alpha_n\}}
	\frac{1-t_{\alpha/2}^{1/2}t_{\alpha}X^{-\alpha}}{1-t_{\alpha/2}^{1/2}X^{-\alpha}}.$$
	Note that the shifted parameters are $t_{\alpha_n}=a^2$ and 
	$t_{2\alpha_n}=u_n^{-1}$. 
	Then we have 
	{\allowdisplaybreaks
	\begin{align*}
		\mathcal{R}_{\lambda}
		=&\sum_{w\in W_{n}/\langle s_n\rangle}
		w(x^{\lambda}\Delta'(\frac{(1+a x_n^{-1})(1-a^{-1}t_n x_n^{-1})}
		{1-x_n^{-2}}
		+x_n^{2m}\frac{(1+a x_n)(1-a^{-1}t_n x_n)}
		{1-x_n^{2}}))\\
		=&\sum_{w\in W_{n}/\langle s_n\rangle}
		w(\frac{x^{\lambda}\Delta'}{x_n^2-1}
		((x_n^2+\beta x_n-t_n)
		-x_n^{2m}(1+\beta x_n -t_n x_n^2)))\\
		=&\sum_{w\in W_{n}/\langle s_n\rangle}
		w(\frac{x^{\lambda}\Delta'}{x_n^2-1}
		(t_n(x_n^{2m}(x_n^2+\beta x_n-t_n)
		-(1+\beta x_n -t_n x_n^2))\\
		&-\beta\sum_{k=0}^{\lfloor \frac{m-1}{2}\rfloor}
		(x_n^{2k+1}((x_n^{2m-4k-2}(x_n^2+\beta x_n-t_n)
		-(1+\beta x_n -t_n x_n^2))\\
		&+((x_n^2+\beta x_n-t_n)
		-x_n^{2m-4k-2}(1+\beta x_n -t_n x_n^2))
		))\\
		&+(t_n-1)\sum_{k=1}^{\lfloor \frac{m-1}{2}\rfloor}
		(x_n^{2k}((x_n^{2m-4k}(x_n^2+\beta x_n-t_n)
		-(1+\beta x_n -t_n x_n^2))\\
		&+((x_n^2+\beta x_n-t_n)
		-x_n^{2m-4k}(1+\beta x_n -t_n x_n^2))))\\
		&+ \epsilon(m)x_n^m ((x_n^2+\beta x_n-t_n)
		-(1+\beta x_n -t_n x_n^2))))\\
		=& t_n \mathcal{R}_{s_n\lambda} + (t_n-1)\sum_{k=1}^{m-1}
		\mathcal{R}_{\lambda+2k\alpha_n}- \beta \sum_{k=1}^{m}
		\mathcal{R}_{\lambda+(2k-1)\alpha_n}
	\end{align*}}
	where $\epsilon(m)=\beta$ if $m$ is odd and 
	$\epsilon(m)=t_n-1$ if $m$ is even. This is exactly the desired formula.
\end{proof}
\begin{rmk}
	Proposition \ref{cor: Hall-Littlewood expansion} 
	provides a computation formula for 
	the Hall-Littlewood polynomials of type $BC_n$ 
	associated to a general weight.
\end{rmk}

\begin{prop}
For each $k\geq 0$, the sequence of operators 
$\{e_k^{(n)}(t_{\infty})\}_{n\geq k}$ with the fixed parameter $t_{\infty}$ 
is convergent $t$-adically 
to an idempotent operator $e_k^+$ on $\Pas^{\pm}$.
\end{prop}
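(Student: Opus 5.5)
The plan is to reduce the convergence of $e_k^{(n)}(t_\infty)$ to its action on the basis $\{m_{\lambda\vert\mu}\}$ of Proposition \ref{prop: standard basis 1}, and to use the Hall--Littlewood type formulas of Proposition \ref{prop: tail symmetrization}(2) and Proposition \ref{cor: Hall-Littlewood expansion} to compute $e_k^{(n)}(t_\infty)$ on monomials explicitly. Since $e_k^{(n)}$ is built from $T_{k+1},\dots,T_{n-1},T_n^{(n)}$, it commutes with multiplication by $x_1^{\pm1},\dots,x_k^{\pm1}$ and acts only on the tail variables $x_{k+1},\dots,x_n$. Hence for $f=x^{\lambda}m_{\mu}[X_{l(\lambda)}^{\pm}]$ it suffices to handle a monomial $x^{\sigma}x^{\tau}$ with $\sigma\in\mathbb{Z}^k$ and $\tau\in\mathbb{Z}^{n-k}$ supported in a bounded initial segment. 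After factoring out $x^{\sigma}$, we are reduced to the case $k=0$ in $n-k$ variables applied to $\Pi_{n-k}$ of a fixed Laurent monomial $x^{\tau}$ (shifted).

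On a $W_{n-k}$-symmetric input, $e_0$ acts as the identity, so the content is on nonsymmetric monomials $x^{\tau}$. By Proposition \ref{prop: tail symmetrization}(2) together with the symmetrizer formula, $e_0^{(n)}(t_\infty)x^{\tau}$ is a normalized multiple of $\mathcal{R}_{\tau 0^{n-k-l}}$. Repeated application of Proposition \ref{cor: Hall-Littlewood expansion} straightens $\tau$ to dominant weights $\nu^+$ with $\nu^+\le\tau^+$. Each straightening step has coefficients $t,t-1,t_\infty,t_\infty-1,\beta$ that are independent of $n$, and the number of steps is bounded in terms of $\tau$ alone. This gives
\[
e_0^{(n)}(t_\infty)x^{\tau}=\sum_{\nu^+\le \tau^+}c_{\tau,\nu}\,\frac{\mathcal{R}_{\nu^+0^{\ast}}}{\mathcal{R}_{0^{n-k}}}
\]
with $c_{\tau,\nu}$ independent of $n$. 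Each ratio $\mathcal{R}_{\nu^+}/\mathcal{R}_0=v_{\nu^+}(t_\infty)\Pt_{\nu^+}/v_0(t_\infty)$ equals $v_{\nu^+}/v_0$ times a monic $BC$ Hall--Littlewood polynomial.

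The remaining task is to show that, $t$-adically, the normalization $v_{\nu^+}(t_\infty)/v_0(t_\infty)$ and the monomial coefficients of $\Pt_{\nu^+}(x_{k+1},\dots,x_n;t,u_\infty,t_\infty)$ in the basis $m_{\eta}[\bar X^{\pm}_{[k+1,n]}]$ converge. The quotient $v_{\nu^+}/v_0=\prod_{j=1}^{m_0}\cdots/\prod_{j=1}^{n-k}\cdots$ telescopes to a ratio of finitely many $n$-dependent factors of the form $(1-t^{j})(1+t_\infty t^{j-1})$ with $j$ near $n$, and each such factor tends to $1$ $t$-adically. The stability of $BC$ Hall--Littlewood polynomials under $x_n\mapsto$ ``removal'' is the analogue of the classical type $A$ stability. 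I would prove it by induction on $\nu^+$ using the straightening relation of Proposition \ref{cor: Hall-Littlewood expansion}, in the same way Corollary \ref{cor: expansion of stable limit Koornwinder} uses Example \ref{ex: monomial expansion}, to convert into the $m_{\lambda\vert\mu}$ basis with coefficients converging by Proposition \ref{prop: well-defined limit}. \textbf{This step is the main obstacle}, because the Hall--Littlewood coefficients a priori carry $n$-dependent factors from the $\mathbb{Z}_2$ part of the Weyl group. I would first try to show that all such factors appear as $1+O(t^{n-j})$, by writing $\Pt_{\nu}$ via a generating series as in the proofs of Lemma \ref{lem: symmetric part 1} and Corollary \ref{cor: symmetric part 1}, so that the same $\epsilon$-sum argument applies.

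With the limit operator $e_k^+$ defined by Proposition \ref{prop: continuity}(1), idempotency follows from $(e_k^{(n)})^2=e_k^{(n)}$ and Proposition \ref{prop: continuity}(2). Compatibility with the projections, namely $\pi_{n+1}e_k^{(n+1)}\Pi_{n+1}$ agreeing with $e_k^{(n)}\Pi_n$ up to convergent scalars, is used implicitly in order to invoke the convergence definition, and it comes from the explicit expansion above.
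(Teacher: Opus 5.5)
Your skeleton matches the paper's: pull out the head variables, straighten the tail exponent with Proposition~\ref{cor: Hall-Littlewood expansion}, use Proposition~\ref{prop: tail symmetrization}(2) and the $t$-adic limit of $v_\mu(t_\infty)$ on dominant tails, and get idempotency from Proposition~\ref{prop: continuity}. However, the claim carrying your argument is false. You say the straightening takes a number of steps bounded in terms of $\tau$ alone, with coefficients independent of $n$. This fails as soon as $\tau$ has a negative entry. The only generator that changes a sign is $s_n$, acting on the last variable. So a negative exponent must first be carried past the $n-k-l(\tau)$ trailing zeros, and each transposition costs a factor $t$ (with extra $(t-1)$-terms when the exponent is $\le -2$). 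Concretely, combine $T_ix_i^{-1}=t\,x_{i+1}^{-1}$, $tT_i^{-1}x_i=x_{i+1}$, $T_nx_n^{-1}=t_\infty x_n-\beta$ and $e_0^{(n)}T_i^{\pm1}=e_0^{(n)}$. This gives
\[
e_0^{(n)}(t_\infty)\,x_1^{-1}=t^{n-1}\bigl(t_\infty t^{n-1}\,e_0^{(n)}(t_\infty)\,x_1-\beta\bigr).
\]
So there is no expansion $e_0^{(n)}x^{\tau}=\sum_\nu c_{\tau,\nu}\,\mathcal{R}_{\nu^+}/\mathcal{R}_0$ with $n$-independent $c_{\tau,\nu}$. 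Your convergence argument, which multiplies fixed coefficients by convergent dominant pieces, therefore does not cover these inputs.

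The missing idea is that these contributions die in the $t$-adic topology, and this forces a case split, which is what the paper does.
\begin{itemize}
\item If the tail exponents are nonnegative, straightening only uses $s_{k+1},\dots,s_{k+s-1}$, and your bounded, $n$-independent claim is correct.
\item If the last tail exponent is negative ($\mu_s<0$), every term carries a factor $t^{n-O(1)}$. The paper records this as $e_k^{(n)}(t_\infty)x^{\lambda\mu}=O(t^{n-k-s-|\mu_s|})\to 0$.
\item General exponents are reduced to these two cases by straightening confined to the first $s$ tail slots, using $e_k^{(n)}T_i=e_k^{(n)}$.
\end{itemize}
The split is genuinely needed, because a negative entry does not force vanishing in general. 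For $\tau=(-1,2)$ (padded with zeros), the first step gives $t\,\mathcal{R}_{(2,-1)}+(t-1)\bigl(\mathcal{R}_{(0,1)}+\mathcal{R}_{(1,0)}\bigr)$, whose $t$-adic limit after normalization is nonzero. You must therefore either prove this valuation estimate or track the $n$-dependent $c^{(n)}_{\tau,\nu}$ and show they converge.

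By contrast, the stability of the $BC$ Hall--Littlewood polynomials, which you single out as the main obstacle, is simply asserted in the paper rather than proved. Your reduction to tail monomials is also cleanest in the basis $x^{\lambda}m_\mu[X^{\pm}]$, whose symmetric factor commutes with $e_k^{(n)}$, as the paper uses.
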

\begin{proof}
	From Proposition \ref{prop: tail symmetrization} we have 
	for each $\lambda\in \mathbb{Z}^k$ and $\mu\in \Pi$, 
	$$e_k^{(n)}(t_{\infty})x^{\lambda\mu}
	= v_{\mu0^{n-k-s}}(t_{\infty})
	x^{\lambda}\Pt_{\mu}(x_{k+1},...,x_n;t,u_n,t_{n}).$$ 
	Note that
	$$\lim_{n\rightarrow\infty}v_{\mu}(t_{\infty})=
	\prod_{i\geq 0}[m_i(\mu)]_t!(1-t)^{l(\mu)}.$$
	We see that $v_{\lambda_{R}^+}(t_{\infty})$ converges $t$-adically. 
	On the other hand, 
	$$\Pt_{\mu}(x_{k+1},...,x_n;t,u_n,t_{n})
	\rightarrow \Pt_{\mu}(X_k;t,u_n,t_{n})=\Pt_{\mu}(X_k;0,t,0,0,u_n,t_{n})$$
	as $n\rightarrow \infty$. Therefore the sequence
	$$e_k^{(n)}(t_{\infty})x^{\lambda}m_{\mu}[\bar{X}_n^{\pm}]=
	m_{\mu}[\bar{X}_n^{\pm}]e_k^{(n)}(t_{\infty})x^{\lambda}$$
	is convergent for all $\lambda\in\mathbb{Z}^k$ and $\mu\in\Pi$. Furthermore, 
	by Proposition \ref{cor: Hall-Littlewood expansion} we obtain 
	the convergence of 
	$\{e_k^{(n)}(t_{\infty})x^{\lambda\mu}\}_{n\geq k}$ 
	for all $\lambda\in\mathbb{Z}^k$ and $\mu\in \mathbb{Z}^s_{\geq 0}$. Now 
	we consider the case when $\mu\in \mathbb{Z}^s$ and $\mu_s<0$. Then 
	by Proposition \ref{cor: Hall-Littlewood expansion} we have for sufficiently 
	large $n$, 
	$$e_k^{(n)}(t_{\infty})x^{\lambda\mu}
	= O(t^{n-k-s-|\mu_s|})$$
	which converges to 0 as $n\rightarrow \infty$. Thus by noting 
	that $e_k^{(n)}T_i=e_k^{(n)}$ for all $i\geq k$, 
	we obtain the convergence of all sequences of the form 
	$\{e_k^{(n)}(t_{\infty})x^{\lambda\mu}m_{\nu}[\bar{X}_n^{\pm}]\}_{n\geq k}$ 
	for all 
	$\lambda\in\mathbb{Z}^k$, $\mu\in \mathbb{Z}^s$ and $\nu\in \Pi$. 
	By linearity we obtain that the sequence of operators 
	$\{e_k^{(n)}(t_{\infty})\}_{n\geq k}$ is convergent $t$-adically 
	to an idempotent operator $e_k^+$ on $\Pas^{\pm}$.
\end{proof}

\begin{prop}\label{prop: eigenbasis 1}
	For $\lambda\in\mathbb{Z}^k$ with $\lambda_k\neq 0$ and $\mu\in\Pi$, define
	$$\E_{\lambda\vert\mu}^+=
	\frac{1}{(1-t)^{l(\mu)}\prod_{i\geq 0}[m_i(\mu)]_t!}
	\lim_{n\rightarrow\infty}
	e_k^{(n)}(t_{\infty})E_{\lambda\mu 0^{n-k-l(\mu)}}$$
	with respect to the $t$-adic topology. 
	Then $\E_{\lambda\vert\mu}^+$ is an eigenfunction of 
	the positive stable limit Cherednik operators $\tilde{Y}_i^+$ 
	with eigenvalues $\delta_i(\lambda)q^{\lambda_i}
	t^{u_{\lambda\mu}(i)}$. Furthermore, we have
	$$\E_{\lambda\vert\mu}^+=m_{\lambda\vert\mu}+
	\sum_{\nu|\eta<\lambda|\mu}
	\tilde{a}_{\lambda|\mu,\nu|\eta}^+ m_{\nu\vert\eta}$$
\end{prop}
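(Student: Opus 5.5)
Throughout, fix $\lambda\in\mathbb{Z}^k$ with $\lambda_k\neq 0$ and $\mu\in\Pi$ with $s=l(\mu)$, and work with the finite-rank element $F_n:=e_k^{(n)}(t_{\infty})E_{\lambda\mu 0^{n-k-s}}$. The plan has three steps: show $F_n$ is an eigenvector in finite rank, pass to the limit, and then identify the leading term.

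\emph{Finite-rank eigenvalue.} Since $e_k^{(n)}$ is built from $T_w$ with $w\in S_{1^k,n-k}\rtimes\mathbb{Z}^{n-k}$, it lies in the parabolic subalgebra generated by $T_{k+1},\dots,T_n$. For $i\leq k$, the relations \eqref{Y relation ii} show that $Y_i^{(n)}$ commutes with $T_j$ for $j>k$. The relation $T_{n-1}T_nT_{n-1}T_n$ and the commutation of $Y_i$ with $T_n$ for $i<n$ follow from $Y_i=\dots T_n^{-1}\dots$ together with the braid relations, as in the finite DAHA. Hence
$$t^nY_i^{(n)}F_n=e_k^{(n)}\,t^nY_i^{(n)}E_{\lambda\mu0^{n-k-s}}=t^n\,q^{\lambda_i}(t_0t_\infty)^{\delta(\lambda_i\le0)}t^{-\epsilon(\lambda_i)\beta_{\lambda\mu0}(i)}F_n .$$
For $i\leq k$ this scalar converges $t$-adically to $\delta_i(\lambda)q^{\lambda_i}t^{u_{\lambda\mu}(i)}$, by the same computation as in the proof of Proposition \ref{prop: triangularity of positive limit Y}. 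For $i>k$ the claimed eigenvalue is $0$. In that case I would argue exactly as in the last paragraph of Proposition \ref{prop: triangularity of positive limit Y}: the limit operator $\tilde Y_i^+$ acts on an element of $\P(k)^{\pm}$ by a sequence of coefficients picking up an extra factor of $t$ at each step, so the result is $0$.

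\emph{Convergence.} I would expand $E_{\lambda\mu0^{n-k-s}}=\sum_\nu a^{(n)}_\nu x^\nu$ and apply $e_k^{(n)}$ term by term. Each $e_k^{(n)}x^\nu$ converges by the preceding proposition on the convergence of $e_k^{(n)}(t_\infty)$, and it converges to $0$ when the tail of $\nu$ contains a negative entry. The coefficients $a^{(n)}_\nu$ can be handled by the same induction on Bruhat order as in Theorem \ref{thm: stable limit Koornwinder}, using Corollary \ref{cor: limit of Y coefficients}. Here the tail $\mu 0^{n-k-s}$ replaces $0^n$; the finitely many $\nu$ up to tail symmetry are controlled as before, and the spectrum stays simple.

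The \textbf{main obstacle} is that Proposition \ref{prop: continuity} requires a convergent sequence in rank $n$, while the operators are rank-dependent. I would combine (1) and (2) of that proposition with the limit operators $e_k^+$ and $\tilde Y_i^+$. Since the eigenvalue equation $t^nY_i^{(n)}F_n=c_nF_n$ holds with $c_n$ convergent, taking limits gives $\tilde Y_i^+\lim F_n=\lim c_n\cdot \lim F_n$. After dividing by the normalising constant, this yields the eigenvalue equation for $\E^+_{\lambda\vert\mu}$.

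\emph{Leading term.} By triangularity, $E_{\lambda\mu0}=x^{\lambda\mu0}+\text{lower}$. The symmetrisation $e_k^{(n)}$ sends $x^{\nu}$ to a multiple of $x^{\nu_L}$ times a $BC$ Hall--Littlewood polynomial in the tail. By Proposition \ref{cor: Hall-Littlewood expansion}, this is triangular in the tail dominance order, so lower monomials go to combinations of $m_{\nu\vert\eta}$ with $\nu\vert\eta<\lambda\vert\mu$; Proposition \ref{prop: Bruhat order}(2) transfers the order. The top term $x^{\lambda\mu}$ yields $v_{\mu0^{n-k-s}}(t_\infty)\,x^\lambda\Pt_\mu$, where $\Pt_\mu=m_\mu[\bar X^{\pm}_{[k+1,n]}]+\text{lower}$. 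Since $v\to(1-t)^{l(\mu)}\prod_{i\geq 0}[m_i(\mu)]_t!$, the normalisation makes the coefficient of $m_{\lambda\vert\mu}$ equal to $1$.
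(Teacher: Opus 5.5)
Your eigenvalue argument is the paper's. For $i\le k$, $Y_i^{(n)}$ commutes with $T_{k+1},\dots,T_n$ and hence with $e_k^{(n)}(t_\infty)$, and the rescaled finite eigenvalue tends to $\delta_i(\lambda)q^{\lambda_i}t^{u_{\lambda\mu}(i)}$. For $i>k$ you use that the limit lies in $\P(k)^{\pm}$, together with the last part of Proposition~\ref{prop: triangularity of positive limit Y}.

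For convergence you do not need to rerun the Bruhat induction. The termwise version is also not valid as stated, because the number of monomials in $E_{\lambda\mu0^{n-k-s}}$ grows with $n$, so termwise limits do not determine the limit of the sum. Instead, $\lambda\mu$ is itself a strict tuple, so Theorem~\ref{thm: stable limit Koornwinder}, the convergence of $e_k^{(n)}(t_\infty)$ and Proposition~\ref{prop: continuity}(1) give the limit directly, as you indicate later.

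\textbf{The gap is in the leading-term step, and it cannot be closed, because the monicity claim fails.} After symmetrizing you obtain limit terms $x^{\nu_L}m_\eta[X_k^{\pm}]$, which you treat as basis vectors $m_{\nu_L^\circ\vert\eta}$ and then compare via Proposition~\ref{prop: Bruhat order}(2). This is only legitimate when $\nu_k\neq 0$.

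If instead $\nu_L=\nu_L^\circ0^{k-k'}$ with $k'<k$, then $m_{\nu_L^\circ\vert\eta}=x^{\nu_L^\circ}m_\eta[X_{k'}^{\pm}]$ uses a larger alphabet. Rewriting $x^{\nu_L}m_\eta[X_k^{\pm}]$ via Example~\ref{ex: monomial expansion} produces vectors $m_{\sigma\vert\hat\eta}$ with $l(\sigma)$ up to $k$, and these can include $m_{\lambda\vert\mu}$ itself. Put differently, you correctly compute the coefficient of the monomial $x^{\lambda\mu0\cdots0}$ as $v_{\mu0^{n-k-s}}(t_\infty)$. That is not the coefficient of $m_{\lambda\vert\mu}$, since basis vectors $m_{\nu\vert\eta}$ with $l(\nu)<k$ contain the same monomial.

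Concretely, take $\lambda=(-1)$, $\mu=\emptyset$.
\begin{itemize}
\item $E_{(-1,0^{n-1})}$ is fixed by $T_2,\dots,T_n$ (Proposition~\ref{prop: recursion of Koornwinder}), so $e_1^{(n)}$ fixes it, and $\E^+_{(-1)\vert\emptyset}=\E^+_{(-1)}$.
\item By the paper's own example,
\[
\E^{+}_{(-1)}=x_1^{-1}+b\,e_1[X_1^{\pm}]+b\,x_1-\frac{\alpha+q\beta}{t-q},\qquad b=\frac{q(t-1)}{t-q}.
\]
\item You can confirm $b$ in rank $2$ from $E_{(-1,0)}=\bigl(T_1+\frac{(1-t)\kappa_2}{\kappa_1-\kappa_2}\bigr)E_{(0,-1)}$, with $\kappa_1=t_0t_\infty$ and $\kappa_2=q^{-1}tt_0t_\infty$. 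Use $T_1x_2^{-1}=x_1^{-1}+(1-t)x_2^{-1}$ and the $x_2\leftrightarrow x_2^{-1}$ symmetry.
\item Since $e_1[X_1^{\pm}]=m_{\emptyset\vert(1)}-m_{(1)\vert\emptyset}-m_{(-1)\vert\emptyset}$, you get
\[
\E^{+}_{(-1)\vert\emptyset}=\frac{t(1-q)}{t-q}\,m_{(-1)\vert\emptyset}+\frac{q(t-1)}{t-q}\,m_{\emptyset\vert(1)}-\frac{\alpha+q\beta}{t-q}.
\]
\end{itemize}
The diagonal coefficient is $\frac{t(1-q)}{t-q}\neq 1$.

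The paper's proof makes exactly the same identification of monomial coefficient with basis coefficient. The same problem already affects the top-coefficient claim in Corollary~\ref{cor: expansion of stable limit Koornwinder}.

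What your argument does control is the expansion of the limit in the rank-$k$ basis $\{x^{\nu}m_\eta[X_k^{\pm}]\}_{\nu\in\mathbb{Z}^k,\eta\in\Pi}$ of $\P(k)^{\pm}$, in which $x^\lambda m_\mu[X_k^{\pm}]=m_{\lambda\vert\mu}$. Passing to the global basis $\{m_{\nu\vert\eta}\}$ needs an extra argument that tracks the re-expansion of the non-strict $\nu_L$ terms. Any correct statement must also allow a diagonal coefficient different from $1$, or renormalize accordingly.
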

\begin{proof}
	By Proposition \ref{prop: continuity}, for $i\leq k$ we have 
	\begin{align*}
	\tilde{Y}_i^+\E_{\lambda\vert\mu}^+=&
	\lim_{n\rightarrow\infty}t^{n}Y_i^{(n)}
	e_k^{(n)}(t_{\infty})E_{\lambda\mu 0^{n-k-l(\mu)}}\\
	=&\frac{1}{(1-t)^{l(\mu)}\prod_{i\geq 0}[m_i(\mu)]_t!}\lim_{n\rightarrow\infty}e_k^{(n)}(t_{\infty})
	t^{n}Y_i^{(n)}E_{\lambda\mu 0^{n-k-l(\mu)}}\\
	=&\frac{1}{(1-t)^{l(\mu)}\prod_{i\geq 0}[m_i(\mu)]_t!}\lim_{n\rightarrow\infty}e_k^{(n)}(t_{\infty})
	\delta_i(\lambda)q^{\lambda_i}
	t^{u_{\lambda\mu}(i)}E_{\lambda\mu 0^{n-k-l(\mu)}}\\
	=&\delta_i(\lambda)q^{\lambda_i}
	t^{u_{\lambda\mu}(i)}\E_{\lambda\vert\mu}^+
	\end{align*}
	For $i>k$, since $\E_{\lambda\vert\mu}\in\P(k)^{\pm}$ we have 
	$\tilde{Y}_i^+\E_{\lambda\vert\mu}=0.$ 
	
	Now consider the monomial expansion
	$$E_{\lambda\mu 0^{n-k-l(\mu)}}
	=X^{\lambda\mu 0^{n-k-l(\mu)}}+
	\sum_{\nu<\lambda\mu}a_{\lambda\mu 0^{n-k-l(\mu)},\nu}^{(n)}
	X^{\nu}.$$
	Note that $\lambda\mu 0^{n-k-l(\mu)}$ is the 
	unique minimal element in the orbit of $\lambda\mu 0^{n-k-l(\mu)}$ 
	under the action of $S_{1^k,n-k}\rtimes\mathbb{Z}^{n-k}$. 
	Futher note that the Demazure-Lusztig operator $T_i$ 
	for $i\geq k$ acting on monomials only generates monomials in the same 
	$S_{1^k,n-k}\rtimes\mathbb{Z}^{n-k}$-orbit 
	or monomials in a strict lower orbit. Thus by Proposition 
	\ref{prop: Bruhat order} we have 
	$$\nu^{L}\vert(\nu^{R})^+<\lambda\vert\mu$$
	where $\nu^{L}=(\nu_1,...,\nu_{k'})$ with 
	$k'\leq k$ and 
	$$\nu_{k'}\neq 0,\quad \nu_{k'+1}=...=\nu_k=0,$$
	and $\nu^{R}=(\nu_{k+1},...,\nu_{n})$. 
	Furthermore, by Proposition \ref{prop: tail symmetrization} 
	the coefficient of $X^{\lambda\mu 0^{n-k-l(\mu)}}$ in 
	$e_k^{(n)}(t_{\infty})E_{\lambda\mu 0^{n-k-l(\mu)}}$ can only 
	come from the contribution of $e_k^{(n)}(t_{\infty})X^{\lambda\mu 0^{n-k-l(\mu)}}$
	and therefore is $v_{\mu}(t_{\infty})$. 
	Now that we have $t$-adic convergence
	$$\lim_{n\rightarrow\infty}v_{\mu}(t_{\infty})=
	\prod_{i\geq 0}[m_i(\mu)]_t!(1-t)^{l(\mu)},$$
	the normalizer in the definition makes 
	the coefficient of $m_{\lambda\vert\mu}$ equal 1. 
	This shows the triangularity and the monicity of the expansion of 
	$\E_{\lambda\vert\mu}^+$ in the basis $\{m_{\nu\vert\eta}\}$.
\end{proof}

For negative case, we first note the following proposition which is 
a direct consequence of 
Proposition \ref{prop: recursion of Koornwinder}.
\begin{prop}\label{cor: eigenbasis 2}
	Let $\lambda\in\mathbb{Z}^k$ and $\mu\in\mathbb{Z}^{n-k}$. 
	Then we have
	$$e_k^{(n)}(t_{\infty})E_{\lambda\mu}=
	\gamma_{\lambda\mu}^{(n)}e_k^{(n)}(t_{\infty})E_{\lambda\mu^+ }$$
	for some $\gamma_{\lambda\mu}^{(n)}\in \mathbb{K}'$. 
\end{prop}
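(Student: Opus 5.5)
We argue by induction on the number of "moves" needed to bring $\mu\in\mathbb{Z}^{n-k}$ to its dominant rearrangement $\mu^+$ using the generators $s_{k+1},\dots,s_{n-1},s_n$ of the parabolic subgroup $S_{1^k,n-k}\rtimes\mathbb{Z}_2^{n-k}$, which fixes the first $k$ coordinates. More precisely, we induct on the length of the minimal element $w$ of this parabolic subgroup with $w\mu^+=\mu$. The two ingredients are the recursion of Proposition \ref{prop: recursion of Koornwinder} and the absorption property of the tail symmetrizer. Since $e_k^{(n)}(t_\infty)$ is, up to normalization, the $\chi^{-1}$-weighted sum over the parabolic Hecke subalgebra generated by $T_{k+1},\dots,T_n$, it satisfies
$$e_k^{(n)}(t_\infty)T_j=\chi(T_j)\,e_k^{(n)}(t_\infty)$$
for $k<j\le n$. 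Concretely, $e_k^{(n)}T_j=t\,e_k^{(n)}$ for $k<j<n$ and $e_k^{(n)}T_n=t_n e_k^{(n)}$, where $t_n=t_\infty$ is the parameter used in $e_k^{(n)}(t_\infty)$. The paper's remark "$e_k^{(n)}T_i=e_k^{(n)}$" is a different normalization; we would verify it from the quadratic relations and the definition of $e_k^{(n)}$, and fix whichever convention is correct. Either way, the statement only requires that $e_k^{(n)}T_j$ be a scalar multiple of $e_k^{(n)}$.

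Step 1 (base case). If $\mu=\mu^+$, take $\gamma_{\lambda\mu}^{(n)}=1$.

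Step 2 (inductive step). Suppose $\mu\neq\mu^+$, and write $\kappa=\lambda\mu$. Then some simple reflection $s_j$ with $k<j\le n$ moves $\kappa$ strictly closer to dominance in the tail. Either $\kappa_j<\kappa_{j+1}$ for some $k<j<n$, or $\kappa_n<0$. In the first case, apply Proposition \ref{prop: recursion of Koornwinder}(1) to $\kappa'=s_j\kappa$, which satisfies $\kappa'_j>\kappa'_{j+1}$:
$$E_{\kappa}=(T_j+c_j)E_{\kappa'}.$$
Here $c_j\in\mathbb{K}'$ is the explicit constant, well defined because the spectrum is simple so the denominator is nonzero. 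In the second case, apply part (2) with $\kappa'=s_n\kappa$, which has $\kappa'_n>0$, to get
$$E_\kappa=(T_n+c_n)E_{\kappa'}.$$
Applying $e_k^{(n)}(t_\infty)$ and using absorption gives
$$e_k^{(n)}E_\kappa=(\chi(T_j)+c_j)\,e_k^{(n)}E_{\kappa'}.$$
Since $\kappa'=\lambda\mu'$ with $\mu'$ strictly closer to $\mu^+$ and $(\mu')^+=\mu^+$, the induction hypothesis gives
$$\gamma^{(n)}_{\lambda\mu}=(\chi(T_j)+c_j)\,\gamma^{(n)}_{\lambda\mu'}.$$
Equal adjacent entries need no step, since $E_{s_j\kappa}=T_jE_\kappa$ with $s_j\kappa=\kappa$. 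A zero last entry is already nonnegative.

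Step 3 (termination). Each step either removes an inversion among the tail entries or flips a negative tail entry to positive, so the process reaches $\mu^+$ in finitely many steps. Every intermediate weight keeps $\lambda$ in the first $k$ slots, because all the $s_j$ used have $j>k$.

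The main obstacle is Step 2: we must confirm the exact scalar by which $e_k^{(n)}(t_\infty)$ absorbs $T_j$ (including the $T_n$ case, where $t_n$ is specialized to $t_\infty$), and check that the recursion in Proposition \ref{prop: recursion of Koornwinder} is stated in the right direction, producing $E_{s_j\kappa}$ from $E_\kappa$ when $\kappa$ is "more dominant". If it runs the other way, we invert: $E_{\kappa'}=(T_j+c)^{-1}E_\kappa$. The operator $T_j+c$ is invertible unless $c\in\{-1,t\}$, which simplicity of the spectrum excludes, and $e_k^{(n)}(T_j+c)^{-1}=(\chi(T_j)+c)^{-1}e_k^{(n)}$. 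The argument then goes through unchanged, with $\gamma$ a product of such factors or their inverses.
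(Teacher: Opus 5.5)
Your proposal follows the paper's own argument: apply $e_k^{(n)}(t_\infty)$ to the recursion of Proposition~\ref{prop: recursion of Koornwinder} and induct toward $\mu^+$, and your direction of the recursion and your termination argument are fine. One correction: the paper's absorption rule $e_k^{(n)}(t_\infty)T_j=e_k^{(n)}(t_\infty)$ for $k<j\le n$ is the right one, and your rule $e_k^{(n)}T_j=\chi(T_j)e_k^{(n)}$ cannot hold, because $t$ (resp.\ $t_\infty$) is not a root of the quadratic relation for $T_j$, whose roots are $1$ and $-t$ (resp.\ $-t_\infty$); the weights $\chi(T_w)^{-1}$ are exactly what make the sum absorb $T_j$ with eigenvalue $1$. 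Each step therefore contributes the factor $1+c_j$ rather than $\chi(T_j)+c_j$, which does not affect the existence of $\gamma^{(n)}_{\lambda\mu}$.
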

\begin{proof}
	Note that $e_k^{(n)}(t_{\infty})T_i = e_k^{(n)}(t_{\infty})$ for all $i\geq k$. 
	Then we can apply the symmetrizer to the formulae in 
	Proposition \ref{prop: recursion of Koornwinder} to obtain the result 
	by induction on the Bruhat order of $\mu$. 
\end{proof}
\begin{ex}\label{ex: normalizer for negative}
	For the special case $\tau = (\lambda,-\mu,0^{n})$ where 
	$\lambda\in\mathbb{Z}^k$ and $\mu\in\Pi$ with $l(\mu)=s$ and 
	the first appearance of each 
	distinct part denoted by $\mu_{a_1}>...>\mu_{a_r}$, we
	consider the longest element (See 
	\cite{Hum}*{Section 3.19})
	$$w_0=(s_{k+1}...s_n)^{n-k}\in S_{1^k,n-k}\rtimes\mathbb{Z}^{n-k}$$
	Then 
	$$w_0(\lambda,-\mu,0^{n}) = (\lambda,\mu,0^{n}).$$
	By recursively using the formula in Proposition 
	\ref{prop: recursion of Koornwinder} we obtain that
	{\allowdisplaybreaks
	\begin{align*}
		\gamma_{\tau}^{(n)}=&
		\prod_{j=0}^{n-1}\prod_{i=1}^{s}
		(\frac{1-q^{-\mu_i}t_0 t_{\infty}t^{\beta_{\lambda\mu}(k+i)+2n-j}}
		{1-q^{-\mu_i}t_0 t_{\infty}t^{\beta_{\lambda\mu}(k+i)+2n-j-1}}
		\frac{1-q^{-\mu_i}t^{\beta_{\lambda\mu}(k+i)+n-j+1}}
		{1-q^{-\mu_i}t^{\beta_{\lambda\mu}(k+i)+n-j}})\\
		&\prod_{i=1}^{s}\frac{(1-q^{-\mu_i}t_0 t_{\infty}
		t^{\beta_{\lambda\mu}(k+i)+n})(1-q^{-\mu_i}t_{\infty}
		t^{\beta_{\lambda\mu}(k+i)+n})}
		{1-q^{-2\mu_i}t_0 t_{\infty}t^{2\beta_{\lambda\mu}(k+i)+2n}}\\
		&\prod_{i<j\leq s}\frac{1-q^{-\mu_i-\mu_j}t_0 t_{\infty}
		t^{\beta_{\lambda\mu}(k+i)+\beta_{\lambda\mu}(k+j)+2n+1}}
		{1-q^{-\mu_i-\mu_j}t_0 t_{\infty}
		t^{\beta_{\lambda\mu}(k+i)+\beta_{\lambda\mu}(k+j)+2n}}\\
		&\prod_{i<j\leq r}\left(\frac{1-q^{-\mu_{a_i}+\mu_{a_j}}
		t^{\beta_{\lambda\mu}(k+a_i)-\beta_{\lambda\mu}(k+a_j)+m_{a_j}(\mu)}}
		{1-q^{-\mu_{a_i}+\mu_{a_j}}
		t^{\beta_{\lambda\mu}(k+a_i)-\beta_{\lambda\mu}(k+a_j)}}\right)^{m_{a_i}(\mu)}\\
		=&
		\prod_{i=1}^{s}\frac{(1-q^{-\mu_i}t_0 t_{\infty}
		t^{\beta_{\lambda\mu}(k+i)+2n})(1-q^{-\mu_i}t_{\infty}
		t^{\beta_{\lambda\mu}(k+i)+n})
		(1-q^{-\mu_i}t^{\beta_{\lambda\mu}(k+i)+n+1})}
		{(1-q^{-2\mu_i}t_0 t_{\infty}t^{2\beta_{\lambda\mu}(k+i)+2n})
		(1-q^{-\mu_i}t^{\beta_{\lambda\mu}(k+i)+1})}\\
		&\prod_{i<j\leq s}\frac{1-q^{-\mu_i-\mu_j}t_0 t_{\infty}
		t^{\beta_{\lambda\mu}(k+i)+\beta_{\lambda\mu}(k+j)+2n+1}}
		{1-q^{-\mu_i-\mu_j}t_0 t_{\infty}
		t^{\beta_{\lambda\mu}(k+i)+\beta_{\lambda\mu}(k+j)+2n}}\\
		&\prod_{i<j\leq r}\left(\frac{1-q^{-\mu_{a_i}+\mu_{a_j}}
		t^{\beta_{\lambda\mu}(k+a_i)-\beta_{\lambda\mu}(k+a_j)+m_{a_j}(\mu)}}
		{1-q^{-\mu_{a_i}+\mu_{a_j}}
		t^{\beta_{\lambda\mu}(k+a_i)-\beta_{\lambda\mu}(k+a_j)}}\right)^{m_{a_i}(\mu)}
	\end{align*}}
	Note that the asymptotic degree in $t$ of $\gamma_{\tau}^{(n)}$ is 
	\begin{align*}
		& 2sn+\frac{s(s-1)}{2}+\sum_{i<j}m_{a_i}(\mu)m_{a_j}(\mu)\\
		=&
		\sum_{i=1}^s (n+s-i)+\rm{deg}_t 
		\begin{pmatrix}
			n+s\\
			m_0(\mu 0^{n});m_{a_1}(\mu 0^{n});...;m_{a_r}(\mu 0^{n})
		\end{pmatrix}_t
	\end{align*}
	This is exactly the degree of $v_{\mu 0^{n}}(t_{\infty})^{-1}$ in $t$. 
	By a direct computation of the $t^{-1}$-adic limit we may obtain the following result.
\end{ex}

\begin{cor}\label{cor: eigenbasis minus}
	For $\lambda\in\mathbb{Z}^k$ with $\lambda_k\neq 0$ and $\mu\in\Pi$ 
	with $l(\mu)=s$ and the first appearance of each 
	distinct part denoted by 
	$\mu_{a_1}>...>\mu_{a_r}>0$ 
	define
	\begin{align*}
		\E_{\lambda\vert\mu}^-&=
		\frac{\prod_{i=1}^{r}(1-q^{\mu_{a_i}}
		t^{-\beta_{\lambda\mu}(k+a_i)-1})}
		{(1-t^{-1})^{s}\prod_{i>0}[m_i(\mu)]_{t^{-1}}!}
		\\
		&\prod_{i<j\leq r}
		\left(\frac{1-q^{-\mu_{a_i}+\mu_{a_j}}
			t^{\beta_{\lambda\mu}(k+a_i)-\beta_{\lambda\mu}(k+a_j)}}
			{1-q^{-\mu_{a_i}+\mu_{a_j}}
			t^{\beta_{\lambda\mu}(k+a_i)-\beta_{\lambda\mu}(k+a_j)+m_{a_j}(\mu)}}\right)^{m_{a_i}(\mu)}
		\lim_{n\rightarrow\infty}
		e_k^{(n)}(t_{\infty})E_{\lambda,-\mu, 0^{n-k-s}}		
	\end{align*}
	with respect to the $t^{-1}$-adic topology. 
	Then $\E_{\lambda\vert\mu}^-$ is an eigenfunction of 
	the negative stable limit Cherednik operators $\tilde{Y}_i^-$ 
	with eigenvalues $\delta_i'(\lambda)(t_0t_n)q^{\lambda_i}
	t^{-u_{\lambda\mu}(i)}$. Furthermore, the monomial expansion of 
	$\E_{\lambda\vert\mu}^-$ is 
	$$\E_{\lambda\vert\mu}^-=
	m_{\lambda\vert\mu}+ \sum_{\nu|\eta<\lambda|\mu} 
	\tilde{a}_{\lambda|\mu,\nu|\eta}^- m_{\nu\vert\eta}$$
\end{cor}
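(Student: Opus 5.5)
The plan is to mirror the proof of Proposition \ref{prop: eigenbasis 1}, with two changes: the $t^{-1}$-adic topology replaces the $t$-adic one, and we start from the antidominant representative $E_{\lambda,-\mu,0^{n-k-s}}$ instead of $E_{\lambda\mu0^{n-k-s}}$. We first establish that the limit exists. By Proposition \ref{cor: eigenbasis 2} we have
$$e_k^{(n)}(t_{\infty})E_{\lambda,-\mu,0^{n-k-s}}=\gamma^{(n)}_{\tau}\,e_k^{(n)}(t_{\infty})E_{\lambda\mu0^{n-k-s}},$$
where $\tau=(\lambda,-\mu,0^{n-k-s})$. The explicit product for $\gamma_\tau^{(n)}$ is given in Example \ref{ex: normalizer for negative}. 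We will analyse the $t^{-1}$-adic behaviour of each factor separately.
\begin{itemize}
\item The factors involving $t^{2n}$ or $t^{n}$ have ratios tending to constants times powers of $t$.
\item The factor $\prod_i(1-q^{-\mu_i}t^{\beta+1})^{-1}$, grouped by distinct parts, produces the inverse of $\prod_{i=1}^r(1-q^{\mu_{a_i}}t^{-\beta_{\lambda\mu}(k+a_i)-1})$ up to monomials.
\item The last product over $i<j\le r$ is $n$-independent.
\end{itemize}
The degree computation in Example \ref{ex: normalizer for negative} shows that the leading $t$-power of $\gamma_\tau^{(n)}$ matches that of $v_{\mu0^{n}}(t_\infty)^{-1}$.

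Next we examine $e_k^{(n)}(t_{\infty})E_{\lambda,-\mu,0^{n-k-s}}$ coefficientwise. Since $E_{\lambda,-\mu,0}$ is triangular with top monomial $x^{\lambda,-\mu,0}$, and since $T_i$ for $i\ge k$ preserves or lowers $S_{1^k,n-k}\rtimes\mathbb{Z}^{n-k}$-orbits, the coefficient of the orbit of $\lambda\mu0$ comes only from $e_k^{(n)}x^{\lambda,-\mu,0}$. By Proposition \ref{prop: tail symmetrization}(2) and the Hall–Littlewood straightening of Proposition \ref{cor: Hall-Littlewood expansion}, this coefficient equals $v_{\mu0}(t_\infty)$ times an explicit factor. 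We will compute its $t^{-1}$-adic limit and check that it is exactly the reciprocal of the prefactor in the definition of $\E^-_{\lambda\vert\mu}$. This gives monicity, that is, the coefficient of $m_{\lambda\vert\mu}$ equals $1$.

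For the lower terms $m_{\nu\vert\eta}$, we argue as follows. Lower monomials $\nu$ of $E_{\lambda,-\mu,0}$ symmetrize to $\nu^L\vert(\nu^R)^+$, which is $<\lambda\vert\mu$ by Proposition \ref{prop: Bruhat order}. Their coefficients converge $t^{-1}$-adically by Corollary \ref{cor: expansion of stable limit Koornwinder} (the negative version of Theorem \ref{thm: stable limit Koornwinder}), together with the $t^{-1}$-adic convergence of $e_k^{(n)}(t_\infty)$. That convergence is proved exactly as for $e_k^+$, with $t\leftrightarrow t^{-1}$: now negative tails survive, positive tails vanish, and $v$ is normalized by $t$-powers.

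For the eigenvalue property, we take $i\le k$ and use that $Y_i^{(n)}$ commutes with $T_j$ for $j>k$, hence with $e_k^{(n)}$. Then $t^{-n}Y_i^{(n)}E_{\lambda,-\mu,0}$ equals the finite eigenvalue $q^{\lambda_i}(t_0t_n)^{\delta(\lambda_i\le0)}t^{-\epsilon(\lambda_i)\beta(i)-n}$ times $E_{\lambda,-\mu,0}$. This eigenvalue converges $t^{-1}$-adically to $\delta_i'(\lambda)q^{\lambda_i}(t_0t_\infty)t^{-u_{\lambda\mu}(i)}$. When $\lambda_i>0$ the exponent $-\beta-n\to-\infty$, so the limit is $0$; when $\lambda_i\le0$, $\beta_{\lambda,-\mu,0}(i)-n$ stabilizes, and we identify it with $-u_{\lambda\mu}(i)$ using the ordering $\prec$. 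Proposition \ref{prop: continuity} then passes the limit through the operators. For $i>k$ the limit lies in $\P(k)^{\pm}$, so the argument at the end of Proposition \ref{prop: triangularity of positive limit Y} gives $0$.

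The main obstacle will be the exact normalizer. We must verify that the $t^{-1}$-adic limit of $\gamma_\tau^{(n)}v_{\mu0}(t_\infty)$, multiplied by the Hall–Littlewood leading factor, equals the displayed constant, including the $(1-t^{-1})^s$ and $[m_i(\mu)]_{t^{-1}}!$ terms. We would first do this for $\mu$ with one distinct part and then use the multiplicativity over blocks of equal parts.
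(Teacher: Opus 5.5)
\textbf{Main gap: the coefficient of $m_{\lambda\vert\mu}$.} Your eigenvalue computation for $i\le k$ is fine. Getting existence of the limit from Theorem~\ref{thm: stable limit Koornwinder} (applied to the strict tuple $(\lambda,-\mu)$) together with Proposition~\ref{prop: continuity} is also a reasonable route. It does need the $t^{-1}$-adic convergence of $e_k^{(n)}(t_\infty)$, which the paper only proves $t$-adically.

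The genuine gap is the monicity step. You assert that the coefficient of the orbit of $\lambda\mu0$ in $e_k^{(n)}(t_\infty)E_{\lambda,-\mu,0}$ comes only from $e_k^{(n)}x^{\lambda,-\mu,0}$. In Proposition~\ref{prop: eigenbasis 1} the analogous claim holds because $\lambda\mu0$ is the \emph{minimal} element of its $S_{1^k,n-k}\rtimes\mathbb{Z}^{n-k}$-orbit, so every other monomial of $E_{\lambda\mu0}$ lies in a strictly lower orbit. Here $(\lambda,-\mu,0)$ is the \emph{maximal} element of that orbit. So $E_{\lambda,-\mu,0}$ contains further monomials of the same orbit with nonzero coefficients; already $E_{(-1,0^n)}$ contains $x_1$ and all $x_j^{\pm1}$. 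Their symmetrizations feed into the coefficient of $m_{\lambda\vert\mu}$, and these contributions survive the $t^{-1}$-adic limit.

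Your claim cannot produce the stated normalizer. By Proposition~\ref{cor: Hall-Littlewood expansion}, the leading term of $e_k^{(n)}x^{\lambda,-\mu,0}$ is $v_{\mu0}(t_\infty)$ times a monomial in $t,t_\infty$, which is independent of $q$. The normalizer, however, contains the $q$-dependent factors $1-q^{\mu_{a_i}}t^{-\beta_{\lambda\mu}(k+a_i)-1}$. These come precisely from the other orbit members, e.g.\ from the $x_j^{-1}$, $j\ge 2$, in $E_{(-1,0^{n-1})}$ when $\mu=(1)$. The same objection invalidates your claim that every lower monomial of $E_{\lambda,-\mu,0}$ symmetrizes to a symbol strictly below $\lambda\vert\mu$.

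\textbf{How the paper handles this.} The paper uses Proposition~\ref{cor: eigenbasis 2} for the whole expansion: $e_k^{(n)}E_{\lambda,-\mu,0}=\gamma_\tau^{(n)}e_k^{(n)}E_{\lambda\mu0}$. For $e_k^{(n)}E_{\lambda\mu0}$ the finite-rank argument of Proposition~\ref{prop: eigenbasis 1} applies verbatim: the top coefficient is exactly $v_{\mu0}(t_\infty)$ and all other terms are strictly lower symbols. So the coefficient of $m_{\lambda\vert\mu}$ is $\gamma_\tau^{(n)}v_{\mu0^{n-k-s}}(t_\infty)$. By Example~\ref{ex: normalizer for negative} its $t$-degree is asymptotically $0$, and its $t^{-1}$-adic limit is the reciprocal of the prefactor. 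Your last paragraph multiplies $\gamma_\tau^{(n)}v_{\mu0}$ by a further Hall--Littlewood leading factor, which double counts: once $\gamma$ is factored out, there is nothing left to straighten.

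\textbf{Second problem: the case $i>k$.} Your step for $i>k$ does not transfer, and here the statement itself is at fault. The positive argument at the end of Proposition~\ref{prop: triangularity of positive limit Y} uses $\pi_{n+1}t^{n+1}Y_i^{(n+1)}\Pi_{n+1}m_{\lambda\vert\mu}=t^2\,(t^nY_i^{(n)}\Pi_n m_{\lambda\vert\mu})$. Under the rescaling by $t^{-n}$ the factor $t^2$ becomes $1$, so nothing decays. Concretely:
\begin{itemize}
\item Every $T_j$ fixes $1$, so $Y_i^{(n)}1=t_0t_nt^{n-i}$ and $\tilde Y_i^-1=t_0t_\infty t^{-i}\neq 0$.
\item For the paper's $\E^-_{(0,1)}=x_2$, a direct computation gives $Y_3^{(n)}x_2=t_0t_nt^{n-3}x_2$. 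Hence $\tilde Y_3^-\E^-_{(0,1)}=t_0t_\infty t^{-3}\E^-_{(0,1)}$, although $\delta_3'((0,1))=0$.
\item For $\mu\neq\emptyset$ one does not even get eigenfunctions: $\tilde Y_1^-(e_1[X^{\pm}]+c)$ acquires extra $x_1^{\pm1}$-terms and is not proportional to $e_1[X^{\pm}]+c$ unless $q=1$.
\end{itemize}
So in the negative setting only the eigenvalue claims for $i\le k$ can be proved.
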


As a direct consequence of Proposition \ref{prop: eigenbasis 1} 
we obtain the following theorem which is the main result of this section.
\begin{thm}\label{thm: positive stable limit Koornwinder}
	The set of positive stable limit Koornwinder polynomials 
	$\{\E_{\lambda\vert\mu}^{+}\}$ 
	forms a simultaneous eigenbasis for $\Pas^{\pm}$ of 
	the positive stable limit Cherednik operators $\tilde{Y}_i^+$ 
	with eigenvalues $\delta_i(\lambda)q^{\lambda_i}t^{u_{\lambda\mu}(i)}$.
\end{thm}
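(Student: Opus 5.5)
The plan is to deduce the theorem from Proposition \ref{prop: eigenbasis 1} together with the standard basis result Proposition \ref{prop: standard basis 1}, by a unitriangularity argument. The eigenfunction property and the eigenvalues $\delta_i(\lambda)q^{\lambda_i}t^{u_{\lambda\mu}(i)}$ are already given by Proposition \ref{prop: eigenbasis 1}. So the only remaining issue is that $\{\E_{\lambda\vert\mu}^+\}$ is a basis of $\Pas^{\pm}$.

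For this I will use the expansion
$$\E_{\lambda\vert\mu}^+=m_{\lambda\vert\mu}+\sum_{\nu\vert\eta<\lambda\vert\mu}\tilde{a}^+_{\lambda\vert\mu,\nu\vert\eta}m_{\nu\vert\eta}.$$
The transition matrix from $\{\E^+_{\lambda\vert\mu}\}$ to $\{m_{\lambda\vert\mu}\}$ is then unitriangular with respect to the Bruhat order on $\Lambda^{\rm as}$. The key step is to check that for each symbol $\lambda\vert\mu$ the lower set $\{\nu\vert\eta\le\lambda\vert\mu\}$ is finite. Granted that, I proceed as follows.

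\begin{enumerate}
\item \textbf{Locally finite filtration.} I filter $\Pas^{\pm}$ by the finite-dimensional subspaces $V_{\le \lambda\vert\mu}=\mathrm{span}\{m_{\nu\vert\eta}:\nu\vert\eta\le\lambda\vert\mu\}$.
\item \textbf{Spanning.} By induction along the order, restricted to this finite lower set, each $m_{\lambda\vert\mu}$ is expressed as $\E^+_{\lambda\vert\mu}$ minus a combination of lower $\E^+$'s. Hence the $\E^+$'s span.
\item \textbf{Linear independence.} Take a finite nontrivial relation and look at a maximal symbol occurring in it. Its coefficient in the $m$-basis must vanish, which is a contradiction.
\end{enumerate}

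For the finiteness of lower sets, I use the definition of the order. The condition $\nu\vert\eta\le\lambda\vert\mu$ forces $l(\nu)\le l(\lambda)$ and $\nu0^{\cdot}\eta\le\lambda\mu$ in $\Lambda_{l(\lambda)+l(\eta)}$. The Bruhat comparison then forces $(\nu\eta)^+\le(\lambda\mu)^+$ in dominance, so $|\nu^+|+|\eta|\le|\lambda^+|+|\mu|$. Since $\eta$ has no zero parts and $l(\nu)\le l(\lambda)$, there are only finitely many such pairs $(\nu,\eta)$.

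Alternatively, I can avoid finiteness altogether by triangularity within each $\P(k)^{\pm}$ graded by the total degree $|\lambda^+|+|\mu|$, which is finite-dimensional by Proposition \ref{prop: standard basis 1}.

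\textbf{Main obstacle.} The step I expect to need the most care is making sure the triangular expansion of Proposition \ref{prop: eigenbasis 1} really stays inside the finite sets just described. Concretely, the symbols $\nu\vert\eta$ that appear must have $l(\nu)\le k$, and this is what $\E^+_{\lambda\vert\mu}\in\P(k)^{\pm}$ guarantees. Once this bookkeeping is confirmed, the basis statement follows formally.

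The word ``simultaneous'' only requires that each $\E^+_{\lambda\vert\mu}$ is an eigenvector for all $\tilde Y_i^+$ at once. For $i\le k$ this is Proposition \ref{prop: eigenbasis 1}. For $i>k$ the eigenvalue is $0$, consistent with $\delta_i(\lambda)=0$.
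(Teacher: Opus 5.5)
Your proposal is correct and follows the paper's proof. The paper also gets the basis property from the unitriangular expansion in Proposition~\ref{prop: eigenbasis 1} combined with the basis $\{m_{\lambda\vert\mu}\}$ of Proposition~\ref{prop: standard basis 1}, and reads the eigenvalues off Proposition~\ref{prop: eigenbasis 1}; your check that lower sets are finite (from $l(\nu)\le l(\lambda)$ and the $BC$-dominance bound $|\nu^+|+|\eta|\le|\lambda^+|+|\mu|$) spells out a step the paper leaves implicit.
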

\begin{proof}
	Note that 
	the transition matrix $\{\tilde{a}_{\lambda\mu,\nu\eta}^{+}\}$ 
	from $\{\E_{\lambda\vert\mu}^{+}\}$ to $\{m_{\nu\vert\eta}\}$ is 
	lower triangular with monic diagonal entries. 
	Therefore by Proposition \ref{prop: standard basis 1} 
	the set $\{\E_{\lambda\vert\mu}^{+}\}$ forms a basis of $\Pas^{\pm}$. 
	By Proposition \ref{prop: eigenbasis 1} we see that each 
	$\E_{\lambda\vert\mu}^{+}$ is a simultaneous eigenfunction of 
	the positive stable limit Cherednik operators $\tilde{Y}_i^+$ 
	with eigenvalues $\delta_i(\lambda)q^{\lambda_i}t^{u_{\lambda\mu}(i)}$.
\end{proof}

\begin{ex}
	We have 
\begin{align*}
	E_{(1,1,0^n)}=& x_1x_2+\frac{\alpha+t^n t_0\beta}{q-t^{2n}t_0t_n}(x_1+x_2)\\
	&+\frac{t_0(1-t)}{q-t^{2n+1}t_0t_n}
	+\frac{(\alpha+t^n t_0\beta)(\alpha+t^{n+1} t_0\beta)}
	{(q-t^{2n}t_0t_n)(q-t^{2n+1}t_0t_n)}
\end{align*}
and the partial symmetrization
\begin{align*}
	e_{1}^{(n+2)}(t_{\infty})E_{(1,1,0^n)}=&
	\frac{1}{[n+1]_t(1+t^n t_{\infty})}(x_1 e_1[X_1^{\pm}]
	+\frac{\alpha+t^n t_0\beta}{q-t^{2n}t_0t_{\infty}}e_1[X_1^{\pm}])\\
	&+\frac{(1+t^n t_{\infty})\alpha+(q+t^n t_0)\beta}
	{(1+t^n t_{\infty})(q-t^{2n}t_0t_{\infty})}x_1\\
	&+\frac{\alpha^2+(1-t)qt_0+t^nt_0\alpha\beta+t^{2n}t_0^2 t_{\infty}(t-1)}
	{(q-t^{2n}t_0t_{\infty})(q-t^{2n+1}t_0t_{\infty})}\\
	&+\frac{q\alpha\beta+qt_0t^n\beta+t^{2n}\alpha\beta+t^{n+1}t_0\alpha\beta
	+t^{2n+1}t_0^2\beta^2}
	{(1+t^n t_{\infty})(q-t^{2n}t_0t_{\infty})(q-t^{2n+1}t_0t_{\infty})}	
\end{align*}
The positive stable limit Koornwinder polynomial is then
\begin{align*}
	\E_{(1\vert 1)}^+=&x_1 e_1[X_1^{\pm}]+q^{-1}\alpha e_1[X_1^{\pm}]
	+\frac{q^{-1}\alpha+\beta}{1-t}x_1+
	\frac{q^{-2}\alpha^2+q^{-1}(\alpha\beta+t_0-tt_0)}{1-t}
\end{align*}
\end{ex}

By the same argument we can also obtain the following result for the 
negative stable limit Koornwinder polynomials.
\begin{thm}\label{thm: negative stable limit Koornwinder}
	The set of negative stable limit Koornwinder polynomials 
	$\{\E_{\lambda\vert\mu}^{-}\}$ 
	forms a simultaneous eigenbasis for $\Pas^{\pm}$ of 
	the negative stable limit Cherednik operators $\tilde{Y}_i^-$ 
	with eigenvalues 
	$\delta_i'(\lambda)(t_0t_n)q^{\lambda_i}t^{-u_{\lambda\mu}(i)}$.
\end{thm}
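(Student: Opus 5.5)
The argument mirrors the proof of Theorem \ref{thm: positive stable limit Koornwinder}, with Corollary \ref{cor: eigenbasis minus} playing the role of Proposition \ref{prop: eigenbasis 1}. It has two parts.

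\textbf{Basis property.} Corollary \ref{cor: eigenbasis minus} gives the expansion
$$\E_{\lambda\vert\mu}^-=m_{\lambda\vert\mu}+\sum_{\nu|\eta<\lambda|\mu}\tilde{a}^-_{\lambda|\mu,\nu|\eta}\,m_{\nu\vert\eta}.$$
So the transition matrix from $\{\E^-_{\lambda\vert\mu}\}$ to $\{m_{\nu\vert\eta}\}$ is unitriangular with respect to the Bruhat order on $\Lambda^{\rm as}$. To invert it one needs finiteness of lower intervals. Each $m_{\lambda\vert\mu}$ lies in $\P(k)^{\pm}$ with $k=l(\lambda)$, and the symbols $\nu\vert\eta\le\lambda\vert\mu$ satisfy $l(\nu)\le k$ and $(\nu0\eta)^+\le(\lambda\mu)^+$ in dominance order. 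Hence only finitely many symbols lie below $\lambda\vert\mu$, and one can invert by induction on the order. Combined with Proposition \ref{prop: standard basis 1}, this shows the family $\{\E^-_{\lambda\vert\mu}\}$ spans $\Pas^{\pm}$ and is linearly independent.

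\textbf{Eigenvalue property.} For $i\le k$, apply Proposition \ref{prop: continuity} to the sequence of operators $t^{-n}Y_i^{(n)}$ acting on $e_k^{(n)}(t_\infty)E_{\lambda,-\mu,0^{n-k-s}}$. For $i\le k$, $Y_i^{(n)}$ commutes with $T_j$ for $j>k$, because $T_jY_i=Y_iT_j$ whenever $\langle\alpha_j,\epsilon_i\rangle=0$. It therefore commutes with $e_k^{(n)}(t_\infty)$, which lies in the subalgebra generated by the $T_j$ with $j>k$. So we can move $t^{-n}Y_i^{(n)}$ past $e_k^{(n)}(t_\infty)$ and use the finite-rank eigenvalue
$$q^{\lambda_i}(t_0t_\infty)^{\delta(\lambda_i\le0)}t^{-\epsilon(\lambda_i)\beta_{\lambda,-\mu,0}(i)}.$$
Multiplied by $t^{-n}$, this converges $t^{-1}$-adically to $\delta_i'(\lambda)q^{\lambda_i}(t_0t_\infty)t^{-u_{\lambda\mu}(i)}$, in the same way as the diagonal coefficient in Corollary \ref{cor: limit of Y coefficients}. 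The scalar prefactor in the definition of $\E^-_{\lambda\vert\mu}$ does not depend on $n$, so it passes through the limit.

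For $i>k$, since $\E^-_{\lambda\vert\mu}\in\P(k)^{\pm}$, the argument from the second half of the proof of Proposition \ref{prop: triangularity of positive limit Y}, with $t$ replaced by $t^{-1}$, gives $\tilde Y_i^-\E^-_{\lambda\vert\mu}=0$. This agrees with $\delta_i'(\lambda)=0$.

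\textbf{Expected main obstacle.} The hard step is checking that the $n$-dependent eigenvalue exponent $\beta_{\lambda,-\mu,0^{n-k-s}}(i)$, after the shift by $-n$, really tends to $-u_{\lambda\mu}(i)$ in the $t^{-1}$-adic sense. This needs a case analysis on the sign of $\lambda_i$ using the ordering $\prec$: the $n-k-s$ trailing zeros contribute exactly the compensating count when $\lambda_i\le0$, and they force a vanishing limit when $\lambda_i>0$. The other point to confirm is that the $t^{-1}$-adic limit defining $\E^-_{\lambda\vert\mu}$ exists at all. That existence is what Corollary \ref{cor: eigenbasis minus} asserts, via the degree matching in Example \ref{ex: normalizer for negative} between $\gamma^{(n)}_\tau$ and $v_{\mu0^n}(t_\infty)^{-1}$. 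I would cite it rather than reprove it.
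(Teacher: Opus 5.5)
Your basis argument (unitriangularity from Corollary~\ref{cor: eigenbasis minus}, plus finiteness of lower intervals) is sound. So is your eigenvalue argument for $i\le k$: you commute $Y_i^{(n)}$ past $e_k^{(n)}(t_\infty)$, and the trailing zeros convert $t^{-n}t^{\beta}$ into $t^{-u}$. This is the same structure as the paper's proof, which just says ``by the same argument'' as Theorem~\ref{thm: positive stable limit Koornwinder}.

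\textbf{The gap is the case $i>k$.} The claim $\tilde Y_i^-\E^-_{\lambda\vert\mu}=0$ does not follow from the positive-case argument ``with $t$ replaced by $t^{-1}$'', and it is false.

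\emph{Why the transfer fails.} On functions symmetric in the tail, $Y_i^{(n)}$ carries an intrinsic factor $t^{n-i}$. Every $T_j$, including $T_0$ and $T_n$, fixes constants, so $Y_i^{(n)}1=t_0t_nt^{n-i}$. In the positive case this factor combines with the rescaling to give $t^{2n-i}\to 0$. With $t^{-n}$ the two cancel, so $\tilde Y_i^-1=t_0t_\infty t^{-i}\neq 0$. But $\E^-_{\emptyset\vert\emptyset}=1$ and $\delta_i'(\emptyset)=0$, so the claimed eigenvalue is wrong.

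\emph{The failure is worse than a wrong eigenvalue.} Take $G\in\P(0)^{\pm}=\Sym[X^{\pm}]$. Every $T_j$ with $j\geq 1$ fixes $\Pi_nG$, so $\tilde Y_1^-G=t^{-1}t_0t_\infty T_0^{-1}G$. Using $t_0T_0^{-1}x_1^{-1}=qx_1+\alpha$ and $T_0x_1=q^{-1}(t_0x_1^{-1}-\alpha)$, one gets
$$\tilde Y_1^-m_{(1)}[X^{\pm}]=t^{-1}t_0t_\infty\bigl(m_{(1)}[X^{\pm}]+t_0^{-1}(q-1)x_1+(q^{-1}-1)x_1^{-1}+t_0^{-1}(1-q^{-1})\alpha\bigr).$$
This is not in $\Sym[X^{\pm}]$. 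Now $\E^-_{\emptyset\vert(1)}$ is $W_n$-invariant of degree $\le 1$ in the limit, hence of the form $m_{(1)}[X^{\pm}]+c$. Therefore it is not an eigenvector of $\tilde Y_1^-$ at all, and no change of eigenvalue convention can rescue the step.

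\emph{What your argument actually proves.} It proves the basis property together with the eigenvalue equations for $i\le l(\lambda)$. For $i>l(\lambda)$ the statement as written fails. The paper's proof has the same defect, since it inherits the $i>k$ claim from the positive case through Corollary~\ref{cor: eigenbasis minus}.
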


\begin{rmk}
	In fact, from Proposition \ref{prop: eigenbasis 1}, 
	Corollary \ref{cor: eigenbasis minus},  
	Theorem \ref{thm: positive stable limit Koornwinder} and 
	Theorem \ref{thm: negative stable limit Koornwinder} 
	we obtain that 
	the positive and negative stable limit Koornwinder polynomials 
	$\E_{\lambda\vert\mu}^{\pm}$ are uniquely 
	characterized by the following properties:
	\begin{enumerate}
		\item The transition matrix into the monomial basis 
		$\{m_{\nu\vert\eta}\}$ 
		is lower triangular with monic diagonal entries, namely 
		$$\E_{\lambda\vert\mu}^{\pm}\in m_{\lambda\vert\mu}+
		\sum_{\nu\vert\eta<\lambda\vert\mu} \mathbb{K}' m_{\nu\vert\eta};$$
		\item $\E_{\lambda\vert\mu}^{\pm}$ is a simultaneous eigenfunction of 
		the positive (resp. negative) stable limit Cherednik operators 
		$\tilde{Y}_i^{\pm}$.
	\end{enumerate}
\end{rmk}

\begin{ex}
	We have 
\begin{align*}
	&E_{(1,-1,0^n)}\\
	=& x_1x_2^{-1}+
	\frac{q(t-1)}{t^2-q}x_1 e_1[\bar{X}_{[3,n+2]}^{\pm}]+
	\frac{\alpha+t^n t_0\beta}{q-t^{2n}t_0 t_{\infty}}x_2^{-1}
	-\frac{q(t-1)(\alpha+t^n t_0\beta)}{(t^2-q)(t^{2n}t_0t_{\infty}-q)}
	e_1[\bar{X}_{[3,n+2]}^{\pm}]\\
	&+\frac{q(t^{2n+3}t_{\infty}(t_0-1)+q^2(1-t-t^n t_{\infty}+t^{n+1})
	+qt^{n+1}(-t+t_{\infty}+t^{n+1}t_{\infty}-t^n t_0t_{\infty}))
	}{(q-t^2)(q^2-t^{2n+2}t_0t_{\infty})}\\
	&(x_1x_2
	+\frac{\alpha+t^n t_0 \beta}{q-t^{2n}t_0t_{\infty}}x_2)
	+\frac{q-t^{n+2}}{(q-t^2)(q-t^{2n}t_0t_{\infty})
	(q^2-t^{2n+2}t_0t_{\infty})}\\
	&((-t^{3n+2}t_0t_{\infty}^2\alpha+q^3\beta-qt^{n+1}t_{\infty}
	((-1+t^n t_0)\alpha+t^n t_0(-1+t+t^n t_0)\beta\\
	&+q^2(\alpha+t^n t_0(1-t^n t_{\infty})\beta)))x_1
	+(t^{n+1}t_{\infty}(t^{2n}(t-1)t_0^2 t_{\infty}
	+\alpha^2+t^n t_0 \alpha\beta)\\
	&+q(\alpha^2+t^n t_0((1+t)\alpha\beta+t(1-t)t_{\infty})
	+t^{2n}t_0((t-1)t_0t_{\infty}+tt_0\beta^2))))
\end{align*}
and the negative stable limit of the normalized partial symmetrization operator 
$e_1^{(n)}$ is 
\begin{align*}
	\E_{(1\vert 1)}^-=&
	x_1 e_1[\bar{X}_1^{\pm}]-\frac{t_{0}^{-1}\alpha
	+t_{\infty}^{-1}\beta}{1-t^{-1}} x_1+1
\end{align*}
\end{ex}

\subsection{Parameter specializations}
From the Koornwinder polynomials of type $(C_n^{\vee},C_n)$ 
we can obtain other types of the Koornwinder polynomials 
by appropriate specializations of parameters. 
These specializations are compatible with the stable limit. Hence 
we list the parameter specializations of stable limit Koornwinder 1polynomials 
of other affine root systems in Table \ref{tab:parameter-specialization} for 
reference.  
We refer to 
\cite{Mac}*{Section 1.3}, 
\cite{IS}*{Section 11}, \cite{YY} for detailed discussions of the specialization.
\begin{table}[H]
\begin{center}
	\begin{tabular}{|c|c|c|}
	\hline
	Type                & Generating parameters                  & Other parameters         \\ \hline
	$(C_n^{\vee},C_n)$  & $(q,t,t_0,t_n,a,c)$                    & $(\alpha,\beta)$         \\ \hline
	$(C_n^{\vee},BC_n)$ & $(q,t,t_0,t_n,a,q^{1/2})$              & $(q^{1/2}(1-t_0),\beta)$ \\ \hline
	$(BC_n,C_n)$        & $(q,t,t_0,t_n,a,(qt_0)^{1/2})$         & $(0,\beta)$              \\ \hline
	$D_{n+1}^{(2)}=C_n^{\vee}$        & $(q,t,t_0,t_n,1,q^{1/2})$              & $(q^{1/2}(1-t_0),1-t_n)$ \\ \hline
	$(B_n^{\vee},B_n)$  & $(q,t,1,t_n,a,q^{1/2})$                & $(0,\beta)$              \\ \hline
	$A_{2n}^{(2)}=BC_n$              & $(q,t,t_0,t_n,1,(qt_0)^{1/2})$         & $(0,1-t_n)$              \\ \hline
	$C_n^{(1)}=C_n$               & $(q,t,t_0,t_n,t_n^{1/2},(qt_0)^{1/2})$ & $(0,0)$                  \\ \hline
	$B_n^{(1)}=B_n$               & $(q,t,1,t_n,1,q^{1/2})$                & $(0,1-t_n)$              \\ \hline
	$A_{2n-1}^{(2)}=B_n^{\vee}$        & $(q,t,1,t_n,t_n^{1/2},q^{1/2})$        & $(0,0)$                  \\ \hline
	$D_n^{(1)}=D_n$               & $(q,t,1,1,1,q^{1/2})$                  & $(0,0)$                  \\ \hline
	\end{tabular}
\end{center}
\caption{Specialization of parameters for stable limit Koornwinder polynomials 
to other affine root systems}
\label{tab:parameter-specialization}
\end{table}

\section*{Acknowledgements}
The author would like to thank Bogdan Ion for 
helpful discussions and suggestions.

\end{document}